\numberwithin{equation}{section}
\newtheorem{theorem}{Theorem}[section]
\newtheorem{corollary}[theorem]{Corollary}
\newtheorem{lemma}[theorem]{Lemma}
\newtheorem{definition}[theorem]{Definition}
\definecolor{gray}{RGB}{128, 128, 128}
\newtheorem*{namedtheorem}{\theoremname}
\newcommand{\theoremname}{testing}
\theoremstyle{remark}
\newtheorem{remark}[theorem]{Remark}
\DeclareMathOperator{\diam}{diam}
\DeclareMathOperator{\dist}{dist}
\newcommand{\N}{\mathbb{N}}
\renewcommand{\S}{\mathbb{S}}
\newcommand{\D}{\mathbb{D}}
\newcommand{\R}{\mathbb{R}}
\newcommand{\rf}{\mathcal R}
\DeclareMathOperator{\elle}{le}
\renewcommand{\d}{{\mathrm d}}
\def\az{\alpha}
\def\dist{{\mathop\mathrm{\,dist\,}}}
\def\loc{{\mathop\mathrm{\,loc\,}}}
\def\ez{\epsilon}
\def\bz{\beta}
\def\gz{{\gamma}}
\def\boz{{\Omega}}
\def\wz{\widetilde}
\def\ls{\lesssim}
\def\gs{\gtrsim}
\def\bint{{\ifinner\rlap{\bf\kern.35em--}
\int\else\rlap{\bf\kern.45em--}\int\fi}\ignorespaces}
\def\bbint{{\ifinner\rlap{\bf\kern.35em--}
\hspace{0.078cm}\int\else\rlap{\bf\kern.45em--}\int\fi}\ignorespaces}
\def\diam{{\mathop\mathrm{\,diam\,}}}
\def\bint{{\ifinner\rlap{\bf\kern.35em--}
\int\else\rlap{\bf\kern.45em--}\int\fi}\ignorespaces}
\definecolor{purple}{rgb}{0.75, 0, 0.9}
\begin{document}

\title[A geometric characterization of planar Sobolev extension domains]
{A geometric characterization of\\ planar Sobolev extension domains}

\author{Pekka Koskela}
\author{Tapio Rajala}
\author{Yi Ru-Ya Zhang}

\address{Department of Mathematics and Statistics \\
         P.O. Box 35 (MaD) \\
         FI-40014 University of Jyv\"as\-kyl\"a \\
         Finland}
\email{pekka.j.koskela@jyu.fi} 
\email{tapio.m.rajala@jyu.fi}
         
\address{Academy of Mathematics and Systems Science\\
The Chinese Academy of Sciences\\ Beijing 100190\\ China}

\email{yzhang@amss.ac.cn}

\thanks{The first two authors partially supported by the Academy of Finland. The third author is funded by NSFC grant No. 12288201,  the Chinese Academy of Science, and CAS Project for Young Scientists in Basic Research, Grant No. YSBR-031.}
\subjclass[2000]{Primary 46E35.}
\keywords{Sobolev extension, quasiconvexity}
\date{\today}


\begin{abstract}
We characterize bounded simply connected planar $W^{1,p}$-extension domains for $1 < p <2$ as 
those bounded simply connected domains
$\Omega \subset \R^2$
for which any two points $z_1,z_2 \in \R^2 \setminus \Omega$ can be connected 
with a curve 
$\gamma\subset \R^2 \setminus \Omega$  satisfying
\[
 \int_\gamma \dist(z,\partial \Omega)^{1-p}\,\d s(z)\leq  C(\Omega,p) |z_1-z_2|^{2-p}.
\]
By combining with earlier results, we obtain the following duality result:
a Jordan domain $\Omega \subset \R^2$ is a $W^{1,p}$-extension domain, $1 < p < \infty$, if and only 
if the
complementary domain $\R^2 \setminus \overline\Omega$ is a $W^{1,p/(p-1)}$-extension domain.
\end{abstract}


\maketitle


\tableofcontents


\section{Introduction}

In this paper we study those planar domains $\Omega \subset \R^2$ for
which there exists an extension operator $E \colon W^{1,p}(\Omega) \to 
W^{1,p}(\R^2)$.
Here the Sobolev space $W^{1,p}$, $1 \le p \le \infty,$ is 
\[
 W^{1,p}(\Omega) = \left\{u \in L^p(\Omega) ~:~\nabla u \in 
L^p(\Omega,\R^2)\right\},
\]
where $\nabla u$ denotes the distributional gradient of $u$. The usual norm in
$W^{1,p}(\Omega)$ is 
$\|u\|_{W^{1,p}(\Omega)} = \|u\|_{L^p(\Omega)} + \|\nabla u\|_{L^p(\Omega)}$.
More precisely,  $E \colon W^{1,p}(\Omega) \to W^{1,p}(\R^2)$ is an extension 
operator
if there exists a constant $C \ge 1$ so that for every 
$u \in W^{1,p}(\Omega)$ we have 
\[
\|Eu\|_{W^{1,p}(\R^2)} \le C\|u\|_{W^{1,p}(\Omega)}
\]
and $Eu|_\Omega = u$. Notice that we are not assuming the operator $E$ to be 
linear.
However, for $p>1,$ there also always 
exists 
a \emph{linear} extension operator provided that there exists an extension operator, 
see
\cite{hakotu2008} and also \cite{sh2006}. Finally, a domain 
$\Omega \subset \R^2$
is called a $W^{1,p}$-extension domain if there exists an extension operator
$E \colon W^{1,p}(\Omega) \to W^{1,p}(\R^2)$. For example, each Lipschitz
domain is a $W^{1,p}$-extension domain for each $1\le p\le \infty$ by the
results of Calder\'on \cite{cal1961} and Stein \cite{stein1970}. However, as proven
by Jones \cite{jo1981}, the class of extension domains is much larger. The boundary
of a $W^{1,p}$-extension domain can be of full Hausdorff dimension and it can include
fractal parts.

In this paper we prefer to use the homogeneous seminorm 
$\|u\|_{L^{1,p}(\Omega)} = \|\nabla u\|_{L^p(\Omega)}$.
This makes no difference because we only consider domains  
$\Omega$ with bounded (and hence compact) boundary; for such domains one has 
a bounded (linear)
extension operator for the 
homogeneous seminorms if and only if there is one
for the non-homogeneous ones; see \cite{heko91}.
In what follows, the norm of the extension operator is usually  with respect to the homogeneous seminorms.

The main result of our paper is the following  geometric characterization of 
simply connected bounded
planar $W^{1,p}$-extension domains. 

\begin{theorem}\label{thm:main}
 Let $1 < p < 2$ and let $\Omega \subset \R^2$ be a bounded simply connected 
domain.
 Then $\Omega$ is a $W^{1,p}$-extension domain if and only if for all
 $z_1,z_2 \in \R^2 \setminus \Omega$ there exists a curve $\gamma \subset \R^2 \setminus \Omega$
 joining $z_1$ and $z_2$ such that
 \begin{equation}\label{eq:extcharcompl}
  \int_{\gamma}\dist(z,\partial \Omega)^{1-p}\,\d s(z) 
 \le  C(\Omega,p)|z_1-z_2|^{2-p}.
\end{equation}
\end{theorem}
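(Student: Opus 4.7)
The plan is to prove the two implications separately. For the sufficiency direction (curve condition $\Rightarrow$ extension), I would construct an extension operator by a Jones-type Whitney reflection. Take a Whitney decomposition $\{Q_i\}$ of $\R^2\setminus\overline\Omega$ and assign to each cube $Q_i$ a reflected set $\tilde Q_i\subset\Omega$ with $\diam(\tilde Q_i)$ and $\dist(\tilde Q_i,\partial\Omega)$ comparable to $\diam(Q_i)$. Define $Eu(z)=\sum_i \phi_i(z)\,u_{\tilde Q_i}$ for $z$ in the complement, where $\phi_i$ is a standard partition of unity and $u_{\tilde Q_i}$ is the average of $u$ over $\tilde Q_i$; set $Eu=u$ on $\Omega$. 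The pointwise estimate $|\nabla Eu(z)| \lesssim \diam(Q_i)^{-1}\,\max_{Q_j\sim Q_i}|u_{\tilde Q_i}-u_{\tilde Q_j}|$ for $z\in Q_i$ reduces the $L^p$-bound on $\nabla Eu$ to bounding oscillations of averages over reflected neighbors. Each such oscillation is handled by a chain Poincar\'e inequality along a path in $\Omega$ joining $\tilde Q_i$ to $\tilde Q_j$. The existence of a suitable chain is where the curve condition enters, via a planar conjugate-modulus argument: the complementary curve between $Q_i$ and $Q_j$ granted by hypothesis corresponds, by planar duality, to a ``perpendicular'' chain of Whitney cubes in $\Omega$ whose weighted length has the right summability.

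For the necessity direction (extension $\Rightarrow$ curve condition), I would use a dual test-function argument. Fix $z_1,z_2\in\R^2\setminus\Omega$ and consider $m(z_1,z_2):=\inf_\gamma \int_\gamma \dist(z,\partial\Omega)^{1-p}\,\d s(z)$, the infimum taken over curves in the complement joining $z_1$ to $z_2$. If no curve realized the bound $m(z_1,z_2)\lesssim|z_1-z_2|^{2-p}$, then by a capacity duality the $p$-capacity of the pair $\{z_1,z_2\}$, measured through a weighted condenser in the complement, would be excessively small. One then constructs a test function $u\in W^{1,p}(\Omega)$, built from cutoffs of the weighted distance from $z_1$ and $z_2$ propagated inward through the boundary, whose extension $Eu$ must oscillate by a definite amount between $z_1$ and $z_2$, while the $W^{1,p}(\Omega)$-norm of $u$ stays under control. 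The forced lower bound on $\|\nabla Eu\|_{L^p(\R^2\setminus\Omega)}$ coming from the modulus contradicts the boundedness of $E$.

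The hardest step is expected to be in the sufficiency direction: constructing the reflection $Q_i\mapsto\tilde Q_i$ for a general bounded simply connected planar domain and, above all, converting the curve condition on the complement into a chain condition in $\Omega$ strong enough to drive the chain Poincar\'e inequality. This conversion is a planar conjugate-modulus step that requires careful bookkeeping of exponents, Whitney-cube generations, and the planar topology of $\Omega$ (where simple-connectedness is essential). The delicate part will be choosing the chain and summing the Poincar\'e contributions so that the final weighted sum produces the $L^p$-bound with the exponent matching what the curve condition provides.
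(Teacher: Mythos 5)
Your overall architecture matches the paper's: a Jones-type Whitney reflection from the complement into $\Omega$ with a chain Poincar\'e inequality for sufficiency, and a test function forced to oscillate against the extension operator for necessity. But the plan leaves unaddressed exactly the steps where the theorem is hard, and these are genuine gaps rather than routine bookkeeping. On the sufficiency side, the critical issue is not finding a chain between the reflections of two neighbouring complementary squares but the global counting: a single interior square $Q_i$ must in general serve as the reflection of many complementary squares $\wz Q_{ij}$ of wildly different sizes (a small square deep in a complementary cusp can have a huge shadow on $\partial\Omega$), and one needs the summability $\sum_j \ell(\wz Q_{ij})^{2-s}\lesssim \ell(Q_i)^{2-s}$. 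The paper obtains this by transferring the curve condition to hyperbolic rays of the exterior conformal map (a Gehring--Hayman argument) and counting the squares met by a bounded number of rays; your ``planar conjugate-modulus duality'' is not an off-the-shelf substitute for this. Moreover the chain summation only closes after self-improving \eqref{eq:extcharcompl} from exponent $p$ to some $s>p$ (Lemma \ref{selfimprove}); with $s=p$ the H\"older step in the chain estimate degenerates, and your plan never invokes any open-endedness of the hypothesis.

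On the necessity side, ``capacity duality'' does not by itself produce the contradiction. The paper's test function $\Phi$ is close to $1$ on one boundary arc $P_1$ and close to $0$ on the complementary arc $P_2$ determined by $z_1,z_2$, with $\|\nabla\Phi\|_{L^p(\Omega)}^p\lesssim|z_1-z_2|^{2-p}$; constructing it already uses that $\Omega$ is a John domain (itself a consequence of extendability for $p<2$), and the lower bound $\gtrsim\ell(Q)^{2-p}$ for the energy of $E\Phi$ on each Whitney square $Q$ along the chosen complementary curve requires knowing that a bounded dilate of $Q$ meets both $P_1$ and $P_2$ --- again a hyperbolic-ray/Gehring--Hayman fact, not a modulus identity. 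Finally, both directions are proved in the paper first for Jordan domains and then extended to general bounded simply connected domains by approximating with $\Omega_n=\varphi(B(0,1-\frac1n))$; this forces one to prove that the $\Omega_n$ are extension domains with uniformly bounded operator norms (the inner extension theorem) and to pass to limits by Arzel\`a--Ascoli and weak compactness. Your plan omits this reduction entirely, and it matters: for a general $\Omega$ the complement of $\overline\Omega$ need not be connected, so the curve $\gamma$ may be forced to pass through $\partial\Omega$, and the conformal/shadow machinery is only available in the Jordan case.
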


Both the necessity and sufficiency in Theorem \ref{thm:main} are new. Notice that the curve 
$\gamma$
above is allowed to touch the boundary of $\Omega$ even if the points in question lie outside
the closure of $\Omega.$ This is crucial: there exist bounded simply connected $W^{1,p}$-extension 
domains for which $\R^2 \setminus \overline{\Omega}$ has multiple components; see e.g. \cite{kos1990}, 
\cite{deheu2014}. 

When combined with
earlier results, Theorem \ref{thm:main} essentially completes the search for a geometric 
characterization
of bounded simply connected planar $W^{1,p}$-extension domains. The unbounded case requires
extra technical work and it will be discussed elsewhere.

The condition \eqref{eq:extcharcompl} on the complement in Theorem 
\ref{thm:main} appears
also in the characterization of $W^{1,q}$-extension domains when 
$2 < q < \infty$.
For such domains a characterization using condition 
\eqref{eq:extcharcompl} in the domain itself
with the H\"older dual exponent $q/(q-1)$ of $q$ was 
proved in \cite[Theorem 1.2]{sh2010}; see also earlier partial results 
in \cite{buko1996, ko1998}.

\begin{theorem}[Shvartsman]\label{thm_sh}
Let $2 < q < \infty$ and let $\Omega$ be a bounded simply connected  
planar domain. Then $\Omega$ is a $W^{1,q}$ -extension domain if and 
only if for all $z_1, z_2 \in \Omega$ there exists 
a rectifiable curve $\gamma \subset \Omega$ joining $z_1$ and $z_2$ such that
\begin{equation}\label{eq:extchar}
  \int_{\gamma}\dist(z,\partial \Omega)^\frac{1}{1-q}\,\d s(z) 
\le C(\Omega,q) |z_1-z_2|^\frac{q-2}{q-1}.
\end{equation}
\end{theorem}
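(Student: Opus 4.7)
The plan is to prove the two implications separately: necessity by a duality argument with sharp test functions, and sufficiency by an explicit Whitney-type extension. The exponent $(q-2)/(q-1)$ in \eqref{eq:extchar} is dual to the weight $\dist(\cdot,\partial\Omega)^{1/(1-q)}$ via H\"older's inequality with exponents $q$ and $q/(q-1)$ applied to the splitting $|\nabla u| = |\nabla u|\,\dist(\cdot,\partial\Omega)^{1/q}\cdot \dist(\cdot,\partial\Omega)^{-1/q}$ along a curve, and this identity is the source of the sharpness.

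For necessity, fix an extension operator $E\colon W^{1,q}(\Omega)\to W^{1,q}(\R^2)$ with operator norm $\Lambda$ and points $z_1,z_2\in\Omega$. I argue by contradiction: if for every $N$ no curve $\gamma\subset\Omega$ joining $z_1$ and $z_2$ satisfied \eqref{eq:extchar} with constant $N$, then the weighted $q$-modulus in $\Omega$ of the family of such curves, taken with weight $\dist(\cdot,\partial\Omega)^{1/(1-q)}$, would be arbitrarily small; standard length-$q$-modulus duality (existence of a short weighted curve is equivalent to existence of an admissible density of small $L^q$ norm) then produces a test function $u\in W^{1,q}(\Omega)$ with $\|\nabla u\|_{L^q(\Omega)}\le 1$ and $|u(z_1)-u(z_2)|\ge N$. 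Since $q>2$, $Eu$ admits a continuous representative, and Morrey's inequality yields $|u(z_1)-u(z_2)|=|Eu(z_1)-Eu(z_2)|\le C|z_1-z_2|^{1-2/q}\|\nabla Eu\|_{L^q(\R^2)}\le C\Lambda|z_1-z_2|^{1-2/q}$, which is bounded independently of $N$ and provides the desired contradiction. The simple-connectedness of $\Omega$ enters to ensure that the relevant curve family is nontrivial in $\Omega$ itself rather than only in $\R^2$.

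For sufficiency, fix a Whitney decomposition $\{Q_i\}$ of $\R^2\setminus\overline\Omega$, a smooth partition of unity $\{\phi_i\}$ subordinate to a slight enlargement, and a reflection assigning to each $Q_i$ a set $Q_i^*\subset\Omega$ with $\ell(Q_i^*)\sim\dist(Q_i^*,Q_i)\sim\ell(Q_i)$. Define
\[
Eu(z)=u(z) \text{ for } z\in\Omega,\qquad Eu(z)=\sum_i\phi_i(z)\,u_{Q_i^*} \text{ for } z\in\R^2\setminus\overline\Omega,
\]
where $u_{Q_i^*}$ is the mean of $u$ over $Q_i^*$. A standard partition-of-unity estimate yields $|\nabla Eu(z)|\lesssim\ell(Q_i)^{-1}\max_{Q_j\sim Q_i}|u_{Q_i^*}-u_{Q_j^*}|$ on $Q_i$. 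For adjacent cubes and each $(x,y)\in Q_i^*\times Q_j^*$, pick a curve $\gamma_{xy}\subset\Omega$ from \eqref{eq:extchar} and apply H\"older:
\[
|u(x)-u(y)|^q\le\Bigl(\int_{\gamma_{xy}}|\nabla u|^q\dist(\cdot,\partial\Omega)\,ds\Bigr)\Bigl(\int_{\gamma_{xy}}\dist(\cdot,\partial\Omega)^{1/(1-q)}\,ds\Bigr)^{q-1}\lesssim\ell(Q_i)^{q-2}\int_{\gamma_{xy}}|\nabla u|^q\dist(\cdot,\partial\Omega)\,ds.
\]
Summing $\int_{Q_i}|\nabla Eu|^q$ over $i$, the factor $|Q_i|\ell(Q_i)^{-q}=\ell(Q_i)^{2-q}$ cancels against the $\ell(Q_i)^{q-2}$ above, and Fubini converts the remaining 1D integrals over $\{\gamma_{xy}\}$ into a 2D integral on $\Omega$ weighted by $\dist(\cdot,\partial\Omega)$. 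The main obstacle is to select the curves $\{\gamma_{xy}\}$ measurably with bounded overlap in the weighted sense, meaning that the weighted density of curves through any $z\in\Omega$ is $\lesssim\dist(z,\partial\Omega)^{-1}$. This is arranged by taking each $\gamma_{xy}$ to be a near-extremal of the weighted length functional (essentially a quasihyperbolic quasi-geodesic) and verifying that quasi-geodesics between cubes of comparable Whitney scale cannot pile up at any single point; once in place, the estimate closes to $\|\nabla Eu\|_{L^q(\R^2)}\lesssim\|\nabla u\|_{L^q(\Omega)}$.
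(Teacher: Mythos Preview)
This theorem is not proved in the paper; it is quoted from \cite{sh2010} (with antecedents in \cite{buko1996,ko1998}) and used as a black box to derive Corollary~\ref{cor:dual}. So there is no proof in the paper to compare your attempt against. That said, let me comment on the attempt itself, since your sketch leaves real gaps in both directions.

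For necessity, the idea of exploiting Morrey's inequality on $Eu$ is exactly right and is the heart of the argument in \cite{ko1998,sh2010}. However, the way you produce the test function is not standard and not obviously correct: the ``length--$q$-modulus duality'' you invoke would need a weighted version over curves in $\Omega$, and it is not clear that failure of \eqref{eq:extchar} for every curve translates into existence of a single $u\in W^{1,q}(\Omega)$ with large oscillation and unit gradient norm. The usual route is direct: set $u(x)=\min\bigl\{1,\inf_{\gamma}\int_{\gamma}\dist(z,\partial\Omega)^{1/(1-q)}\,ds\bigr\}$, infimum over curves from $z_1$ to $x$, and truncate and localize near $z_1,z_2$ so that $\|\nabla u\|_{L^q(\Omega)}^q\lesssim |z_1-z_2|^{q-2}$ (this localization is where the exponent $1-2/q$ appears); then Morrey on $Eu$ gives the bound. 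Your contradiction framing can be made to work, but you should replace the abstract duality by this explicit construction.

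For sufficiency, your Whitney scheme and the H\"older splitting along $\gamma_{xy}$ are correct and match the approach in \cite{sh2010}. The place where your argument is genuinely incomplete is the overlap control. You propose to average over curves $\gamma_{xy}$, apply Fubini, and assert that near-minimizers of the weighted length ``cannot pile up'' at a point. This is not how the published proofs proceed, and it is not clear it can be made to work: one would need a quantitative transversality statement for weighted quasi-geodesics that is not available off the shelf. Shvartsman instead discretizes: the curve condition gives a chain of Whitney cubes of $\Omega$ joining $Q_i^*$ to $Q_j^*$ whose side-lengths satisfy a power-sum bound, one telescopes $u_{Q_i^*}-u_{Q_j^*}$ along this chain using the Poincar\'e inequality on each cube, and the summation closes via a maximal-function argument together with a combinatorial count showing each interior Whitney cube lies in only boundedly many such chains. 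If you want to complete your proof, you should replace the Fubini-over-curves step by this chain argument.
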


The above two theorems leave out the case $p=2.$ This is settled by 
earlier results \cite{golavo1979,gore1990,govo1981,jo1981}, according to
which a bounded  simply connected domain is a $W^{1,2}$-extension domain
if and only if it is a quasidisk (equivalently, a uniform Jordan domain).
Thus, $\Omega$ is a bounded simply connected $W^{1,2}$-extension domain if and only if $\Omega$ is a uniform (bounded) Jordan domain which in turn is true if and only if the $\Omega$ is a Jordan domain and $\mathbb R^2\setminus \overline{\Omega}$ is uniform or, equivalently, if and only if $\Omega$ is bounded and simply connected with
$\mathbb R^2 \setminus \overline{\Omega}$ a $W^{1,2}$-extension domain.

By combining (the proof of) our characterization in Theorem \ref{thm:main} with 
Shvartsman's 
characterization stated in Theorem \ref{thm_sh},
we verify the following 
duality result between the extendability of 
Sobolev functions from a Jordan domain and from its complementary domain in Subsection 
\ref{sufffinal}.

\begin{corollary}\label{cor:dual}
 Let $1 < p,q < \infty$ be H\"older dual exponents and let 
$\Omega \subset \R^2$ be a Jordan
 domain. Then $\Omega$ is a $W^{1,p}$-extension domain if and only if 
 $\R^2 \setminus \bar\Omega$ is a $W^{1,q}$-extension domain.
\end{corollary}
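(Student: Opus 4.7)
The plan is to deduce Corollary~\ref{cor:dual} by combining Theorem~\ref{thm:main} with Shvartsman's Theorem~\ref{thm_sh}, after settling three technical issues: matching exponents across H\"older duality, transferring Shvartsman's characterization to the unbounded complementary Jordan domain, and reconciling a mild mismatch between the two geometric conditions, one of which allows curves and endpoints in the closed complement while the other a priori demands them in the open domain. First I would set $q = p/(p-1)$ and observe that $1/(1-q) = 1-p$ and $(q-2)/(q-1) = 2-p$, so that the estimate \eqref{eq:extchar} of Theorem~\ref{thm_sh} with exponent $q$ applied to a domain $U$ with $\partial U = \partial\Omega$ is formally identical to \eqref{eq:extcharcompl} of Theorem~\ref{thm:main} with exponent $p$ on $\Omega$. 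For a Jordan domain $\Omega$ we have $\partial(\R^2 \setminus \overline{\Omega}) = \partial\Omega$ and $\R^2 \setminus \overline{\Omega}$ is connected (the unique unbounded component of the complement of the bounding Jordan curve), so the two formulations differ only in whether the endpoints and curve may meet $\partial\Omega$.

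Assume $1 < p < 2$, so $q > 2$. Passing from the characterization of Theorem~\ref{thm:main} for $\Omega$ to Shvartsman's characterization applied to $U = \R^2 \setminus \overline{\Omega}$ is the easier direction: every pair of points in $U$ lies a fortiori in $\R^2 \setminus \Omega$, and any Theorem~\ref{thm:main} curve meeting $\partial\Omega$ can be locally perturbed into $U$ without inflating the weighted length, since $\dist(\,\cdot\,,\partial\Omega)^{1-p}$ is integrable on such a perturbation thanks to the subcritical exponent $1-p > -1$. The converse is the substantive part: Shvartsman's theorem delivers a curve in $U$ only for endpoints $z_1,z_2 \in U$, so when one or both endpoints lie on $\partial\Omega$ I would approximate them by sequences in $U$, apply Shvartsman's estimate to the perturbed pairs with uniform constants, and extract a subsequential limit curve in the closed complement satisfying the required bound. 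This approximation/compactness scheme, together with the self-improvement of the weighted-length estimate that keeps its constant from blowing up as the endpoints approach $\partial\Omega$, is precisely what Theorem~\ref{neceJordan}, Proposition~\ref{prop:Jordancase} and Lemma~\ref{selfimprove} are designed to provide in the Jordan setting, and I would invoke them as black boxes. The case $2 < p < \infty$ is then symmetric, with the roles of $\Omega$ and $U$ swapped and Lemma~\ref{reduktio} playing the analogous role; the borderline $p = q = 2$ is handled by the classical characterization of bounded simply connected $W^{1,2}$-extension domains as quasidisks, combined with the fact that a Jordan curve bounds a quasidisk on one side if and only if it bounds one on the other.

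A remaining wrinkle is that $U = \R^2 \setminus \overline{\Omega}$ is unbounded, so Theorem~\ref{thm_sh} does not apply verbatim; I would route this through Lemma~\ref{lma:unboundedbounded}, which transfers the $W^{1,q}$-extension property between unbounded domains with bounded boundary and a suitable bounded simply connected auxiliary domain, and yields the matching geometric condition along the way. I expect the main obstacle to be precisely the open-versus-closed reconciliation described above: one must verify that the integral bound is \emph{stable} under the limiting procedure, and that the limit curves really connect the designated boundary points in $\partial\Omega$ without getting trapped in prime-end pathologies. This control is clean precisely when $\R^2 \setminus \overline{\Omega}$ is a single simply connected region, which is the content of the Jordan hypothesis, and it is also the reason such a clean duality cannot be expected in the general simply connected case, where, as remarked in the introduction, $\R^2 \setminus \overline{\Omega}$ may split into several components.
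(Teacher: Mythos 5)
Your proposal follows essentially the same route as the paper: the exponent computation $1/(1-q)=1-p$ and $(q-2)/(q-1)=2-p$ identifies \eqref{eq:extcharcompl} for $\Omega$ with Shvartsman's condition \eqref{eq:extchar} for $\R^2\setminus\overline\Omega$, and the assembly via Theorem~\ref{thm:main}, Theorem~\ref{thm_sh}, Lemma~\ref{lma:unboundedbounded} for the unbounded complement, Lemma~\ref{selfimprove}, and the quasidisk argument for $p=q=2$ is exactly the paper's. The one soft spot is your generic ``local perturbation, the weight is integrable'' justification for pushing a curve off $\partial\Omega$, which by itself does not yield the quantitative bound $C|z_1-z_2|^{2-p}$; the paper instead obtains curves inside $\R^2\setminus\overline\Omega$ directly from the structure of its construction (the remark following the proof of Theorem~\ref{neceJordan}) and notes that Proposition~\ref{prop:Jordancase} only needs the condition for points of the open complement, so the open-versus-closed reconciliation is already built into the black boxes you invoke.
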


Corollary \ref{cor:dual} was hinted by the example in \cite{koyazh2010} 
(see also \cite{ma1981, ro1993}) that exhibits such duality. 

\begin{corollary}\label{cor:1}
Let $\Omega \subset \R^2$ be a bounded, simply connected  
$W^{1,p}$-extension domain, where $1<p\le2.$ Then there is $q>p$ so that
$\Omega$ is a 
$W^{1,s}$-extension domain for all $1<s< q.$
\end{corollary}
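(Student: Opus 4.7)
The case $p = 2$ is classical: by the references \cite{golavo1979,gore1990,govo1981,jo1981} cited above, every bounded simply connected $W^{1,2}$-extension domain is a quasidisk, and hence by Jones' theorem a $W^{1,s}$-extension domain for every $1\le s\le\infty$, so one may take $q=\infty$.

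For $1 < p < 2$ the plan is to translate the problem into a geometric one via Theorem~\ref{thm:main}, self-improve the resulting condition, and translate back. \textbf{Step 1:} The necessity direction of Theorem~\ref{thm:main} applied to $\Omega$ provides, for every pair $z_1, z_2 \in \R^2\setminus\Omega$, a curve $\gamma\subset\R^2\setminus\Omega$ joining them with
\[
 \int_\gamma \dist(z,\partial\Omega)^{1-p}\,\d s(z) \le C(\Omega, p)\,|z_1-z_2|^{2-p}.
\]
\textbf{Step 2:} Invoke the self-improvement Lemma~\ref{selfimprove}, tailored precisely to this type of curve integral, to obtain some $q\in(p,2)$ and a uniform constant so that the same inequality holds with $p$ replaced by $q$. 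Hölder's inequality applied along each curve, interpolating between the exponents $p$ and $q$, then yields the analogous bound for every intermediate exponent $s\in[p,q]$. \textbf{Step 3:} Feed the resulting geometric condition back into the sufficiency direction of Theorem~\ref{thm:main} at each $s\in(p,q)$ to conclude that $\Omega$ is a $W^{1,s}$-extension domain.

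To cover the remaining range $s\in(1,p]$ I would rely on the boundedness of $\Omega$: the curves produced in Step~1 may be confined to a fixed bounded enlargement of $\Omega$, on which $\dist(\cdot,\partial\Omega)$ has a uniform upper bound. A splitting of $\gamma$ according to whether $\dist(\cdot,\partial\Omega)$ is smaller than, or comparable to, $|z_1-z_2|$, together with the pointwise comparison $\dist^{1-s}\le \dist^{1-p}$ when $\dist\le 1$ and a length bound on the complementary piece extracted from Step~1, is the natural route to the corresponding inequality at exponent $s$; a final application of Theorem~\ref{thm:main} gives $W^{1,s}$-extendability. I expect this step to be soft and technical rather than conceptually hard.

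The principal obstacle is Step~2. Unlike in the dual setting of Theorem~\ref{thm_sh}, the weight $\dist^{1-p}\,\d s$ becomes more singular near $\partial\Omega$ as $p$ grows, so promoting the curve condition from exponent $p$ to $p+\varepsilon$ requires a genuine quantitative rerouting of curves in the complement so as to keep $\dist(\cdot,\partial\Omega)$ bounded below on most of their length. This is exactly the content Lemma~\ref{selfimprove} is set up to deliver, and constitutes the real work behind the corollary.
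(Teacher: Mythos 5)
Your proposal matches the paper's argument: the authors likewise obtain the curve condition from the necessity half of Theorem~\ref{thm:main}, upgrade it to all exponents $1<s<p+\epsilon$ via Lemma~\ref{selfimprove} (with H\"older's inequality handling $s\le p$, as in \cite{la1985}), feed the result back into the sufficiency half, and dispose of $p=2$ by the classical quasidisk characterization. The proposal is correct and essentially identical in structure, correctly locating the substantive work in the self-improvement step.
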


The case $1<p<2$ follows from Theorem \ref{thm:main} together with the fact 
that \eqref{eq:extcharcompl} implies 
the analogous inequality
for all $1 < s < p+\epsilon$. The case of smaller $s$ is essentially just 
H\"older's 
inequality, see \cite{la1985}, while the improvement to larger exponents follows
from Lemma~\ref{selfimprove} that relies on ideas in the proof of Proposition 2.6 in \cite{sh2010}.
Again, the case $p=2$ of Corollary \ref{cor:1} was already known to hold: one 
then has extendability
for all $1<s<\infty.$  

By combining Corollary \ref{cor:1} with results from \cite{ko1998} and 
\cite{sh2010} we obtain an
open-ended property.

\begin{corollary}\label{cor:2}
Let $\Omega \subset \R^2$ be a bounded, simply connected  
$W^{1,p}$-extension domain, where $1<p<\infty.$ Then the set of all $1<s<\infty$ for which  
$\Omega$ is a 
$W^{1,s}$-extension domain is an open interval.
\end{corollary}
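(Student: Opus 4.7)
The plan is to show that the set
\[
I := \{s \in (1,\infty) : \Omega \text{ is a } W^{1,s}\text{-extension domain}\}
\]
is both open and connected, by combining Corollary~\ref{cor:1}, Theorem~\ref{thm_sh}, the self-improvement Lemma~\ref{selfimprove} drawn from \cite{sh2010}, and an open-ended property for $p>2$ from \cite{ko1998}.

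For openness, I would fix $p_0 \in I$ and produce a neighborhood of $p_0$ inside $I$. If $p_0 \le 2$, Corollary~\ref{cor:1} already supplies an interval $(1,q)\subseteq I$ with $q>p_0$, placing $p_0$ in the interior; in particular, if $p_0=2$, then $\Omega$ is a quasidisk and $I=(1,\infty)$. If $p_0>2$, Theorem~\ref{thm_sh} reads $p_0\in I$ as the Shvartsman condition~\eqref{eq:extchar} with exponent $p_0$, Lemma~\ref{selfimprove} upgrades this to exponent $p_0+\epsilon$, and the analogous downward step for the range $p>2$ coming from \cite{ko1998} furnishes the other half $(p_0-\epsilon, p_0)\subseteq I$.

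For the interval structure, I would proceed by cases. If $I\cap(1,2]\neq\emptyset$, iterating Corollary~\ref{cor:1} gives $I\cap(1,2]=(1,b_1)$ for some $b_1\le 2$; if the iteration ever reaches $b_1>2$, then $2\in I$ and the quasidisk case forces $I=(1,\infty)$. The delicate case is when both $I\cap(1,2]\neq\emptyset$ and $I\cap(2,\infty)\neq\emptyset$ hold while $2\notin I$: here condition~\eqref{eq:extcharcompl} holds for the complement at some exponent below~$2$ and Shvartsman's condition~\eqref{eq:extchar} holds in $\Omega$ at some exponent above~$2$, and pushing both of these through their respective self-improvement mechanisms one must show that they collide at $2$, forcing $\Omega$ to satisfy a two-sided uniform-type (quasidisk) condition. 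The contradiction closes the gap and lets the three regimes $p<2$, $p=2$, and $p>2$ glue into a single open interval.

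The main obstacle is precisely this ``no gap at $p=2$'' step: one must rule out that $\Omega$ is a $W^{1,s}$-extension domain for some $s<2$ and for some $s>2$ without being a $W^{1,2}$-extension domain. This appeals to the classical equivalence of $W^{1,2}$-extendability with the quasidisk property from \cite{golavo1979,gore1990,govo1981,jo1981} together with the interplay of the interior and exterior curve conditions, paralleled by the duality recorded in Corollary~\ref{cor:dual}. Once this is settled, the openness statement and the case analysis combine to give a single open interval.
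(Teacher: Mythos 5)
Your overall architecture --- openness at each exponent plus a case analysis showing that the set $I$ of good exponents is connected --- is the right reading of the paper's one-line derivation from Corollary~\ref{cor:1}, \cite{ko1998} and \cite{sh2010}, and the openness part is essentially correct. One small misattribution in the range $p_0>2$: Lemma~\ref{selfimprove} as stated concerns \eqref{eq:extcharcompl}, and the relevant self-improvement of \eqref{eq:extchar} from \cite[Proposition~2.6]{sh2010} moves the exponent \emph{downward}, giving $(p_0-\epsilon,p_0]\subseteq I$, while the \emph{upward} inclusion $[p_0,\infty)\subseteq I$ is the easy direction (H\"older applied to \eqref{eq:extchar}, or the monotonicity above the dimension from \cite{ko1998}); you have the two roles reversed, but both halves are available, so this is cosmetic.

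The genuine gap is the step you yourself single out and then leave unproved: excluding $I=(1,b)\cup(a,\infty)$ with $b\le 2\le a$. Writing that the two curve conditions ``must collide at $2$'' and ``once this is settled'' is an announcement, not an argument, and the step is not a formality. A route using the paper's tools is: if some $s<2$ lies in $I$, then by Theorem~\ref{thm:main} the complement satisfies \eqref{eq:extcharcompl}, so by Lemma~\ref{kvasikonveksi} and Lemma~\ref{lma:dual of John} the domain $\Omega$ is John and in particular finitely connected along its boundary; if some $s>2$ lies in $I$, then by Theorem~\ref{thm_sh} $\Omega$ satisfies \eqref{eq:extchar} and is therefore quasiconvex (the interior analogue of Lemma~\ref{kvasikonveksi}, cf.\ \cite{GM1985}). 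Quasiconvexity of $\Omega$ together with finite connectivity along the boundary forces each boundary point to carry a single prime end, which is what makes $\Omega$ Jordan --- and note that you cannot appeal to Corollary~\ref{cor:dual} before Jordan-ness is established, since that corollary is stated only for Jordan domains. Once $\Omega$ is Jordan, the two conditions make both $\Omega$ and its exterior John, and the two-sided John characterization of quasicircles (see \cite{nava1991}) yields that $\Omega$ is a quasidisk, hence $2\in I$ and $I=(1,\infty)$. Without an argument of this kind your case analysis does not rule out a disconnected $I$, so the proof as written is incomplete precisely at its decisive point.
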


Actually, the open interval above can only be one of $1<s<\infty,$ $1<s<q$ with $q\le 2,$ or
$q<s<\infty$ with $q\ge 2.$

Let us finally comment on some earlier partial results related to Theorem \ref{thm:main}.
First of all, bounded simply connected 
$W^{1,p}$-extension domains are 
John domains when $1 \le p < 2$; see e.g.\ \cite[Theorem 6.4] {kos1990}, \cite[Theorem 3.4]{gore1990}, \cite[Theorem 4.5]{nava1991} and 
references therein. 
The definition of a John domain is given in Definition \ref{def:John} below. However, there exist 
John domains
that fail to be extension domains and, even after Theorem \ref{thm:main} there is no interior 
geometric
characterization available for this range of exponents. Secondly, in \cite{kosmirsha10} it was 
shown that the complement of a bounded
simply connected $W^{1,1}$-extension domain is quasiconvex.
This was obtained as a corollary to a characterization of 
bounded simply connected $BV$-extension domains.
Recall that a set $E \subset \R^2$
is called {\it{quasiconvex}} if there exists a constant $C\ge1$ such that any pair of 
points $z_1,z_2 \in E$ can be connected to each other with a rectifiable curve 
$\gamma \subset E$
whose length satisfies $\elle(\gamma) \le C|z_1-z_2|$. 
In \cite{kosmirsha10} it was conjectured that quasiconvexity of the complement
holds for every bounded simply connected $W^{1,p}$-extension planar domain when $1 < p < 2$.
This conjecture follows from  Theorem \ref{thm:main} (see Lemma~\ref{kvasikonveksi}), but again, quasiconvexity is a weaker
condition than our geometric characterization.

 \subsection{Idea of the proof}
 
We show the necessity of
\eqref{eq:extcharcompl} in Section \ref{sec:nec} by first verifying this condition
under the additional requirement that the domain in question is a Jordan domain. This additional assumption
together with the extension property allows us to construct suitable
test functions that are employed to verify \eqref{eq:extcharcompl}; see Lemma~\ref{lma:testfunction}. 
These are motivated by the function $u(x,y) = \frac{y}{x}$ on $\Omega = \{(x,y)\,:\, 0 < y < x, 0 < x < 1\}$.
In the complex notation we have 
$|\nabla u(z)|\le C |z|^{-1};$ a variant of this property, see \eqref{eq:phiestimate}, holds for the function $\phi$ constructed in the proof of Lemma~\ref{lma:testfunction}. 
The curve $\gamma$ in \eqref{eq:extcharcompl} in the case of a Jordan domain is given as the image under the exterior Riemann mapping function of a uniform curve (in the sense of Definition~\ref{inneruniform}) that we construct by hand in the exterior of the unit disk; see Figure~\ref{fig:gamma}. For readers who are familiar with conformal geometry,
it might be helpful to think of the constructed curve in the exterior of the unit disk as
almost a quasihyperbolic geodesic for which many useful geometric properties are preserved under conformal maps. 

The general
case is then handled via an approximation argument, for which we fill $\Omega$ by an increasing sequence
of Jordan $W^{1,p}$-extension domains with control on the norms of the respective extension operators; see Theorem~\ref{inner extension}.
These domains $\Omega_n$ are the images of $B(0,1-\frac1n)$, $n = 1,2,\dots$, under the Riemann mapping function from the unit disk onto the domain $\Omega$. To prove the uniform $W^{1,p}$-extension property we employ a variant of the technique used by Jones in \cite{jo1981} to construct extensions from $\Omega_n$ to $\Omega$.
The main differences with the setting in \cite{jo1981} are that we only extend to an annular region and that the uniformity of the domain considered by Jones is with respect to the Euclidean metric, 
while  in our case it is with respect to the inner metric of the domain.

For sufficiency, we again first deal with
the Jordan case, and then use a compactness argument to pass to a limit. This is done in
Section \ref{sec:suf}. The crucial point in the proof is the introduction of a 
new version of the Whitney extension technique in the case of Jordan domains. The extension operator is defined in Section~\ref{jordandef}. In order to build this extension, we assign a Whitney square of $\Omega$ to each
complementary Whitney square of size at most the size of $\Omega.$ 
In \cite{jo1981}, Jones chooses a square of comparable diameter, and the uniformity gives that each Whitney square of $\Omega$ gets assigned to at most uniformly finitely many Whitney squares of the exterior. Roughly speaking, this gives a bi-Lipschitz correspondence between Whitney squares. 
In our case this kind of a correspondence cannot be expected. 
To overcome this problem, we pick a  Whitney square whose \emph{shadow}
along hyperbolic rays has diameter comparable to that of the shadow of the complementary square. In a sense we reflect with respect to harmonic measure. Hence the diameter of a ``reflected" square can be much larger than the
diameter of the original one and we cannot uniformly bound the number of exterior squares  that correspond to a single Whitney square of $\Omega$. Nevertheless, \eqref{eq:extcharcompl}  allows us to eventually 
establish appropriate bounds on our extension. Towards this, roughly speaking, we establish estimates on the many-to-one map from the collection of the exterior Whitney squares to Whitney squares of $\Omega$ ; see Lemma~\ref{strong sum estimate}. This is obtained through a delicate
analysis of the locations and sizes of those complementary squares that share a ``reflected" square.

The reader familiar with Sobolev extensions may wonder why do we not simply employ the existing extension operators such as those in \cite{sh2006} and \cite{hakotu2}. This is because we have not been able to directly show that these operators work under our assumptions.
However, once we know by our main theorem that the domains are extension domains, we conclude that also these extension operators work under our assumptions.

\medskip

Section \ref{sec:preli} introduces notation and initial results. Theorem \ref{thm:main} gets proven in Sections \ref{sec:nec} and \ref{sec:suf}.
Finally, Corollary \ref{cor:dual} is proven at the very end of this paper in Section \ref{sec:cor}.


\section{Preliminaries}\label{sec:preli}

Let us  fix some notation. When we make estimates, we often write the 
constants as 
positive real numbers $C(\cdot)$ with
the parentheses including all the parameters which the constant depends on; 
we just simply write $C$ if it is absolute. The constant $C(\cdot)$ may
vary between appearances, even within a chain of inequalities. 
By $a\ls b$ we mean that $a \le Cb$ for some constant $C \ge 2$.
Then $a \sim b$ means that both $a\ls b$ and $b\ls a$ hold.  
If we need to stress the dependence of the respective constant $C$ 
only on data $A$, we write $a\ls_{A}b$, $a {\sim}_{A} b,$
respectively.
The Euclidean distance between two sets $A,\,B \subset \R^2$ is denoted 
by $\dist(A,\,B)$. 
By $\D$ we always mean the open unit disk in $\R^2$ and by $\S^1$ its boundary. 
The interior of a set $A$ is denoted by $A^\circ$ and the closure by 
$\overline A.$ Given a measurable set $A$ of strictly positive area $|A|$
and a function $u\in L^1(A),$ we write
$$u_A=\bint_A u =\frac 1 {|A|}\int_A u\,dz.$$

\subsection{Curves and integrals over curves}\label{subsection:curves}

Let us next define the curves and line integrals that we use throughout this paper.
A continuous map $\gamma \colon I \to \mathbb R^2$ is called a curve when $I$ is a (possibly unbounded) interval. When there is no danger for confusion, we sometimes refer also to the image $\gamma(I) \subset \mathbb R^2$ by $\gamma.$ Recall that the derivative $\gamma'(t)$ exists for almost every $t \in I$ for a locally Lipschitz $\gamma.$ Then the Euclidean length of such a curve can be defined by
\[
 \elle(\gamma) = \int_I|\gamma'(t)|\,\d t.
\]
In general, 
\[
\elle(\gamma)=\sup\{\sum_{j=1}^k|\gamma(t_{j+1})-\gamma(t_j)|\},
\]
where the supremum runs over all $k\ge 1$ and all $t_1<t_2<\dots <t_{k+1}\in I.$ 
If $\elle(\gamma) < \infty$, which is the case for the Lipschitz curves defined on compact intervals, we call the curve $\gamma$ rectifiable. In this case, after a reparametrization, we may assume that $\gamma \colon [0,1] \to \mathbb R^2$ and that $|\gamma'(t)|=\elle(\gamma)$ almost everywhere. We call such a parametrization a constant speed parametrization. Alternatively, we may parametrize the curve $\gamma$ by arc-length. This way we obtain $\tilde \gamma \colon [0,\elle(\gamma)] \to \mathbb R^2$ with $|\tilde\gamma'(t)|= 1$ almost everywhere. 
A change of variable argument argument shows that  reparametrization does not change the length of the curve. From now on, when using the term \emph{rectifiable curve}, by default, we will assume the constant speed parametrization on $[0,1]$, unless otherwise stated.

Notice that we are not requiring our curves to be injective. However, if $E \subset \mathbb R^n$ is a continuum with $\mathcal H^1(E) < \infty$, then for any $x,y \in E$ there exists an injective curve $\gamma_{x,y} \colon [0,1] \to E$ with $\gamma(0) = x$, $\gamma(1) = y$ and $\elle(\gamma_{x,y}) \le \mathcal H^1(E)$, see \cite[Lemma 3.12]{Falconer198}.

The line integral of a measurable function $f \colon \mathbb R^2 \to \overline{\mathbb R}$ along a rectifiable curve $\gamma$ is defined as
\[
 \int_\gamma f(z)\,\d s(z) = \int_0^1 f(\gamma(t))|\gamma'(t)|\,\d t
 = \int_0^1 f(\gamma(t))\elle(\gamma)\,\d t,
\]
whenever the integral on the right-hand side exists.
Alternatively, for the arc-length parametrization $\tilde \gamma$ for $\gamma$ we have
\[
 \int_\gamma f(z)\,\d s(z) = \int_0^{\elle(\gamma)}f(\tilde \gamma(t))\,\d t.
\]


We equip $[0,\infty]$ with the topology whose basis consists of restrictions of open sets of $\mathbb R$ to $[0,\infty)$ together with all the intervals $(M,\infty]$ with $M>0$.
A function $f\colon \mathbb R^2 \to [0,\infty]$ is then continuous at $x_0 \in \mathbb R^2$ where $f(x_0) = \infty$, if for every $M>0$ there exists $\varepsilon >0$ for which $f(x) > M$ for all 
$x \in B(x_0,\varepsilon)$. The continuity of  $f$  at each $x_0,$ where $f(x_0)<\infty$ has the usual meaning.


By the Arzel\'a-Ascoli lemma, we have the following result.
\begin{lemma}\label{arsela}
 Let $\gamma_i \colon [0,1] \to \mathbb R^2$, $i \in \mathbb N$, be a collection of rectifiable curves so that $\bigcup_i \gamma_i([0,1])$ is bounded and $\sup_i\elle(\gamma_i) < \infty$. Then there exists a sequence $i_j \nearrow \infty$ and a rectifiable curve $\gamma_\infty$ so that $\gamma_{i_j} (t)\to \gamma_\infty(t)$ for all $t\in [0,1]$ when $j \to \infty$. Moreover, for any continuous function $f \colon \mathbb R^2 \to [0,\infty]$ we have
 \[
  \int_{\gamma_\infty}f(z)\,\d s(z) \le \liminf_{j \to \infty}\int_{\gamma_{i_j}}f(z)\,\d s(z).
 \]
 In particular, 
 \[
  \elle(\gamma_\infty) \le \liminf_{j \to \infty}\elle(\gamma_{i_j}).
 \]
\end{lemma}
%

If $\gamma_1,\gamma_2 \colon [0,1] \to \mathbb{R}^2$ are two non-constant rectifiable curves with $\gamma_1(1) = \gamma_2(0)$, we define their concatenation $\gamma_1\ast\gamma_2$ by
\[
 \gamma_1\ast\gamma_2(t) =
 \begin{cases}
  \gamma_1\left(t\frac{\elle(\gamma_1) + \elle(\gamma_2)}{\elle(\gamma_1)}\right),& \text{if }0 \le t \le \frac{\elle(\gamma_1)}{\elle(\gamma_1) + \elle(\gamma_2)}\\
  
  \gamma_2\left(t\frac{\elle(\gamma_1) + \elle(\gamma_2) }{\elle(\gamma_2)}- \frac{\elle(\gamma_1)}{\elle(\gamma_2)}\right), & \text{if } \frac{\elle(\gamma_1)}{\elle(\gamma_1) + \elle(\gamma_2)} < t \le 1 
 \end{cases}.
\]
For a curve $\gamma$ we define the reversed curve by $\overleftarrow{\gamma}$ by setting  $\overleftarrow{\gamma}(t) = \gamma(1-t)$.
Because of possible noninjectivity, a restriction
of a curve to a subcurve between $x,y \in \gamma$ can be defined in many ways. We will use the first times in the parameter space where we hit $x$ and $y$, i.e. the restriction $\gamma[x,y]$ is defined as
\[
 \gamma[x,y](t) = \gamma((1-t)t_x+tt_y),
\]
where $t_x = \inf\{t \in [0,1]\,:\,\gamma(t) = x\}$ and
$t_y = \inf\{t \in [0,1]\,:\,\gamma(t) = y\}$. Notice that with this definition we have $\gamma[x,y](0) = x$ and $\gamma[x,y](1) = y$, and the subcurve might go in the reversed direction along $\gamma$.
For $x,y \in \mathbb R^2$, we also use the notation $[x,y] \colon [0,1] \to \mathbb R^2 \colon t \mapsto (1-t)x + ty$ to denote the line segment from $x$ to $y$.


Let $\Omega \subset \mathbb R^2$ be a domain and $x,y \in \overline{\Omega}$. We say that a curve $\gamma \colon [0,1] \to \mathbb R^2$ joins $x$ and $y$ in $\Omega$, if $\gamma(0) = x$, $\gamma(1) = y$, and $\gamma([0,\,1])\subset\Omega\cup\{x,y\}$.
We then define the {\it{inner distance with respect to $\Omega$}} between $x,\,y\in\overline{\Omega}$ by setting
$$\dist_{\Omega}(x,\,y)=\inf_{\gamma\subset \Omega} \elle(\gamma),$$
where the infimum runs over all the curves joining $x$ and $y$ in $\Omega.$
We will postpone the proof of the fact that $\dist_\Omega$ is a distance on Jordan domains to Lemma~\ref{inner triangle}.
Notice that any $x,y \in \Omega$ are rectifiably joinable in $\Omega$, but the inner distance from $x \in \Omega$ to a point $y \in \partial\Omega$ might well be infinite.
If $\dist_{\Omega}(x,\,y)< \infty$, we say that \emph{$x$ and $y$ are rectifiably joinable in $\Omega$}.
The inner diameter $\diam_{\Omega}(E)$ of a set $E\subset \overline{\Omega}$ is 
then defined to be the supremum of $\dist_{\Omega}(x,\,y)$ over pairs of points
$x,y\in E$ and $B_\Omega(z,r) = \{y \in \Omega \,|\,\dist_{\Omega}(z,\,y)<r\}$ denotes the open ball in $\Omega$ with respect to the inner distance.

\subsection{Curve condition} 

We begin by recording a consequence of \eqref{eq:extcharcompl} that essentially follows 
from \cite[Lemma 2.1]{sh2010}, 
also see the proof of \cite[Theorem 2.15]{GM1985} and \cite{la1985}. Since the results of 
\cite[Lemma 2.1]{sh2010} are stated for curves contained in open sets, we check below that the 
arguments work in our setting. 

\begin{lemma}\label{kvasikonveksi}
Let $1<p<2$ and let $\Omega\subset \R^2$ be a bounded simply connected domain and let $z_1,\,z_2\in \mathbb R^2\setminus \Omega$. 

\noindent{\rm (1)} If
\begin{equation}\label{ineq 5}
\max\{\dist(z_1,\,\partial \Omega),\,\dist(z_2,\,\partial \Omega)\}\le 2|z_1-z_2|,
 \end{equation}
and if $\gamma\subset \mathbb R^2\setminus \Omega$ is a curve joining $z_1,\,z_2$ so that
$$
\int_{\gamma}\dist(z,\partial \Omega)^{1-p}\,\d s(z) 
 \le  C_1|z_1-z_2|^{2-p},
$$
then we have 
$$\elle(\gamma)\le C(p,\,C_1)|z_1-z_2|.$$

\noindent{\rm (2)} If
\begin{equation}\label{ineq 3}
\max\{\dist(z_1,\,\partial \Omega),\,\dist(z_2,\,\partial \Omega)\}> 2|z_1-z_2|,
\end{equation}
then the line segment $[z_1,\,z_2]\subset \mathbb R^2\setminus \Omega$ joining $z_1,\,z_2$ satisfies
$$\int_{[z_1,\,z_2]}\dist(z,\,\partial\Omega) ^{1-p}\,\d s(z) 
 \le C(p)  |z_1-z_2|^{2-p}.$$

Especially, if the curve condition \eqref{eq:extcharcompl} holds, then  $ \mathbb R^2\setminus \Omega$ is quasiconvex with a constant depending only on $p$ and $C_1.$ 
\end{lemma}

\begin{proof}
Let us first verify part (1).  
We claim that 
\begin{equation}\label{ineq 9}
\gamma\subset B(z_1,\,c|z_1-z_2|) \setminus \Omega
\end{equation} with $c=((2-p)(C_1+1)+3^{2-p})^{1/(2-p)}-2$. If \eqref{ineq 9} holds, then for any $z\in \gamma$, according to \eqref{ineq 5} we have
$$\dist(z,\,\partial\Omega)\le \dist(z_1,\,\partial \Omega)+c|z_1-z|\le (2+c) |z_1-z_2|, $$
and by $1<p<2$
 $$ (2+c)^{1-p}  |z_1-z_2|^{1-p} \elle(\gamma)\le \int_{\gamma}\dist(z,\partial \Omega)^{1-p}\,\d s(z)  \le C_1|z_1-z_2|^{2-p}.$$
Hence
$$\elle(\gamma)\le C(p,\,C_1)|z_1-z_2|,$$ and we conclude that we only need to
establish \eqref{ineq 9}.

Let us verify \eqref{ineq 9}. By the curve condition on $\gamma$, the triangle inequality, and \eqref{ineq 5}
\begin{equation}\label{eeka}
\begin{split}
C_1|z_1-z_2|^{2-p}& \ge \int_{\gamma}\dist(z,\partial \Omega)^{1-p}\,\d s(z)\\
& \ge \int_{\gamma}( \dist(z_1,\,\partial \Omega) + |z-z_1|)^{1-p}\,\d s(z)\\
& \ge  \int_{\gamma}( 2|z_1-z_2|+ |z-z_1|)^{1-p}\,\d s(z).
\end{split}
\end{equation}
Suppose that $\gamma\subset \mathbb R^2\setminus \Omega$ is not contained 
in $B(z_1,\,c|z_1-z_2|)\setminus \Omega$. Then 
by restricting the curve to the part contained in the disk $B(z_1,\,c|z_1-z_2|)$,
we further have
\begin{equation}\label{tooka}
\begin{split}
\int_{\gamma}( 2|z_1-z_2|+ |z-z_1|)^{1-p}\,\d s(z) & \ge \int_{|z_1-z_2|}^{c|z_1-z_2|} (2|z_1-z_2|+ t)^{1-p}\,dt \\
&=\frac {|z_1-z_2|^{2-p}}{2-p}((c+2)^{2-p}-3^{2-p}).
\end{split}
\end{equation}
By combining \eqref{eeka} and \eqref{tooka}, we arrive at
$$\frac 1 {2-p}((c+2)^{2-p}-3^{2-p})\le C_1,$$
which is impossible for our choice of $c$. Thus we conclude \eqref{ineq 9}, and we have proven part (1) of our claim. 

Towards (2), clearly  \eqref{ineq 3} implies $[z_1,\,z_2]\subset \mathbb R^2\setminus \Omega$. With possibly changing the roles of $z_1$ and $z_2$, we may assume that
$$\dist(z_1,\,\partial\Omega)>2|z_1-z_2|.$$
Thus we have 
\begin{align*}
   \int_{[z_1,\,z_2]} \dist(z,\,\partial \Omega)^{1-p} \, ds(z) 
\le  |z_1-z_2| 2^{p-1} \dist(z_1,\,\partial \Omega)^{1-p}\le C(p) |z_1-z_2|^{2-p},
\end{align*}
where we used the facts that $1<p<2$, and that by \eqref{ineq 3} together with the triangle inequality we have that, for each $z\in [z_1,\,z_2]$,
$$\dist(z,\,\partial \Omega)\ge \dist(z_1,\,\partial \Omega)-|z_1-z|\ge \dist(z_1,\,\partial \Omega)-|z_1-z_2|\ge \frac 1 2 \dist(z_1,\,\partial \Omega).$$
This gives the claim of the second part. 
\end{proof}

We establish the following self-improving property of 
\eqref{eq:extcharcompl}   via ideas from 
the proof of \cite[Proposition 2.6]{sh2010}. 

\begin{lemma}\label{selfimprove}
Let $\Omega \subset  \R^2$ be a bounded simply connected domain for which  
\eqref{eq:extcharcompl} holds for $\R^2\setminus \Omega.$ Then there exists $\epsilon>0$ that only depends
on $p$ and the constant $C_1=C(\Omega,p)$ in \eqref{eq:extcharcompl} so that,
for every $1 < \hat p < p+\epsilon$ and all  
 $z_1,z_2 \in \R^2 \setminus \Omega$ there exists a curve 
$\gamma \subset \R^2 \setminus \Omega$
 joining $z_1$ and $z_2$ such that
 \begin{equation*}
  \int_{\gamma}\dist(z,\partial \Omega)^{1-\hat p}\,\d s(z) 
 \le  C(p,\,C_1)|z_1-z_2|^{2-\hat p}.
\end{equation*}
\end{lemma}

\begin{proof}
We  begin by showing that, under the assumption of the lemma, for any pair of points
 $z_1,z_2 \in \R^2 \setminus \Omega$, there exists a rectifiable curve 
$\gamma\subset  \R^2 \setminus \Omega$ joining them  with $\elle(\gamma)\le C|z_1-z_2|$
such that, for all $w_1,\,w_2\in \gamma$,  any subcurve $\gamma[w_1,\,w_2]\subset \gamma$ joining $w_1$ and $w_2$ satisfies
 \begin{align}\label{ineq 4}
\int_{\gamma[w_1,\,w_2]} \dist(z,\,\partial \Omega)^{1-p} \, ds(z) \le c |w_1-w_2|^{2-p},
\end{align}
where the constants $C,c$ depend only on $p$ and $C_1$. 
In the case where $z_1,\,z_2$ satisfy \eqref{ineq 3}, we claim that we may
take $\gamma=[z_1,z_2],$ the line segment joining $z_1$ to $z_2.$ Towards this,
we may clearly assume that 
$$\dist(z_1,\,\partial\Omega)>2|z_1-z_2|.$$
Then since every subcurve of our line segment $\gamma$ is still a line segment,  we have 
\begin{align*}
   \int_{[w_1,\,w_2]} \dist(z,\,\partial \Omega)^{1-p} \, ds(z) 
&\le C(p)  |w_1-w_2| \dist(z_1,\,\partial \Omega)^{1-p}\\
&\le C(p) |w_1-w_2||z_1-z_2|^{1-p}\le C(p) |w_1-w_2|^{2-p},
\end{align*}
where we used the facts that $1<p<2$, and that by \eqref{ineq 3} with the triangle inequality we have that, for each $z\in [z_1,\,z_2]$,
$$\dist(z,\,\partial \Omega)\ge \dist(z_1,\,\partial \Omega)-|z_1-z|\ge \dist(z_1,\,\partial \Omega)-|z_1-z_2|\ge \frac 1 2 \dist(z_1,\,\partial \Omega).$$
Thus \eqref{ineq 4} holds whenever  \eqref{ineq 3} holds.

We are left with the case where \eqref{ineq 3} fails. Then \eqref{ineq 5} holds.
We claim that there exists a curve $\gamma\subset  \R^2 \setminus \Omega$ that joins $z_1,z_2$  and 
minimizes the integral in \eqref{eq:extcharcompl}.  
 
Let $\gamma_j$ be a sequence of curves joining $z_1$ and $z_2$ such that
 $$\int_{\gamma_j} \dist(z,\,\partial \Omega)^{1-p} \, ds(z) \le c_j |z_1-z_2|^{2-p},$$
 where $c_j\le C_1$  converge to the infimum $c$ of such constants $c_j$ for 
the pair $z_1$ and $z_2$. Then this condition  ensures that 
$$\elle(\gamma_j)\le C |z_1-z_2|$$
for all $j$ by part (1) of Lemma~\ref{kvasikonveksi}. 
 Therefore, by Lemma~\ref{arsela} there exists a sequence $j_i \to \infty$ and a limit curve $\gamma$ so that $\gamma_{j_i}(t)\to  \gamma(t)$ for all $t$ as $i \to \infty$ and
 \begin{equation}\label{kayra 1}
\int_{\gamma} \dist(z,\,\partial \Omega)^{1-p} \, ds(z) \le   \liminf_{i \to \infty}\int_{\gamma_{j_i}} \dist(z,\,\partial \Omega)^{1-p} \, ds(z) \le c|z_1-z_2|^{2-p}.
 \end{equation}

Now fix $z_1,\, z_2\in \mathbb R^2\setminus \Omega$ satisfying  \eqref{ineq 5}, and let  $\gamma\subset \mathbb R^2\setminus \Omega$  
be a minimizer for the integral in \eqref{eq:extcharcompl} for $z_1,\, z_2$. 
We claim that any subcurve $\gamma[w_1,\,w_2]$ of $\gamma$ is also a minimizer for $w_1$ and $w_2$. 
Otherwise, let $\gamma'[w_1,\,w_2]$ a minimizer for $w_1$ and $w_2$. Because of symmetry we may assume that $\gamma$ passes $z_1,\,w_1,\,w_2,\,z_2$ in this order. Then by the linearity of the integral we have that
\begin{align*}
 \int_{\gamma} \dist(z,\,\partial \Omega)^{1-p} \, ds(z)
 =& \left(\int_{\gamma[z_1,\,w_1]}+\int_{\gamma[w_1,\,w_2]}+\int_{\gamma[w_2,\,z_2]} \right) \dist(z,\,\partial \Omega)^{1-p} \, ds(z)\\
 > & \left(\int_{\gamma[z_1,\,w_1]}+\int_{\gamma'[w_1,\,w_2]}+\int_{\gamma[w_2,\,z_2]} \right) \dist(z,\,\partial \Omega)^{1-p} \, ds(z)\\
=& \int_{\gamma'} \dist(z,\,\partial \Omega)^{1-p} \, ds(z),
\end{align*}
where 
$$\gamma'=\gamma[z_1,\,w_1]\ast \gamma'[w_1,\,w_2]\ast \gamma[w_2,\,z_2] $$
joins $z_1$ and $z_2$.
This  contradicts the minimality assumption on $\gamma$. Thus our claim follows, and hence \eqref{ineq 4} also holds for points satisfying \eqref{ineq 5}.

To conclude,  for any pair of points
 $z_1,z_2 \in \R^2 \setminus \Omega$, there exists a rectifiable curve $\gamma\subset  \R^2 \setminus \Omega$ joining them  with $\elle(\gamma)\le C|z_1-z_2|$ 
and so that \eqref{ineq 4} holds.
In other words, the curve $\gamma$ satisfies the so-called ``strong $\az$-hyperbolicity" in \cite[Definition 2.4]{sh2010} with $\az=2-p$. Thus we can use  the proof of \cite[Proposition 2.6]{sh2010}  to conclude the lemma. For the sake of 
completeness, let us give the details of this argument. 

We first show that, whenever a curve $\gamma$ satisfies \eqref{ineq 4} and $w_1,w_2\in \gamma,$ we have 
\begin{equation}\label{ineq 11}
 \frac 1 {\elle(\gamma[w_1,{w_2}])}\int_{\gamma[w_1,w_2]} \dist(z,\,\partial \Omega)^{1-p} \, ds(z) \le C(p,\,c) \min_{z\in \gamma[w_1,w_2]} \dist(z,\,\partial \Omega)^{1-p}. 
\end{equation}
We have two cases. 
If $$\max_{z\in\gamma[w_1,w_2]} \dist(z,\,\partial \Omega)< 2\elle(\gamma[w_1,w_2]),$$
then as $1<p<2$,
$$\min_{z\in\gamma[w_1,w_2]} \dist(z,\,\partial \Omega)^{1-p}>2^{1-p}\elle(\gamma[w_1,w_2])^{1-p}.$$
Therefore
$$ \int_{\gamma[w_1,w_2]} \dist(z,\,\partial \Omega)^{1-p} \, ds(z) \le c|w_1-w_2|^{2-p}\le c \elle(\gamma[w_1,w_2]) \elle(\gamma[w_1,w_2])^{1-p}$$
$$\le C(p,\,c) \elle(\gamma[w_1,w_2])\min_{z\in\gamma[w_1,w_2]} \dist(z,\,\partial \Omega)^{1-p},$$
and \eqref{ineq 11} follows. 
If 
$$\max_{z\in\gamma[w_1,w_2]} \dist(z,\,\partial \Omega)\ge 2\elle(\gamma[w_1,w_2]),$$
then by the triangle inequality
$$\max_{z\in\gamma[w_1,w_2]} \dist(z,\,\partial \Omega)\le \min_{z\in \gamma[w_1,w_2]} \dist(z,\,\partial \Omega) + \elle(\gamma[w_1,w_2])$$
$$\le \min_{z\in \gamma[w_1,w_2]} \dist(z,\,\partial \Omega) + \frac 1 2 \max_{z\in\gamma[w_1,w_2]} \dist(z,\,\partial \Omega).$$
Thus 
$$ \min_{z\in \gamma[w_1,w_2]} \dist(z,\,\partial \Omega)\le  \max_{z\in\gamma[w_1,w_2]} \dist(z,\,\partial \Omega)\le 2 \min_{z\in \gamma[w_1,w_2]} \dist(z,\,\partial \Omega),$$
and  \eqref{ineq 11} again follows  from \eqref{ineq 4}. 

Now let us complete the proof by relying on $\elle(\gamma)\le C|z_1-z_2|,$
\eqref{ineq 4} and \eqref{ineq 11}. 
Parametrize $\gamma$ by arc length, $\gamma\colon [0,\,\elle(\gamma)] \to \mathbb R^2\setminus \Omega$. 
Then \eqref{ineq 11} gives the estimate
$$\frac 1 { |t_2-t_1|} \int_{t_1}^{t_2} \dist(\gamma(t),\,\partial \Omega)^{1-p}\, dt\le C(p,\,c) \min_{t\in [t_1,\,t_2]} \dist(\gamma(t),\,\partial \Omega)^{1-p}, $$
 for all $0\le t_1< t_2\le \elle(\gamma).$ 
This implies that $\omega(t)=\dist(\gamma(t),\,\partial \Omega)^{1-p}$ is a Muckenhoupt  $\mathcal A_1$-weight on $[0,\,\elle(\gamma)]$. By the reverse H\"older inequality (see e.g.\ \cite[15.3]{HKM1993}) there exists $\bz>1$ 
that only depends  on $C(p,c)$ such that that
$$\left(\frac 1 {\elle(\gamma)} \int_{0}^{\elle(\gamma)} \omega(t)^{\bz} \, dt \right)^{\frac 1 \bz} \le C(p,\,c) \frac 1 {\elle(\gamma)} \int_{0}^{\elle(\gamma)} \omega(t)  \, dt. $$
This estimate together with \eqref{ineq 4} and the fact that $|z_1-z_2|\le \elle(\gamma)\le 
C|z_1-z_2|$ implies the claim. 
\end{proof}

We close this subsection with the following technical existence result that will be employed in Section~\ref{sec:suf}.

\begin{lemma}\label{pisteetlahella}
Let $\frac 1 2 <R<1$ and $1<\hat p<2.$ There is an absolute constant $\delta>0$ and a constant $C(\hat p)$ that only depends on $\hat p$ so that the following holds. 
Let $w_1\in \mathbb D\setminus \overline{B}(0,R)$ and let $w_1\neq w_2\in B(z_1,\delta (1-|w_1|))\setminus \overline {B}(0,R).$ Then there is a curve
$\gamma\subset B(w_1, (1-|w_1|)/2)\setminus \overline {B}(0,R)$ joining $w_2$ to $w_1$ so that
$$\int_{\gamma}\dist (z,\partial B(0,R)\cup \partial B(w_1,(1-|w_1|)/2))^{1-\hat p}\, ds(z)\le C(\hat p)|w_1-w_2|^{2-\hat p}.$$
\end{lemma}
\begin{proof}
We will prove the claim with $\delta=1/30.$
Fix $R,w_1$ and $w_2$ as in our assumptions. If $w_2$ lies on the radial segment through $w_1,$  then we may clearly choose $\gamma$ to be the radial segment between the points
$w_2$ and $w_1.$ Otherwise, consider the additional points $\xi_j=(|w_1|+|w_2-w_1|)\frac {w_j}{|w_j|},$ $j=1,2.$ We let $\gamma_j$ be the radial segment from $w_j$ to $\xi_j$ for
$j=1,2$ and let $\gamma_3$ be the shorter arc on the circle $S(0,|w_1|+|w_2-w_1|)$ from $\xi_1$ to $\xi_2.$
We can estimate the lenghts of these curves by
\[
 \elle(\gamma_2) = |w_2-\xi_2| = |w_1|-|w_2|+|w_2-w_1| \le  2|w_1-w_1|
\]
and
\[
 \elle(\gamma_3) \le \pi |\xi_1-\xi_2| \le 2\pi|w_1-w_2|,
\]
since $w_1,w_2,\xi_1,\xi_2 \in \mathbb D \setminus \overline{B}(0,\frac12)$.
We define $\gamma$ as the concatenation $\gamma = \gamma_1 \ast \gamma_3 \ast \overleftarrow{\gamma}_2$.
Then 
\begin{equation}\label{msp}
\elle(\gamma)\le \elle(\gamma_1)+\elle(\gamma_2)+\elle(\gamma_3)\le |w_2-w_1| + 2|w_2-w_1| +2\pi |w_2-w_1|<10|w_2-w_1|.
\end{equation}
Moreover, clearly $\gamma\cap \overline {B}(0,R)=\emptyset.$ Since $\gamma_3\subset S(0,|w_1|+|w_2-w_1|)$ and $|w_1|>R,$ we have that $\dist(\gamma_3,\partial B(0,R))\ge |w_2-w_1|.$ Furthermore,
$$\dist(\gamma,\partial B(w_1,(1-|w_1))/2))\ge (1-|w_1|)/2- 10|w_2-w_1|\ge 5|w_2-w_1| $$ by \eqref{msp} since $w_1\in \gamma$ and $z_2\in B(z_1,(1-|w_1|)/30.$  
This together with \eqref{msp} yields
$$\int_{\gamma}\dist (z,\partial B(0,R)\cup \partial B(w_1,(1-|w_1|)/2))^{1-\hat p}\, ds(z)$$
$$ \le \int_{\gamma_1}\dist (z,\partial B(0,R))^{1-\hat p}\,ds(z)+\int_{\gamma_2}\dist (z,\partial B(0,R))^{1-\hat p}\,ds+20|w_2-w_1|^{1-\hat p}.$$
The claim follows by integrating since $\gamma_1,\gamma_2$ are radial segments and both are of length no more than $2|w_2-w_1|.$
\end{proof}

\subsection{Hyperbolic metric}\label{sec:hyperbolic}

Recall that the hyperbolic distance between $z_1,\,z_2\in\mathbb D$ is defined to be
$$\dist_h(z_1,\,z_2)=\inf_{\gamma}\int_{\gamma} \frac {2}{1-|z|^2}\, \d s(z),$$
where the infimum is taken over all rectifiable curves $\gamma$ joining $z_1$ to $z_2$ in $\mathbb D$. 
Notice that the density above is comparable to $\frac 1{1-|z|}=\dist(z,\,\partial \mathbb D)^{-1}.$ 
The infimum is achieved by a unique curve joining $z_1,z_2$  that we call  the (geodesic) hyperbolic segment between $z_1$ and $z_2.$ It is an arc of  a (generalized) 
circle that intersects the unit circle orthogonally. Especially, if the hyperbolic segment contains the origin, then it is a Euclidean segment. Conversely, each Euclidean
segment that contains the origin and is contained in $\mathbb D$ is a hyperbolic segment. 
It is not obvious from the  definition that the hyperbolic distance is preserved under conformal self maps of the disk, but this is indeed the case and follows from the fact that
conformal self maps of the disk are M\"obius transformations of $\mathbb D$ onto itself.

The hyperbolic distance in a simply connected domain is defined via a conformal map. Precisely, given a simply connected domain 
$\Omega$ we pick a conformal map $\varphi\colon \mathbb D\to \Omega$ and define, for $x,\,y\in \Omega$,
$$\dist_{h}(x,\,y)=\dist_h(\varphi^{-1}(x),\,\varphi^{-1}(y)) .$$
This is independent of the choice of $\varphi$ since $\varphi$ is unique modulo composition with a M\"obius transformation that maps $\mathbb D$ onto $\mathbb D.$
Equivalently,
$$\dist_h(x,\,y)=\inf_{\gamma}\int_{\gamma} \frac {2|g'(z)|}{1-|g(z)|^2}\, \d s(z),$$
where $g=\varphi^{-1}$ and the infimum is taken over all rectifiable curves that join $x$ to $y$ in $\Omega.$ 
Hyperbolic segments in $\Omega$ are then both minimizers of this integral and images of hyperbolic segments in the unit disk. Even though the hyperbolic metric
is defined via conformal maps, one can estimate it without knowing the map in question. For this one uses the following Koebe distortion theorem.

\begin{lemma}[{\cite[Theorem 2.10.6]{AIM2009}}]\label{koebe}
Suppose that $\varphi$ is conformal in a domain $\Omega\subsetneqq \mathbb C$ with $\varphi(\Omega)=\Omega'\subsetneqq \mathbb C$. Let $z_0\in \Omega$. Then
$$\frac 1 4 |\varphi'(z_0)|\dist(z_0,\,\partial \Omega)\le \dist(\varphi(z_0),\,\partial \Omega') \le |\varphi'(z_0)|\dist(z_0,\,\partial \Omega). $$
\end{lemma}




By the Koebe distortion theorem, the density in the definition of the hyperbolic distance is comparable to $\dist(z,\partial \Omega)^{-1}$ with absolute constants.
For example, in the upper half-plane $\mathbb H$ the hyperbolic metric has the density $y^{-1}$ at
the point $(x,\,y)\in \mathbb H$, and the hyperbolic geodesics are circular arcs perpendicular to the real axis (contained in half-circles with
center on the real axis) and segments of vertical lines ending at the real axis. 
See
\cite[Chapter 2]{AIM2009} for more information on the hyperbolic metric.

We will also need the hyperbolic distance in the complement of the closed unit disk and in complementary Jordan domains. Towards this, we recall that the hyperbolic
distance in the punctured disk $\mathbb D\setminus \{0\}$ is defined via the density $\rho(z)=\frac 1 {|z|\log(1/|z|)}$ and this time taking the infimum over curves in
$\mathbb D\setminus \{0\}.$ For the exterior of the closed unit disk, we transform this density and the hyperbolic distance via the (conformal) M\"obius transformation $\varphi(z)=\frac 1 z.$
Then the density of the hyperbolic distance is still controlled from above by an absolute constant multiple of $\frac 1 {|z|-1}=\dist(z,\,\partial \mathbb D)^{-1}$ (and also from below when $z\in B(0,\,10)$). 

Recall that a Jordan curve divides the plane into two domains, the
boundary of each of which equals to this curve;
we refer to the bounded one as a Jordan domain $\Omega.$ Then the Jordan domain $\Omega$ is conformally equivalent to the unit disk and the corresponding unbounded domain 
$\wz \Omega=\mathbb R^2\setminus \overline {\Omega}$ is conformally equivalent to 
$\mathbb R^2\setminus \overline {\mathbb D}.$  We define the hyperbolic distance and the corresponding density in $\wz \Omega$ via the conformal map and our hyperbolic distance and density
in the exterior of the closed unit disk. This does not depend on the choice of the conformal map in question since any two conformal maps from the exterior domain of the unit circle onto our Jordan domain 
can only differ by a precomposition with a rotation. This follows since the composition of the inverse of the second map with the first one would be a conformal self map of the exterior domain of the unit circle.
Each such a map is a rotation. This can be seen e.g. by pre- and postcomposing with the M\"obius transformation $\varphi(z)=\frac 1 z$ so as to obtain a conformal self map of the punctured disk, noticing that
the origin is a removable singularity and mapped to the origin by the extension. Thus we obtain a conformal self map of the unit disk that maps 0 to 0. Such maps are rotations.

Given a Jordan domain $\Omega$ and a conformal 
map
$\varphi\colon \mathbb D\to \Omega$ or $\varphi\colon \mathbb R^2\setminus
\overline{\mathbb D}\to  \mathbb R^2\setminus \overline{\Omega},$ our map
$\varphi$ extends homeomorphically up to the boundary by the Carath\'eodory-Osgood 
theorem \cite[Theorem 4.9, Page 445]{P1991}. 
Then the {\it hyperbolic ray in $\Omega,$}
ending at $z\in \partial
\Omega,$ is the image under $\varphi$ of the radial ray from the origin to
$\varphi^{-1}(z)$, or in $\mathbb R^2\setminus \overline \Omega$ the 
image under $f$
of the radial half-line starting from $f^{-1}(z).$  Most of the hyperbolic rays in a Jordan domain $\Omega$ have finite length 
in the sense that 
\begin{equation}\label{lyhyita}
\elle (\varphi([0,w)))<\infty
\mbox{ for a.e.}\ w\in S^1=\partial \mathbb D.
\end{equation} 
This follows since
$$\int_0^{2\pi}\int_0^1r|\varphi'(re^{i\theta})|\, dr\, d\theta=\int_{\mathbb D}|f'(z)|\le \pi^{1/2}\left(\int_{\mathbb D}|f'(z)|^2\right)^{1/2}=\pi^{1/2}\left(\int_{\mathbb D}J_{\varphi}(z)\right)^{1/2}$$
by the H\"older inequality and the Cauchy-Riemann equations; the integral of $J_{\varphi}$ over $\mathbb D$ is the area of $\Omega$ and hence finite
and $|f'(z)|$ is bounded in $\overline B(0,1/2)$ since $f$ is smooth.



The following lemma provides us with estimates on the oscillation of $|\varphi'(z)|$ in terms of the hyperbolic metric.

\begin{lemma}\cite[ Theorem 2.10.8]{AIM2009}\label{linearmap}
Suppose that $\varphi$ is conformal in $U,$ where $U$ is the unit disk $\mathbb D$ or $U=\R^2\setminus
\overline {\mathbb D},$ and let $z,\,w\in U.$ Then
$$\exp{(-3\dist_h(z,\,w))}|\varphi'(w)|\le |\varphi'(z)| \le\exp{(3\dist_h(z,\,w))}|\varphi'(w)|. $$
\end{lemma}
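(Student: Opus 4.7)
The plan is to reduce to the case $U=\mathbb D$ and to derive the estimate by integrating a pointwise Koebe-type bound on $\varphi''/\varphi'$ against the hyperbolic density. The exterior case $U=\R^2\setminus\overline{\mathbb D}$ will then be handled by conjugation with the inversion $T(z)=1/z$, under which the hyperbolic metric is invariant by the very definition used in the excerpt.

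For $U=\mathbb D$, since $\varphi$ is conformal (hence univalent with $\varphi'$ nowhere vanishing), $\log\varphi'$ is locally holomorphic and
$$\bigl|\nabla\log|\varphi'|(\zeta)\bigr| = \bigl|(\log\varphi')'(\zeta)\bigr| = \left|\frac{\varphi''(\zeta)}{\varphi'(\zeta)}\right|.$$
My first step is to establish the pointwise pre-Schwarzian bound
$$\left|\frac{\varphi''(\zeta)}{\varphi'(\zeta)}\right| \le \frac{6}{1-|\zeta|^2} = 3\,\rho_{\mathbb D}(\zeta), \qquad \rho_{\mathbb D}(\zeta)=\frac{2}{1-|\zeta|^2}.$$
I would deduce this from the Bieberbach coefficient inequality $|g''(0)|\le 4$ (valid for any $g$ univalent on $\mathbb D$ with $g(0)=0$, $g'(0)=1$) applied to a normalized version of $\varphi\circ M_\zeta$, where $M_\zeta(z)=(z+\zeta)/(1+\bar\zeta z)$ is the M\"obius self-map of $\mathbb D$ sending $0$ to $\zeta$; a chain-rule computation with $M_\zeta'(0)=1-|\zeta|^2$ and $M_\zeta''(0)=-2\bar\zeta(1-|\zeta|^2)$ then translates the bound at $0$ into the stated bound at $\zeta$. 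Given this, for any rectifiable curve $\gamma$ joining $w$ to $z$ in $\mathbb D$ one has
$$\bigl|\log|\varphi'(z)|-\log|\varphi'(w)|\bigr| \le \int_\gamma \left|\frac{\varphi''(\zeta)}{\varphi'(\zeta)}\right|\,|d\zeta| \le 3\int_\gamma \rho_{\mathbb D}(\zeta)\,|d\zeta|,$$
and taking the infimum over such $\gamma$ followed by exponentiation yields the desired two-sided inequality.

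For $U=\R^2\setminus\overline{\mathbb D}$, I would set $\tilde\varphi=\varphi\circ T^{-1}$ on $T(U)\subset\mathbb D$ and apply the previous case to $\tilde\varphi$. The identity $\dist_h^{U}(z,w)=\dist_h^{\mathbb D}(T(z),T(w))$ is built into the definition of the exterior hyperbolic metric used in the excerpt, so the exponential factor transfers verbatim; pulling back through the chain rule $\tilde\varphi'(w)=-\varphi'(1/w)/w^2$ produces symmetric $|w|^{-2}$ factors on both sides of the inequality for $\tilde\varphi'$, which cancel to give the corresponding inequality for $\varphi'$ with the constant $3$ preserved. The main obstacle I foresee is the sharp pre-Schwarzian estimate with constant $6$ rather than $8$: the quick route via $|g''(0)|\le 4$ is clean, but one must track the M\"obius and affine normalizations carefully to ensure that the exponent is exactly $3$ and not a larger absolute constant. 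All remaining steps are routine line integration.
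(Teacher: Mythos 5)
Your disk-case argument is correct: the pre-Schwarzian bound $\bigl|\varphi''(\zeta)/\varphi'(\zeta)\bigr|\le 6/(1-|\zeta|^2)=3\rho_{\mathbb D}(\zeta)$ does follow from $|g''(0)|\le 4$ via the M\"obius normalization you describe (with $M_\zeta'(0)=1-|\zeta|^2$, $M_\zeta''(0)=-2\bar\zeta(1-|\zeta|^2)$ one gets $g''(0)=\frac{\varphi''(\zeta)}{\varphi'(\zeta)}(1-|\zeta|^2)-2\bar\zeta$, hence the bound with constant $6=4+2$), and integrating $|\nabla\log|\varphi'||=|\varphi''/\varphi'|$ along hyperbolic geodesics gives exactly the claimed two-sided inequality with constant $3$. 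That part is the standard proof.

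The exterior case, however, contains a genuine error. With $\tilde\varphi=\varphi\circ T^{-1}$, $T(z)=1/z$, the chain rule gives $|\tilde\varphi'(w)|=|\varphi'(1/w)|/|w|^2$. Substituting this into the disk inequality $e^{-3d}|\tilde\varphi'(w_2)|\le|\tilde\varphi'(w_1)|\le e^{3d}|\tilde\varphi'(w_2)|$ and clearing denominators yields
\[
e^{-3d}\,\frac{|w_1|^2}{|w_2|^2}\,|\varphi'(1/w_2)|\le|\varphi'(1/w_1)|\le e^{3d}\,\frac{|w_1|^2}{|w_2|^2}\,|\varphi'(1/w_2)|.
\]
The factor $|w_1|^2/|w_2|^2$ appears on the same side of both bounds; the factors are not symmetric and do not cancel, and this ratio is not controlled by $e^{3d}$. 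Indeed the statement as you are trying to prove it is simply false for arbitrary conformal $\varphi$ on $U=\R^2\setminus\overline{\mathbb D}$: take $\varphi(z)=1/z$, so that $\tilde\varphi(\zeta)=\zeta$ satisfies the disk inequality trivially, yet $|\varphi'(2)|/|\varphi'(10)|=25$ while $\dist_h(2,10)=\dist_h^{\mathbb D}(1/2,1/10)=\log(27/11)\approx 0.898$ and $e^{3\cdot 0.898}\approx 14.8<25$. There is a second gap as well: in the case the paper actually uses, $\varphi(\infty)=\infty$, the pullback $\tilde\varphi$ has a pole at $0$, so it is not a holomorphic univalent function on $\mathbb D$ and the disk case cannot be applied to it at all, even on $\mathbb D\setminus\{0\}$, since the pre-Schwarzian $|\tilde\varphi''/\tilde\varphi'|\sim 2/|\zeta|$ near $0$ violates $3\rho_{\mathbb D}$. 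The correct reduction for $\varphi(\infty)=\infty$ is to conjugate on both sides, setting $\psi=T\circ\varphi\circ T^{-1}$, which is a genuine univalent self-map of a plane domain with $\psi(0)=0$; then the chain-rule factors coming from the two copies of $T$ do partially cancel (using $\psi(\zeta)=\zeta\hat\psi(\zeta)$), but one is left with $|\varphi'(z_1)|/|\varphi'(z_2)|=\bigl(|\psi'(\zeta_1)|/|\psi'(\zeta_2)|\bigr)\cdot\bigl(|\hat\psi(\zeta_2)|/|\hat\psi(\zeta_1)|\bigr)^2$, and controlling the second factor requires an extra distortion estimate on $\hat\psi=\psi/\zeta$; this is not ``routine line integration'' as your proposal suggests.
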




We also record the following estimates, referred to as the 
Gehring-Hayman
inequalities e.g. in  {\cite[Theorem 4.20, Page 88]{Ch1992}}. They show the significance of hyperbolic segments. 

\begin{lemma}[\cite{GH1962}]\label{lma:map and geodesic}
Let $\varphi\colon\mathbb D \to \Omega$ be a conformal map.
Given a pair of points
$x,\,y\in \overline{ \mathbb D}$, 
denoting the corresponding hyperbolic segment in $\mathbb D$ by $\Gamma_{x,\,y}$, 
and by $\gamma_{x,\,y}$ any 
curve connecting $x$ and $y$ in $ \mathbb D$, we have
$$\elle(\varphi(\Gamma_{x,\,y}))\le C \elle(\varphi(\gamma_{x,\,y}))$$
and
$$\diam(\varphi(\Gamma_{x,\,y}))\le C \diam(\varphi(\gamma_{x,\,y})),$$
where $C$ is an absolute constant. 
\end{lemma}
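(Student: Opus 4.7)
The plan is to prove the length inequality first via a Whitney-type decomposition of $\Gamma_{x,y}$ coupled with a conformal capacity argument, and then obtain the diameter inequality by a parallel reduction.

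I would begin by decomposing $\Gamma_{x,y}$ into consecutive sub-arcs $\Gamma_1, \ldots, \Gamma_N$ of unit hyperbolic length. Each $\Gamma_i$ is then a $\lambda$-Whitney-type set in $\mathbb{D}$ by the observation following \eqref{hyperbolic diameter}, so Lemma~\ref{whitney preserving} yields that each $\varphi(\Gamma_i)$ is a Whitney-type set in $\Omega$ with a uniform constant. Combined with Lemma~\ref{linearmap} this gives
\[
\ell(\varphi(\Gamma_i)) \sim \diam(\varphi(\Gamma_i)) \sim |\varphi'(z_i)|(1 - |z_i|),
\]
where $z_i$ denotes the midpoint of $\Gamma_i$, and summation produces $\ell(\varphi(\Gamma_{x,y})) \sim \sum_i |\varphi'(z_i)|(1-|z_i|)$.

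For the lower bound on $\ell(\varphi(\gamma_{x,y}))$, for each $i$ I would introduce the crosscut $\tau_i \subset \mathbb{D}$: the hyperbolic geodesic through $z_i$ orthogonal to $\Gamma_{x,y}$. As a circular arc meeting $\partial \mathbb{D}$ orthogonally, $\tau_i$ separates $\mathbb{D}$ into two components containing $x$ and $y$ respectively, so any curve $\gamma_{x,y}$ from $x$ to $y$ must meet $\tau_i$ at some point $w_i$; we may order the $w_i$ so that they appear along $\gamma_{x,y}$ in sequence. Let $\gamma^i$ denote the sub-arc of $\gamma_{x,y}$ between $w_i$ and $w_{i+1}$. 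The key estimate to establish is
\[
\ell(\varphi(\gamma^i)) \gtrsim \diam(\varphi(\Gamma_i))
\]
with a uniform constant. This follows by noting that $\gamma^i$ together with suitable pieces of $\tau_i, \tau_{i+1}$ and $\partial \mathbb{D}$ bounds a region in $\mathbb{D}$ containing the Whitney-type set $\Gamma_i$, and then transplanting the configuration to $\Omega$ via $\varphi$; the capacity estimates \eqref{condition of lower bound}--\eqref{capacity of balls} together with the conformal invariance of capacity then yield the lower bound on the image length. Summing over $i$ gives the length inequality.

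The diameter inequality follows by a parallel argument. Given any two points $p, q \in \Gamma_{x,y}$, they lie in two of the pieces $\Gamma_i, \Gamma_j$; one uses the crosscuts $\tau_i, \tau_j$ and the same capacity reasoning applied to $\varphi(\gamma_{x,y})$ (now extracting the diameter rather than the length of the relevant sub-arc image) to bound $|\varphi(p) - \varphi(q)|$ by $\diam(\varphi(\gamma_{x,y}))$ up to a uniform constant, and then take the supremum over $p, q$. The main technical obstacle throughout is making the capacity estimate uniform, and in particular controlling what happens near the portions of $\tau_i$ that extend close to $\partial \mathbb{D}$; the uniformity of Lemma~\ref{whitney preserving} and the distortion estimate of Lemma~\ref{linearmap} along each $\tau_i$ are exactly what make this possible.
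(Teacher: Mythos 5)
The paper does not prove this lemma; it cites \cite{GH1962} and \cite{Ch1992} (the Gehring--Hayman theorem and its treatment in Pommerenke's book), so there is no in-paper argument to compare against. Your outline -- subdividing $\Gamma_{x,y}$ into unit hyperbolic pieces, introducing the orthogonal crosscut $\tau_i$ through each midpoint, observing that any competitor $\gamma_{x,y}$ must cross the $\tau_i$'s in order, and then estimating the image length of each sub-arc $\gamma^i$ from below -- is indeed the standard framework for proving Gehring--Hayman, so the general plan is sound.

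However, the whole weight of the theorem sits on the ``key estimate'' $\ell(\varphi(\gamma^i)) \gtrsim \diam(\varphi(\Gamma_i))$, and the justification you give for it does not hold up. You argue that $\gamma^i$ together with pieces of $\tau_i$, $\tau_{i+1}$ and $\partial\mathbb{D}$ bounds a region \emph{containing} $\Gamma_i$. That is geometrically false in the critical case: if $\gamma^i$ runs from $\tau_i$ to $\tau_{i+1}$ close to $\partial\mathbb{D}$ -- which is exactly the dangerous scenario, since $|\varphi'|$ can be tiny there -- then the region enclosed by $\gamma^i$, the crosscut arcs and $\partial\mathbb{D}$ sits \emph{near the boundary} and does not contain $\Gamma_i$ at all, so there is nothing for the stated capacity argument to grip. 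Some further input is needed to rule out $\gamma^i$ being short in the image. The estimate can in fact be recovered, but along a different route than the one you sketch: one should bound $\mathrm{Cap}(\gamma^i,\Gamma_i,\mathbb{D})$ from below directly via \eqref{condition of lower bound} (this works because $\diam(\gamma^i)\ge \dist(\tau_i,\tau_{i+1})\sim 1-|z_i|\sim\diam(\Gamma_i)$ while $\dist(\gamma^i,\Gamma_i)\lesssim 1-|z_i|$), transfer the bound to $\Omega$ by conformal invariance and Lemma~\ref{inner capacity}, and then split into cases according to whether $\gamma^i$ enters a small Whitney ball around $z_i$: if it does, Koebe gives the length bound directly; if it does not, Koebe's quarter theorem shows $\varphi(\gamma^i)$ stays outside a ball of radius $\sim d_i$ around $\varphi(z_i)$ while $\varphi(\Gamma_i)$ lies well inside it (this needs $\Gamma_i$ of small enough hyperbolic diameter relative to the crosscut spacing), forcing $\dist(\varphi(\gamma^i),\varphi(\Gamma_i))\gtrsim d_i$ and hence $\diam(\varphi(\gamma^i))\gtrsim d_i$ via the capacity bound. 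This dichotomy and the tuning of $\Gamma_i$'s size are the substance of the theorem, and they are absent from the proposal. The same gap carries over to your sketch for the diameter inequality, which again leans on the unjustified enclosing-region picture.
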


We close this subsection with the following lemma that employs hyperbolic segments. In general, the internal distance
between two boundary points of a Jordan domain can well be infinite.  However, such points cannot be obtained as 
Euclidean limits of sequences of points with uniformly bounded internal distances.

\begin{lemma} \label{tatatarvitaan}
Let $\Omega$ be a Jordan domain and $x_j,y_j\in \Omega,$ $j\in \mathbb N,$ be points so 
that $x_j\to x\in \overline {\Omega}$ and $y_j\to y\in \overline {\Omega}$ with $y\neq x.$
Then $$\elle(\Gamma_{x,\,y})\le C\liminf_{j\to \infty}\dist_{\Omega}(x_j,y_j)$$ for the hyperbolic segment $\Gamma_{x,\,y}$ between
$x$ and $y$ in $\Omega,$ where $C$ is an absolute constant. Especially, if $\dist_{\Omega}(x_j,y_j)\le M<\infty$ for
all $j,$ then  $\dist_{\Omega}(x,y)\le CM.$
\end{lemma}
\begin{proof}
Let $x_j,y_j\in \Omega$ be points so that $x_j\to x\in \overline {\Omega}$ and $y_j\to y\in \overline {\Omega}$ with $y\neq x.$
We may assume that $x_j\neq y_j.$
Pick rectifiable curves $\gamma_j$ joining $x_j$ to $y_j$ in $\Omega$ so that
\begin{equation}\label{tamatod1}
\elle(\gamma_j)\le 2\dist_{\Omega}(x_j,y_j).
\end{equation}
Fix a conformal map $\varphi\colon\mathbb D\to \Omega.$ By the Carath\'eodory-Osgood theorem we may extend $\varphi$ homeomorphically
up to the boundary. We refer also to this extension by $\varphi.$ Write $z_j=\varphi^{-1}(x_j), w_j=\varphi^{-1}(y_j).$ By Lemma \ref{lma:map and geodesic}
we conclude that
\begin{equation}\label{tamatod2}
\elle(\varphi(\Gamma_j))\le C \elle(\gamma_j)
\end{equation}
for the hyperbolic segment $\Gamma_j:=\Gamma_{z_j,w_j}$ between $z_j,w_j$ in $\mathbb D$ with an absolute constant $C.$
Since $\varphi$ is uniformly continuous, $x_j\neq y_j$ and $\lim_{j\to \infty}x_j=x\neq y=\lim_{j\to \infty}y_j,$ there exists $\delta>0$
so that $|z_j-w_j|\ge \delta$ for every $j\ge 1.$ Because each $\Gamma_j$ is a hyperbolic segment and hence an arc of a (generalized)
circle that intersects the unit circle orthogonally, we deduce from this the existence of a $\delta'>0$ and points $\xi_j\in \Gamma_j$ so
that $|\xi_j|\ge 1-\delta'$ for all $j\ge 1.$ Define
$$T_j(z)=\frac {z-\xi_j}{1+\overline \xi_j z},$$
where $\overline \xi_j$ is the complex conjugate of $\xi_j.$ Then $T_j$ is a conformal (M\"obius) self map of $\mathbb D$ and maps $\xi_j$ to the origin.
Now $T_j\circ \Gamma_j$ is a hyperbolic segment that contains 0 and hence a Euclidean line segment. On the other hand, a subsequence of the points $\xi_j$ converges
to some $\xi$ with $|\xi|\ge 1-\delta'$ and a subsequence of the corresponding $T_j\circ \Gamma_j$ converges to some curve $\alpha$ by Lemma \ref{arsela}.
Clearly $\alpha$ is a Euclidean line segment that contains the origin and hence also a hyperbolic segment.  Define
$$T(z)=\frac {z-\xi}{1+\overline \xi z}.$$
Then $T$ is a conformal self map of the disk and hence $T^{-1}\circ \alpha$ is a hyperbolic segment between $\varphi^{-1}(x)$ and $\varphi^{-1}(y).$ Consequently, $\Gamma:=\varphi\circ T^{-1}\circ \alpha$ is a hyperbolic segment
in $\Omega$ with end points $x,y.$

We are left to estimate the length of $\Gamma.$ By switching to a subsequence in the beginning of our proof, we may assume by \eqref{tamatod1} and \eqref{tamatod2} that
$$\liminf_{j\to \infty}\elle(\varphi(\Gamma_j))\le 2C \liminf_{j\to \infty} \dist_{\Omega}(x_j,y_j).$$
Since $\varphi$ is uniformly continuous and for the above subsequence  $T_j\circ \Gamma_j$ converge to $\alpha$ and $T_j^{-1}$ converge to $T^{-1},$ we have that $\varphi\circ \Gamma_j$ converge to $\varphi\circ T^{-1}\circ \alpha=\Gamma.$
Hence we may deduce from Lemma \ref{arsela} that
 $$\elle(\Gamma)\le 2C \liminf_{j\to \infty}\dist_{\Omega}(x_j,y_j).$$
\end{proof}

\subsection{Whitney-type set}

A \emph{dyadic square} in $\R^2$ refers to any set
\[
 [m_i 2^{-k},\, (m_i + 1)2^{-k}]\times[m_j2^{-k},\, (m_j + 1)2^{-k}], 
\]
where $m_i,\,m_j,\,k \in \mathbb Z$.
We denote by $\ell(Q)$ the side length of the given square $Q$. 
The barycenter of a set $A\subset \mathbb R^2$ with positive and finite Lebesgue-measure will be denoted by $x_A$ 
and for $c>0$ the dilation of $A$ by factor $c$ by
\[
 cA = \{c(x-x_A)+x_A\,:\,x \in A\}.
\]
We will use these concepts in particular for dyadic squares $Q$.

Recall that any open set in $\R^2,$ different from the entire  $\R^2,$
admits a 
Whitney decomposition; see e.g.\ \cite[Chapter VI]{stein1970}.

\begin{lemma}[Whitney decomposition]\label{lma:whitney}
 For any open set $U \neq \R^2$ there exists a collection
 $W=\{Q_j\}_{j\in\N}$ of countably many closed dyadic squares such that

 (i) $U=\cup_{j\in\N}Q_j$ and $(Q_k)^\circ\cap (Q_j)^\circ=\emptyset$ for all $j,\,k\in\N$  with $j\ne k$;

 (ii) $ \ell(Q_k)\le \dist(Q_k,\,\partial U)\le 4\sqrt{2} \ell(Q_k)$ for all $k\in\N$;

 (iii) $\frac 1 4 \ell(Q_k)\le  \ell(Q_j) \le 4 \ell(Q_k)$ whenever $k,\,j\in \N$ and  
$Q_k\cap Q_j\ne\emptyset$.
\end{lemma}

The above squares $Q_j$ are called Whitney squares of $U.$  We will also need the following more general concept since the
image of a Whitney square under a conformal map need not be a Whitney square.

\begin{definition}\label{whitney-type set}
 A bounded  connected set $A \subset U\neq \mathbb R^2$ is said to be of $\lambda$-Whitney type in $U$ 
(with constant $\lambda\ge 1$) if the following holds. 

(i) There exists a disk with radius $\frac {1}{\lambda}\diam(A)$ contained in $A$;

(ii) $ \frac {1}{\lambda} \diam(A)\le \dist(A,\,\partial U)\le {\lambda } \diam(A)$. 
\end{definition}
For example, the Whitney squares in Lemma~\ref{lma:whitney} are 
$4\sqrt 2$-Whitney-type sets. Conversely, each $\lambda$-Whitney-type set 
$A\subset U$ intersects at most $N(\lambda)$ Whitney squares of $U:$
by  (ii) of Lemma \ref{lma:whitney} and (ii) of Definition 
\ref{whitney-type set} we have that
$$Q\subset B(x,C(\lambda)\dist(x,\partial U))$$
with $C(\lambda)= \sqrt 2(\lambda +1)+\lambda$
for  any $x\in A$ and any Whitney square $Q$ of $U$ that intersects $A,$
and that
$$\ell(Q)\ge (5\sqrt 2)^{-1}\dist(A,\partial U)$$ for any such $Q.$

Observe that for a $\lambda$-Whitney-type set $A$ in $U$ and any $x\in A$, by the triangle inequality 
and (ii) of Definition 
\ref{whitney-type set} we have
\begin{equation}\label{equat90}
\dist(A,\,\partial U)\le \dist(x,\,\partial U)\le (1+\lambda)\dist(A,\,\partial U).
\end{equation}
Thus, if a pair $A_1,\,A_2$ of $\lambda$-Whitney-type sets has non-empty intersection, 
then 
\begin{equation}\label{equat91}
\diam(A_1)\sim \diam(A_2)
\end{equation}
with the constant depending only on $\lambda$.  

In terms of hyperbolic metric, Whitney-type sets have uniformly bounded 
diameter in the following sense.

\begin{lemma}\label{constant diam}
Let $\Omega$ be a Jordan (or exterior Jordan) domain in $\mathbb R^2$
and $A\subset \Omega$ a $\lambda$-Whitney-type set with $\lambda\ge 1$. Then 
\begin{equation}
\label{hyperbolic diameter}
\dist_h(x,\,y) \le C(\lambda)
\end{equation} 
for all $x,\,y\in A$. 
\end{lemma}
\begin{proof}
Let $x,y \in A$ be fixed.
Let $r = \frac1{10}\dist(A,\partial \Omega)$ and consider the cover $\{B(z,r)\}_{z \in A}$ of the set $A$. By the $5r$-covering lemma, there exists a pairwise disjoint subcollection $\{B(z_i,r)\}_{i=1}^N$ so that $A \subset \bigcup_{i=1}^NB(z_i,5r)$. For every $i$ we have $$B(z_i,r) \subset B(x,\diam(A)+r) \subset B(x,11r).$$
Hence, since $B(z_i,r)$ are pairwise disjoint, we have $N \le (11\lambda)^2$. By the fact that $A$ is connected, there exists a sequence $\{i(k)\}_{k=1}^M$ with $M \le N$ so that $x \in B(z_{i(1)},5r)$, $y \in B(z_{i(M)},5r)$, and $B(z_{i(k)},5r)\cap B(z_{i(k+1)},5r) \ne \emptyset$ for all $k = 1,\dots, M-1$. Then
the curve
\[
 \gamma = [x,z_{i(1)}]\ast [z_{i(1)},z_{i(2)}]\ast \cdots \ast 
 [z_{i(M-1)},z_{i(M)}]\ast [z_{i(M)},y]
\]
satisfies $\elle(\gamma) \le 10(N+2)r$ and $\dist(\gamma,\partial\Omega) \ge \dist(A,\partial\Omega) - 5r = 5r$.

Since the density of the hyperbolic metric is bounded from above by $\dist(z,\partial\Omega)^{-1}$ up to a multiplicative constant, we have
\[
 \dist_h(x,y) \le C\int_\gamma \dist(z,\partial\Omega)^{-1} \,ds(z) \le C\elle(\gamma)\dist(\gamma,\partial\Omega)^{-1} \le 2C(N+2),
\]
concluding the proof.

\end{proof}

Given a $\lambda$-Whitney-type set $A\subset \mathbb D$, one has $\dist_h(z,\,w)\le C(\lambda)$ for all $z,\,w\in A$ by \eqref{hyperbolic diameter}. Hence 
Lemma~\ref{linearmap} implies $|\varphi'(z)|\sim|\varphi'(w)|$ with a constant 
depending only on $\lambda$. 
By this (applied to suitable disks) in combination with  Lemma \ref{koebe}, one can prove that the images of Whitney squares of a simply connected domain $\Omega$ under a conformal map of
$\Omega$ onto $\Omega'$ get mapped  to Whitney type sets.
Following the idea from \cite[Theorem 11]{G1962}, a more general version of this can be proven with the help of Lemma \ref{koebe} and \cite[Theorem 18.1]{V1971}.
Since we will use it later on, let us recall that \cite[Theorem 18.1]{V1971}  gives the following: There exists a universal increasing (continuous) function $\Theta\colon (0,\,1) \to \mathbb R$ such that $\lim_{x\to 0^+} \Theta(x)=0$, $\lim_{x\to 1^-} \Theta(x)=\infty$ and for every conformal map $\varphi\colon \Omega  \to \Omega'$ with  domains $\Omega,\,\Omega'\subsetneqq \mathbb R^2$ and each point $x\in \Omega$, we have
\begin{equation}\label{quasi}
\frac{|\varphi(x) -\varphi(y)|}{\dist(\varphi(x),\,\partial \Omega')}\le \Theta\left(\frac{| x  - y|}{\dist( x ,\,\partial \Omega)}\right)
\end{equation}
for every $y$ with $0<|x-y|<\dist( x ,\,\partial \Omega). $


\begin{lemma}\label{whitney preserving} 
Suppose that $\varphi\colon \Omega \to \Omega'$ is conformal, 
where $\Omega,\Omega'\subsetneqq \R^2$  
are domains 
and $A\subset \Omega$ is a $\lambda_1$-Whitney-type set. Then 
$\varphi(A)\subset \Omega'$ is a $\lambda_2$-Whitney-type set with 
$ \lambda_2=\lambda_2(\lambda_1)$.  \end{lemma}

Our next estimate shows that a conformal map from the unit disk (or from the exterior domain of the disk) onto a simply connected domain (to an exterior domain) is $C_\lambda$-bi-Lipschitz modulo a scaling factor on each $\lambda$-Whitney-type set.

\begin{lemma} \label{bilipominaisuus}
Let $\varphi\colon U\to \Omega$ be conformal, where $U=\mathbb D$ or $U=\mathbb R^2\setminus \overline {\mathbb D}.$
If $A\subset U$ is of $\lambda$-Whitney type and $z_0,z_1,z_2\in A,$ then 
$$C_{\lambda}^{-1}|\varphi'(z_0)||z_2-z_1|\le |\varphi(z_2)-\varphi(z_1)|\le C_{\lambda}|\varphi'(z_0)||z_2-z_1|,$$
where $C_{\lambda}$ only depends on $\lambda.$
\end{lemma}
\begin{proof}
Fix $z_0,z_1,z_2\in A,$ where $A\subset  U$ is of $\lambda$-Whitney type.  
As in the proof of Lemma \ref{constant diam}, let $r=\frac {1}{10}\dist (A,\,\partial U).$  Then $\dist_h(z,z_1)\le C$ for an absolute constant when $z_2\in B(z_1,5r).$   Hence 
Lemma \ref{linearmap} gives us the estimate
\begin{equation}\label{tidea}
|\varphi(z_2)-\varphi(z_1)|\le \exp(C)|\varphi'(z_1)||z_2-z_1|
\end{equation}
if $z_2\in B(z_1,5r).$  
Let us assume that $z_2\notin B(z_1,5r).$ Then $|z_2-z_1|\ge 5r.$ Let $\gamma$ be the curve from the proof of Lemma \ref{constant diam} for the pair $z_1,z_2.$ Then $\elle(\gamma)\le (11\lambda)^2r$
and $\dist_h(z,z_1)\le 2C(2+(11\lambda)^2)=:C_1$ for each $z\in \gamma.$  Hence
Lemma \ref{linearmap} gives
\begin{equation}\label{tidea2}
|\varphi(z_2)-\varphi(z_1)|\le \int_{\gamma}|\varphi'(z)|\, ds(z)\le \exp(C_1)|\varphi'(z_1)|\elle(\gamma)\le \frac 1 5 (11\lambda^2)\exp(C_1)|\varphi'(z_1)||z_2-z_1|.
\end{equation}
By combining \eqref{tidea} and \eqref{tidea2}, Lemma \ref{constant diam} together with Lemma \ref{linearmap} allow us to further deduce that
\[
|\varphi(z_2)-\varphi(z_1)|\le C(\lambda)|\varphi'(z_1)||z_2-z_1|\le C'(\lambda)|\varphi'(z_0)||z_2-z_1|.
\]


Towards the opposite inequality, notice first that $\dist_h(\hat w,\hat z)\le C$ with an absolute constant whenever $\hat z\in \Omega$ and $\hat w\in B(\hat z,\dist(\hat z,\partial \Omega)/2).$ This holds since the density
of the hyperbolic distance in this disk is bounded from above by a fixed multiple of $\dist (\hat z,\partial \Omega)^{-1}.$ Especially, $\dist_h(\varphi^{-1}(\hat \xi),\varphi^{-1}(\hat z))\le C$ for each $\hat \xi \in B(\hat z,\dist (\hat z,\partial \Omega)/2).$ By Lemma \ref{linearmap} we conclude that
\begin{equation}\label{deriala}
|\varphi'(\varphi^{-1}(\hat z))|\le C_3|\varphi'(\varphi^{-1}(\hat \xi))|
\end{equation}
for all $\hat \xi \in B(\hat z,\partial \Omega)/2).$  
Since $\varphi^{-1}(\hat \xi)=\frac 1 {\varphi'(\varphi^{-1}(\hat \xi))},$ we deduce from \eqref{deriala} the estimate
\begin{equation}\label{deriyla}
|(\varphi^{-1})'(\hat \xi)|\le C_3 |(\varphi^{-1})'(\hat z)|.
\end{equation}
Let $I$ be the Euclidean line segment between $\hat z\in \varphi(A)$ and given $\hat w\in B(\hat z,\dist(\hat z,\partial \Omega)/2).$ Then $I\subset B(\hat z,\dist(\hat z,\partial \Omega)/2).$ By integrating the estimate \eqref{deriyla} over 
$I$ we
conclude that
\begin{equation}\label{kaanteisosk}
|\varphi^{-1}(\hat w)-\varphi^{-1}(\hat z)|\le \int_I |(\varphi^{-1})'(\hat \xi)|\, ds(\hat \xi)\le C_3|(\varphi^{-1})'(\hat z)||\hat w -\hat z|\le C_1|\varphi'(\varphi^{-1}(\hat z))|^{-1}|\hat w-\hat z|.
\end{equation}
Especially, if $\hat w,\hat z\in \varphi(A)$ satisfy
\begin{equation}\label{wzlahella}
|\hat w-\hat z|\le \frac 1 2 \max\{\dist (\hat z,\partial \Omega),\dist (\hat w, \partial \Omega)\},
\end{equation}
then by \eqref{kaanteisosk}, Lemma \ref{linearmap} and  Lemma \ref{constant diam} we get
$$|\varphi^{-1}(\hat w)-\varphi^{-1}(\hat z)|\le C_3\max\{|\varphi'(\varphi^{-1}(\hat \xi))|^{-1}:\ \hat {\xi}\in \varphi(A)\}|\hat w-\hat z| \le C_1(\lambda)C_3|\varphi'(z_0)|^{-1}|\hat w -\hat z|.$$

We are left to consider the case where \eqref{wzlahella} fails. By Lemma \ref{whitney preserving} we know that $\varphi(A)$ is of $C_4(\lambda)$-Whitney type. Hence
\begin{equation}\label{whitneydiamet}
\diam \varphi(A)\le C_5(\lambda)\dist(\varphi(A),\partial \Omega)\le C_5 \dist(\hat z,\partial \Omega)
\end{equation}
for each $\hat z\in A.$ If \eqref{wzlahella} fails, then $\dist(\hat z,\partial \Omega)\le 2|\hat w-\hat z|$ and we conclude that
$$\diam(\varphi(A))\le 2 C_5(\lambda) |\hat w-\hat z|$$
and further that
$$|\varphi^{-1}(\hat w)-\varphi^{-1}(\hat z)|\le \diam(A)=\diam(A)|\hat w-\hat z|^{-1}|\hat w -\hat z|$$
$$\le 2C_5(\lambda)\diam(A)\diam(\varphi(A))^{-1}|\hat w-\hat z|.$$
It only remains to be noticed that $|\varphi'(z_0)|$ is comparable to $\dist(\varphi(z_0),\partial \Omega)/\dist(z_0,\,\partial U)$ with absolute constants by Lemma \ref{koebe}, that 
$C_5(\lambda)\dist(\varphi(z_0),\partial\Omega)\ge \diam(\varphi(A))$ by \eqref{whitneydiamet} and that $\diam(A)\le C_6(\lambda)\dist(z_0,\,\partial U)$ since $A$ is of 
$\lambda$-Whitney type in $U$ with $z_0\in A.$
\end{proof}

\subsection{Conformal capacity}

Let $\Omega\subset \mathbb R^2$ be a  domain. For a given pair of disjoint continua $E,\,F \subset \overline \Omega$, 
define the {\it conformal capacity between $E$ and $F$ in $\Omega$} as
 $${\rm Cap}(E,\,F,\,\Omega)=\inf\{\|\nabla u\|^2_{L^{2} (\Omega)}: \ u\in\Delta(E,\,F,\,\Omega )\},$$
 where $\Delta(E,\,F,\,\Omega )$ denotes the class of all $u\in W^{1,\,2}_{\loc}(\Omega) \cap C(\Omega \cup E \cup F)$ that satisfy $u=1$ on $E$, and $u=0$ on $F$. 
We remark that in general one has to be careful when defining the capacity of continua that are allowed to intersect the boundary of the domain. However, for our purposes the definition above is enough, so we can avoid considering the prime-end compactification of $\Omega$ and the more subtle definitions of capacity.
 The conformal capacity is by definition increasing both with respect to the sets $E,F$ and with respect to
$\Omega.$  We refer to this by the monotonicity of conformal capacity. Also notice that ${\rm Cap}(E,\,F,\,\Omega)={\rm Cap}(F,\,E,\,\Omega),$ 
as  seen by switching a given test function $u$ to $v=1-u.$

Let us introduce the properties of conformal capacity which will be used in the rest of the paper; we refer to e.g.\ \cite[Chapter 1]{V1971} for more properties. We remark that, even though \cite{V1971} (as well as some other references below) states estimates for ``modulus'',   ``modulus'' is equivalent with
conformal capacity in our setting below (see e.g. \cite[Theorem 2.6]{heko90},
\cite[ Proposition 10.2, Page 54]{R1993}).

\begin{lemma}
The conformal capacity is conformally invariant, that is, for  domains $\Omega$ and $\Omega'$ in $\mathbb R^2$, a conformal (onto) map $\varphi \colon \Omega \to \Omega'$ and disjoint continua $E$ and $F$ in $\Omega$, we have
\begin{equation}\label{cap inv}
{\rm Cap}(\varphi(E),\,\varphi(F),\,\Omega') = {\rm Cap}(E,\,F,\,\Omega).
\end{equation}
Moreover, if $\varphi$ has a homeomorphic extension, still denoted by  $\varphi$, $\varphi\colon \overline{\Omega}\to \overline{\Omega'}$, then \eqref{cap inv} also holds for disjoint continua in $\overline{\Omega}$. Especially this is the case if both $\Omega$ and $\Omega'$ are Jordan. 
\end{lemma}

In what follows, whenever we mention the conformal invariance of conformal capacity, we always refer to the above lemma.

We have the following 
estimate for the conformal capacity in the unit disk $\D$ (and in its exterior domain $\mathbb R^2\setminus \overline{\D}$). Let $E$ and $F$ be 
disjoint 
continua in $\overline{\D}$. Then 
\begin{equation}
\label{condition of lower bound}
{\rm Cap}(E,\,F,\,\D) \ge c \log \left(1+\frac{\min\{\diam (E),\,\diam (F)\}}{\dist(E,\,F)}\right)
\end{equation}
where $c>0$ is a universal constant. Moreover, the analogous inequality holds for  $E,\,F\subset \mathbb R^2 \setminus {\mathbb \D}.$  For these results see e.g.\ 
\cite[Lemma 7.38]{V1988} that gives \eqref{condition of lower bound} for the entire plane
instead of $\D$ and \cite[Remark 2.12]{GM1985 2}, \cite[Theorem 2.6, Theorem 2.8]{heko90}
that allow us to deduce the desired estimates from the global one.

We call a domain $A\subset\mathbb R^2$ a {\it ring domain} if its complement has exactly two components and at least one of the components is compact. If the exterior components of $A$ are $U_0$ and $U_1$, then we write $A=R(U_0,\,U_1)$.
It follows from topology that also $\partial A$ has two components, $V_0=U_0\cap \overline{A}$ and $V_1=U_1\cap \overline{A}$.  If $U_0$, $V_0$ and $V_1$ are compact, we have
\begin{equation}\label{same capacity}
  {\rm Cap}( V_0,\, V_1 ,\,A)= {\rm Cap}( U_0,\, V_1 ,\, A\cup U_0 ); 
\end{equation}
indeed, ``$\le$'' directly follows from the definition and ``$\ge$'' follows by extending each $u\in \Delta(V_0,\,V_1,\,A)$ as constant $1$ to $U_0\setminus V_0$, see also \cite[Theorem 11.3]{V1971} (and its proof). 
Furthermore, we have the following estimate for the capacity of the boundary components of a 
ring domain. 

\begin{lemma}
Let $A=R(U_0,\,U_1)\subset\mathbb R^2$ be a ring domain with $U_1$ unbounded. Assume that $V_0=U_0\cap \overline{A}$ and $V_1=U_1\cap \overline{A}$ are compact.  There exist two universal increasing functions $\phi_i \colon (0,\infty) \to (0,\infty),\, i=1,\,2,$ so that $\lim_{t\to 0+} \phi_i(t) = 0$ and $\lim_{t\to \infty}\phi_i(t) = \infty$, and so that  
\begin{equation}\label{capacity of balls}
\phi_1\left(\frac{ \diam (U_0) }{\dist(U_0,\, U_1 )}\right) \le {\rm Cap}( V_0,\, V_1 ,\,A)\le \phi_2\left(\frac{ \diam (U_0) }{\dist(U_0,\, U_1)}\right). 
\end{equation}
\end{lemma}

%

We need the fact that the inner distance satisfies the triangle inequality 
 \cite[Lemma 2.3]{Br06}.

\begin{lemma}\label{inner triangle}
Let $\Omega$ be a Jordan domain and $z_1,\,z_2,\,z_3\in \overline{\Omega}$ be 
three distinct points. Then 
$$\dist_{\Omega}(z_1,\,z_3)\le \dist_{\Omega}(z_1,\,z_2)+\dist_{\Omega}(z_2,\,z_3).$$
\end{lemma}

We record the following estimate, which states a kind of converse to  \eqref{condition of lower bound}. It builds on \cite[Lemma 2.2]{KZ2016}. 
(Recall from Subsection~\ref{subsection:curves} that $\diam_{\Omega}$ and $B_\Omega(z,r)$ refer to the diameter and ball in the inner distance with respect to $\Omega$.)

\begin{lemma}{\label{inner capacity}}
Let $\Omega$ be a domain and  $E,\,F\subset \Omega$ be a pair of disjoint continua. 
Then if ${\rm Cap}(E,\,F,\, \Omega)\ge \delta_0>0$, we have
\begin{equation}\label{inequat 1}
{\min\{\diam_{\Omega}(E),\,\diam_{\Omega}(F)\}}\gtrsim {\dist_{\Omega}(E,\,F)},
\end{equation}
where the constant only depends on $\delta_0$. Especially
$${\min\{\diam_{\Omega}(E),\,\diam_{\Omega}(F)\}}\gtrsim {\dist (E,\,F)}, $$
and if $\Omega=\mathbb R^2$
\begin{equation}\label{inequat 10}
\min\{\diam(E),\,\diam(F)\}\gtrsim {\dist (E,\,F)}.  
\end{equation}
If we further assume that $\Omega$ is Jordan, then \eqref{inequat 1} also holds if $E\subset \overline \Omega$ and $F\subset\overline\Omega$  are disjoint continua with ${\rm Cap}(E,\,F,\, \Omega)\ge 
\delta_0$. 
\end{lemma}
\begin{proof}
\noindent{\bf Step 1:}
We begin with the case where $E,\,F\subset \Omega$. 
By switching $E$ and $F$ we may assume that $\diam_{\Omega}(E)\le \diam_{\Omega}(F).$ We may also assume that
$2\diam_{\Omega}(E) \le \dist_{\Omega}(E,\,F)$; otherwise the claim holds trivially. 
Fix $z\in E$, and write $\frac{\dist_{\Omega}(E,\,F)}{\diam_{\Omega}(E)}=M$. 
We define
$$ u(x)=
 \begin{cases}
   1, & \text{if } \dist_{\Omega}(x,\,z)\le \diam_{\Omega}(E) \\
  0, & \text{if }  \dist_{\Omega}(x,\,z)\ge \dist_{\Omega}(E,\,F)\\
      \frac {\log (\dist_{\Omega}(E,\,F))-\log (\dist_{\Omega}(x,\,z))}
      {\log(M)},
      &\text{otherwise }
 \end{cases}.$$

Then $u$ is locally Lipschitz and 
$$|\nabla u(x)|\le (\log M)^{-1} \dist_{\Omega}(x,\,z)^{-1}$$
for all $x \in \Omega$ with $\diam_{\Omega}(E) \le \dist_{\Omega}(x,\,z)\le \dist_{\Omega}(E,\,F)$, and
$|\nabla u(x)|=0$ elsewhere.
Write
$$R=B_{\Omega}(z,\,\dist_{\Omega}(E,\,F))\setminus B_{\Omega}(z,\,\diam_{\Omega}(E)),$$
and for $i\ge 1$
$$A_i=B_{\Omega}(z,\,2^{i}\diam_{\Omega}(E))\setminus B_{\Omega}(z,\,2^{i-1}\diam_{\Omega}(E)),$$
where $B_{\Omega}(z,\,r)$ is the disk centered at $z$ with radius $r$ with respect to the inner distance. 
The assumption ${\rm Cap}(E,\,F,\, \Omega)\ge \delta_0>0$ and a direct calculation via our dyadic annular decomposition with respect
to the inner distance give
\begin{align*}
\delta_0\le & \int_{\Omega} |\nabla u|^2\, dx  
\leq   (\log M)^{-2}\int_{R} \dist_{\Omega}(x,\,z)^{-2} \, dx\\ 
\leq &   (\log M)^{-2} \sum_{i=1}^{\infty}\int_{R\cap A_i } 2^{2-2i}\diam_{\Omega}(E)^{-2} \, dx\\
\leq &   2(\log M)^{-2} \sum_{i=1}^{[\log M]+1} 4\pi\\
\ls &  (\log M)^{-2} \log M \lesssim  {(\log M)}^{-1}, 
\end{align*}
where  $[\log M]$ denotes the integer part of $\log M$, and in the third inequality we used the fact that $B_{\Omega}(z,\,r)\subset B(z,\,r)$.
Hence $M\le C(\delta_0)$, which means that 
$\dist_{\Omega}(E,\,F) \lesssim \diam_{\Omega}(E)$.

\noindent{\bf Step 2:} We continue with the case where at least one of the sets $E,\,F$ intersects $\partial\Omega$. 
We cannot directly use a test-function defined like the function $u$ from the previous step since it would not necessarily be continuous in $\Omega\cup E\cup F.$

To begin, let $\varphi\colon \overline{\mathbb D} \to \overline{\Omega}$ be a 
homeomorphism, conformal in $\mathbb D,$ given by the Riemann Mapping and 
Carath\'eodory-Osgood 
theorems. 
Since 
$${\rm Cap}(E,\,F,\, \mathbb R^2)\ge {\rm Cap}(E,\,F,\, \Omega)\ge \delta_0$$
by monotonicity,
we conclude by \eqref{inequat 10}, which is given by Step 1, that neither $E$ nor $F$ is a singleton.

Suppose that $E\subset \partial \Omega.$
Then $\varphi^{-1}(E)$ is a closed nondegenerate arc contained in the unit circle and hence \eqref{lyhyita} provides us with $w\in \varphi^{-1}(E)$ for which
$\elle(\varphi([0,w]))<\infty.$  We choose $0<t_E<1$ so that 
$$\diam_{\Omega}(\varphi([t_Ew,w])\cup E)\le 2 \diam_{\Omega}(E)$$ and 
$$\dist_{\Omega}(\varphi([t_Ew,w])\cup E,F)\ge \frac 1 2 \dist_{\Omega}(E,F).$$  Set $E':=\varphi([t_Ew,w]))\cup E=\varphi([t_Ew,w]\cup \varphi^{-1}(E)).$  Then $E'$ is compact and connected. Notice that $E'$ intersects $\Omega$  and $E\subset E'.$ 

If $E$ intersects $\Omega,$ we simply let $E'=E.$
We construct $F'$ in the analogous manner, considering now the internal distance to $E'.$ Then $E',F'\subset \overline \Omega$ are continua,
$$\diam_{\Omega}(E')\le 2 \diam_{\Omega}(E),$$ 
$$\diam_{\Omega}(F')\le 2 \diam_{\Omega}(F)$$
and
$$\dist_{\Omega}(E,F)\le 4\dist_{\Omega}(E',F').$$
Moreover, $E\subset E',F\subset F'$ and hence monotonicity of capacity
together with our capacity assumption ensures that 
$${\rm Cap}(E',\,F,'\, \Omega)\ge \delta_0.$$
Hence it suffices to prove \eqref{inequat 1} for $E',F'$ instead of $E,F.$  For simplicity of notation, we refer to $E',F'$ by $E,F$ in what follows.


By switching the roles of $E$ and $F$ if necessary, it suffices to show that
\begin{equation}\label{riittaa}
C\diam_{\Omega}(E)\ge \dist_{\Omega}(E,F)
\end{equation}
for some constant that may only depend on $\delta_0.$ At this point, it is perhaps worth pointing out that the right-hand side is finite since both $E$ and $F$ intersect $\Omega.$

Towards \eqref{riittaa} we first pick $z_0\in E\cap \Omega$ and choose a new conformal map $\psi\colon\Omega \to \mathbb D$ so that $\psi(z_0)=0.$ This can be done by post-composing $\varphi^{-1}$ with
a suitable conformal self (M\"obius) map of the disk. Given $j\ge 1,$ we define $E_j=\psi^{-1}((1-\frac 1 j)\psi(E))$ and $F_j=\psi^{-1}((1-\frac 1 j)\psi(F)).$ Here $tA$ for a subset of the unit disk is the image
of $A$ under the map $f(z)=tz.$ Then $E_j,F_j\subset \Omega$ are disjoint continua. By conformal invariance and monotonicity
\begin{equation}\label{pitka}
{\rm Cap}(E_j,\,F_j,\, \Omega)={\rm Cap}(\psi(E_j),\,\psi(F_j),\, \mathbb D)\ge {\rm Cap}\left(\psi(E_j),\,\psi(F_j),\, B\left(0,1-\frac 1 j\right)\right).
\end{equation}
Notice that $\psi(E_j)=(1-\frac 1 j)\psi(E)$ and $\psi(F_j)=(1-\frac 1 j)\psi(F)$ are the images of $\psi(E)$ and $\psi(F),$ respectively, under the conformal map $f(z)=(1-\frac 1 j)z$ and $B(0,1-\frac 1 j)$ is the image
of the unit disk under this map. Hence conformal invariance gives
\begin{equation}\label{lyhyt}
{\rm Cap}\left(\psi(E_j),\,\psi(F_j),\, B\left(0,1-\frac 1 j\right)\right)={\rm Cap}(\psi(E),\,\psi(F),\, \mathbb D)={\rm Cap}(E,\,F,\, \Omega).
\end{equation}
By combining \eqref{pitka} and \eqref{lyhyt} we conclude from our capacity assumption on $E,F$ that
$${\rm Cap}(E_j,\,F_j,\, \Omega)\ge \delta_0$$
for all $j\ge 1.$
Hence we are allowed to infer from Step 1 that
\begin{equation}\label{imar}
\dist_{\Omega}(E_j,\,F_j)\le C\diam_{\Omega}(E_j)
\end{equation}
with $C=C(\delta_0)$ and all $j\ge 1.$

We proceed with a limiting process that relies on \eqref{imar}. First, notice that $0\in \psi(E_j)$ for all $j.$ Hence, by Lemma~\ref{inner triangle} there exists $z_j$ in $E_j$ so that $\diam_{\Omega}(E_j)\le 3\dist_{\Omega}(\psi^{-1}(0),z_j).$
Thus 
\begin{equation}\label{amar}
\diam_{\Omega}(E_j)\le 3\elle(\psi^{-1}([0,\psi(z_j)),
\end{equation}
where $[0,\psi(z_j)]$ is the radial segment between the points $0$ and $\psi(z_j)$. Since $\psi(E_j)=(1-\frac 1 j)\psi(E),$ the point $\xi_j=(1-\frac 1 j)^{-1}\psi(z_j)$ belongs to $\psi(E).$ Since also $0\in \psi(E)$ we
deduce that $\diam_{\Omega}(E)\ge \dist_{\Omega}(\psi^{-1}(0),\psi^{-1}(\xi_j)).$
By Lemma \ref{lma:map and geodesic} we conclude that 
\begin{equation}\label{bmar}
\elle(\psi^{-1}([0,\xi_j]))\le C_1 \diam_{\Omega}(E)
\end{equation}
with an absolute constant $C_1$. By the definition of $\xi_j$ we have that $[0,\psi(z_j)]\subset [0,\xi_j]$ and hence \eqref{amar} together with \eqref{bmar} gives the uniform estimate
\begin{equation}\label{cmar}
\diam_{\Omega}(E_j)\le 3C_1 \diam_{\Omega}(E).
\end{equation}
Next, \eqref{imar} and Lemma \ref{lma:map and geodesic} provide us with points $x_j\in E_j,$ $y_j\in F_j$ and corresponding hyperbolic segments $\Gamma_j$ between $x_j$ and $y_j$ in $\Omega$ so
that
\begin{equation} \label{dmar}
\elle(\Gamma_j)\le C_2 \diam_{\Omega}(E_j)
\end{equation}
where $C_2$ depends only on $\delta_0.$
Since $\Omega$ is a Jordan domain, it is especially bounded and hence, by switching to a subsequence if necessary, we may assume that $x_j\to x$ and $y_j\to y,$ where $x,y\in \overline \Omega.$
This together with  \eqref{cmar} and \eqref{dmar} allow us to employ Lemma \ref{tatatarvitaan} so as to conclude that
$$\dist_{\Omega}(x,y)\le C_3 \diam_{\Omega}(E).$$
By construction, $\psi(x_j)\in (1-\frac 1 j)E$ and $\psi(y_j)\in (1-\frac 1 j)F$ and since $\psi$ is homeomorphic up to boundary, we deduce that $x\in E$ and $y\in F.$ 
Thus we have established \eqref{riittaa} and the proof is complete.
\end{proof}

\subsection{John domains}

Let us recall the definition of a John domain. 

\begin{definition}[John domain]\label{def:John}
An open bounded subset $\Omega\subset \mathbb R^2$ is called a John domain provided it satisfies 
the 
following condition:
There exist a distinguished point $x_0 \in \Omega$ and a constant $J>0$ such that, for every 
$x\in\Omega$, 
there is a rectifiable curve $\gamma$ joining $x$ and $x_0$ in $\Omega$ and satisfying
\[
\dist(y,\, \R^2\setminus\Omega)\ge J\elle(\gamma[x,y]) 
\]
for all $y \in \gamma$.
Such a  curve $\gamma$ is called a $J$-John curve, $J$ is called a John 
constant, and we refer to a John domain with a John constant $J$ by a $J$-John domain and to $x_0$ by a John center of $\Omega.$
\end{definition}

We continue with results related to John domains. For the convenience of the reader, we refer to \cite{nava1991} whenever possible, even when the
result in question has a longer history.

As an example, every disk is a 1-John domain with the center as the John center and radial segments as John curves. In fact, 
one can always choose hyperbolic segments for the John curves in the simply connected situation.

\begin{lemma}\cite[Theorem 5.4]{nava1991}\label{hsjohn}
If $\Omega$ is a simply connected $J$-John domain, then hyperbolic segments from the John center $x_0$ to points in $\Omega$ are $J'$-John curves, 
where $J'$ depends
only on $J$. 
\end{lemma}

\begin{remark}\label{muistutus}
Actually, also the hyperbolic segment $\Gamma$ connecting $x_0$ and 
$y\in \partial \Omega$ is a $J'$-John curve for a simply connected planar 
$J$-John domain 
$\Omega$ with the base point $x_0.$ This follows from the preceding lemma and the definition of a 
hyperbolic segment: the hyperbolic segment between $x_0$ and a point $x$ on this segment is the part of $\Gamma$ between $x_0$ and $x.$ Consequently, any two points $x,y\in \overline \Omega$ are 
rectifiably joinable
and the diameter of a simply connected John domain with respect to the inner distance is
finite.

For further reference let us record the following consequence that deals with integrals as in \eqref{eq:extcharcompl}. Let $1<p<2.$ 

By parametrizing $\Gamma$ via arc length the John
condition and integration gives
$$\int_{\Gamma}\dist(z,\partial \Omega)^{1-p}\, ds(z)\le (J')^{1-p}
\elle(\gamma)^{2-p}\le C(p,J')\dist(x_0,\partial \Omega)^{2-p}.$$
\end{remark}

We move towards explaining the role of John domains in our work.

\begin{definition}
A set $E$ is of \it{bounded turning} if there is a constant $C$ such that any pair of points $z_1,z_2$ can be joined by a curve $\gamma\subset E$ whose diameter satisfies
$\diam(\gamma)\le C|z_1-z_2|.$ We then say that $E$ is of $C$-bounded turning.
\end{definition} 

Recall that quasiconvexity was defined analogously but with length instead of diameter. Hence being of bounded turning is a weaker condition than being quasiconvex.

\begin{lemma}\cite[Theorem 4.5]{nava1991} \label{btchar}
Let $\Omega$ be a bounded simply connected planar domain. Then $\Omega$ is John if and only if $\mathbb R^2\setminus \Omega$ is of bounded turning.  This equivalence is
quantitative in the sense that the John constant and the constant in bounded turning depend only on each other.
\end{lemma}

By Lemma \ref{kvasikonveksi} the complement of a bounded simply connected domain whose complement satisfies \eqref{eq:extcharcompl} is $C'$-quasiconvex and hence
also of $C'$-bounded turning with a constant that only depends on the exponent $p$ and the constant $C$ in \eqref{eq:extcharcompl}. Hence we obtain the following important
corollary to the preceding lemma.

\begin{corollary}\label{komplementtionjohn}
Let $\Omega$ be a bounded simply connected domain whose complement satisfies \eqref{eq:extcharcompl}.
Then $\Omega$ is $J$-John with a constant $J$ that only depends on the exponent $p$ and the constant $C$ in \eqref{eq:extcharcompl}.
\end{corollary}

We will need the fact that the boundaries of bounded simply connected John domains are of area zero.  

\begin{lemma}\label{bdyzero}
If $\Omega$ is a bounded simply connected planar John domain, then the Lebesgue area of $\partial \Omega$ is zero.
\end{lemma}

Conformal maps from the unit disk onto a John domain behave nicely with respect to the inner distance.  In order to
state this quantitatively we need a definition.

We say that a homeomorphism $\varphi\colon \mathbb D\to \Omega $ is 
{\it{quasisymmetric with respect to the
inner distance}} if there is a homeomorphism $\eta\colon[0,\infty)\to [0,\infty)$ so that
\begin{equation}\label{qsdef}
|z-x|\le t|y-x|\mbox{ implies }\dist_{\Omega}(\varphi(z),\varphi(x))\le 
\eta(t)\dist_{\Omega}(\varphi(y),\varphi(x))
\end{equation}
for each triple $z,x,y$ of points in $\mathbb D.$

It follows from the definition that the inverse of a quasisymmetric map is also quasisymmetric: 
$$\dist_{\Omega}(z,x)\le t\dist_{\Omega}(y,x) \mbox{ implies }|\varphi^{-1} (z)-\varphi^{-1}(x)|\le \eta(1/t)^{-1}|\varphi^{-1}(y)-\varphi^{-1}(x)|$$
when $z,x,y\in \Omega.$

Roughly speaking, the definition means that the homeomorphism $\varphi$ maps round 
objects to essentially round objects (with respect to the inner distance).  The following result will be an important technical tool for us.

\begin{lemma}\cite[Theorem 7.2]{nava1991}\label{quasisymmetry}
Let $\Omega\subset \mathbb R^2$ be a simply connected domain, and let
$\varphi\colon \mathbb D\to \Omega $ be a 
conformal map. Then $\Omega$ is John  if and only if $\varphi$ 
is quasisymmetric with 
respect to the inner distance. This statement is quantitative in the sense
that the John constant and the function $\eta$ in quasisymmetry depend
only on each other and on $\diam(\Omega)/\dist(\varphi(0),\partial\Omega).$
\end{lemma}

\begin{remark}\label{uptobdy}
Notice that quasisymmetry is a strong version of uniform continuity of the conformal map from the unit disk onto 
$\Omega$ equipped with the inner distance. Hence the quasisymmetry condition extends up to the boundary:
one is allowed to use \eqref{qsdef}, when correctly interpreted, for triples of points in $\overline {\mathbb D}.$ For example, when $\Omega$ is a slit disk, 
every other point of the slit than the tip
corresponds to two different points with respect to the completion of $\Omega$ in the inner metric. In what follows, 
we will
only use the quasisymmetry condition up to the boundary in situations where $\Omega$ is a Jordan John domain where this is not an issue. However,
we will later employ the fact that a conformal map from the unit disk onto a bounded simply connected John
domain extends continuously to $\partial \mathbb D,$ with respect to the Euclidean distances. This follows since
quasisymmetry of a quasisymmetric map from the unit disk implies uniform continuity with respect to the
Euclidean distances.
\end{remark}

Recall that hyperbolic segments in the unit disk are arcs of (generalized) circles perpendicular to the unit circle.
Hence they are essentially the shortest connecting curves and stay ``optimally away from the boundary". 
The following definition gives an analog of this property.

\begin{definition}[Inner uniform domain]\label{inneruniform}
A domain $\boz$ is called   {\it inner uniform} if there exists a
positive constant $\ez_0$ such that for any pair of points $x,\,y\in\boz$, there exists a 
rectifiable curve $\gz\subset\boz$ joining $x$, $y$ and satisfying
\begin{equation}\label{e1.1}
\elle(\gz)\le \frac1{\ez_0}\dist_{\Omega}(x,\,y) \ \ \mbox{and} \  \ \dist(z,\, \partial \boz)\ge\ez_0 \min\{\elle(\gamma[x,z]),\,\elle(\gamma[z,y])\}\ \mbox{for all}\ z\in\gz.
\end{equation}
\end{definition}

Since a conformal map of the unit disk onto a John domain is quasisymmetric, the definition of hyperbolic segments and Lemma~\ref{lma:map and geodesic} suggest that
each bounded simply connected John domain should be inner uniform. This is indeed the case.

\begin{lemma} \cite[Theorem 2.29, Example 2.18 (2)]{nava1991} \label{johninner}
Let $\Omega$ be a bounded simply connected $J$-John domain. Then $\Omega$ is inner uniform with an associated  constant $\ez_0$ that only depends on $J.$
Moreover, the curves in the definition may be chosen to be hyperbolic segments. 
\end{lemma}

We continue by relating the inner and Euclidean diameters of boundary arcs of a Jordan John domain.

\begin{lemma}\label{inner diameter diameter}
Let $\Omega$ be a Jordan $J$-John domain and let $\gamma\subset \partial \Omega$ be a subarc containing its endpoints. Then we have
$$\diam(\gamma)\le \diam_{\Omega}(\gamma)\le C\diam(\gamma),$$
where $C$ depends only on $J.$
\end{lemma}
\begin{proof}
We only need to show that
$$\diam_{\Omega}(\gamma)\le C\diam(\gamma),$$
since the first inequality is trivial. Pick $x,\,y\in \gamma$ such that
$$\diam_{\Omega}(\gamma)\le 2\dist_{\Omega}(x,\,y).$$
By the definition of inner distance, the hyperbolic segment $\Gamma$ 
joining $x,\,y$ satisfies
$$\dist_{\Omega}(x,\,y)\le \elle(\Gamma).$$
Let $z$ be the midpoint (in the sense of length) of $\Gamma$. Then since $\Omega$ is a John domain and $\Gamma$ is a hyperbolic segment, by applying Lemma \ref{johninner} to pairs of points on $\gamma$ converging to the end points of $\gamma$, we deduce that 
$$\elle(\Gamma)\le C(J) \dist(z,\,\partial \Omega). $$
Hence we have 
\begin{equation}\label{inequ 201}
\diam_{\Omega}(\gamma)\le C(J) \dist(z,\,\partial \Omega).
\end{equation}

Fix a conformal map $\varphi\colon\mathbb D\to \Omega.$ Since $\Omega$ is Jordan,
$\varphi$ extends to a homeomorphism (still denoted $\varphi$) of $\overline
{\mathbb D}$ onto $\overline \Omega.$
Let $B$ be the closed disk of radius $\frac 1 8 |1-\varphi^{-1}(z)|,$ 
tangent to the circular arc $\varphi^{-1}(\Gamma)$ at $\varphi^{-1}(z),$ 
and contained in the Jordan domain enclosed by $\varphi^{-1}(\Gamma)$ and 
$\varphi^{-1}(\gamma)$; recall that $\varphi^{-1}(\Gamma)$ is a hyperbolic 
segment in $\mathbb D$ and hence a circle that meets the unit circle orthogonally at two end points of the arc $\varphi^{-1}(\gamma)$ of the unit circle. Since $B$ is a $3$-Whitney-type set, by Lemma~\ref{whitney preserving}, $Q'=\varphi(B)$ is 
a $\lambda'$-Whitney-type set. Here $\lambda'$ is an absolute constant. 
Let $\alpha$ be the radial projection of $B$ to
$\partial \mathbb D.$ Then $\alpha\subset \varphi^{-1}(\gamma)$ and  $\diam(B)=\frac 1 4 |1-\varphi^{-1}(z)|\le \diam(\alpha).$
Hence
$$
\diam(\varphi^{-1}(\gamma))
\ge  \diam(B)
\ge \frac 1 4 \dist(B,\,\alpha)\ge \frac 1 4 \dist(B,\,\varphi^{-1}(\gamma)).
$$
Consequently, by \eqref{condition of lower bound} 
$${\rm Cap}( B,\,\varphi^{-1}(\gamma),\,\mathbb D)\ge \delta>0$$
for an absolute constant $\delta.$
By the conformal invariance of capacity and monotonicity, 
$$ \delta \le {\rm Cap}( Q' ,\, \gamma,\,\Omega) \le {\rm Cap}( Q' ,\, \gamma,\,\mathbb R^2),$$
which with Lemma~\ref{inner capacity} implies 
\begin{equation} \label{ddg}
  {\dist(Q',\, \gamma)}\le C(\delta) {\diam(\gamma)} . 
\end{equation}
Since $Q'$ is of $\lambda'$- Whitney-type and $z\in Q'$ we conclude via
\eqref{equat90} and \eqref{ddg} that
$$\dist(z,\,\partial \Omega)\sim \diam(Q')\ls \dist(Q',\, \gamma)\ls \diam(\gamma) $$
where all the constants are absolute. 
This together with \eqref{inequ 201} gives
$$\diam_{\Omega}(\gamma) \ls \dist(z,\,\partial \Omega)\ls \diam(\gamma) $$
with constants depending only on $J$ as desired. 
\end{proof}

Based on the above lemma, one would expect $\partial \Omega$ to be compact
with respect to the inner metric for each Jordan John domain. This is
indeed the case by \cite[Remark 3.14]{Br06}, also see \cite{H2012}.


We close this subsection with a subinvariance property.

\begin{lemma} \cite[Theorem 1]{hlpw}  \label{John subdomain}
Let $\Omega\subset \mathbb R^2$ be a simply connected domain, and let
$\varphi\colon \mathbb D\to \Omega $ be a 
conformal map. Suppose that $\Omega$ is $J$-John  with John center $\varphi(0).$
Then $\varphi$ maps every $J'$-John domain 
$G\subset\mathbb D$ with John center $z_0$ to a $c(J,\,J')$-John domain 
$G'=\varphi(G)$ with John center $\varphi(z_0)$. 
\end{lemma}

\subsection{Conformal geometry of the exterior domain}\label{sec:confexterior}

Let us fix our notation for this subsection.
Let  $\Omega\subset \mathbb R^2$ be a Jordan domain, and let a homeomorphism 
$\varphi\colon \mathbb R^2\setminus \mathbb D \to 
\mathbb R^2\setminus \Omega$ be conformal in $\mathbb R^2 \setminus
\overline{\mathbb D}$. 
For $z_1
\in \partial \Omega$, define
\begin{equation}\label{eq:annulusdef}
A(z_1,\,k):=\{x\in \mathbb R^2\setminus \overline{\mathbb D} 
\mid 2^{k-1}< |x-\varphi^{-1}(z_1)|\le 2^{k}\},  
\end{equation}
for 
$k\in \mathbb Z$. 
Furthermore, let $\Gamma(z_1)\subset \mathbb R^2\setminus \overline{\Omega}$ 
be the hyperbolic ray corresponding to $z_1,$ 
and set
\[
\Gamma_{k}:=\varphi(A(z_1,\,k))\cap \Gamma(z_1).
\]

We call $\varphi(\{x \in \mathbb R^2 \setminus \mathbb {D}\mid 2^{k-1} = |x-\varphi^{-1}(z_1)|\})$ the inner boundary of $\varphi(A(z_1,\,k))$  and $\varphi(\{x \in \mathbb R^2 \setminus \mathbb {D}\mid 2^{k} = |x-\varphi^{-1}(z_1)|\})$ the outer boundary of
$\varphi(A(z_1,\,k)).$
See Figure~\ref{fig:annuli}
for an illustration of our notation.

The following technical lemma is a version of a step in the proof of an 
analog of
Lemma~\ref{lma:map and geodesic} in \cite{BKR1998}.

\begin{lemma}\label{lengthtransfer}
With the notation introduced in the beginning of this subsection,
let $z_2\in \Gamma(z_1),$ and let
$\gamma\subset \mathbb R^2 \setminus \Omega$ 
be
any curve connecting $z_1$ and $z_2.$
Let $k \in \mathbb Z$ be such that $2^k\le |\varphi^{-1}(z_1)-\varphi^{-1}(z_2)|$ and let $\gamma_{k}$ 
be any subcurve of $\gamma$ in $\varphi(A(z_1,\,k))$
joining the inner boundary and the outer boundary of
$\varphi(A(z_1,\,k)).$
Then 
\begin{equation}\label{equ 1002}
\elle(\Gamma_{k})\sim \dist(\Gamma_{k},\,\partial 
\Omega)\sim \diam(\Gamma_{k})
\end{equation}
 and 
$$\elle(\gamma_{k})\gtrsim 
\elle(\Gamma_{k}).$$
Here all the constants are absolute and especially independent of $\Omega$ and the choice
of $\varphi,z_1,\gamma,z_2,k.$ 
\end{lemma}

\begin{figure}
 \centering
\includegraphics[width=0.7\textwidth]{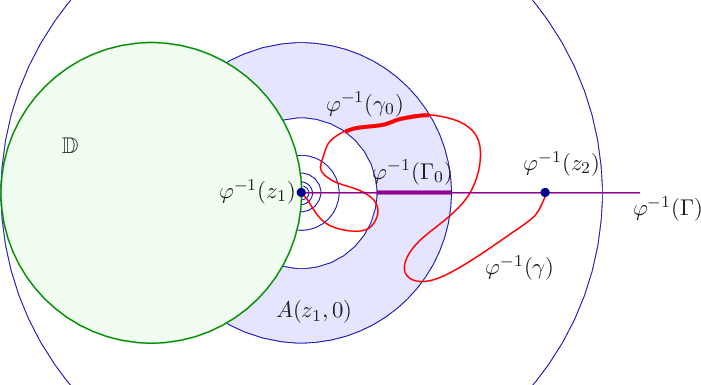}
    \caption{An illustration of the annular parts $\varphi^{-1}(\Gamma_k)$ and $\varphi^{-1}(\gamma_k)$, for $k = 0$, that are considered in Lemma \ref{lengthtransfer}.
    }
   \label{fig:annuli}
 \end{figure}

\begin{proof}
The fact that 
$\elle(\Gamma_k)\sim \dist(\Gamma_k,\,\partial \Omega)\sim \diam(\Gamma_k)$ immediately
follows  from Lemma~\ref{whitney preserving} and 
Lemma~\ref{bilipominaisuus}, since by definition 
$\varphi^{-1}(\Gamma_k)$ is contained in a 2-Whitney-type disk 
in $\mathbb R^2\setminus \overline{\mathbb D}$. 

Hence we only need to prove that $\elle(\gamma_k)\gtrsim \elle(\Gamma_k)$. 
Observe that, since $\gamma_k$ by definition joins the inner and outer 
boundaries of
$\varphi(A(z_1,\,k)),$ then
\begin{equation}\label{eqn1}
\elle(\varphi^{-1}(\gamma_k))\ge \diam(\varphi^{-1}(\gamma_k))\ge \diam(\varphi^{-1}(\Gamma_k))= \elle(\varphi^{-1}(\Gamma_k))=\dist(\varphi^{-1}(\Gamma_k),\,\partial \mathbb D).
\end{equation}
We next argue by case study. 

\noindent{{\bf Case 1}: $\dist(\varphi^{-1}(\gamma_k),\,\varphi^{-1}(\Gamma_k))< \frac 1 3 \dist(\varphi^{-1}(\Gamma_k),\,\partial \mathbb D)$.}
Write $r=\dist(\varphi^{-1}(\Gamma_k),\,\partial \mathbb D) = 2^{k-1}$ and pick $w\in \varphi^{-1}(\Gamma_k)$ so that $\dist(w,\varphi^{-1}(\gamma_k))<\frac r 3.$ Then
$B(w,\frac r 2)$ contains a subcurve $\alpha$ of $\varphi^{-1}(\gamma_k)$ of 
length at least $r/6$. 
Since $\varphi^{-1}(\Gamma_k)\cup \alpha$ is contained in
the 3-Whitney-type set $B(w,\frac{r}2)\cup \varphi^{-1}(\Gamma_k)$ and $6\elle(\alpha)\ge \elle(\varphi^{-1}(\Gamma_k)),$  
Lemma~\ref{bilipominaisuus} gives
\begin{align*}
\elle(\gamma_k) & \ge \elle(\varphi( \alpha))
 = \int_\alpha|\varphi'(z)|\,ds(z)
\ge \frac1C \elle(\alpha)|\varphi'(w)| \\
& \ge
\frac1C \elle(\varphi^{-1}(\Gamma_k))|\varphi'(w)|
\ge \frac1C \int_{\varphi^{-1}(\Gamma_k)}|\varphi'(z)|\,ds(z)
= \frac 1 C\elle(\Gamma_k) 
\end{align*}
for an absolute constant $C$.

\noindent{{\bf Case 2}: $\dist(\varphi^{-1}(\gamma_k),\,\varphi^{-1}(\Gamma_k))\ge \frac 1 3 \dist(\varphi^{-1}(\Gamma_k),\,\partial \mathbb D)$.}  
Let $\alpha'\subset \mathbb R^2\setminus \overline{\Omega}$ be a curve that joins $\gamma_k$ and $\Gamma_k.$
Since $\varphi^{-1}(\Gamma_k)$ is contained in a (2-Whitney-type) disk $B,$ 
$\varphi^{-1}(\alpha')$ contains a subcurve $\alpha\subset \frac 3 2 B$ of length at 
least $\frac 1 6  \dist(\varphi^{-1}(\Gamma_k),\,\partial \mathbb D).$ Since
$\frac 3 2 B$ is of 6-Whitney type, we may again apply Lemma~\ref{bilipominaisuus} to conclude
that
$$\elle(\alpha')\ge \elle(\varphi(\alpha))\ge \frac 1 C\elle(\Gamma_k)$$
with an absolute constant. Hence
\begin{equation}\label{eqn00}
C\dist_{\Omega}( \gamma_k ,\, \Gamma_k )\ge \diam(\Gamma_k). 
\end{equation}

Next, by \eqref{condition of lower bound} for the exterior of the unit disk, \eqref{eqn1}, the fact that $\dist(\varphi^{-1}(\gamma_k),\, \varphi^{-1}(\Gamma_k))\le 2 \dist(\varphi^{-1}(\Gamma_k),\, \partial \mathbb D)$
and the monotonicity of the capacity we obtain
$$c\log(\frac 3 2) \le {\rm Cap}(\varphi^{-1}(\overline{\gamma}_k),\,\varphi^{-1}(\overline{\Gamma}_k),\,\mathbb R^2\setminus \overline{\mathbb D})={\rm Cap}( \overline{\gamma}_k ,\, \overline{\Gamma}_k ,\,\mathbb R^2 \setminus \overline{\Omega})\le {\rm Cap}( \overline{\gamma}_k ,\, \overline{\Gamma}_k ,\,\mathbb R^2).$$
Hence by \eqref{eqn00} and Lemma~\ref{inner capacity} for $\mathbb R^2$ we conclude that
$$\elle(\gamma_k)\ge \diam(\gamma_k)\gtrsim \dist(\gamma_k,\,\Gamma_k)\gtrsim \diam(\Gamma_k)\sim \elle(\Gamma_k)$$
with absolute constants.
\end{proof}

We record another technical result,
see \cite[Corollary 4.18]{Ch1992} and \cite[Proof of Theorem 3.1, Page 645]{BKR1998}.

\begin{lemma} \label{samedistance}
Let 
Let $\Omega$ and $\varphi$ be as in the beginning of this subsection and let 
$0<\sigma\le 1$.
Let $z_0\in \R^2\setminus \overline {\mathbb D}$ and let $I$ be an arc of
$\partial \mathbb D$ with 
$$\elle(I)\ge \sigma (|z_0|-1)$$
and $$\dist(I,z_0)\le \frac {|z_0|-1} {\sigma}.$$ 
Then there is a rectifiable curve $\alpha\subset \R^2\setminus  {\mathbb D}$ 
joining $z_0$ to $I$ so that
$$\elle(\varphi(\alpha))\le C(\sigma)\dist(\varphi(z_0),\,\partial \Omega),$$
where $C(\sigma)$ is independent of $\varphi,z_0,\Omega.$
\end{lemma}
\begin{proof}
Notice first that it is enough to prove the claim for a subarc of $I$. Let $w \in I$ be such that $|w-z_0| \le 2\dist(I,z_0)$. Then, by taking a subarc of $I$ that contains $w$, we may assume that $\elle(I)\le |z_0|-1$,
that $I$ is closed and that $\dist(I,z_0) \le2\frac{|z_0|-1}{\sigma}$. Define $I_t=\{t\xi:\ \xi\in I\}$ for all $1 < t \le |z_0|$.
Then $I_t$ is also a continuum and $\elle(I_t) \ge \elle(I)$.
According to \eqref{condition of lower bound} (for $\mathbb R^2\setminus
\overline {\mathbb D}$), the assumptions on $I$, and the conformal invariance of capacity we have the 
estimate
\begin{equation}
0<\delta(\sigma)\leq {\rm Cap}(I_{|z_0|},\, I_t ,\,\mathbb R^2\setminus 
\overline{\mathbb D})={\rm Cap}(\varphi(I_{|z_0|}) ,\, \varphi(I_t) ,\, \mathbb R^2\setminus \overline{\Omega})
\end{equation}
for all $1 < t < |z_0|$.
Then, by Lemma~\ref{inner capacity}, we conclude that 
$$\dist_{\mathbb R^2\setminus \overline{\Omega}}(\varphi(I_{|z_0|}),\,\varphi(I_t))\le C(\sigma)\diam_{\mathbb R^2\setminus \overline{\Omega}}(\varphi(I_{|z_0|})) \le  C(\sigma) \elle(\varphi(I_{|z_0|})).$$
Hence, we can connect $I_{|z_0|}$ to $I_t$ with a curve $\beta_t$ for which 
$\elle(\varphi(\beta_t)) \le C(\sigma) \elle(\varphi(I_{|z_0|}))$.


By Lemma \ref{arsela}, there exists a sequence $t_i \searrow 1$ so that $\varphi\circ \beta_{t_i}$ converges to a rectifiable curve $\hat \beta \subset \mathbb R^2 \setminus \Omega$ joining $\varphi(I)$ to $\varphi(I_{|z_0|})$ with
\[
 \elle(\hat \beta) \le \liminf_{i \to \infty} \elle(\varphi(\beta_{t_i})) \le 
 C(\sigma) \elle(\varphi(I_{|z_0|})).
\]

Next, take $J$ to be a shortest closed subarc of $\partial B(0,|z_0|)$ containing both $I_{|z_0|}$ and $z_0$. Then, 
\begin{align*}
 \elle(J) & \le \elle(I_{|z_0|}) + \pi\dist(z_0,I_{|z_0|}) \le |z_0|\elle(I) + \pi|z_0-w| + \pi\dist(w,I_{|z_0|})\\
 & \le |z_0|\min((|z_0|-1),2\pi) + 2\pi\frac{|z_0|-1}{\sigma} + \pi(|z_0|-1)\\
 & \le C(\sigma)(|z_0|-1) = C(\sigma)\dist(z_0,\mathbb D).
\end{align*}
Since $J$ is contained in a $\lambda(\sigma)$-Whitney-type set $B = J\cup B(z_0,\frac{|z_0|-1}{2})$, by Lemma \ref{bilipominaisuus} and Lemma \ref{whitney preserving}
\[
\elle(\varphi(I_{|z_0|})) \le \elle(\varphi(J)) \le C(\sigma) \diam(\varphi(B)) \le C(\sigma) \dist(\varphi(B),\partial\Omega).
\]
Now, take $z \in I_{|z_0|}\cap \varphi^{-1}(\hat \beta)$ and define $\alpha = \varphi^{-1}(\hat \beta) \ast J[z,z_0]$. Then $\alpha$ connects $I$ to $z_0$ in $\mathbb R^2 \setminus \mathbb D$ and 
\begin{align*}
\elle(\varphi(\alpha)) &\le \elle(\hat \beta) + \elle(\varphi(J)) \le C(\sigma) \elle(\varphi(I_{|z_0|})) + \elle(\varphi(J))\\
& \le  C(\sigma) \dist(\varphi(B),\partial\Omega) \le C(\sigma)\dist(\varphi(z_0),\partial\Omega).
\end{align*}

\end{proof}

%
%
%

\section{Proof of necessity}\label{sec:nec}
In this section we prove that a bounded simply connected planar 
$W^{1,p}$-extension domain with $1<p<2$
necessarily has the property that
any two points $z_1,z_2 \in \R^2 \setminus \Omega$ can be connected with a curve 
$\gamma \subset \R^2 \setminus \Omega$ satisfying
 \begin{equation*}
  \int_{\gamma}\dist(z,\partial \Omega)^{1-p}\,\d s(z) 
 \le  C(\|E\|,p) |z_1-z_2|^{2-p}.
\end{equation*}

We will first consider the case where $\Omega$ is additionally assumed to be 
Jordan. Under this assumption, we usually denote the complementary domain of $\Omega$ by 
$\wz \Omega.$ 

\begin{theorem}\label{neceJordan} Let $1<p<2$ and let $\Omega$ be a Jordan 
domain. Suppose that 
there exists an extension operator
 $E \colon W^{1,p}(\Omega) \to W^{1,p}(\R^2).$ 
Then, given $z_1,\,z_2\in \wz \Omega\cup \partial \Omega$,  
there is a curve $\gamma\subset  \wz \Omega\cup \partial \Omega$ so that
 \begin{equation}\label{eq:extcharcomplre}
  \int_{\gamma}\dist(z,\partial \Omega)^{1-p}\,\d s(z)
 \le  C(\|E\|,p) |z_1-z_2|^{2-p},
\end{equation}
where 
$C(\|E\|,p)$ depends only on $p$ and the norm of the extension operator.
\end{theorem}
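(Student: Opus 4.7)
The plan is to prove this via a conformal-mapping argument combined with a test-function/capacity estimate coming from the extension operator. Given $z_1,z_2 \in \overline{\wz\Omega}$, I would first reduce to the case $z_1,z_2 \in \partial\Omega$. Using the conformal map $\varphi\colon \R^2\setminus \overline{\mathbb D} \to \wz\Omega$ (which extends homeomorphically to the boundary by Carath\'eodory--Osgood since $\Omega$ is Jordan), any $z \in \wz\Omega$ can be connected to a boundary point along the image of a radial ray starting at $\varphi^{-1}(z)$. This image decomposes into a chain of Whitney-type sets whose diameters form a geometric sequence (by Lemma~\ref{whitney preserving}, Lemma~\ref{linearmap} and Lemma~\ref{lengthtransfer}), so the integral of $\dist(\cdot,\partial\Omega)^{1-p}$ along the ray is bounded by a constant times $\dist(z,\partial\Omega)^{2-p}$. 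Combined with the trivial observation that when $|z_1-z_2|$ dominates $\dist(z_i,\partial\Omega)$ the bound is automatic, this reduces the general case to the boundary case.

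For $z_1,z_2 \in \partial\Omega$, set $w_i := \varphi^{-1}(z_i)\in\partial\mathbb D$. I would propose as candidate the curve $\gamma := \varphi(\Gamma)$, where $\Gamma \subset \R^2\setminus \overline{\mathbb D}$ is the hyperbolic geodesic joining $w_1$ and $w_2$. Using the change of variables $ds_\gamma = |\varphi'(w)|\,ds_\Gamma$ and the Koebe distortion estimate $\dist(\varphi(w),\partial\Omega)\sim |\varphi'(w)|(|w|-1)$, the left-hand side of \eqref{eq:extcharcomplre} is comparable to
\[
\int_\Gamma |\varphi'(w)|^{2-p}(|w|-1)^{1-p}\,ds_\Gamma(w).
\]
Decomposing $\Gamma$ dyadically by its distance to $\partial\mathbb D$ into pieces $\Gamma_k$, each piece is a Whitney-type set in $\R^2\setminus\overline{\mathbb D}$, so by Lemma~\ref{linearmap} and Lemma~\ref{whitney preserving} the integral over $\Gamma_k$ is comparable to $\diam(\varphi(\Gamma_k))^{2-p}$. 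Thus the task reduces to showing
\[
\sum_k \diam(\varphi(\Gamma_k))^{2-p} \le C(\|E\|,p)\,|z_1-z_2|^{2-p}.
\]

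Here the extension operator enters decisively. The plan is to construct a test function $u$ on $\Omega$ adapted to the Whitney decomposition along $\gamma$---a piecewise cutoff between the two arcs into which $\gamma$ separates $\partial\Omega$, with values varying on the dyadic pieces---whose $L^p$ gradient norm is controlled geometrically by $|z_1-z_2|^{2-p}$. The extension $Eu\in W^{1,p}(\R^2)$ then satisfies $\|\nabla Eu\|_{L^p(\wz\Omega)}^p \le \|E\|^p\|\nabla u\|_{L^p(\Omega)}^p$, and a $p$-capacity estimate in each Whitney piece $\varphi(\Gamma_k)$---linking the $L^p$ gradient bound to the diameter via \eqref{capacity of balls} and Lemma~\ref{inner capacity}---would then force the above sum to satisfy the required bound. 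The main obstacle is precisely this last step: since $W^{1,p}$ functions for $1<p<2$ are not continuous, $Eu$ cannot be evaluated along a single curve, so a Fuglede-type $p$-modulus argument is needed, together with a test function whose $W^{1,p}$ norm is sharply controlled by $|z_1-z_2|^{2-p}$. Designing this test function, and verifying that the hyperbolic geodesic (or a mild perturbation of it) realises the required curve, is the crux of the proof.
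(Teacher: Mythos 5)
Your high-level architecture matches the paper's: take $\gamma$ to be (essentially) the image under $\wz\varphi$ of a hyperbolic geodesic, reduce \eqref{eq:extcharcomplre} to the summability of $\diam(\varphi(\Gamma_k))^{2-p}$ over the Whitney pieces met by $\gamma$, and force that summability by extending a test function and extracting a per-piece lower bound on $\|\nabla E\Phi\|_{L^p}$. However, there are two genuine gaps.

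First, your reduction of the interior case to the boundary case is wrong as stated. You claim that the diameters of the Whitney pieces along the image of a radial ray ``form a geometric sequence,'' so that $\int_{\Gamma}\dist(z,\partial\Omega)^{1-p}\,dz\lesssim \dist(z,\partial\Omega)^{2-p}$ automatically. Lemma~\ref{linearmap} only controls $|\varphi'|$ within a single Whitney-type set; across consecutive pieces the image diameters can remain comparable over arbitrarily many scales (this is exactly the behaviour that the extension hypothesis must exclude). Since $0<2-p<1$, one has $\sum_k a_k^{2-p}\ge\bigl(\sum_k a_k\bigr)^{2-p}$, so the sum over pieces is \emph{not} controlled by the total length raised to the power $2-p$ without geometric decay. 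The paper accordingly treats the arc of a hyperbolic ray by rerunning the whole test-function argument (choosing an auxiliary boundary point $z_2'$ and applying the machinery of Lemma~\ref{sarja} to $[z_1,z_2]$); this case is not free.

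Second, the step you yourself identify as the crux is precisely what the paper's proof consists of, and your sketch of it points in a different and harder direction. The paper's test function $\Phi$ is not a ``piecewise cutoff with values varying on the dyadic pieces'': it is a single function built from $\phi(x)=\max_i\inf_{\gamma(x,P_2)}\int |z-z_i|^{-1}$, bounded below on the Jordan subdomain bounded by $P_1$ and two John curves --- and the John property of $\Omega$, which you never invoke, is itself a consequence of the extension hypothesis and is indispensable here. The per-square lower bound is then obtained without any Fuglede/modulus argument: Lemma~\ref{samedistance} guarantees that a fixed dilate $c_2Q_{ij}$ of each Whitney square met by $\gamma$ intersects both $P_1$ and $P_2$ in arcs of diameter comparable to $\ell(Q_{ij})$, Lemma~\ref{lma:lowerbound} (an ACL representative plus Fubini and H\"older on squares) yields $\int_{2c_2Q_{ij}}|\nabla E\Phi|^s\gtrsim \ell(Q_{ij})^{2-s}$, and a maximal-function argument with an auxiliary exponent $s<p$ sums these into $\|\nabla E\Phi\|_{L^p}^p\lesssim|z_1-z_2|^{2-p}$. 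Without these three ingredients the proposal does not close.
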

 
   After this, based on inner uniformity (see Definition~\ref{inneruniform} 
below), we prove that, 
if $\Omega$ is a bounded simply connected $W^{1,p}$-extension domain, then,
for $n\ge 2,$ the 
Jordan domains $\Omega_n=\varphi(B(0,\,1-\frac 1 n))$ are also  
$W^{1,p}$-extension domains with 
extension operator norms only 
depending on $p$ and the norm of the extension operator for 
$\Omega.$ 
Here $\varphi\colon \mathbb D\to \Omega$ is a suitable conformal map. 
Finally by a limiting
argument we obtain the result in the general case. 

We remark that, actually, when $z_1,\,z_2\in \wz \Omega$ one can require that the curve $\gamma$ in Theorem~\ref{neceJordan} is contained in $\wz \Omega$. For this see Remark~\ref{remark 1}.

\subsection{Necessity in the Jordan case}

In this section we  prove Theorem~\ref{neceJordan}. Recall that the
existence of our extension operator guarantees that $\Omega$ is a John
domain with a constant $J$ only depending on $p$ and the norm of $E;$ see e.g.\ \cite[Theorem 6.4] {kos1990}, \cite[Theorem 3.4]{gore1990}, \cite[Theorem 4.5]{nava1991} and 
references therein. 
In what follows, $J$ refers to this constant.
Because of technical issues, we first 
consider the case  
$z_1,\,z_2 \in \partial\wz \Omega=\partial \Omega$ with $z_2\neq z_1.$ 

Since $\Omega$ is Jordan,  $\partial \Omega\setminus \{z_1,\,z_2\}$ consists of two open 
arcs $P_1$ and $P_2$. Without loss of generality we assume that $\diam(P_1)\le \diam(P_2).$
For the following four lemmas let $\Omega$, $z_1$, $z_2$, $P_1$ and $P_2$ be fixed.

We need the following general lower bound on the Sobolev norm.

\begin{lemma}\label{lma:lowerbound}
Let $Q$ be a square with sides parallel to the coordinate axes and fix $1\le p<2.$ 
Let $u\in W^{1,1}(Q)$ be absolutely continuous on almost all lines parallel to the 
coordinate axes. Write
$$A_0=\{x\in Q \mid u(x)\le 0\} \quad \text{ and} \quad A_1=\{x\in Q \mid u(x) \ge 1\}.$$
Suppose further that 
$$\max\{\mathscr H^1(\pi_1(A_0)),\, \mathscr H^1(\pi_2(A_0))\}\ge \delta \ell(Q)$$
and
$$\max\{\mathscr H^1(\pi_1(A_1)),\, \mathscr H^1(\pi_2(A_1))\}\ge \delta \ell(Q)$$
for some $\delta>0$, where  $\mathscr H^1$ is the 1-dimensional Hausdorff measure, and $\pi_i$ stands for the projection to the $x_i$-axis for each $i=1,\,2$. Then 
$$ \ell(Q)^{2-p}\le C(\delta,\,p)\int_{Q}|\nabla u|^p \, dx.$$ 
\end{lemma}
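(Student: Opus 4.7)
The plan is to reduce, via H\"older's inequality, to the stronger-looking estimate $\int_Q |\nabla u|\,dx \gtrsim_\delta \ell(Q)$, and to obtain that by applying the co-area formula to the truncation $v = \max\{0,\min\{1,u\}\}$. Observe that $v \in W^{1,1}(Q)$ with $|\nabla v|\le|\nabla u|$ a.e., that $v\equiv 0$ on $A_0$ and $v\equiv 1$ on $A_1$, and that for every $t\in(0,1)$ the super-level set $\{v>t\}$ contains $A_1$ and is disjoint from $A_0$.

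The geometric heart of the argument will be to show that every measurable set $E\subset Q$ with $A_1\subseteq E$ and $A_0\subseteq Q\setminus E$ satisfies
\[
P(E,Q)\ \ge\ c(\delta)\,\ell(Q),
\]
where $P(\cdot,Q)$ denotes the relative perimeter. Choose $i,j\in\{1,2\}$ realizing the projection hypotheses, so $|\pi_i(E)|\ge\delta\ell(Q)$ and $|\pi_j(E^c)|\ge\delta\ell(Q)$; without loss of generality take $j=1$. Consider the dichotomy based on $|\pi_1(E)\cap\pi_1(E^c)|$. If this is at least $\delta\ell(Q)/2$, then for each such $x$ the vertical line at $x$ meets both $E$ and $E^c$ in positive one-dimensional measure and hence hosts an essential boundary point of $E$, so the $1$-Lipschitz property of $\pi_1$ yields $P(E,Q)\ge|\pi_1(\partial^{*}E\cap Q)|\ge \delta\ell(Q)/2$. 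Otherwise $|\pi_1(E^c)\setminus\pi_1(E)|\ge\delta\ell(Q)/2$, and for every $x$ in this set the whole column $\{x\}\times[0,\ell(Q)]$ is contained in $E^c$, forcing $\pi_2(E^c)=[0,\ell(Q)]$. When $i=2$ this gives $|\pi_2(E)\cap\pi_2(E^c)|\ge\delta\ell(Q)$, and we invoke the first alternative on the $x_2$-axis; when $i=1$, the symmetric strip reasoning applied to $E$ produces $\pi_2(E)=[0,\ell(Q)]$ as well, so $P(E,Q)\ge\ell(Q)$.

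With the perimeter lower bound in hand, the co-area formula
\[
\int_Q|\nabla v|\,dx \;=\; \int_0^1 P(\{v>t\},Q)\,dt
\]
together with $|\nabla v|\le|\nabla u|$ gives $\int_Q|\nabla u|\,dx \ge c(\delta)\,\ell(Q)$. H\"older's inequality then produces
\[
c(\delta)\,\ell(Q) \;\le\; \int_Q|\nabla u|\,dx \;\le\; \Big(\int_Q|\nabla u|^p\,dx\Big)^{1/p}\ell(Q)^{2(p-1)/p},
\]
which rearranges to the desired $\int_Q|\nabla u|^p\,dx\ge c(\delta,p)\,\ell(Q)^{2-p}$.

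The point that most deserves care is the strip branch of the dichotomy: passing from the set-theoretic statement $x\in\pi_1(E^c)\setminus\pi_1(E)$ to the claim that the entire column over $x$ lies in $E^c$ is immediate once the projections are genuine set-projections, but one must verify that the super-level sets $\{v>t\}$ arising from the $W^{1,1}$-function $v$ are sufficiently regular (Lebesgue measurable, of finite perimeter for a.e.\ $t$) so that the projection and perimeter identities used above apply. These facts are standard for $BV$-functions, so no additional tools are needed.
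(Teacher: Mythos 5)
Your overall strategy (truncate, apply the co-area formula, bound the relative perimeter of the level sets from below, finish with H\"older) is a legitimate alternative to the paper's argument, which instead runs the fundamental theorem of calculus directly along vertical and horizontal lines and handles the mixed-projection case by noting that a ``bad'' vertical segment and a ``bad'' horizontal segment would have to intersect. The H\"older reduction and the co-area identity are fine. However, the ``geometric heart'' of your proof contains a genuine gap: the claim that \emph{every} measurable $E\subset Q$ with $A_1\subseteq E$ and $A_0\subseteq Q\setminus E$ satisfies $P(E,Q)\ge c(\delta)\ell(Q)$ is false. The sets $A_0$ and $A_1$ are defined pointwise (via the precise ACL representative), so they may well be Lebesgue-null while having projections of full length (think of graphs); taking $E=A_1$ then gives $P(E,Q)=0$. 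The specific step where this surfaces is the first branch of your dichotomy: from $x\in\pi_1(E)\cap\pi_1(E^c)$ you conclude that the column over $x$ meets both $E$ and $E^c$ ``in positive one-dimensional measure.'' Membership in a set-theoretic projection only guarantees a single point of $E$ (resp.\ $E^c$) on the column, which is not enough to produce an essential boundary point on that slice, and perimeter is blind to null modifications.

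The repair is exactly the ingredient the paper uses: on almost every line parallel to an axis the representative $u$ (hence $v$) is absolutely continuous, hence continuous, so a point of $A_1$ on such a line forces $\{v>t\}$ to contain a relatively open, positive-measure piece of that line for every $t<1$ (and similarly $\{v<t\}$ near a point of $A_0$ for $t>0$). Once you insert this, your slicing argument for a.e.\ $t$ goes through and one recovers $P(\{v>t\},Q)\gtrsim_\delta\ell(Q)$ for a.e.\ $t\in(0,1)$. But note that at that point you are invoking continuity along a.e.\ line to turn projection information into slice information, which is precisely the mechanism of the paper's direct proof; the co-area detour then buys little. Your closing remark that the only thing to check is standard $BV$-regularity of the level sets misidentifies the issue: the problem is not the regularity of $\{v>t\}$ but the passage from set projections to measures of slices, and that passage needs the ACL hypothesis, not general $BV$ theory.
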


\begin{proof}
We may assume that
$$\int_{Q}|\nabla u|^p \, dx <\infty; $$ otherwise the claim is trivial.

Suppose first that $\mathscr H^1(\pi_1(A_0))\ge \delta \ell(Q)$ and $\mathscr H^1(\pi_1(A_1))\ge \delta \ell(Q)$. If for $\mathscr H^1$-almost every $x_1\in\pi_1(A_0)$, there exists some $x_2\in \pi_2(Q)$ such that $u(x_1,\,x_2)\ge \frac 1 3$, then by our absolute continuity assumption and the H\"older inequality,  
$$\frac 1 3 \le \int_{\pi_2(Q)} |\nabla u(x_1,\,t)|\,dt\le \ell(Q)^{\frac {p-1} p}\left(\int_{\pi_2(Q)}|\nabla u(x_1,\,t)|^p\,dt\right)^{\frac 1 p}$$
for $\mathscr H^1-$almost every $x_1\in\pi_1(A_0)$, and our claim follows by Fubini's theorem:
\[
 \int_Q|\nabla u|^p \, dx \ge \int_{\pi_1(A_0)}\int_{\pi_2(Q)}
 |\nabla u(x_1,t)|^p\,dt\,dx_1\ge \mathcal H^1(\pi_1(A_0))\frac{1}{3^p}\ell(Q)^{1-p}\ge \frac{\delta}{3^p}\ell(Q)^{2-p}.
\]
Similarly, the claim holds if for $\mathscr H^1-$almost every $x_1\in\pi_1(A_1)$, there exists $x_2\in \pi_2(Q)$ such that $u(x_1,\,x_2)\le \frac 2 3$. 
If both of the above two conditions fail, we find $x_1\in\pi_1(A_0)$ and $\hat x_1\in\pi_1(A_1)$ such that for all $x_2\in\pi_2(Q)$, $u(x_1,\,x_2)\le \frac 1 3$ and $u(\hat x_1,\,x_2)\ge \frac 2 3$. 
Then by absolute continuity and H\"older's inequality, for $\mathscr H^1$-almost every $x_2 \in \pi_2(Q)$, we have
$$\frac 1 3 \le u(\hat x_1,x_2) - u(x_1,x_2) \le \int_{\pi_1(Q)} |\nabla u(t,\,x_2)|\,dt\le \ell(Q)^{\frac {p-1} p}\left(\int_{\pi_1(Q)}|\nabla u(t,\,x_2)|^p\,dt\right)^{\frac 1 p},$$
and we again conclude by Fubini's theorem that
\[
 \int_Q|\nabla u|^p \, dx \ge \int_{\pi_2(Q)}\int_{\pi_1(Q)}
 |\nabla u(t,x_2)|^p\,dt\,dx_2\ge \mathcal H^1(\pi_2(Q))\frac{1}{3^p}\ell(Q)^{1-p}\ge \frac{1}{3^p}\ell(Q)^{2-p}.
\]


If $\mathscr H^1(\pi_2(A_0))\ge \delta \ell(Q)$ and $\mathscr H^1(\pi_2(A_1))\ge \delta \ell(Q)$, the argument for the previous case gives the asserted estimate after switching the roles of the coordinates $x_1$ and $x_2$.
We are left with the cases where 
$$\mathscr H^1(\pi_1(A_0))\ge \delta \ell(Q)\quad \text{ and} \quad\mathscr H^1(\pi_2(A_1))\ge \delta \ell(Q)$$
and 
$$\mathscr H^1(\pi_2(A_0))\ge \delta \ell(Q) \quad \text{ and} \quad \mathscr H^1(\pi_1(A_1))\ge \delta \ell(Q). $$
By symmetry, it suffices to consider the first one. As above,
if for $\mathscr H^1$-almost every $x_1\in\pi_1(A_0)$, there exists some $x_2\in \pi_2(Q)$ such that $u(x_1,\,x_2)\ge \frac 1 3$, then we get
\[
 \int_Q|\nabla u|^p \, dx \ge \frac{\delta}{3^p}\ell(Q)^{2-p}.
\]
Similarly, if for $\mathscr H^1$-almost every $x_2\in\pi_2(A_1)$, there exists some $x_1\in \pi_2(Q)$ such that $u(x_1,\,x_2)\le \frac 1 3$, then we get
\[
 \int_Q|\nabla u|^p \, dx \ge \frac{\delta}{3^p}\ell(Q)^{2-p}.
\]

Thus, the only case remaining is the one in which there exist $x_1\in\pi_1(A_0)$ and $x_2\in\pi_2(A_1)$ such that for all $t\in \pi_2(Q)$ and $s\in \pi_1(Q)$, $u(x_1,\,t)\le \frac 1 3$ and $u(s,\,x_2)\ge \frac 2 3$, and so that $u$ is absolutely  continuous along these two line segments. This is impossible as these segments intersect.
\end{proof}

We continue with the existence of suitable test functions. Recall that the  curves $P_1$ and $P_2$ are open.

\begin{lemma}\label{lma:testfunction} Let $c_1\ge 1$ and $1 < p < 2$.
With the above notation, there exists a function $\Phi \in W^{1,\,p}({\Omega})$
such that for any $0<\epsilon<\frac 1 9$, we have $\Phi\ge 1-\epsilon$ in 
some neighborhood of
 $P_1\cap B(z_1,c_1|z_2-z_1|)$, $\Phi\le \epsilon$ in some neighborhood of 
$P_2 \cap B(z_1,c_1|z_2-z_1|),$  and
 \[
  \|\nabla \Phi\|^p_{L^{p}({\Omega})} \leq C(p,c_1,J) |z_1-z_2|^{2-p}.
 \]
Here the neighborhoods are defined with respect to the topology of $\overline{\Omega}$. 
\end{lemma}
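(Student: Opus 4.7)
The plan is to reduce to the unit disk via the Riemann map and build $\Phi$ from two localized pieces near the preimages of $z_1$ and $z_2$. Since the extension operator $E$ forces $\Omega$ to be a John domain with constant $J=J(\|E\|,p)$ by Lemma~\ref{lma:dual of John}, the Riemann map $\varphi\colon\mathbb{D}\to\Omega$ extends to a homeomorphism of the closures by the Carath\'eodory--Osgood theorem and is quasisymmetric with respect to the inner distance of $\Omega$ by Lemma~\ref{John subdomain}. Write $\zeta_i=\varphi^{-1}(z_i)$ and $\tilde P_j=\varphi^{-1}(P_j)$, so that $\tilde P_1,\tilde P_2$ are the two arcs of $\partial\mathbb{D}$ meeting at $\zeta_1,\zeta_2$.

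For each $i\in\{1,2\}$ I would pick a radius $r_i>0$ so that, by Koebe distortion (Lemma~\ref{linearmap}) combined with Lemma~\ref{whitney preserving}, the image $\varphi(B(\zeta_i,2r_i)\cap\mathbb{D})$ is a Whitney-type set in $\Omega$ of Euclidean diameter comparable to, and a sufficiently large multiple of, $c_1|z_1-z_2|$. Then define a smooth $\tilde\Phi_i$ supported in $B(\zeta_i,2r_i)\cap\mathbb{D}$, equal to $1$ on the quarter-disk of $B(\zeta_i,r_i)\cap\mathbb{D}$ adjacent to $\tilde P_1$ and equal to $0$ on the other quarter-disk, with transition scale $r_i$ across the radial segment through $\zeta_i$ and across the annulus $B(\zeta_i,2r_i)\setminus B(\zeta_i,r_i)$. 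Setting $\tilde\Phi=\max(\tilde\Phi_1,\tilde\Phi_2)$ (smoothly interpolated in any overlap) and $\Phi=\tilde\Phi\circ\varphi^{-1}$ produces the candidate function on $\Omega$.

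The norm bound will be computed via the change-of-variables formula
\[
\|\nabla\Phi\|_{L^p(\Omega)}^p=\int_{\mathbb{D}}|\nabla\tilde\Phi|^{p}\,|\varphi'|^{2-p}\,d\tilde z\lesssim \sum_{i=1,2} r_i^{-p}\cdot r_i^{2}\cdot \Bigl(\sup_{B(\zeta_i,2r_i)\cap\mathbb{D}}|\varphi'|\Bigr)^{2-p},
\]
and each summand equals $(r_i\sup|\varphi'|)^{2-p}\sim\diam\bigl(\varphi(B(\zeta_i,2r_i)\cap\mathbb{D})\bigr)^{2-p}\sim|z_1-z_2|^{2-p}$ by the choice of $r_i$ and Koebe distortion; on such a small hyperbolically-bounded set, Lemma~\ref{linearmap} lets us replace $|\varphi'|$ by a constant up to multiplicative error.

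The main obstacle is to verify the boundary condition, namely that every point of $P_j\cap B(z_1,c_1|z_1-z_2|)$ (for both $j=1,2$) lies in a neighborhood in $\overline{\Omega}$ on which $\Phi$ has the correct value. Equivalently, $\varphi^{-1}(P_j\cap B(z_1,c_1|z_1-z_2|))$ must be contained in $B(\zeta_1,r_1)\cup B(\zeta_2,r_2)$, which amounts to ruling out pieces of $P_j$ that, while being far along the boundary arc from both $z_1$ and $z_2$, fold back Euclideanly into $B(z_1,c_1|z_1-z_2|)$. This localization uses the John property combined with Lemma~\ref{samedistance} to transfer Euclidean smallness on the $\Omega$-side into smallness on the $\mathbb{D}$-side, and it crucially uses the hypothesis $\diam(P_1)\le\diam(P_2)$ to exclude large-diameter subarcs of $P_1$ from folding back into the ball.
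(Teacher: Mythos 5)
Your construction has a genuine gap at exactly the point you flag as ``the main obstacle,'' and that obstacle cannot be overcome in the form you propose. Your candidate $\Phi$ is supported in $\varphi\bigl(B(\zeta_1,2r_1)\cup B(\zeta_2,2r_2)\bigr)$, so the boundary conditions force $\varphi^{-1}\bigl(P_j\cap B(z_1,c_1|z_1-z_2|)\bigr)\subset B(\zeta_1,r_1)\cup B(\zeta_2,r_2)$ for \emph{both} $j=1,2$. But for a Jordan John domain the boundary genuinely can fold back: a subarc of $P_1$ that is far from $z_1$ and $z_2$ in the internal metric (hence whose preimage sits far from $\zeta_1,\zeta_2$ on $\partial\mathbb D$) can re-enter $B(z_1,c_1|z_1-z_2|)$ Euclideanly --- think of a square with a thin inward spike whose two sides are Euclideanly close but boundary-far. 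On such a piece your $\Phi$ vanishes, violating $\Phi\ge 1-\epsilon$ there. The normalization $\diam(P_1)\le\diam(P_2)$ does not exclude this, and neither does Lemma~\ref{samedistance}. This is why the paper's proof does \emph{not} localize on the disk side: it defines $\phi$ globally as a maximum of infima of $\int \frac{1}{|z-z_i|}$ over all curves in $\Omega$ from $x$ to $P_2$, and uses a topological separation argument (any such curve starting in the Jordan subdomain $\Omega_1$ bounded by $P_1$ and the two John curves must cross those John curves, where the John condition forces the integral to be at least $J/(J+1)$). That global definition makes $\phi$ large near \emph{all} of $P_1$, folded-back pieces included, and small near all of $P_2$; the localization to the ball is done afterwards by a logarithmic cut-off, whose gradient is harmless for $p<2$.

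A secondary error: you invoke Lemma~\ref{linearmap} to replace $|\varphi'|$ by a constant on $B(\zeta_i,2r_i)\cap\mathbb D$, calling it a ``hyperbolically-bounded set.'' It is not --- it touches $\partial\mathbb D$ at $\zeta_i$, its hyperbolic diameter is infinite, and $\sup|\varphi'|$ over it can be infinite, so the displayed estimate $(r_i\sup|\varphi'|)^{2-p}\sim|z_1-z_2|^{2-p}$ fails as justified. The $L^p$ bound for $p<2$ could still be salvaged by H\"older: $\int_B|\varphi'|^{2-p}\le |B|^{p/2}\bigl(\int_B|\varphi'|^2\bigr)^{(2-p)/2}=|B|^{p/2}\,|\varphi(B)|^{(2-p)/2}$, but that is a different argument from the one you wrote. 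In any case the norm estimate is the easy half of the lemma; the unresolved boundary condition is the fatal issue.
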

\begin{proof} 
Let $x_0 \in \Omega$ be a distinguished point as in Definition
\ref{def:John}. 
Denote by $\gamma_1$ the hyperbolic segment from $x_0$ to $z_1$ and by $\gamma_2$ the hyperbolic segment from $x_0$ to $z_2$. By Lemma~\ref{hsjohn} (4), the curves $\gamma_1$ and $\gamma_2$ are John curves. We define $\gamma_0=\gamma_1\cup\gamma_2$. The existence of John
curves is actually only guaranteed by the definition for points inside the domain, but
the general case follows easily from this; see Remark~\ref{muistutus}. 
Let $\varphi\colon\overline{\mathbb D}\to\overline{\Omega}$ be a homeomorphism which is conformal inside and satisfies $\varphi(0)=x_0$. Then it is clear that the preimages of $\gamma_1$ and $\gamma_2$ under $\varphi$ are radial line segments, and $\varphi^{-1}(P_1\cup \gamma_0)$ is a Jordan curve. Hence $P_1\cup \gamma_0$ is also Jordan as $\varphi$ is a homeomorphism. It follows that $P_1\cup \gamma_0$ bounds a Jordan subdomain $\Omega_1\subset\Omega.$


Define a function $\phi\colon \Omega \to \R$ by setting
\[
\phi(x)=\max\left\{\inf_{\gamma(x,\,P_2)} \int_{\gamma(x,\,P_2)} \frac 1 {|\hat z-z_1|} \, \d s(\hat z),\,\inf_{\gamma(x,\,P_2)} \int_{\gamma(x,\,P_2)} \frac 1 {|\hat z-z_2|} \, \d s(\hat z)\right\}, 
\]
for $x\in \Omega$, where the infima are taken over all the rectifiable curves
$\gamma(x,\,P_2)\subset \Omega$ joining $x$ to   $P_2$.

Since $\Omega$ is a Jordan domain, $\gamma_0$, $P_1$ and $P_2$ are pairwise disjoint. 
By the John condition we have
\[
\dist(w,\,\partial \Omega) \ge J |w-z_1| ,
\]
for every $w\in \gamma_1$.
Therefore, for $w\in \gamma_1$, we get
\[
\phi(w) \ge \inf_{\gamma(w,\,P_2)}\int_{\gamma(w,\,P_2)} \frac 1 {|\hat z-z_1|} \, \d s(\hat z)
  \ge  \frac {\dist(w,\, \partial \Omega)} {\dist(w,\, \partial \Omega)+|w-z_1|} \ge \frac {J} {J+1} =: c_0,
\]
where we have used the fact that $\gamma(w,\,P_2)$ necessarily
exits $B(w,{\dist(w,\partial \Omega)})$
and that for points $\hat z \in B(w,{\dist(w,\partial \Omega)})$ we have, by the triangle inequality,
\[
 |\hat z-z_1| \le |\hat z - w| + |w-z_1| \le  \dist(w,\,\partial \Omega)+|w-z_1|.
\]
The same estimate follows similarly for $w\in \gamma_2$.  Hence for any point $w\in \Omega_1$, we have $\phi(w)\ge c_0$ as $\Omega_1$ is Jordan and $P_2$ is outside $\Omega_1$; any curve $\gamma(w,\,P_2)\subset \Omega$ must cross $\gamma_0$ by the Jordan curve theorem.  See Figure \ref{fig:testf}. 

Fix $0<\ez<\frac 1 9.$ We claim that we have $\phi\le\ez$ in some neighborhood of $P_2$.  
Indeed  for any $x\in P_2$ there is a radius $R_x>0$ such that $B(x,\,2R_x)\cap P_1=\emptyset$.  
Let $0<r_x\le R_x$. 
Then for any $y\in B(x,\, r_x)\cap \Omega$ there is a point $z\in P_2\cup\{z_1,\,z_2\}$ such that 
$$|y-z|=\dist(y,\,P_2)=\dist(y,\,\partial \Omega)\le r_x$$
while
$$ \dist(y,\,P_1)\ge R_x$$
via the triangle inequality.  By choosing $r_x$ sufficiently small,  we conclude, via letting $\gamma(w,\,P_2)$ be the line segment joining $y$ and $z$  in the definition of $\phi$,  that
$$\phi(y)\le r_x R_x^{-1} \le \ez.$$
 Hence by taking the union of $B(x,\,r_x)\cap \overline{\Omega}$ over $x\in P_2$ we obtain a neighborhood of $P_2$ in which $\phi\le \ez$.

Recall that $c_1\ge 1$. We define a cut-off function by setting
\[
 \alpha(z)=
 \begin{cases}
   1, & \text{if } |z-z_1|<{c_1|z_1-z_2|} \\
   \log_2 \frac{2c_1|z_1-z_2|}{|z-z_1|}, &\text{if } c_1|z_1-z_2|\le|z-z_1|\le 2 c_1|z_1-z_2| \\
   0, & \text{otherwise}
 \end{cases}
\]
for $z\in \Omega$. 
Using this cut-off function 
we define
\[
\Phi(z)= \alpha(z)\min\left\{\frac 1 {c_0} \phi(z),\,1\right\} 
\]
when $z\in \Omega$. We also define $\Phi(x)=1$ for $x\in P_1\cap B(z_1,\, c_1|z_1-z_2|)$, and $\Phi(x)=0$ when $x\in P_2$. Then by the properties of $\phi$ we know that, for 
any $0<\epsilon<\frac 1 9$, $\Phi\ge 1-\epsilon$ in some neighborhood of
the set  $P_1\cap B(z_1,\, c_1|z_1-z_2|)$, and $\Phi\le \epsilon$ in some 
neighborhood of 
$P_2 \cap B(z_1,\, c_1|z_1-z_2|)$.

 We claim that $\phi$ is locally Lipschitz in $\Omega$ with 
\begin{equation}\label{eq:phiestimate}
 |\nabla \phi(z)|\le \frac 3 2 \max\left\{{|z-z_1|}^{-1},\,{|z-z_2|}^{-1} \right\}
\end{equation}
for almost every $z \in \Omega$. 
Indeed, for any $y\in B(z,\, 3^{-1} \dist(z,\,\partial \Omega))$, we have, by the definition of $\phi$ and the fact that $z_1,\,z_2\in \partial \Omega$,
\begin{align*}
|\phi(y)-\phi(z)|&\le \max\left\{\int_{[y,\,z]} |\hat z-z_1|^{-1}\, \d s(\hat z),\, \int_{[y,\,z]} |\hat z-z_2|^{-1}\, \d s(\hat z)\right\}\\
&\le \frac 3 2 \max\left\{{|z-z_1|}^{-1},\,{|z-z_2|}^{-1} \right\} |y-z|,
\end{align*}
where $[y,\,z]$ is the line segment joining $y$ and $z$. Thus our claim follows. Furthermore, by applying the Leibniz rule we obtain
\begin{align*}
&\|\nabla \Phi\|^p_{L^{p}({\Omega})}
\lesssim \|\nabla \alpha\|^p_{L^{p}({\Omega})}+\|\nabla \phi\|^p_{L^{p}({\Omega\cap B(z_1,\,2c_1|z_1-z_2|)})}\\
&\lesssim \int_{B(z_1,\,2c_1|z_1-z_2|)\setminus B(z_1,\,|z_1-z_2|)} {|\hat z-z_1|}^{-p} \, d\hat z+ \int_{B(z_1,\,2c_1|z_1-z_2|)} {|\hat z-z_1|}^{-p}+{|\hat z-z_2|}^{-p}   \, d\hat z \\
&\leq C(p,c_1, J)  |z_1-z_2|^{2-p},
\end{align*}
by calculating in polar coordinates with $1<p<2$. Thus $\Phi\in W^{1,\,p}(\Omega)$ with the desired properties since $\|\Phi\|_{L^{\infty}(\Omega)}\le 1$ and $\Omega$ is bounded. 
\end{proof}

 \begin{figure}
 \centering
 \includegraphics[width=0.6\textwidth]{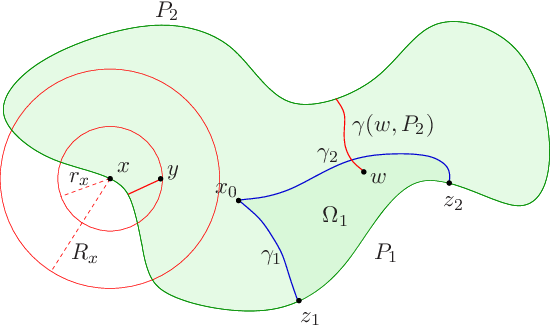}
     \caption{The function $\phi$ is seen to have large value in $\Omega_1$ by observing that any curve $\gamma(w,P_2)$ connecting a point $w \in \Omega_1$ to $P_2$ in $\Omega$ must
 intersect $\gamma_0 = \gamma_1\cup\gamma_2$. In order to see that $\phi$ has small value near $P_2$ 
     one observes that $\phi$ near $x \in P_2$ can be estimated by integrating $\frac{1}{R_x}$ along a curve with length at most $r_x \le \epsilon R_x$.
     }
   \label{fig:testf}
 \end{figure}

Let $\wz \varphi: \R^2\setminus \overline {\mathbb D}\to \R^2\setminus\overline {\Omega}$ be a conformal
map. Since $\Omega$ is Jordan, $\wz \varphi$ extends homeomorphically up to the boundary by the Carath\'eodory-Osgood theorem.
We refer to this extension also by $\wz \varphi.$ 
For our fixed 
$z_1,\,z_2 \in \partial \wz \Omega,$ let $\Gamma_k$ be the hyperbolic ray starting at 
$\wz \varphi^{-1}(z_k),$ where $k=1,2.$ Pick $y_k\in \Gamma_k$ with 
$$|\wz \varphi^{-1}(z_k)-y_k|=|\wz \varphi^{-1}(z_2)-\wz \varphi^{-1}(z_1)|,$$
let $\alpha_c$ be a shorter one of the two
circular arcs from $y_1$ to $y_2.$
Define
\begin{equation}\label{eq:alphagammadef}
 \alpha = [\wz \varphi^{-1}(z_1),y_1]\ast \alpha_c \ast[y_2, \wz \varphi^{-1}(z_2)] \qquad \text{and} \qquad \gamma=\wz \varphi(\alpha).
\end{equation}
See Figure~\ref{fig:gamma}.
We will establish the curve condition \eqref{eq:extcharcomplre} for $\gamma.$
The reason for using $\gamma$ instead of the corresponding hyperbolic segment 
is partially that this is technically easier.

Let $\wz W$ be a Whitney decomposition of $\wz \Omega$  given by Lemma \ref{lma:whitney} and set
$$\wz W_{\gamma}=\left\{\wz Q_{i}\in \wz W \mid \wz Q_{i} \cap \gamma \neq \emptyset \right\}. $$
We index the squares in $W_{\gamma}$ according to side length:
$\wz Q_{i1},\cdots,\wz Q_{in_i}$ are those with side length $2^i$ when $i\in \mathbb Z,$ if there are
such squares. Notice that since $\partial\wz \Omega$ is bounded, each $n_i$ is necessarily finite. 

We start with a simple observation on the Whitney squares that intersect the circular part of the curve $\alpha$.
\begin{lemma}\label{lma:finitelymanyoncircle}
 The number of squares $\wz Q_{ij} \in\wz W$ for which $\varphi^{-1}(\wz Q_{ij}) \cap \alpha[y_1,y_2] \ne \emptyset$ is bounded from above by a universal constant.
\end{lemma}
\begin{proof}
 By Lemma~\ref{whitney preserving} we have that $\wz \varphi^{-1}(\wz Q_{i,j})$ is a $\lambda$-Whitney-type set with a universal constant $\lambda$. If $\varphi^{-1}(\wz Q_{ij}) \cap \alpha[y_1,y_2] \ne \emptyset$, we have 
 \begin{align*}
 |\wz\varphi^{-1}(z_2) - \wz\varphi^{-1}(z_1)| & =
  \dist(\partial \mathbb D, \alpha[y_1,y_2]) \\
  & \le 
  \dist(\partial \mathbb D,\wz\varphi^{-1}(\wz Q_{ij}))
  + \diam(\wz\varphi^{-1}(\wz Q_{ij}))\\
  &\le (\lambda+1)\diam(\wz\varphi^{-1}(\wz Q_{ij})),
 \end{align*}
 and so $\varphi^{-1}(\wz Q_{ij})$ contains a disk of radius 
 $\frac{1}{\lambda(\lambda+1)}|\wz\varphi^{-1}(z_2) - \wz\varphi^{-1}(z_1)|$. Since for different $\wz Q_{ij}$ these disks are disjoint, and since
 \[
  \dist(\wz\varphi^{-1}(z_1),\wz\varphi^{-1}(\wz Q_{ij}))
  \le \diam(\alpha) \le 3|\wz\varphi^{-1}(z_2) - \wz\varphi^{-1}(z_1)|,
 \]
 the claim follows.
\end{proof}


\begin{figure} 
 \centering
 \includegraphics[width=0.85\textwidth]{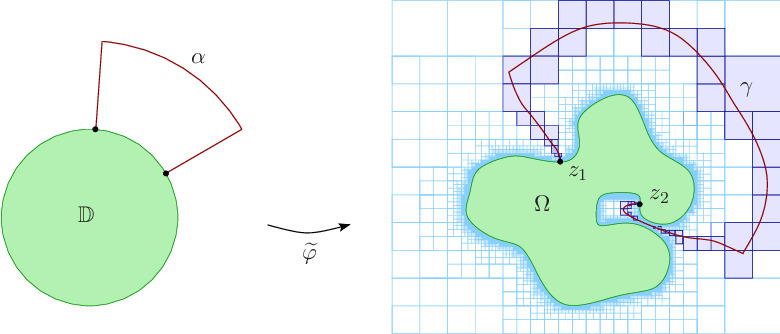}
 \caption{The curve $\gamma$   is obtained as 
the image of
          the curve $\alpha$ under the conformal map 
          $\widetilde\varphi \colon \R^2 \setminus \overline {\D} \to \R^2 
\setminus \overline {\Omega}$.
          In the illustration the Whitney squares in $\wz W_\gamma$ are highlighted.}
 \label{fig:gamma}
\end{figure}

\begin{lemma}\label{turning}
For the curve $\gamma$ defined in \eqref{eq:alphagammadef} and each Whitney square $\wz Q\in \wz W_{\gamma}$, we have
$$\wz Q\subset B(z_1, C|z_1-z_2|),$$
where $C=C(J)$ is independent of $z_1,z_2,\wz \varphi.$
\end{lemma}

\begin{proof}
Since $\Omega$ is John, by Lemma~\ref{btchar} the set $\mathbb R^2 \setminus \Omega$ is of $C(J)$-bounded turning, where $C(J)$ depends only on $J$. Thus, there is a (closed) curve
$\beta\subset \mathbb R^2\setminus \Omega$ that joins $z_1,z_2$ and so that $\beta\subset \overline B(z_1, C(J)
|z_1-z_2|)$. 

Now, if $\wz Q\cap \beta\neq \emptyset$, we have
\[
 \wz Q \subset B(z_1,C(J)|z_1-z_2| + \diam(\wz Q)) \subset
 B(z_1,(C(J)+\sqrt{2}C(J))|z_1-z_2|),
\]
as $z_1\in \partial \Omega$. 

Suppose then that $\wz Q\cap \beta = \emptyset$.
We have
\begin{equation}\label{inequ 36}
\diam(\wz \varphi^{-1}(\beta))\ge |\wz \varphi^{-1}(z_1)-\wz \varphi^{-1}(z_2)|
\end{equation} 
since $z_1,z_2\in \wz \varphi^{-1}(\beta)).$ 
Next, $\wz \varphi^{-1}(\wz Q)$ is a $\lambda-$Whitney-type set by 
Lemma~\ref{whitney preserving} with a univeral constant $\lambda$
and $\wz \varphi^{-1}(\wz Q)\cap \az\neq \emptyset.$ 
Hence the definition of  $\az$ ($\alpha$ is an inner uniform curve for the exterior domain of the unit disk) gives
\begin{align*}
\dist(\wz \varphi^{-1}(\wz Q),\,\wz \varphi^{-1}(\beta))&\le \min\{\dist(\wz \varphi^{-1}(\wz Q),\,\wz \varphi^{-1}(z_1)),\,\dist(\wz \varphi^{-1}(\wz Q),\,\wz \varphi^{-1}(z_2))\}\\
&\le C\diam(\wz \varphi^{-1}(\wz Q))\le C |\wz \varphi^{-1}(z_1)-\wz \varphi^{-1}(z_2)|.
\end{align*}
This together with \eqref{inequ 36} shows that
$$C\min\{\diam(\wz \varphi^{-1}(\wz Q)),\,\diam(\wz\varphi^{-1}(\beta))\}\ge \dist(\wz \varphi^{-1}(\wz Q),\,\varphi^{-1}(\beta)). $$
Then the version of \eqref{condition of lower bound}  for $\mathbb R^2\setminus \overline{\mathbb D}$ and conformal invariance of capacity give
$$0 < \delta(C) \le {\rm Cap}(\wz \varphi^{-1}(\wz Q),\, \wz \varphi^{-1}(\bz) ,\,\mathbb R^2\setminus \overline{\mathbb D})={\rm Cap}(\wz Q ,\, \beta ,\, \wz \Omega)\le {\rm Cap}(\wz Q ,\, \beta ,\, \mathbb R^2), $$
where in the last inequality we used the monotonicity of capacity. 

Hence Lemma~\ref{inner capacity} shows that $\dist(\wz Q,\,\beta)\le C \diam(\beta),$ and since $z_1\in \beta$ and $$\diam(\beta)\le C(J)|z_1-z_2|,$$
 we conclude that
 \begin{align*}
  \wz Q &\subset B(z_1, \dist(z_1,\wz Q) + \diam(\wz Q))
  \subset B(z_1, (1+\sqrt{2})\dist(z_1,\wz Q))\\
  &\subset B(z_1, (1+\sqrt{2})(\diam(\beta) + \dist(\beta,\wz Q)))\\
  & \subset B(z_1, C\diam(\beta)) \subset B(z_1, C(J)|z_1-z_2|).\qedhere
 \end{align*}
 \end{proof}

We apply the preceding four lemmas to prove the following estimate for 
$\wz W_{\gamma}.$ Recall that $n_i$ stands for the number (if any) of $\wz Q_{ij}
\in \wz  W_{\gamma}$
of side length $2^{i}$ and $\|E\|$ stands for the norm of the homogeneous extension operator.

\begin{lemma} \label{sarja}
We have 
$$\sum_i n_i 2^{i(2-p)} \le C(\|E\|,\,p) |z_1-z_2|^{2-p}. $$
\end{lemma}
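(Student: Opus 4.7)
Plan. The strategy is to apply the extension operator to the test function $\Phi$ of Lemma~\ref{lma:testfunction}, and then, for each Whitney square $Q=Q_{ij}\in W_\gamma$, use Lemma~\ref{lma:lowerbound} on a controlled enlargement $\widehat Q$ to extract a local lower bound of the form $\int_{\widehat Q}|\nabla E\Phi|^p\gtrsim\ell(Q)^{2-p}$. Summing over the bounded-overlap family $\{\widehat Q_{ij}\}$ then gives the stated sum estimate (matching $\sum_{i,j}\ell(Q_{ij})^{2-p}=\sum_i n_i 2^{i(2-p)}$).

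First fix $c_1=c_1(J)$ large enough that Lemma~\ref{turning} forces $\gamma\subset B(z_1,c_1|z_1-z_2|)$, and apply Lemma~\ref{lma:testfunction} with $\epsilon=1/10$, producing $\Phi\in W^{1,p}(\Omega)$ with $\|\nabla\Phi\|_{L^p(\Omega)}^p\le C(p,J)|z_1-z_2|^{2-p}$. Letting $u=E\Phi$, taken as its $W^{1,p}$-representative absolutely continuous on almost every line parallel to a coordinate axis, and $v=(u-\epsilon)/(1-2\epsilon)$, we obtain
\[
\|\nabla u\|_{L^p(\mathbb R^2)}^p\le \|E\|^p\|\nabla\Phi\|_{L^p(\Omega)}^p\le C(\|E\|,p)|z_1-z_2|^{2-p},
\]
together with $v\ge 1$ on a neighborhood (in $\overline\Omega$) of $P_1\cap B(z_1,c_1|z_1-z_2|)$ and $v\le 0$ on a neighborhood of $P_2\cap B(z_1,c_1|z_1-z_2|)$.

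For each $Q=Q_{ij}\in W_\gamma$ I construct an axis-aligned square $\widehat Q\supset Q$ with $\ell(\widehat Q)\le C(J)\ell(Q)$ such that, upon applying Lemma~\ref{lma:lowerbound} to $v$ on $\widehat Q$ with some $\delta=\delta(J)$, both sets $A_0=\{v\le 0\}\cap\widehat Q$ and $A_1=\{v\ge 1\}\cap\widehat Q$ project on some coordinate axis at length $\ge\delta\,\ell(\widehat Q)$. To build $\widehat Q$, I work via the conformal uniformization: Lemma~\ref{whitney preserving} tells us $\widetilde\varphi^{-1}(Q)$ is a Whitney-type set meeting $\alpha$. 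A case analysis based on which part of $\alpha$ it meets --- the radial segment of $\alpha$ near $\widetilde\varphi^{-1}(z_1)$, the one near $\widetilde\varphi^{-1}(z_2)$, or the short circular arc --- together with Koebe distortion (Lemma~\ref{linearmap}) and Lemma~\ref{samedistance}, produces connected subarcs $P_1'\subset P_1$ and $P_2'\subset P_2$ of diameter $\sim\ell(Q)$ within distance $C(J)\ell(Q)$ of $Q$. By Lemma~\ref{turning}, $P_j'\subset B(z_1,c_1|z_1-z_2|)$, so the boundary conclusions of Lemma~\ref{lma:testfunction} hold along them. I take $\widehat Q$ to be a minimal axis-aligned square containing $Q\cup P_1'\cup P_2'$; each $P_j'$, being connected of diameter $\sim\ell(\widehat Q)$, projects at length $\gtrsim\ell(\widehat Q)$ onto at least one coordinate axis, and the axis-parallel absolute continuity of $u$ transports the boundary values of $\Phi$ into thin inward-into-$\Omega$ slabs inside $\widehat Q$, delivering the projection hypothesis for both $A_0$ and $A_1$. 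The circular-arc branch of the case analysis yields only $O(1)$ squares of side $\sim|z_1-z_2|$, whose total contribution is trivially $\lesssim|z_1-z_2|^{2-p}$. Lemma~\ref{lma:lowerbound} then gives $\int_{\widehat Q}|\nabla u|^p\ge c(J,p)\ell(Q)^{2-p}$.

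Since neighboring Whitney squares have comparable sides by Lemma~\ref{lma:whitney}, the family $\{\widehat Q_{ij}\}$ has overlap bounded by some $N(J)$. Summing,
\[
c(J,p)\sum_{i,j}\ell(Q_{ij})^{2-p}\le \sum_{i,j}\int_{\widehat Q_{ij}}|\nabla u|^p\le N(J)\|\nabla u\|_{L^p(\mathbb R^2)}^p\le C(\|E\|,p)|z_1-z_2|^{2-p},
\]
which is the desired estimate. The main obstacle is the geometric construction of $\widehat Q$ satisfying the coordinate-axis projection hypothesis of Lemma~\ref{lma:lowerbound}: projections are not conformally invariant, so one must exploit both that each $P_j'$ is a connected arc of diameter comparable to $\ell(\widehat Q)$ and that the Sobolev extension $u$ is absolutely continuous on a.e.\ axis-parallel line, in order to carry the boundary conditions of Lemma~\ref{lma:testfunction} across $\partial\Omega$ into $\widehat Q$.
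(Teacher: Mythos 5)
Your overall strategy mirrors the paper's: extend the test function $\Phi$ of Lemma~\ref{lma:testfunction}, extract a local lower bound of order $\ell(Q_{ij})^{2-p}$ by applying Lemma~\ref{lma:lowerbound} on an enlargement of each $Q_{ij}$ that reaches both $P_1$ and $P_2$, and sum. The gap is in the final summation, where you claim that the enlarged squares $\{\widehat Q_{ij}\}$ have overlap bounded by some $N(J)$ because ``neighboring Whitney squares have comparable sides.'' This does not follow. The enlargement ratio $C(J)$ must be large enough that $\widehat Q_{ij}$ crosses $\partial\Omega$ and catches arcs of both $P_1$ and $P_2$, so it exceeds $1$ by a margin governed by the John constant via Lemma~\ref{samedistance}; once the ratio is even moderately larger than $1$, dilations of Whitney cubes do \emph{not} have bounded overlap. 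Concretely, $W_\gamma$ contains a chain of Whitney cubes of geometrically decreasing side converging to $z_1$ (and similarly to $z_2$), produced by the radial parts of $\alpha=\wz\varphi^{-1}(\gamma)$; the dilations $\widehat Q_{ij}$ along such a chain all contain a fixed neighbourhood of $z_1$, so the pointwise overlap grows with the number of scales present, which is unbounded. Comparability of \emph{adjacent} cubes controls the jump between consecutive scales but says nothing about how many distinct scales can pile up at a single point. Thus the inequality
\[
\sum_{i,j}\int_{\widehat Q_{ij}}|\nabla u|^p\le N(J)\,\|\nabla u\|_{L^p(\mathbb R^2)}^p
\]
is unjustified and the argument does not close.

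The paper handles precisely this obstacle with a maximal-function argument. Fix an auxiliary exponent $1<s<p$. For any $x\in Q_{ij}$ one has $\bint_{2c_2 Q_{ij}}|\nabla E\Phi|^s\lesssim M\bigl((\nabla E\Phi)^s\bigr)(x)$, hence
\[
|Q_{ij}|\left(\bint_{2c_2 Q_{ij}}|\nabla E\Phi|^s\right)^{p/s}\lesssim\int_{Q_{ij}}\bigl(M((\nabla E\Phi)^s)(x)\bigr)^{p/s}\,dx.
\]
Summation now uses only the pairwise \emph{disjointness} of the original cubes $Q_{ij}$, not any overlap bound on their enlargements, and then the $L^{p/s}$-boundedness of $M$ (here $p/s>1$ is where $s<p$ is essential) yields $\lesssim\|\nabla E\Phi\|_{L^p(\mathbb R^2)}^p\lesssim|z_1-z_2|^{2-p}$. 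Correspondingly, Lemma~\ref{lma:lowerbound} is applied with exponent $s$ rather than $p$ to produce the matching lower bound $\gtrsim\sum_i n_i 2^{-i(2-p)}$. This intermediate-exponent maximal-function device is the ingredient your argument is missing; the rest of your construction (the enlarged square containing subarcs $P_1'$, $P_2'$ of comparable diameter, and the separate treatment of the short circular portion of $\alpha$) is in line with what the paper does.
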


\begin{proof}
We claim that there exists a constant $c_0$  such that, 
for every $\wz Q_{ij}\in  \wz W_{\gamma}$, 
\begin{equation}\label{eqn05}
c_0 \wz Q_{ij}\cap P_1\neq \emptyset \neq c_0 \wz Q_{ij}\cap P_2.
\end{equation}
Towards this, suppose first that $\wz \varphi^{-1}(Q_{ij})\cap 
[\wz \varphi^{-1}(z_k),y_k] \neq \emptyset$ for $k=1$ or for $k=2,$
where the points $y_k \in \alpha$ are from the definition of $\alpha$ and $\gamma.$
Pick $z_0\in \wz \varphi^{-1}(\wz Q_{ij})\cap 
[\wz \varphi^{-1}(z_k),y_k].$
Then $\elle(\wz \varphi^{-1}(P_k))\ge |z_0|-1$ and $\dist(\varphi^{-1}(P_k),z_0)\le |z_0-\wz\varphi^{-1}(z_k)| \le |z_0|-1$ for $k=1,2.$ Hence Lemma~\ref{samedistance}, applied to both $\wz \varphi^{-1}(P_1)$ and $\wz \varphi^{-1}(P_2)$, gives a curve $\az'$ connecting $P_1$ and $P_2$ and passing 
through $z_0$ such that 
$$\elle(\wz \varphi(\az'))\le C_0\dist(\wz \varphi(z_0),\,\partial \wz \Omega).$$ 
Since $\wz Q_{ij}$ is a Whitney square, it follows that 
$\wz \varphi(\az')\subset c_0'\wz Q_{ij},$
where $c_0'=c_0'(C_0)\ge 1,$
and we conclude \eqref{eqn05} for our $\wz Q_{ij}$. 

We are left with the case where $\wz Q_{ij}$ only intersects the image of the 
circular  part of $\az.$ By Lemma~\ref{lma:finitelymanyoncircle} there are only uniformly finitely 
many such $\wz Q_{ij}$ and so there exists a constant $c''_0$ such that
$$\ell(\wz Q')\le c''_0\ell(\wz Q_{ij}) \quad \text{ and }\quad \dist(\wz Q_{ij},\,\wz Q')\le c''_0 \ell(\wz Q_{ij})$$
 for each such $\wz Q_{ij}$ and some $\wz Q'$ from our first case. 
By setting $c_0=c_0'c_0''$ we  obtain \eqref{eqn05} also in this case. 

Next,  Lemma~\ref{turning} allows us to infer that 
\begin{equation}\label{sisaltyy}
2c_0 \wz Q_{ij}\subset B(z_1,
2c_0C|z_1-z_2|)
\end{equation}
 for each $\wz Q_{ij}\in \wz W_{\gamma}.$ Here $C=C(J)=C(p,\,\|E\|).$

Let $\Phi$ be defined as in Lemma \ref{lma:testfunction} for the choice
$c_1=2c_0C,$ where $c_0C$ is from \eqref{sisaltyy}.
Let $s=\frac {1+p} 2$. Then $1<s<p$. 

Since $\Omega$ is a
$W^{1,\,p}-$extension domain, we have $E\Phi \in W^{1,\,p}(\mathbb R^2)$, 
where $E$ is the corresponding extension operator. 
Therefore, by denoting  the Hardy-Littlewood maximal operator by $M,$ and by using the boundedness of $M\colon L^{p/s}(\mathbb R^2)\to L^{p/s}(\mathbb R^2)$ applied to the function $|\nabla E\Phi|^s$, we obtain
\begin{align}\label{haluttuey}
&\sum_i \sum_{j=1}^{n_i}  |\wz Q_{ij}|^{1-\frac p s} \left(\int_{2c_0 \wz Q_{ij}} |\nabla E\Phi(x)|^s \, dx\right)^{\frac p s}\\
&\le C(c_0,\,p)   \sum_i \sum_{j=1}^{n_i} |\wz Q_{ij}| \left(\bint_{2c_0 \wz Q_{ij}} 
|\nabla E\Phi(x)|^s \, dx\right)^{\frac p s}\\
&\le C(c_0,\,p)  \sum_i  \sum_{j=1}^{n_i} \int_{\wz Q_{ij}} |M(|\nabla E\Phi|^s)(x)|^{\frac p s} \, dx\\
&\le C( c_0,\,p)   \int_{\wz \Omega} |M(|\nabla E\Phi|^s)(x)|^{\frac p s} \, dx \\
&\le C(c_0,\,p)   \int_{\mathbb R^2} |\nabla E\Phi(x)|^p \, dx\\
&\le C(c_0,\,\|E\|,\,p)\int_{\Omega}|\nabla \Phi(x)|^p \, dx
 \le C(c_0,\,c_1,\,\|E\|,\,p) |z_1-z_2|^{2-p}.
\end{align}
Notice that for any $\wz Q_{ij}\in \wz W_{\gamma}$, 
$$\diam(\gamma_1) \sim_{c_0}  \ell(\wz Q_{ij}) \sim_{c_0} \diam(\gamma_2) $$
for subcurves $\gamma_1\subset  2c_0 \wz Q_{ij}$ of $P_1$
and $\gamma_2\subset  2c_0 \wz Q_{ij}$ of $P_2$
 by \eqref{eqn05}, \eqref{sisaltyy} and the definition of $c_1$. 
Then, by Lemma \ref{lma:lowerbound} (with $p=s$ there) applied to a representative
of $E\Phi$ that is absolutely continuous on almost every
line segment parallel to the coordinate axes, relying on the values of $\Phi$ on $P_1,P_2$ from
Lemma 
\ref{lma:testfunction}, we have
\[
\ell(2c_0\wz Q_{i,j})^{2\frac{p}{s}-p} \le C(c_0,p)\left(\int_{2 c_0 \wz Q_{ij}} |\nabla E\Phi(x)|^s \, dx\right)^{\frac p s},
\]
which, by summing over all the squares $\wz Q_{ij}$, gives
the estimate
\begin{eqnarray*}
C(c_0,p)\sum_i \sum_{j=1}^{n_i}  |2c_0 \wz Q_{ij}|^{1-\frac p s} \left(\int_{2 c_0 \wz Q_{ij}} |\nabla E\Phi(x)|^s \, dx\right)^{\frac p s} \ge  \sum_i n_i 2^{i(2-p)}.  
\end{eqnarray*}
Therefore \eqref{haluttuey} yields the asserted inequality. 
\end{proof}

\begin{proof}[Proof of Theorem~\ref{neceJordan}]
We establish the result via a case study.

\noindent{{\bf Case 1}: $z_1,\,z_2\in \partial \Omega.$} 
Let $\gamma$ be the curve constructed in \eqref{eq:alphagammadef} for the pair $z_1,z_2$. Then
$\wz \varphi^{-1}(\gamma)=\alpha.$ 
Since each $\wz Q_{ij}\in \wz W_{\gamma}$ is a Whitney square, its diameter is 
comparable to 
$\dist(\wz Q_{ij},\,\partial \Omega)$, 
 which means for the points 
$w \in \gamma \cap \wz Q_{ij}$ that
\begin{equation}\label{equat41}
\dist(w,\, \partial\Omega)\sim \diam(\wz Q_{ij})
\end{equation}
with absolute constants.

 Since $\alpha$ consists of two line segments and a circular arc, we have
\[
 \mathcal H^1(\varphi^{-1}(\wz Q_{ij})\cap \alpha) \le (2+\pi) \diam(\varphi^{-1}(\wz Q_{ij})),
\]
and by Lemma~\ref{whitney preserving}, the set $\wz \varphi^{-1}(\wz Q_{ij})$ is of $\lambda$-Whitney type with an absolute constant $\lambda$.
Thus, by Lemma~\ref{bilipominaisuus}, we get
\begin{equation}\label{equat40}
\mathcal H^{1}(\wz Q_{ij}\cap \gamma)\le C \ell(\wz Q_{ij}),
\end{equation}
for some absolute constant $C$. 

By combining the claim of Lemma \ref{sarja} with \eqref{equat41} and 
\eqref{equat40} we arrive at
\begin{align*}
\int_{\gamma} \dist(z,\,\partial \Omega)^{1-p} \, ds &\le \sum_{\wz Q_{ij}\in \wz W_{\gamma}}\int_{\gamma\cap \wz Q_{ij}} \dist(z,\,\partial \Omega)^{1-p} \, ds \\
&\le C(p) \sum_{\wz Q_{ij}\in \wz W_{\gamma}} \dist(\wz Q_{ij},\,\partial \Omega)^{2-p} \le C(\|E\|, \, p) |z_1-z_2|^{2-p}.
\end{align*}
Hence we have proven the existence of the desired curve when $z_1,z_2\in \partial \Omega.$

\noindent{{\bf Case 2}}: $z_1,z_2\in \wz \Omega\cup \partial \Omega$ and at 
least one of the points belongs to $\wz \Omega.$ By swapping $z_1$ and $z_2$, if needed, we may assume that $z_2\in \wz \Omega$ and that $\dist(z_1,\partial \Omega)\le \dist(z_2,\partial \Omega).$

Suppose first that 
\begin{equation}\label{sisalla}
|z_1-z_2|\le \dist (z_2,\partial \Omega).
\end{equation}
Then we may choose $\gamma$ to be the line segment $[z_1,\,z_2]$ 
between $z_1,z_2,$   and the curve condition \eqref{eq:extcharcomplre} is satisfied as $1<p<2$: 
\begin{equation}\label{sisallahyvin}
\begin{split}\int_{[z_1,\,z_2]} \dist(z,\,\partial \Omega)^{1-p}\,ds(z)
& \le \int_{[z_1,\,z_2]}  \dist(z,\,\partial B(z_2,\dist(z_2,\partial \Omega)))^{1-p}\,ds\\
&\le C(p) |z_1-z_2|^{2-p}. 
\end{split}
\end{equation} 

Assume now that \eqref{sisalla} fails. Choose $z_3,z_4\in \partial \Omega$ so
that 
$$|z_i-z_{i+2}|=\dist(z_i,\partial \Omega)$$
for $i=1,2.$ Then 
$$|z_1-z_3|+|z_2-z_4|< 2|z_1-z_2|$$ and 
\begin{equation}\label{eikaukana}
|z_3-z_4|\le 3|z_1-z_2|.
\end{equation}
Let $\gamma'$ be the curve connecting $z_3$ and $z_4$ obtained from Case 1, and define $\gamma = [z_1,z_3]\ast \gamma' \ast [z_4,z_2]$. Then, by \eqref{sisallahyvin} for $[z_1,z_3]$ and $[z_4,z_2]$, the Case 1, and \eqref{eikaukana}, we get
\begin{align*}
\int_{\gamma} \dist(z,\,\partial \Omega)^{1-p}\,ds(z) 
& \le C(\|E\|,p)\left(|z_1-z_3|^{2-p} + |z_3-z_4|^{2-p} + |z_4-z_2|^{2-p}\right)\\
&\le C(\|E\|,p)|z_1-z_2|^{2-p},
\end{align*}
concluding the proof also in this case.
\end{proof}

\begin{remark}\label{remark 1}
 Let $z_1,\,z_2\in \wz \Omega$. Even though the curve joining $z_1,\,z_2$ 
which we constructed in the proof above may touch  the boundary 
$\partial \Omega$, it can be modified so as to be contained in $\wz \Omega$. 
 
To begin, we may again assume that 
$$\dist(z_1,\partial \Omega)\le \dist(z_2,\partial \Omega)$$ and that
\eqref{sisalla} fails. Consider the points $z_3,z_4\in \partial \Omega$ from 
the proof 
above and let $w_i=\wz \varphi^{-1}(z_i)$ for $i=3,4.$ Since $\wz \varphi$ is 
continuous up to the boundary and \eqref{eikaukana} holds, 
we find $\epsilon>0$ so that 
$$|\wz \varphi(w)-\wz \varphi(w')|<4|z_1-z_2|$$ whenever
$w,w'\in \partial \mathbb D$ satisfy $|w-w_3|+|w'-w_4|<\epsilon.$
Recall that the curve $\gamma$ in the above proof in Case 1 is the image of the
curve $\alpha$ that consists of two radial segments and a circular arc. See
Figure \ref{fig:combination}. Suppose
that $w_3\neq w_4.$ Then we may choose $w,w'$ as above so that the 
corresponding curve $\alpha$ between $w,w'$ intersects the preimages of the
line segments between $z_1,z_3$ and between $z_2,z_4.$  This allows us to 
reroute our curve so that it does not intersect the boundary. The case of
$w_3=w_4$ is similar; choose $w,w'$ from ``different sides'' of $w_3.$

\begin{figure} 
   \includegraphics[width=0.8\textwidth]{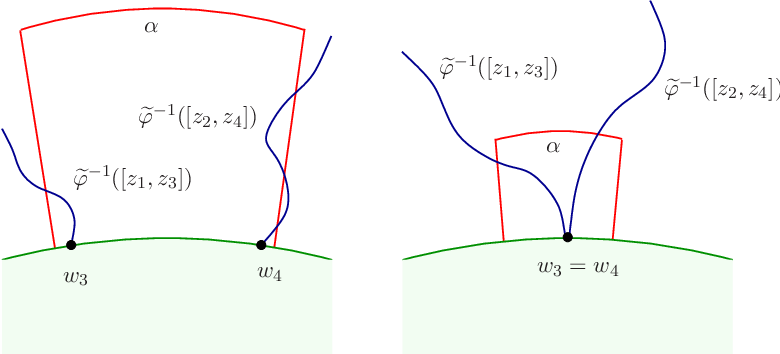}
   \caption{The curve constructed in Theorem \ref{neceJordan} can be modified so as to
travel inside $\widetilde\Omega$ by perturbing slightly the starting point and
the 
endpoint of the intermediate curve $\widetilde\varphi(\alpha)$ and by
disregarding the unnecessary parts of the concatenated curves. On the
left we have the case where the selected points $z_3$ and $z_4$ differ,
and on the right the case where they agree.}
   \label{fig:combination}
\end{figure}

 
\end{remark}

\begin{remark}
 The inequality in Lemma \ref{sarja} is actually equivalent to \eqref{eq:extcharcomplre} for 
our $\gamma$. One of the directions was shown above. For the other direction, first we note that each Whitney square has at most 20 neighboring squares, which tells us that we can distribute the squares in
$\wz W_{\gamma}$ into  no more than 21 subcollections $\{\wz W_k\}_{k=1}^{21}$ so that each of them consists of pairwise disjoint squares.
Then  for any two distinct $\wz Q_i,\,\wz Q_j\in \wz W_k$,  by Lemma~\ref{lma:whitney} we have
$$\frac {11}{10} \wz Q_i\cap \frac{11}{10} \wz Q_j =\emptyset. $$
Notice that for each $\wz Q_{ij}\in \wz W_{\gamma}$, by definition, we have
$$\mathcal H^{1}(\frac {11}{10} \wz Q_{ij}\cap \gamma)\ge \frac {1}{10}\ell(\wz Q_{ij}).$$
Thus by applying the estimate
$$\ell(\wz Q_{ij}) \le \dist(\wz Q_{ij},\,\partial \Omega)\le 4\sqrt 2 \ell(\wz Q_{ij}), $$
 we have
\begin{align*}
\sum_{\wz Q_{ij}\in \wz W_\gamma} \dist(\wz Q_{ij},\,\partial \Omega)^{2-p} & \le C(p) \sum_{k=1}^{21}\sum_{\wz Q_{ij}\in \wz W_k}\int_{\gamma\cap \wz Q_{ij}} \dist(z,\,\partial \Omega)^{1-p} \, ds \\
& \le C(p) \int_{\gamma} \dist(z,\,\partial \Omega)^{1-p} \, ds\le C(\|E\|, \, p) |z_1-z_2|^{2-p},
\end{align*}
which gives the other direction. 
\end{remark}

\subsection{Inner extension}

We prove the following inner extension theorem in this subsection. 

\begin{theorem}{\label{inner extension}}
Let $\varphi \colon \mathbb D \to \Omega$ be a conformal map, where $\Omega\subset \mathbb R^2$ is a simply connected 
John domain with John constant $J$. Suppose that $\varphi(0)$ is the 
distinguished point in the definition of a John domain.
Set $\Omega_{\epsilon}=\varphi (B(0,\,1-\epsilon))$ for 
$0<\epsilon\le \frac 1 2$ and let  $1< p<\infty.$  
Then there exists an extension 
operator $E_\epsilon\colon W^{1,\,p}(\Omega_\epsilon)\to W^{1,\,p}(\Omega)$ such that 
$\|E_{\epsilon}\|\le C(p,\,J).$ 
\end{theorem}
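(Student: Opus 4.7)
The plan is to construct $E_\epsilon$ by a Whitney-type extension through the analytic inner boundary $\partial\Omega_\epsilon=\varphi(\partial B(0,1-\epsilon))$, assigning to each Whitney square of $\Omega\setminus\overline{\Omega_\epsilon}$ a shadow Whitney-type set inside $\Omega_\epsilon$ and using the corresponding integral averages. The key geometric input comes from Lemma~\ref{John subdomain}: with constants depending only on $J$, the map $\varphi$ is quasisymmetric with respect to the inner distance on $\Omega$, and $\Omega_\epsilon$ is itself a Jordan John domain. Consequently a chain-Poincar\'e inequality on Whitney-type sets of $\Omega_\epsilon$ holds with constants depending only on $p$ and $J$, and Lemma~\ref{whitney preserving} together with Lemma~\ref{linearmap} let one transport Whitney-type sets through $\varphi$ between $\mathbb D$ and $\Omega$.

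I would take a Whitney decomposition $\mathcal W$ of $\Omega\setminus\overline{\Omega_\epsilon}$ and to each $Q\in\mathcal W$ assign a shadow $Q^*\subset\Omega_\epsilon$ with $\diam(Q^*)\sim_J\ell(Q)$, bounded overlap over $Q\in\mathcal W$, and the property that for neighboring squares $Q,Q'\in\mathcal W$ the shadows $Q^*$ and $(Q')^*$ can be linked in $\Omega_\epsilon$ by a chain of overlapping Whitney-type sets whose combined length is controlled. The construction splits into two regimes according to whether $\tilde Q:=\varphi^{-1}(Q)\subset A_\epsilon:=\mathbb D\setminus\overline{B(0,1-\epsilon)}$ lies closer to $\partial B(0,1-\epsilon)$ or to $\partial\mathbb D$: in the former, the hyperbolic reflection of $\tilde Q$ across $\partial B(0,1-\epsilon)$ lands at bounded hyperbolic distance and directly produces $\tilde Q^\flat\subset B(0,1-\epsilon)$; in the latter, $\tilde Q^\flat$ is produced by moving along the radial segment toward $0$ with an accompanying chain of intermediate Whitney-type sets of $\mathbb D$. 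Setting $Q^*:=\varphi(\tilde Q^\flat)$, the required properties of $\{Q^*\}_Q$ and the connecting chains would follow from Lemma~\ref{linearmap} and Lemma~\ref{whitney preserving} combined with the inner quasisymmetry of $\varphi$.

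With a smooth partition of unity $\{\phi_Q\}_{Q\in\mathcal W}$ subordinate to a slight enlargement of $\mathcal W$ satisfying $|\nabla\phi_Q|\lesssim\ell(Q)^{-1}$, define
\[
E_\epsilon u(x)=\begin{cases} u(x), & x\in\Omega_\epsilon,\\ \sum_{Q\in\mathcal W}\phi_Q(x)\, u_{Q^*}, & x\in\Omega\setminus\overline{\Omega_\epsilon},\end{cases}
\]
where $u_{Q^*}$ denotes the integral average of $u$ on $Q^*$. For $x\in Q$, the telescoping $\sum_Q\phi_Q\equiv 1$ yields $|\nabla E_\epsilon u(x)|\lesssim\ell(Q)^{-1}\sum_{Q'\sim Q}|u_{Q^*}-u_{(Q')^*}|$, and the chain Poincar\'e inequality in $\Omega_\epsilon$ converts each term into $\ell(Q)^{p-2}\int_{C_Q}|\nabla u|^p$ on a controlled neighborhood $C_Q\subset\Omega_\epsilon$. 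Integration over $Q$, summation using the bounded overlap of $\{C_Q\}$, and a standard trace-matching argument across $\partial\Omega_\epsilon$ then yield $\|\nabla E_\epsilon u\|_{L^p(\Omega)}\lesssim_{p,J}\|\nabla u\|_{L^p(\Omega_\epsilon)}$.

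The principal obstacle lies in the second regime: for Whitney squares $Q$ with $\ell(Q)\sim\dist(Q,\partial\Omega)\ll\dist(Q,\partial\Omega_\epsilon)$, the set $\tilde Q$ sits hyperbolically far from $B(0,1-\epsilon)$ in $\mathbb D$, so no shadow inside $\Omega_\epsilon$ can be at bounded hyperbolic distance from $\tilde Q$. The chain connecting the shadow of such $Q$ to those of its neighbors then has length of order $\log\bigl(\dist(Q,\partial\Omega_\epsilon)/\ell(Q)\bigr)$, potentially unbounded in $\epsilon$. Showing that the sum of the corresponding telescoping contributions stays bounded by $\|\nabla u\|_{L^p(\Omega_\epsilon)}^p$ with a constant independent of $\epsilon$ requires combining the John and inner-quasisymmetric structure of $\varphi$ from Lemma~\ref{John subdomain} with the Gehring-Hayman-type transport estimates of Lemma~\ref{lma:map and geodesic} and Lemma~\ref{samedistance}. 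This is the technical heart of the proof and the step where the John hypothesis on $\Omega$ is used in an essential way.
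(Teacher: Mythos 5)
Your construction contains a genuine gap, and you have in fact located it yourself: the ``second regime'', where a standard Whitney square $Q$ of $\Omega\setminus\overline{\Omega_\epsilon}$ satisfies $\ell(Q)\sim\dist(Q,\partial\Omega)\ll\dist(Q,\partial\Omega_\epsilon)$, is left unresolved. With your choice of decomposition this regime is unavoidable and genuinely problematic: the partition of unity forces $|\nabla\phi_Q|\lesssim\ell(Q)^{-1}$ with $\ell(Q)$ tending to zero at $\partial\Omega$ while the reflected data lives at scale $\dist(Q,\partial\Omega_\epsilon)$, so the telescoping sum accumulates contributions from arbitrarily many tiny squares whose shadows essentially coincide, and no combination of Lemma~\ref{John subdomain}, Lemma~\ref{lma:map and geodesic} and Lemma~\ref{samedistance} is exhibited that controls this sum uniformly in $\epsilon$. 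Asserting that the needed bound ``requires combining'' these lemmas is not a proof of it.

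The paper's proof avoids this regime altogether by \emph{not} using the Whitney decomposition of $\Omega\setminus\overline{\Omega_\epsilon}$. Instead it decomposes the round annulus $\mathbb D\setminus B(0,1-\epsilon)$ into pieces $\wz Q_i$ that are Whitney-type with respect to the \emph{inner} circle only (size comparable to the distance to $\partial B(0,1-\epsilon)$, with no shrinking toward $\partial\mathbb D$), and transports them by $\varphi$; by the inner quasisymmetry of $\varphi$ (Lemma~\ref{John subdomain}) the images $\wz S_i=\varphi(\wz Q_i)$ are Whitney-type for the inner distance to $\partial\Omega_\epsilon$, the associated squares $S_i\subset\Omega_\epsilon$ are found by tracing hyperbolic rays back toward $\varphi(0)$, and the connecting chains $G(\wz S_i,\wz S_j)$ for neighboring pieces consist of uniformly finitely many Whitney squares of $\Omega_\epsilon$ by inner uniformity. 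With that decomposition the gradient of the partition of unity is bounded by $\diam_\Omega(\wz S_i)^{-1}$, which stays at the correct reflected scale even near $\partial\Omega$, and there is no logarithmically long chain anywhere. If you wish to salvage your argument you would have to regroup your tiny Whitney squares near $\partial\Omega$ into clusters at the scale of their distance to $\partial\Omega_\epsilon$ and make the shadow assignment constant on each cluster --- but that regrouping is precisely the paper's decomposition, so the honest conclusion is that the decomposition itself, not the subsequent estimates, is the missing idea.
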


Fix $\ez$, and notice that $\Omega_{\ez}$ is a Jordan domain. 
Let $\Omega'_{\epsilon}=\mathbb R^2 \setminus \overline \Omega_{\epsilon}$, 
and $\widetilde\Omega_{\epsilon}=\Omega'_{\epsilon}\cap \Omega$. 
Let $\varphi$ be a conformal map as in Theorem \ref{inner extension}, with $\varphi(0)$
a John center of $\Omega.$ This map will
be fixed through this subsection.
By Lemma \ref{John subdomain}, $\varphi$ is $\eta$-quasisymmetric with respect to the 
inner distance, where $\eta$ depends only on $J.$ Moreover, by Remark \ref{uptobdy}, 
we may extend $\varphi $ continuously to the boundary $\partial \mathbb D;$ we 
denote the extended map 
still by $\varphi $. 

We are going to modify the method of P.W. Jones from \cite{jo1981} to prove 
Theorem \ref{inner extension}. 
%
%
%
%
We will construct a suitable cover for $\widetilde\Omega_{\epsilon}$ inside 
$\Omega$ and an associated partition of unity. Towards this, recall that
$\Omega$ is John and that, by Lemma~\ref{John subdomain}, so is 
$\Omega_{\ez},$ with a constant only depending on $J.$ 
Thus by Lemma \ref{johninner}, we have that $\Omega_{\epsilon}$ is inner uniform and  we may use hyperbolic segments of
$\Omega_{\epsilon}$ as curves referred to in the definition of inner uniformity, with constant 
$\ez_0$
only depending on $J.$

Fix $k_0\in \mathbb N$ with $2^{-k_0-1}<\ez \le 2^{-k_0}$. 
We begin by constructing a decomposition of the preimage $A=\mathbb D\setminus \overline{B(0,\,1-\epsilon)}$,
of $\widetilde\Omega_{\epsilon}$ under 
$\varphi$, and then obtain a decomposition of $\wz \Omega_\ez$ with the help of the map 
$\varphi$.  See Figure~\ref{fig:innerext}.

For $k\in \mathbb N$ let
$$A_k=B(0,\,1-\epsilon+2^{-k}\epsilon)\setminus B(0,\,1-\epsilon+2^{-k-1}\epsilon). $$
For each $k\ge 0$,  the collection of the $2^{k+k_0}$ radial rays obtained by
dividing the polar angle $2\pi$ evenly and by starting with the zero angle 
subdivides $A_k$ into closed  (with respect to $\mathbb D$) 
sets. Run this process for all $k\in \mathbb N$. We refer to these closed sets by 
$\wz Q_i$.  They satisfy the version  
\begin{equation}\label{uusiwhitney}
\frac 1 \lambda \diam(\wz Q_i)\le \dist(\wz Q_i,\,\partial \left(B(0,1-\ez) ) \right) \le \lambda \diam(\wz Q_i)
\end{equation}
of (i) in Definition~\ref{whitney-type set} with $\lambda=16\pi$. 

Set $\wz S_i=\varphi(\wz Q_i)$ and let $\wz W= \{ \wz S_i\}$. 
We claim that  each $\wz S_i$ is a $\lambda$-Whitney-type set with respect to 
the inner
distance of $\Omega$ and $\partial \Omega_{\ez}$  in the following sense.   

\begin{lemma}
There 
exists a constant $0<c=c(J)<1$ such that
\begin{equation}\label{equat20}
B_{\Omega}(w_i,\,c\diam_{\Omega}(\wz S_i))\subset \wz S_i
\end{equation}
for some $w_i\in \wz S_i,$ 
\begin{equation}\label{equat11}
c  \diam_{\Omega}(\wz S_i)\le  \dist_{\Omega}(\wz S_i,\,\partial \Omega_{\ez})\le \frac 1 c  \diam_{\Omega}(\wz S_i) 
\end{equation}
and
\begin{equation}\label{equat200}
c\diam_{\Omega}(\wz S_i)\le \diam_{\Omega}(\wz S_j) \le \frac 1 c  \diam_{\Omega}(\wz S_i)
\end{equation}
whenever 
$\wz S_i\cap \wz S_j \neq\emptyset.$
Here $B_{\Omega}(x,\,r)$ denotes the open disk centered at $x$ with radius $r$ with 
respect to the inner distance.
\end{lemma}
\begin{proof}
Let $\wz S_i$ be fixed. By the construction of $\wz Q_i = \varphi^{-1}(\wz S_i)$ there is a disk $B(z_0,\,c_0\diam(\wz Q_i))$ contained in $\wz Q_i$ for some absolute constant $c_0\le 1$. Let $z_1$ be an arbitrary point on the boundary of $B(z_0,\,c_0\diam(\wz Q_i))$ and let $z_2\in  \wz Q_i$ be such that 
\begin{equation}\label{lisatag0}
\dist_{\Omega}(\varphi(z_2),\,\varphi(z_0))\ge \frac 1 3 \diam_{\Omega}(\wz S_i);
\end{equation}
the existence of such a point follows from the triangle inequality. 
Then 
$$|z_2-z_1|\le c_0^{-1}|z_1-z_0|$$ and hence \eqref{lisatag0} together with 
quasisymmetry gives
\begin{equation}\label{lisatag}
\diam_{\Omega}(\wz S_i)\le 3 \dist_{\Omega}(\varphi(z_2),\,\varphi(z_0)) \le 3 \eta(c_0^{-1}) \dist_{\Omega}(\varphi(z_1),\,\varphi(z_0)). 
\end{equation}
By the arbitrariness of $z_1$ and the fact that $\varphi$ is a homeomorphism, we conclude \eqref{equat20} for a constant $c=c(\eta)=c(J)$. 

Towards \eqref{equat11}, first choose points $z_3\in \partial B(0,1-\ez)$  and $z_4\in 
\wz Q_i$ such that
\begin{equation}\label{equat44}
\dist_{\Omega}(\varphi(z_4),\,\varphi(z_3))\le 
2\dist_{\Omega}(\wz S_i,\,\partial \Omega_{\ez}). 
\end{equation}
Let $z\in \wz Q_i$ be such that 
\begin{equation}\label{lisatag3}
\diam(\wz Q_i)\le 2|z_4-z|.
\end{equation}
By \eqref{uusiwhitney} 
\begin{equation}\label{lisatag1}
|z_4-z|\leq \diam(\wz Q_i)\sim \dist(\wz Q_i,\, \partial B(0,1-\ez))\ls |z_4-z_3|
\end{equation}
with absolute constants.
Now \eqref{lisatag1},  quasisymmetry of $\varphi$ and \eqref{equat44} give
\begin{equation}\label{equat45}
 \dist_{\Omega}(\varphi(z_4),\,\varphi(z))\le {C(\eta)} \dist_{\Omega}(\varphi(z_3),\,\varphi(z_4))\leq{C(\eta)}\dist_{\Omega}(\wz S_i,\,\partial \Omega_{\ez}).
\end{equation}
Let $z_0$ be as in the first paragraph of the proof. By the triangle inequality,
$|z-z_0|\ge \frac 1 4 \diam(\wz Q_i)$ or $|z_4-z_0|\ge \frac 1 4 \diam(\wz Q_i).$
Assume that the latter inequality holds; the other case is handled analogously.
Clearly, $|z_4-z_0|\le \diam(\wz Q_i)\le 2|z_4-z|$ by \eqref{lisatag3}.
Hence quasisymmetry gives
\begin{equation}\label{lisatag4}
\dist_{\Omega}(\varphi(z_4),\,\varphi(z_0))\le 
\eta(2)\dist_{\Omega}(\varphi(z_4),\,\varphi(z)). 
\end{equation}
Let us argue as in the first paragraph of the proof with the help of the point $z_2 \in \wz Q_i$.
Our assumption that
$|z_4-z_0|\ge \frac 1 4 \diam(\wz Q_i) \ge \frac14|z_2-z_0|$
together with quasisymmetry further gives
\begin{equation}\label{lisatag5}
\diam_{\Omega}(\wz S_i)\le 3\dist_{\Omega}(\varphi(z_2),\,\varphi(z_0))\le  3 \eta(4)\dist_{\Omega}(\varphi(z_4),\,\varphi(z_0)).
\end{equation}
We obtain the lower bound of the distance in \eqref{equat11} by combining
\eqref{equat45}, \eqref{lisatag4} and \eqref{lisatag5}.

Towards the upper bound in \eqref{equat11}, 
pick points $z_5\in \partial B(0,1-\ez)$  and $z_6\in \wz Q_i$ such that 
$$|z_5-z_6|=\dist(\wz Q_i,\, \partial B(0,1-\ez)). $$
Let $z_0$ and $c_0$ be as in the first paragraph of our proof.
Then \eqref{uusiwhitney} gives
$$|z_5-z_6|\le c_0^{-1}\lambda |z_0-z_6|,$$
and by quasisymmetry 
$$\dist_{\Omega}(\wz S_i,\,\partial \Omega_{\ez}) \le \dist_{\Omega}(\varphi(z_5),\,\varphi(z_6))\le \eta(c_0^{-1}\lambda) \dist_{\Omega}(\varphi(z_6),\,\varphi(z_0))\leq  \eta(c_0^{-1}\lambda)\diam_{\Omega}(\wz S_i),$$
as desired.

We are left to prove \eqref{equat200}. Since $\wz S_i\cap \wz S_j\neq \emptyset,$ both
$$\dist_{\Omega}(\wz S_j,\,\partial \Omega_{\ez})\le \dist_{\Omega}(\wz S_i,\,\partial 
\Omega_{\ez})+\diam_{\Omega}(\wz S_i)$$
and the analogous inequality with the roles of $i,j$ reversed hold. Hence \eqref{equat200}
follows from \eqref{equat11}.
\end{proof}



Given $\widetilde S_{i}\in \widetilde W$ and $M>1$ that will be selected soon,
define
$$\wz U_{i}:=\{x\in\Omega \mid \dist_{\Omega}(x,\,\widetilde S_{i})< \frac 1 {M} \diam_{\Omega}(\widetilde S_{i})\}. $$
We claim that we can choose $M>1$ depending only on $J$ such that these sets 
$\wz U_i$ have uniformly finite overlaps. 
Notice that $\wz U_{i}\subset \wz \Omega$ whenever $M\ge 2/c$ for the constant
$c$ in \eqref{equat11}.

\begin{lemma} \label{naapurit}
If $\wz S_i\cap \wz S_j=\emptyset,$ then
\begin{equation}\label{equat1}
\max\{\diam_{\Omega}(\wz S_i),\,\diam_{\Omega}(\wz S_j)\}\le C(J)\dist_{\Omega}(\wz S_i,\,\wz S_j),
\end{equation}
Especially, for $M\ge 2C(J)$ in the definition of
the sets  $\wz U_i$ we have 
\begin{equation}\label{equat2}
1 \le \sum_{i}\chi_{\wz U_i}(x)\le 9
\end{equation}
for every $x\in \wz \Omega_{\ez},$ 
where $\chi_{\wz U_i}$ is the characteristic function of $\wz U_i$. 
\end{lemma}
\begin{proof} 
First, observe that  $\wz Q_i\cap \wz Q_j=\emptyset$ gives
$$\dist (\wz Q_i,\,\wz Q_j)\ge C \max\{\diam (\wz Q_i),\,\diam (\wz Q_j)\},$$
where the constant is absolute. 
We apply quasisymmetry to show that $\wz S_i\cap \wz S_j=\emptyset$ implies
$$
\dist_{\Omega}(\wz S_i,\,\wz S_j)\gs\max\{\diam_{\Omega}(\wz S_i),\,\diam_{\Omega}(\wz S_j)\},
$$
where the constant depends only on the John constant; also see \cite[Formula (3.5)]{KZ2016} for a version of this. 
Towards this, choose $w_1\in \wz S_i$ and $w_2 \in \wz S_j $ such that
\begin{equation}\label{inequat 4} 
\dist_{\Omega}(w_1,\,w_2)\le 2 \dist_{\Omega}(\wz S_i,\,\wz S_j),
\end{equation}
and let $w_3\in \wz S_i$ be an arbitrary  point. Then since 
$$|\varphi^{-1}(w_1)-\varphi^{-1}(w_2)|\ge \dist (\wz Q_i,\,\wz Q_j)\gs \diam (\wz Q_i)\ge |\varphi^{-1}(w_1)-\varphi^{-1}(w_3)|$$
with an absolute constant, 
the quasisymmetry of $\varphi$ applied to $\varphi^{-1}(w_1),\,\varphi^{-1}(w_2)$ and $\varphi^{-1}(w_3)$
gives
$$ \dist_{\Omega}(w_1,\,w_2)\gs \dist_{\Omega}(w_1,\,w_3). $$
Thus by the arbitrariness of $w_3$, \eqref{inequat 4} shows that
$$\dist_{\Omega}(\wz S_i,\,\wz S_j)\gs  \diam_{\Omega}(\wz S_i).$$
By replacing $w_3\in \wz S_i$ with $w_3\in \wz S_j$ and $\diam(\wz Q_i)$ with $\diam(\wz Q_j)$ above, respectively, we analogously obtain the inequality 
$$\dist_{\Omega}(\wz S_i,\,\wz S_j)\gs  \diam_{\Omega}(\wz S_j),$$ and \eqref{equat1} follows. 

Regarding \eqref{equat2}, the  lower bound is trivial since $\widetilde W$ 
forms a cover of $\widetilde \Omega_{\epsilon}.$
Since each $\wz S_i$ has at most $8$ neighboring sets, we obtain the upper 
bound in \eqref{equat2} from \eqref{equat1}. 
\end{proof}


We now fix $M=\max\{2C(J),2/c\},$ where the constant $C(J)$ is from 
\eqref{equat1}
and $c$ is from \eqref{equat11}.
Given $\widetilde S_{i}\in \widetilde W$, set
$$\psi_{i}(x)=\max\{1- 2M \diam_{\Omega}(\widetilde S_{i})^{-1} \dist_{\boz}(x,\, \wz S_i),\, 0\}$$
for $x\in \Omega.$ 
Then  $\psi_{i}$ is  locally Lipschitz with bounded and relatively closed 
support
in $\Omega$,  $|\nabla \psi_{i}|\le\, C(J) \diam_{\Omega}(\widetilde S_{i})^{-1}$ and $\psi_i(x)=1$ for any 
$x\in \widetilde S_{i}.$ Moreover, the support of   $\psi_{i}$ is
contained in $\wz U_{i}.$
Define 
$$\phi_j(x)= \frac {\psi_j(x)}{\sum_i \psi_i(x)}$$
for $x\in  \widetilde\Omega_{\epsilon}.$ Then our collection of the
functions $\phi_{j}$ is a partition of unity in $\widetilde\Omega_{\epsilon}:$
$\sum \phi_{j}(x)=1$ in $\widetilde\Omega_{\epsilon}.$
By \eqref{equat2} also  the functions $\phi_{j}$ are
locally Lipschitz, have supports in $U_j,$ and 
\begin{equation}\label{inequ 210}
|\nabla \phi_{j}|\le\, C(J) \diam_{\Omega}(\widetilde S_{j})^{-1}.
\end{equation}


\begin{figure}
  \centering
  \includegraphics[width=1\textwidth]{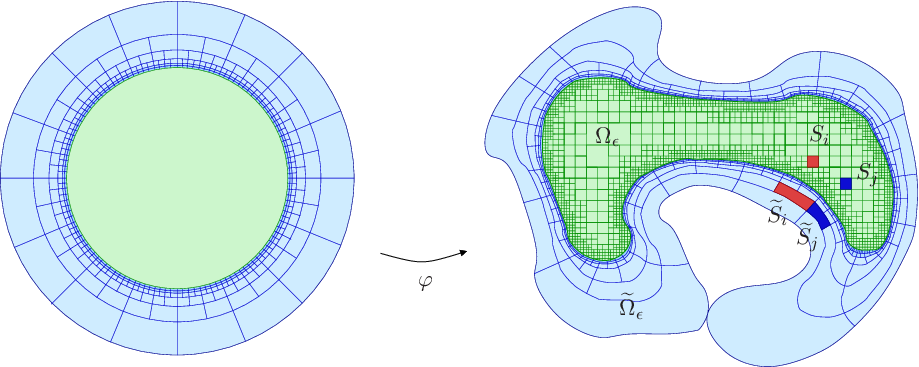}
\caption{In the inner extension the annular region $\wz
\Omega_\epsilon$ is divided into Whitney-type sets that are obtained by 
mapping a Whitney-type decomposition of the annulus inside the disk 
conformally. For the inner part $\Omega_\epsilon$ we use a 
standard Whitney decomposition. Two pairs of sets $(\wz{S}_i,S_i)$ 
and $(\wz{S}_j,S_j)$ are highlighted.}
  \label{fig:innerext}
\end{figure}

In order to construct our extension operator, we associate to  
each
$\widetilde S_{i}\in \widetilde W$ a suitable square $Q\in W,$ 
where $W=\{Q_1,Q_2,\dots \}$ is a fixed Whitney decomposition of $\Omega_{\epsilon}$; 
see Figure~\ref{fig:innerext}. 

\begin{lemma}\label{lma:Sichoice}
Given $\widetilde S_{i}\in \widetilde W$ there is $Q\in W$ such that 
\begin{equation}{\label{choose candidates}}
 \diam(Q)=\diam_{\Omega} (Q) \sim_J \dist_{\Omega}(\wz S_{i},\, Q)\sim_J \diam_{\Omega}(\wz S_i). 
\end{equation}
\end{lemma}
\begin{proof}
To see that a Whitney square of desired size can be chosen, trace back towards 
$\varphi(0)$ along any 
hyperbolic ray of $\Omega$ that intersects $\wz S_{i}$ and let
$Q$ be a first Whitney square of $\Omega_{\ez}$ intersecting this hyperbolic ray such that
\begin{equation}\label{equat92}
\dist(\varphi^{-1}(Q),\,\varphi^{-1}(\wz S_i))\ge \frac 1 {9\lambda} \diam(\varphi^{-1}(\wz S_i)), 
\end{equation}
where $\lambda$ is an absolute constant given by Lemma~\ref{whitney preserving}
so that $\varphi^{-1}(Q)$ is of $\lambda$-Whitney-type with respect to $B(0,1-\ez)$. We show the existence of such a square via Definition~\ref{whitney-type set} and the assumption that $0<\ez\le \frac 1 2$. Towards this, 
if such a square does not exist, then
$$\frac {\dist(\varphi^{-1}(Q),\,\varphi^{-1}(\wz S_i))}{\diam(\varphi^{-1}(\wz S_i))}< \frac 1 {9\lambda}  $$
for all the Whitney squares $Q$ intersecting  our fixed  hyperbolic ray. However, the diameter of $\varphi^{-1}(\wz S_i)=\wz Q_i$  is at most 2, while a $\lambda$-Whitney-type set in $B(0,\, 1-\ez)$ containing the origin has distance to $\partial B(0,\, 1-\ez)$ at least $\frac 1 {4\lambda}$ since $\ez \le \frac 1 2$ and $\lambda \ge 1$. 
Therefore we have
$$  \frac 1 {8\lambda}\le \frac {\dist(\varphi^{-1}(Q),\,\varphi^{-1}(\wz S_i))}{\diam(\varphi^{-1}(\wz S_i))}< \frac 1 {9\lambda}, $$
which leads to a contradiction. 
Then the facts that $Q$ is a first square satisfying \eqref{equat92}, 
$\varphi^{-1}(Q)$ is of
$\lambda$-Whitney type, $\varphi^{-1}(\wz S_i)=\wz Q_i$  satisfies \eqref{uusiwhitney}  and that $\dist_{\Omega}(\varphi^{-1}(Q),\,\varphi^{-1}(\wz S_i))$ is comparable to the length of the segment of our hyperbolic ray between
$\varphi^{-1}(Q)$ and $\varphi^{-1}(\wz S_i)$  allow us to deduce that
\begin{equation}\label{uusiekv}
\diam(\varphi^{-1}(\wz S_i))\sim \dist(\varphi^{-1}(Q), \, \varphi^{-1}(\wz S_i)) \sim \diam(\varphi^{-1}(S)). 
\end{equation}
Next we apply the quasisymmetry of $\varphi$ with respect to the inner distance to establish \eqref{choose candidates}. First of all choose $x_1\in  \wz S_i $ and $x_2\in Q $ such that
\begin{equation}\label{inequat 5} 
\dist_{\Omega}( Q, \,  \wz S_i )\le \dist_{\Omega}(x_1,\,x_2)\le 2\dist_{\Omega}( Q, \,  \wz S_i ), \end{equation}
and let $x_3\in \wz S_i$ be an arbitrary point. 
Since $x_1,x_3\in \wz S_i$ and $x_2\in Q,$ \eqref{uusiekv} implies that
$$|\varphi^{-1}(x_1) - \varphi^{-1}(x_2)|\ge \dist(\varphi^{-1}(Q), \, \varphi^{-1}(\wz S_i))\ge  C^{-1}|\varphi^{-1}(x_1)-\varphi^{-1}(x_3)| $$
with an absolute constant $C$, 
and hence the quasisymmetry of $\varphi$ gives
$$ \dist_{\Omega}(x_1,\,x_3)\le C(J) \dist_{\Omega}(x_1,\, x_2). $$
Thus \eqref{inequat 5} gives
\begin{equation}\label{ey}
 \diam_{\Omega}( \wz S_i )\le C(J)\dist_{\Omega}( Q, \,  \wz S_i )
\end{equation}
according to the arbitrariness of $x_3.$
For the other direction, choose $x_4\in \varphi^{-1}(\wz S_i),\,x_5\in \varphi^{-1}(Q)$ such that
$$\dist(\varphi^{-1}(Q), \, \varphi^{-1}(\wz S_i)) \le |x_4-x_5|\le 2 \dist(\varphi^{-1}(Q), \, \varphi^{-1}(\wz S_i)).$$
Pick $x_6\in \varphi^{-1}(\wz S_i)$ such that
$$\diam(\varphi^{-1}(\wz S_i))\le 2 |x_6-x_4|;$$
the existence of such a point follows from the triangle inequality.   
By \eqref{uusiekv}
$$|x_6-x_4|\ge \frac 1 2 \diam(\varphi^{-1}(\wz S_i))\ge C^{-1}  \dist(\varphi^{-1}(Q), \, \varphi^{-1}(\wz S_i))\ge C^{-1}|x_5-x_4|. $$
Then the quasisymmetry of $\varphi$ gives
$$\diam_{\Omega}(\wz S_i)\ge \dist_{\Omega}(\varphi(x_6),\,\varphi(x_4))\ge  \eta(C)^{-1}\dist_{\Omega}(\varphi(x_5),\,\varphi(x_4))\ge \eta(C)^{-1}  \dist_{\Omega}( Q, \,  \wz S_i ),$$
which together with \eqref{ey} gives the last equivalence in \eqref{choose candidates}. The second 
equivalence follows by an analogous argument by changing the roles of $\wz S_i$ and $Q$,  and the first equality is obvious. 
\end{proof}

We now use Lemma \ref{lma:Sichoice} to pick for each
$\widetilde S_{i}\in \widetilde W$ a square $Q\in W.$
By  Lemma \ref{lma:Sichoice} the collection of those $Q$ that satisfy \eqref{choose candidates} is a nonempty subcollection of $W.$ Recalling that $W=\{Q_1,Q_2,\dots\}$ we simply pick
the one of them that has the smallest index amongst those that belong to our subcollection. For simplicity of notation, we refer to this $Q$ associated to $\wz S_i$ by $S_i.$

Then, by \eqref{choose candidates}, we know that the inner 
distance between  $\wz S_{i}$ and $S_i$  with respect to $\Omega$ is no more
than a constant times $\diam_{\Omega}(\wz S_{i})$. By the triangle inequality 
and \eqref{equat200} it follows that
$$\dist_{\Omega}(S_{i},\, S_{j})\lesssim  \diam_{\Omega}(\wz S_{i})$$ if 
$\widetilde S_i\cap \widetilde S_j\neq\emptyset.$
Given $\widetilde S_i, \widetilde S_j$ with $\widetilde S_i \cap \widetilde S_j \ne \emptyset $, we consider  the hyperbolic segment $\Gamma$ in $\Omega$ joining the barycenters $x_{S_i}$ and $x_{S_j}$ of $S_i,S_j,$
respectively.
From Lemma \ref{lma:map and geodesic} and \eqref{choose candidates} 
we conclude that the Euclidean length of
the 
hyperbolic segment
$\Gamma$ is no more than constant times $\diam_{\Omega}(\wz S_{i}).$ Since
$\Omega_{\epsilon}=\varphi (B(0,\,1-\epsilon))$ and $S_i,S_j\subset \Omega_{\epsilon},$ it follows that the hyperbolic segment $\Gamma$ joining $x_{S_i}$ and $x_{S_j}$ 
is contained in  $\Omega_{\epsilon}.$ We use  Lemma \ref{lma:map and geodesic} 
a second time to conclude that the Euclidean length of a hyperbolic segment $\Gamma_{i,j}$ 
joining $x_{S_i}$ and $x_{S_j}$ that is hyperbolic with respect to $\Omega_{\epsilon}$
 is also bounded from above by a constant 
times $\diam_{\Omega}(\wz S_{i}):$ 
\begin{equation}\label{lisattypituus5}
\elle(\Gamma_{i,j})\ls \diam_{\Omega}(\wz S_{i}).
\end{equation}

Let us define $G(\widetilde S_i, \widetilde S_j)$ 
to be the union of all squares in our fixed Whitney decomposition $W$ 
of $\Omega_{\epsilon}$ that intersect this fixed
geodesic $\Gamma_{i,j}$. 

Next we show that the inner uniformity of $\Omega_{\ez}$
allows us to conclude that there are 
uniformly finitely many Whitney squares in every 
$G(\widetilde S_i, \widetilde S_j)$ with 
$\widetilde S_i\cap \widetilde S_j\neq\emptyset.$ This is a counterpart of \cite[Lemma 2.8]{jo1981} with a similar proof. 

\begin{lemma}\label{lisattypaall}
Let $i,j$ be such that $\widetilde S_i\cap \widetilde S_j\neq\emptyset.$ Then
\begin{equation}\label{equat12}
\#\left\{S_k\in W\mid S_k\in G(\widetilde S_i, \widetilde S_j)\right\} 
\le C(J), 
\end{equation}
where $\#$ denotes the counting measure.
\end{lemma} 
\begin{proof} 
Since $\diam(S_i)\lesssim \diam_{\Omega}(\wz S_{i})$ by 
\eqref{choose candidates} and the curve $\Gamma_{i,j}$ intersects the Whitney square
$S_i$, we conclude
by \eqref{lisattypituus5} that the diameter of each Whitney square of 
$\Omega_{\ez}$ that intersects $\Gamma_{i,j}$ 
is bounded from above by a fixed multiple of $\diam_{\Omega}(\wz S_{i})$.

On the other hand, by \eqref{choose candidates} with \eqref{equat200},
\begin{equation}\label{equat201}
\diam_{\Omega}(S_{i})\sim \diam_{\Omega}(\wz S_{i})\sim \diam_{\Omega}(\wz S_{j})\sim \diam_{\Omega}(S_{j}).
\end{equation}
Hence, the second condition of \eqref{e1.1} together with 
\eqref{equat201} tells us that
$$\dist(Q,\,\partial \Omega_{\ez})\gs \diam_{\Omega}(\wz S_{i})$$
if $Q\cap \Gamma\neq \emptyset.$ 

Thus the diameters of 
$Q\in W$ with $Q\cap \Gamma\neq \emptyset$ are bounded from below and from
above by fixed multiples of $\diam_{\Omega}(\wz S_{i})$, and hence 
\eqref{equat12} follows as $\elle(\Gamma_{i,j})\ls \diam_{\Omega}(\wz S_{i})$. 
\end{proof}

Given $u\in L^1(\Omega_{\epsilon})$, set
$$a_{i}=\bint_{S_{i}} u(x)\,dx=\frac 1 {|S_i|} \int_{S_i} u(x)\, dx,$$
where $S_{i}\in W$ is the square associated to 
$\widetilde S_{i}\in \widetilde W.$  Recall our partition of
unity consisting of the functions $\phi_i,$ see the discussion before \eqref{inequ 210}.
Define
\begin{equation}\label{eq:Eepsilondefinition}
E_{\epsilon}u(x)=\sum_{i}a_{i}\phi_{i}(x),\ x\in \wz \Omega_{\ez}
\end{equation}
for any given  function $u\in W^{1,\,p}(\Omega_{\epsilon})$ which is Lipschitz in $\overline{\Omega}_{\epsilon}$, and set $E_{\epsilon}u(x)=u(x)$ when $x\in \overline{\Omega}_{\epsilon}$. 

\begin{lemma}\label{ketjutus}
Let $E_\epsilon$ be given by \eqref{eq:Eepsilondefinition}.
Given $\widetilde S\in \widetilde W,$ we have the estimate
\begin{equation} \label{inequat 200} \|\nabla(E_{\epsilon}u(x))\|^p_{L^p(\widetilde S)}\ls \sum_{\wz S_{k}\cap \wz S\neq \emptyset}\int_{G(\wz S,\,\wz S_{k})}|\nabla u(x)|^p\, dx
\end{equation}
with a constant that only depends on $p$ and $J.$
\end{lemma}
\begin{proof}
Fix $\widetilde S\in \widetilde W$  and set $a=\bint_{S} u(x)\,dx.$ 
Then 
$$\nabla E_{\epsilon} u(x)=\nabla (E_{\epsilon}u(x)-a)=\nabla\left(\sum_{i}\phi_i(x)(a_i-a)\right) $$
in $\wz S$. 
By \eqref{equat12}, $G(\wz S,\,\wz S_{k})$  consists of  no more than $C(J)$
squares and the side lengths of all of them are comparable to $\diam_{\Omega}(S)$ modulo a multiplicative constant that only depends on $J.$
Hence \eqref{inequ 210}, \eqref{choose candidates} and  the Poincar\'e inequality
 (see e.g. \cite[Lemma 3.1]{jo1981} for the use of the Poincar\'e inequality) 
applied to the chain 
$G(\wz S,\wz S_k)$ of squares give
\begin{eqnarray}
\|\nabla(E_{\epsilon}u(x))\|^p_{L^p(\widetilde S)}
&\lesssim&  \int_{\wz S} \sum_{\wz S_{k}\cap \wz S\neq \emptyset} |a_k-a|^p  |\nabla   \phi_k(x)|^p \, dx \nonumber \\
&\lesssim& \sum_{\wz S_{k}\cap \wz S\neq \emptyset} |a_{k}-a|^p (\diam_{\Omega} (S))^{2-p} \nonumber \\
&\lesssim& \sum_{\wz S_{k}\cap \wz S\neq \emptyset} (\diam_{\Omega} (S))^{2-p} (\diam_{\Omega} (S))^{p-2} \int_{G(\wz S,\,\wz S_{k})}|\nabla u(x)|^p\, dx\nonumber \\
&\lesssim& \sum_{\wz S_{k}\cap \wz S\neq \emptyset} \int_{G(\wz S,\,\wz S_{k})}|\nabla u(x)|^p\, dx. 
\end{eqnarray}
\end{proof}

We are now ready to prove our norm estimate.

\begin{lemma} \label{normiestimaatti}
Let $E_\epsilon$ be given by \eqref{eq:Eepsilondefinition}.
We have 
$$\|E_{\epsilon}u\|^p_{L^p(\widetilde \Omega_\epsilon)}+\|\nabla(E_{\epsilon}u)\|^p_{L^p(\widetilde \Omega_\epsilon)}\ls  \|u\|^p_{L^p(\Omega_\epsilon)}+\|\nabla u\|^p_{L^p(\Omega_\epsilon)}$$
with a constant only depending on $p$ and $J.$
\end{lemma}
\begin{proof}
We begin by estimating the overlaps of $G(\wz S_k,\wz S_i).$ Towards this, for a fixed
$S_i,$ we first
bound the number of distinct $\wz S$ for which $S_i$ is associated to $\wz S.$

To begin with,  
\eqref{choose candidates} implies that, for a fixed $S_i\in W$,  
for every $\wz S\in \wz W$ associated to it we have
\begin{equation}\label{inequat 15}
\dist_{\Omega}(\wz S,\, S_i) \ls \diam_{\Omega}(S_i). 
\end{equation}
We claim that there are no more than $N(J)$ sets $\wz S\in \wz W$ associated to
a fixed $S_i\in W$. Towards this, first note that for any $x\in \Omega$ and 
$0<r<\diam(\Omega)$,  the hyperbolic segment $\Gamma$ of $\Omega$ joining $x$ to a point  $y\in \partial B_{\Omega}(x,\,r)$ satisfies
$$r=  \dist_{\Omega}(x,\,y)\le \elle(B_{\Omega}(x,\,r)\cap \Gamma).$$
Let $z \in \Gamma$ be such that $\elle(\Gamma[x,z]) = \frac{r}2$.
Then since, by Lemma \ref{johninner}, hyperbolic segments of $\Omega$ satisfy \eqref{e1.1} with a constant $0<\epsilon_0=\epsilon_0(J)<1$, we have
$$B\left(z,\,\frac 12 \epsilon_0 r\right)\subset B_{\Omega}(x,\,r).$$
Thus
\begin{equation}\label{equat93}
C(J)r^2 \le  |B_{\Omega}(x,\,r)|\le \pi r^2, 
\end{equation}
where the upper bound comes from 
$$B_{\Omega}(x,\,r)\subset B(x,\,r). $$

By \eqref{equat20}, each $\widetilde S$ associated with a fixed $S_i$ contains a Euclidean disk of radius comparable to $\diam_\Omega(\widetilde S)$, which is turn, by \eqref{choose candidates}, is comparable to $\dist_\Omega(\widetilde S,S_i)$ and to $\diam_\Omega(S_i)$. Hence, the number of such $\widetilde S$ is no more than a constant $N(J)$.
 
Since $\widetilde S_{i}$ has no more than 8 neighbors and the number of the 
sets $\wz S$ associated to any $S\in W$ is no more than $N(J),$ 
by \eqref{choose candidates}, \eqref{equat93} and \eqref{equat12} we conclude that
\begin{equation}\label{peter 3.2}
\sum_{\wz S_i\in \wz W}\sum_{\widetilde S_{i}\cap \widetilde S_k\neq \emptyset} \chi_{G(\wz S_k,\,\wz S_{i})}(x)
\lesssim 1,
\end{equation}
for all $x$; notice that  \eqref{equat12}  with \eqref{choose candidates} and \eqref{equat93} implies that each Whitney square is contained in at most uniformly finitely many chains. 
Inequality \eqref{peter 3.2} is the counterpart of    \cite[Page 80, Formula (3.2)]{jo1981}. 

Now Lemma \ref{ketjutus} together with \eqref{peter 3.2} 
gives
\begin{eqnarray*}
\|\nabla(E_{\epsilon}u)\|^p_{L^p(\widetilde \Omega_\epsilon)}
& = & \sum_{\wz S_i\in \wz W} \|\nabla(E_{\epsilon}u)\|^p_{L^p(\wz S_i)} \\
&\lesssim& \sum_{\wz S_i\in \wz W} \sum_{\widetilde S_{k}\cap \widetilde S_{i}\neq \emptyset} \int_{G(\wz S_{i},\,\wz S_{k})}|\nabla u(x)|^p\, dx \nonumber \\
& = & \int_{\Omega_\epsilon} \sum_{\wz S_i\in \wz W} \sum_{\widetilde S_{k}\cap \widetilde S_{i}\neq \emptyset} \chi_{G(\wz S_k,\,\wz S_{i})}(x)|\nabla u(x)|^p\, dx  \\
&\lesssim& 
\|\nabla u\|^p_{L^p(\Omega_\epsilon)}, 
\end{eqnarray*}
with the constant only depending on $p$ and $J$.

We are left to estimate the integral of $|Eu|^p$ over 
$\widetilde \Omega_\epsilon.$ By the definition of $Eu$ we have
$$\int_{\wz S_i}|Eu|^p\, dx\ls \sum_{\widetilde S_{i}\cap \widetilde S_k\neq \emptyset} 
\int_{S_k}|u|^p\, dx$$
and the desired bound follows via \eqref{peter 3.2}.  
\end{proof}
 
Finally, we prove Theorem~\ref{inner extension}.

\begin{proof}[Proof of Theorem \ref{inner extension}]
Let $E_\epsilon$ be given by \eqref{eq:Eepsilondefinition}.
Let us first show that the above procedure gives us an extension of our 
Lipschitz function $u$ to a function  $E_{\epsilon}u$ in   
$W^{1,\,p}(\Omega),$ 
with the desired norm bound. 
Towards this, we  claim that $E_{\epsilon}u$ is locally Lipschitz in $\Omega.$

According to our construction, $E_{\epsilon}u$ is smooth in $\wz \boz_{\ez}$. Hence to show the local Lipschitz continuity,  we only need to consider the case where $z_1\in \overline{\Omega}_{\epsilon}$ and $z_2\in \wz \boz_{\ez}$ with
$$B(z_2,\, 2|z_1-z_2|)\subset \Omega.$$
Suppose that $z_2\in \wz S$ for some $\wz S\in \wz W$. Then by \eqref{choose candidates} and the Lipschitz continuity of $u$ we have
\begin{eqnarray*}
|E_{\epsilon}u(z_2)-u(z_1)|&\le& \sum_{\widetilde S_{k}\cap \widetilde S\neq \emptyset} \phi_k(z_2)|a_k-u(z_1)|\\
 &\ls& \sum_{\widetilde S_{k}\cap \widetilde S\neq \emptyset} \phi_k(z_2) (\dist(z_1,\,S_{k})+\diam(S_k))\\
&\ls&  \sum_{\widetilde S_{k}\cap \widetilde S\neq \emptyset} \phi_k(z_2) (|z_1-z_2|+\diam_{\Omega}(\wz S_k))\ls |z_1-z_2|, 
\end{eqnarray*}
where in the last inequality we applied the facts that for $\widetilde S_{k}\cap \widetilde S\neq \emptyset$ it holds that
$$\diam_{\Omega}(\wz S_k)\sim \dist_{\Omega}(S,\,{\Omega}_{\epsilon})\sim \dist_{\Omega}(z_2,\,{\Omega}_{\epsilon})\le |z_1-z_2|. $$
Therefore we obtain the local Lipschitz continuity of $E_{\epsilon}u$.

Recall that $\partial \Omega_{\ez}$ has Lebesgue measure zero by Lemma~\ref{bdyzero}. Hence Lemma \ref{normiestimaatti} and the local 
Lipschitz continuity of $E_{\epsilon}u$ give that $Eu\in W^{1,p}(\Omega)$ with
$$\|u\|_{L^p(\Omega)}+ \|\nabla(E_{\epsilon}u)\|_{L^p(\Omega)}\le C(J,\,p)\left(  \| u\|_{L^p(\Omega_\epsilon)}+\|\nabla u\|_{L^p(\Omega_\epsilon)}\right).$$
Consequently $E_{\ez}$ is a bounded linear operator from $W^{1,\,p}(\Omega_{\ez})\cap Lip(\overline{\Omega}_{\ez})$ to $W^{1,\,p}(\Omega)$. 
Next, $W^{1,\,p}(\Omega_{\ez})\cap Lip(\overline{\Omega}_{\ez})$ is 
dense in $W^{1,\,p}(\Omega_{\ez})$ for $1<p<2$:  even $C^{\infty}(\R^2)$ is dense in $W^{1,\,p}(G)$ for $1<p<\infty$ if $G$ is a planar Jordan domain \cite{lewis1987}. This allows us to extend $E_{\ez}$ (uniquely) to a bounded linear operator 
from $W^{1,\,p}(\Omega_{\ez})$ to $W^{1,\,p}(\Omega)$. Thus the claim of Theorem~\ref{inner extension} follows. 
\end{proof}

\subsection{Proof in the general case}\label{ss:general}
In this subsection, we prove the necessity of \eqref{eq:extcharcompl}
in the general case, 
where $\Omega$ is a bounded  simply connected $W^{1,p}$-extension domain. 

First of all, $\Omega$ is necessarily  $J$-John, where the constant $J$ 
depends only on $p$ and the norm of the extension operator $\|E\|$  for 
$\Omega,$ see e.g. \cite[Theorem 3.4]{gore1990}.
 Fix $z_1,\,z_2\in \mathbb R^2\setminus \Omega$. Let $\Omega_n=\varphi(B(0,\,1-\frac 1 n))$ 
for $n\ge 2$, where $\varphi\colon \mathbb D\to \Omega$ is a conformal map with $\varphi(0)$ the John center of $\Omega$. 
Let $\wz \Omega_n$ be the complementary domain of $\Omega_n.$ Then
$$ \bigcap_{n=4}^{\infty} \wz \Omega_n=\mathbb R^2\setminus \Omega. $$ By 
Theorem~\ref{inner extension} we know that each $\Omega_n$ is  a $W^{1,p}$-extension domain 
with the norm of the operator only depending on $p,\,J$, and $\|E\|$.
Hence, by Theorem~\ref{neceJordan}, there is a curve $\gamma_n\subset \wz \Omega_n\cup \partial \Omega_n$ 
connecting $z_1$ and $z_2$ so that 
\begin{equation}\label{33oletus} 
\int_{\gamma_n}  \dist(z,\,\partial \Omega_n)^{1-p} \, ds \le C(J,\,\|E\|, \, p) |z_1-z_2|^{2-p}.
\end{equation}

We now proceed as in the proof of Lemma \ref{arsela} to find a limit curve satisfying \eqref{eq:extcharcompl}. Unlike in Lemma \ref{arsela}, here our integrand is not fixed. 
For this reason we repeat the argument with a small modification.

Notice that we may assume that $\elle(\gamma_n)\le C(J,\,\|E\| ,\,p) |z_1-z_2|:=M$ by Lemma~\ref{kvasikonveksi},  and thus also $\gamma_n \subset \overline{B}(z_1,\,M)$. Therefore by Lemma \ref{arsela}, a subsequence, not relabeled, converges uniformly to a limiting curve $\gamma$. We use the constant speed parametrization from $[0,1]$ for each $\gamma_n$, and by taking a further subsequence if necessary also assume that $\elle(\gamma_n) \to l$ as $n \to \infty$.

Since $\varphi$ is continuous up to the boundary (see Remark~\ref{uptobdy}), we have that $\Omega_n$ converges to $\Omega$ (in the Hausdorff distance with respect to the Euclidean metric). 
Thus, for any $t \in [0,1]$ we have that
\begin{equation}\label{eq:boundarydistance}
 \dist(\gamma(t),\partial\Omega) = \lim_{n\to \infty} \dist(\gamma(t),\partial\Omega_n).
\end{equation}
By the (uniform) convergence of $\gamma_n$ to $\gamma$ we also have
\begin{equation}\label{eq:uniformconvergence}
 \lim_{n\to \infty}|\gamma(t) - \gamma_n(t)| = 0.
\end{equation}
Now,  \eqref{eq:boundarydistance} together with \eqref{eq:uniformconvergence} gives
\[
 \dist(\gamma(t),\partial\Omega)^{1-p} = \lim_{n \to \infty}
 \dist(\gamma_n(t),\partial\Omega_n)^{1-p}.
\]
This, when combined with Fatou's Lemma and \eqref{33oletus}, yields
\begin{align*}
  \int_{\gamma} \dist(z,\partial\Omega)^{1-p}\,ds(z)
  & = \int_0^1 \dist(\gamma(t),\partial\Omega)^{1-p}| \gamma'(t)|\,dt
  \le \int_0^1 \dist(\gamma(t),\partial\Omega)^{1-p}l\,dt\\
  &= \int_0^1 \lim_{n \to \infty}
 \dist(\gamma_n(t),\partial\Omega_n)^{1-p}|(\gamma_{n}'(t))| \,dt\\
  & \le \liminf_{n \to \infty} \int_0^1
 \dist(\gamma_n(t),\partial\Omega_n)^{1-p}|(\gamma_{n}'(t))| \,dt\\
  & = \liminf_{n \to \infty}\int_{\gamma_{n}}\dist(z,\partial\Omega_n)^{1-p}\,\d s(z)\\
  & \le C(J,\,\|E\|, \, p) |z_1-z_2|^{2-p}.
 \end{align*}
 We have completed the proof in the general case.

\section{Proof of sufficiency}\label{sec:suf}

In this section we prove the sufficiency of the condition 
\eqref{eq:extcharcompl} in Theorem \ref{thm:main}, but begin with an auxiliary
version. Namely, let $1<p<\hat p<2$ and suppose that $\Omega$ is a  
Jordan
domain with the property that 
there exists a constant $C$ such that for every pair of points 
$z_1,\,z_2\in \R^2 \setminus \overline{\Omega}$ one can find a curve
$\gamma$ joining them in $\R^2 \setminus \overline{\Omega}$ with
\begin{equation}\label{eq:condition}
\int_{\gamma}\dist(z,\,\partial \Omega)^{1-\hat p} \, \d s(z) \le C |z_1-z_2|^{2-\hat p}.
\end{equation}
We claim that $\Omega$ is a $W^{1,p}$-extension domain. 
Write $\widetilde \Omega$ for
the complementary domain of $\Omega$. 

\begin{theorem}\label{prop:Jordancase}
  Let $1 < p <\hat p < 2$ and let $\Omega \subset \R^2$ be a bounded Jordan domain.
  Suppose that for all
 $z_1,z_2 \in \widetilde  \Omega$ there exists a curve $\gamma \subset \widetilde  \Omega$
 joining $z_1$ and $z_2$ such that \eqref{eq:condition} holds.
 Then $\Omega$ is a $W^{1,p}$-extension domain
 and the norm of the extension operator
 only depends on $p,\,\hat p$ and the constant $C$ in \eqref{eq:condition}.
\end{theorem}

The proof of Theorem \ref{prop:Jordancase} is given in several steps. 
In the first step, in the following subsection, we show that 
\eqref{eq:condition} also holds for initial arcs of hyperbolic rays 
$\Gamma\subset \widetilde  \Omega,$ up to a multiplicative constant. 
In the second subsection we introduce shadows of sets and use them to assign a collection of Whitney squares of the domain 
$\Omega$ to each such Whitney square $\wz Q$ of its complementary domain $\wz\Omega$ that satisfies
$\ell(\wz Q)\le 3\diam(\Omega).$ 
In the third subsection we use these Whitney squares to
 construct our extension operator.  The fourth subsection gives a basic estimate that we will use to
 control the extension, the fifth subsection deals with the construction of additional intermediate squares and estimates for them and the sixth one completes the proof.

Eventually, in the final subsection of this section, we prove Theorem \ref{thm:main} 
via Theorem 
\ref{prop:Jordancase} and an approximation and compactness argument. 
For this, it is crucial that the norm of the extension operator in 
Theorem \ref{prop:Jordancase} only depends on $p,\hat p$ and $C$ in 
inequality \eqref{eq:condition} and that a uniform version of \eqref{eq:condition} for some
$\hat p>p$ and for all of our approximating Jordan domains follows from 
\eqref{eq:extcharcompl} by Lemma \ref{selfimprove}; see 
Lemma~\ref{reduktio} below. 

Since we rely on compactness arguments, we do not obtain an explicit form for the extension operator for Theorem \ref{thm:main}. On the other  hand, once we know that $\Omega$ is indeed a $W^{1,\,p}$-extension domain, an explicit extension operator (a version of the Whitney extension operator) can be given 
\cite{hakotu2008},\cite{hakotu2},\cite{sh2006}. We do not see a way to 
directly show that this kind of a concrete procedure works under our 
assumptions. 


\subsection{Transferring the condition to hyperbolic rays}\label{sec:hypray}


According to the Riemann mapping theorem there is a conformal map 
$\widetilde \varphi\colon \mathbb R^2\setminus \overline{\mathbb D} \to 
\widetilde \Omega.$ 
We will refer to this fixed map 
through Subsection~\ref{suffjordan} by $\widetilde \varphi$. 
Since $\partial\Omega$ is a  Jordan curve, 
the Carath\'eodory-Osgood theorem allows us to extend
$\widetilde \varphi$ continuously to the boundary of $\mathbb D$
as a homeomorphism.
We denote the extension still by $\widetilde \varphi$. 
Recall the definition of a hyperbolic ray from Subsection~\ref{sec:hyperbolic}.

\begin{lemma}\label{lma:2}
Assume that \eqref{eq:condition} holds for $\wz \Omega$ for a bounded Jordan domain
$\Omega.$  
Let $z_1\in \partial \Omega$ and $[z_2,z_3]$ be an arc of the hyperbolic
ray $\Gamma\subset \overline{\wz\Omega}$ corresponding to $z_1.$ 
Then 
\begin{align}\label{curve conditionGamma}
\int_{[z_2,z_3]}\, \dist(z,\,\partial \Omega)^{ 1-\hat p}\,\d s(z) \le C'|z_2-z_3|^{2-\hat p},
\end{align}
where $C'$ depends only on $\hat p$ and the constant $C$ in \eqref{eq:condition}.
\end{lemma}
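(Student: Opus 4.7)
The plan is to apply hypothesis \eqref{eq:condition} to the pair $(z_2,z_3)$ (assuming without loss of generality $z_2,z_3\in \wz\Omega$; the case where one endpoint lies on $\partial\Omega$ follows by the Arzel\'a--Ascoli limiting argument of Lemma~\ref{selfimprove}) to obtain a curve $\gamma\subset \wz\Omega$ joining them with $\int_\gamma \dist(z,\partial\Omega)^{1-s}\,dz\le C|z_2-z_3|^{2-s}$, and then to show that the hyperbolic arc $[z_2,z_3]$ essentially minimises this weighted integral among curves connecting $z_2$ and $z_3$, via a level-by-level comparison based on the dyadic annular decomposition from Lemma~\ref{lengthtransfer}.

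Set $w_1=\wz\varphi^{-1}(z_1)\in\partial\mathbb D$ and $u_i=\wz\varphi^{-1}(z_i)$, so $u_2,u_3$ lie on the radial half-line from $w_1$, and write $\Gamma_k:=\wz\varphi(A(z_1,k))\cap[z_2,z_3]$, which partition $[z_2,z_3]$. The first conclusion of Lemma~\ref{lengthtransfer} gives $\ell(\Gamma_k)\sim\dist(\Gamma_k,\partial\Omega)$, whence
\[
\int_{\Gamma_k}\dist(z,\partial\Omega)^{1-s}\,dz\lesssim \dist(\Gamma_k,\partial\Omega)^{2-s}.
\]
Since $u_2,u_3$ lie on the same radial ray through $w_1$, any curve in $\R^2\setminus\overline{\mathbb D}$ joining them must cross every circle $\{|u-w_1|=r\}$ with $r\in[|u_2-w_1|,|u_3-w_1|]$; in particular $\wz\varphi^{-1}(\gamma)$ traverses each relevant annulus $A(z_1,k)$, giving a subcurve $\gamma_k\subset \wz\varphi(A(z_1,k))$ of $\gamma$ joining the inner and outer boundaries of $\wz\varphi(A(z_1,k))$. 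The second conclusion of Lemma~\ref{lengthtransfer} (whose proof only uses this crossing property) yields $\ell(\gamma_k)\gtrsim \ell(\Gamma_k)$.

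The main step is the weighted integral version
\[
\int_{\gamma_k}\dist(z,\partial\Omega)^{1-s}\,dz\gtrsim \dist(\Gamma_k,\partial\Omega)^{2-s}.
\]
I expect to prove this by re-running the case analysis in the proof of Lemma~\ref{lengthtransfer} while tracking weighted integrals via Koebe distortion (Lemma~\ref{linearmap}): after change of variables the integral becomes $\int_{\wz\varphi^{-1}(\gamma_k)}\dist(u,\partial\mathbb D)^{1-s}|\wz\varphi'(u)|^{2-s}\,du$. If $\wz\varphi^{-1}(\gamma_k)$ stays in a Whitney-type neighborhood of the radial piece $\wz\varphi^{-1}(\Gamma_k)$, then both $|\wz\varphi'|$ and $\dist(u,\partial\mathbb D)$ are comparable to their typical values on $\wz\varphi^{-1}(\Gamma_k)$, so $\dist(z,\partial\Omega)\sim \dist(\Gamma_k,\partial\Omega)$ along $\gamma_k$ and the bound follows from $\ell(\gamma_k)\gtrsim \ell(\Gamma_k)$. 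When $\wz\varphi^{-1}(\gamma_k)$ dips close to $\partial\mathbb D$ the factor $\dist(u,\partial\mathbb D)^{1-s}$ blows up; using the projection inequality $|du|\ge |d(|u-w_1|)|$ with $\dist(u,\partial\mathbb D)\le|u-w_1|$ (valid in the exterior near $w_1$) and integrating $r^{1-s}$ over $r\in[2^{k-1},2^k]$ produces the scale $\sim 2^{k(2-s)}$; multiplying by the Koebe-controlled factor $|\wz\varphi'|^{2-s}$ recovers $\dist(\Gamma_k,\partial\Omega)^{2-s}$. The mixed case reduces to one of the previous by passing to a suitable subcurve, exactly as in Lemma~\ref{lengthtransfer}.

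Granting the integral lower bound, summing over $k$ yields
\[
\int_{[z_2,z_3]}\dist(z,\partial\Omega)^{1-s}\,dz\lesssim\sum_k\dist(\Gamma_k,\partial\Omega)^{2-s}\lesssim\sum_k \int_{\gamma_k}\dist(z,\partial\Omega)^{1-s}\,dz\le \int_\gamma \dist(z,\partial\Omega)^{1-s}\,dz\le C|z_2-z_3|^{2-s},
\]
which is \eqref{curve conditionGamma}. The hard part is the weighted integral lower bound in each annular piece: the purely metric length estimate of Lemma~\ref{lengthtransfer} does not directly imply the integral estimate because $\gamma_k$ may wander into regions where $\dist(\cdot,\partial\Omega)$ differs substantially from $\dist(\Gamma_k,\partial\Omega)$, and the case analysis is needed to show that in each regime either $\dist(\cdot,\partial\Omega)$ remains comparable to $\dist(\Gamma_k,\partial\Omega)$ along $\gamma_k$, or the integrand grows enough near $\partial\Omega$ to compensate for a possibly shorter $\gamma_k$.
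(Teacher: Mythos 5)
Your overall strategy coincides with the paper's: take the curve $\gamma$ from \eqref{eq:condition} for the pair $z_2,z_3$, decompose both $[z_2,z_3]$ and $\gamma$ into the annular pieces $\Gamma_k$, $\gamma_k$ of Lemma~\ref{lengthtransfer}, prove $\int_{\Gamma_k}\dist(z,\partial\Omega)^{1-s}\,dz\lesssim\dist(\Gamma_k,\partial\Omega)^{2-s}$ and $\int_{\gamma_k}\dist(z,\partial\Omega)^{1-s}\,dz\gtrsim\dist(\Gamma_k,\partial\Omega)^{2-s}$, and sum over $k$. The gap is in your proof of the second (lower) bound in the case where $\wz\varphi^{-1}(\gamma_k)$ dips close to $\partial\mathbb D$. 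After the change of variables the integrand is comparable to $\dist(u,\partial\mathbb D)^{1-s}|\wz\varphi'(u)|^{2-s}$, and since $2-s>0$ your argument needs $|\wz\varphi'(u)|$ to be bounded \emph{below} along $\wz\varphi^{-1}(\gamma_k)$ by its value on the radial segment $\wz\varphi^{-1}(\Gamma_k)$. No such bound is available: Lemma~\ref{linearmap} only gives $|\wz\varphi'(u)|\ge e^{-3\dist_h(u,u_0)}|\wz\varphi'(u_0)|$, which for $u$ at distance $\delta\ll 2^k$ from $\partial\mathbb D$ degrades like $(\delta/2^k)^3$; combined with $\dist(u,\partial\mathbb D)^{1-s}\ge |u-w_1|^{1-s}$ this does not recover $(2^k)^{2-s}|\wz\varphi'(u_0)|^{2-s}$, and the derivative of the exterior map can genuinely decay near $\partial\mathbb D$ (outward cusps). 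So "multiplying by the Koebe-controlled factor" is not justified, and this is precisely the regime where the claim is hardest.

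The paper closes this with a non-pointwise argument that you would need to import. It fixes the midpoint set $Z_k$ of $\Gamma_k$ and the ball $B_k=B(Z_k,\tfrac14\dist(Z_k,\partial\Omega))$, and shows via the capacity lower bound \eqref{condition of lower bound} in $\R^2\setminus\overline{\mathbb D}$, conformal invariance, and Lemma~\ref{inner capacity} that $\dist(B_k,\gamma_k)\lesssim\diam(B_k)$; hence some $w\in\gamma_k$ satisfies $\dist(w,\partial\Omega)\lesssim\dist(Z_k,\partial\Omega)$. Since $\ell(\gamma_k)\gtrsim\ell(\Gamma_k)\sim\dist(Z_k,\partial\Omega)$ by Lemma~\ref{lengthtransfer}, a subcurve of $\gamma_k$ of length $\sim\dist(Z_k,\partial\Omega)$ starting at $w$ stays within distance $\lesssim\dist(Z_k,\partial\Omega)$ of $\partial\Omega$ by the triangle inequality, and the monotonicity of $t\mapsto t^{1-s}$ then gives the integral lower bound entirely in the image domain, with no control of $|\wz\varphi'|$ along $\gamma_k$ required. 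You also leave unaddressed the degenerate cases ($[z_2,z_3]$ contained in at most two annuli, the partial annuli containing the endpoints, $k\ge2$ where $A(z_1,k)$ is a full annulus, and $z_1=z_2$), but those are minor compared with the main step.
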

\begin{proof}
By symmetry we may assume that $z_3$ is 
after $z_2$
on $\Gamma$ when one moves towards infinity.
Suppose first that $z_2\neq z_1$. Let $\gamma$ be a curve from  
\eqref{eq:condition}
for the pair $z_2,z_3.$ 
We use the notation from Subsection~\ref{sec:confexterior}; especially, we let $\gamma_k$ be a subcurve of $\gamma$ that joins
the inner and outer boundaries of  $\widetilde \varphi(A(z_1,\,k)),$ provided 
that
$[z_2,z_3]$ intersects at least three such sets. If  $[z_2,z_3]$  is 
contained in the union
of two of these sets, we claim that \eqref{curve conditionGamma} follows from the bi-Lipschitz estimate 
from Lemma~\ref{bilipominaisuus}. 
Indeed, by the definition of 
hyperbolic rays in $\mathbb R^2\setminus \overline{\mathbb D}$, 
$\wz \varphi^{-1}([z_2,\,z_3])$ is contained in 
$B(\wz \varphi^{-1}(z_3)),\frac 3 4 (|\wz \varphi^{-1}(z_3)|-1))$ if $[z_2,\,z_3]$ is contained in 
the union of two consecutive $\wz \varphi(A(z_1,\,k))$. Write $G=B(\wz \varphi^{-1}(z_3),\frac 4 5 (|\wz \varphi^{-1}(z_3)|-1)).$  Then the version of \eqref{curve conditionGamma} (with $\partial \Omega$ replaced by 
$\partial G$) holds for the radial segment $\wz \varphi^{-1}([z_1,\,z_3])=[\wz \varphi^{-1}(z_2),\,\wz \varphi^{-1}(z_3)]\subset G$ with a constant only depending on $\hat p.$
Hence a change of variable together with Lemma~\ref{bilipominaisuus} applied to $\wz \varphi$ in the set
$G$ ensures us that 
\eqref{curve conditionGamma} holds for $[z_2,\,z_3]$ with a constant 
depending only on $\hat p;$ notice that $\dist(z,\partial \wz \varphi(G))\le \dist(z,\partial \Omega)$ when $z\in \wz \varphi (G)$ since $\wz \varphi$ is a homeomorphism.

Suppose then that $[z_2,z_3]$ intersects at least three of the sets 
$\wz \varphi(A(z_1,\,k)).$
For each $k\in \mathbb Z$ with 
$$|\wz \varphi^{-1}(z_1)- \wz \varphi^{-1}(z_2)|\le 2^{k-1}\le 2^k\le 
|\wz \varphi^{-1}(z_1)- \wz \varphi^{-1}(z_3)|,$$
let
$$Z_{k}= \widetilde \varphi( S^1_{k})\cap \Gamma_{k},$$
where $S^1_{k}$ is the circle centered at $\wz \varphi^{-1}(z_1)$ and with radius 
$3\times 2^{k-2}$.

Fix $k\le 2$ as above. 
According to Lemma~\ref{lengthtransfer},  
\begin{align}\label{apu1}
\dist(\Gamma_{k},\,\partial \Omega)\sim \dist(Z_{k},\,\partial \Omega)
\end{align}
and 
\begin{align}\label{apu2}
\elle(\Gamma_{k})\sim \dist(\Gamma_{k},\,\partial \Omega)
\end{align}
with absolute constants. 
Hence
\begin{align}\label{ineq2}
\int_{\Gamma_{k}}\, \dist(z,\partial \Omega)^{ 1-\hat p}\,\d s(z)\lesssim  \dist(Z_{k},\,\partial \Omega)^{2-\hat p}.
\end{align}

Next we claim that
\begin{equation}\label{ineq1}
\dist(Z_{k},\, \partial\Omega)\gtrsim
\dist(\gamma_{k},\, \partial\Omega)
\end{equation}
for some absolute constant.
Indeed let $B_k=\overline{B}(Z_k,\,\frac 1 4  \dist(Z_{k},\,\partial \Omega))$. 
If $\gamma_k\cap B_k\neq\emptyset$, then by the triangle inequality we obtain 
the claim. For the other case, notice that $B_k$ is a $4$-Whitney-type set, 
and then by Lemma~\ref{whitney preserving},  $\wz\varphi^{-1}(B_k)$ is of 
$\lambda$-Whitney-type for some absolute constant $\lambda$. 
Hence \eqref{equat90} gives
\begin{equation}\label{equat30}
\dist(\wz\varphi^{-1}(Z_k),\,\mathbb S^1)\sim \diam(\wz\varphi^{-1}(B_k))
\end{equation}
with an absolute constant. 
By the geometry of $A(z_1,\,k)$ in $\mathbb R^2\setminus \overline{\mathbb D}$, we have
$$\dist(\wz\varphi^{-1}(Z_k),\,\wz\varphi^{-1}(\gamma_k))\le 4 \dist(\wz\varphi^{-1}(Z_k),\,\mathbb S^1)$$
and 
$$\diam(\wz\varphi^{-1}(\gamma_k))\ge 2 \dist(\wz\varphi^{-1}(Z_k),\,\mathbb S^1). $$
Hence with the version of \eqref{condition of lower bound} for 
$\mathbb R^2\setminus \overline{\mathbb D}$ and \eqref {equat30} we conclude 
that
$${\rm Cap}( \wz\varphi^{-1}(B_k),\,\wz\varphi^{-1}(\gamma_k),\,\mathbb R^2\setminus \overline{\mathbb D})\ge \delta(\lambda)>0, $$
and the conformal invariance of capacity gives
$${\rm Cap}( B_k,\, \gamma_k,\,\wz \Omega)\ge \delta(\lambda). $$
This estimate together with Lemma~\ref{inner capacity} yields
$$\dist(B_k,\, \gamma_k)\le C(\lambda) \diam(B_k). $$
We then conclude \eqref{ineq1} also in this case by the definition of $B_k$ and the triangle inequality; indeed
\begin{align*}
\dist(\gamma_{k},\, \partial\Omega)&\le \dist(B_k,\, \partial\Omega)+ \dist(B_k,\, \gamma_k)\\
&\le  \dist(B_k,\, \partial\Omega) + C(\lambda) \diam(B_k)\\
&\le C(\lambda) \dist(Z_k,\, \partial\Omega).
\end{align*}


By Lemma \ref{lengthtransfer}  
$$\ell( \gamma_{k})\gtrsim \ell(\Gamma_{k})$$
with an absolute constant. 
Then, by \eqref{ineq1}  \eqref{apu1} and \eqref{apu2}, this gives that there is a subcurve $\gamma'_k\subset \gamma_k$ such that
$$\dist(Z_{k},\, \partial\Omega) \gtrsim
\dist(\gamma'_{k},\, \partial\Omega)$$
and 
$$\elle( \gamma'_{k})\sim \elle(\Gamma_{k})$$
with  absolute constants. 
By combining this with \eqref{apu1} and \eqref{apu2}  we conclude that
\begin{align}\label{ineq3}
\int_{\gamma_{k}}\,\dist(z,\partial \Omega)^{ 1-\hat p}\,\d s(z)\gtrsim  \int_{\gamma'_{k}}\,\dist(z,\partial \Omega)^{ 1-\hat p}\,\d s(z)\gs
\dist(Z_{k},\,\partial \Omega)^{2-\hat p}.
\end{align}
Now \eqref{ineq2} and \eqref{ineq3} give us the inequality
\begin{equation}\label{hyvak}
\int_{\Gamma_{k}}\, \dist(z,\partial \Omega)^{ 1-\hat p}\,\d s(z) \le C(\lambda)\int_{\gamma_{k}}\,\dist(z,\partial \Omega)^{ 1-\hat p}\,\d s(z).
\end{equation}

Let us consider the remaining values of $k.$ If $k\ge 2,$ then
$A(z_1,k)$ is a full annulus and of $8$-Whitney type.
Especially, 
\begin{equation} \label{trivlisays}
\dist(z,\partial \Omega)\sim \dist(w,\partial \Omega)
\end{equation} for all $z,w\in \wz \varphi(A(z_1,k))$ since this set is
of $\lambda$-Whitney type for an absolute $\lambda$ by Lemma \ref{whitney preserving}.  Moreover, $\elle(\wz \varphi^{-1}(\gamma_k)\ge \elle(\wz \varphi^{-1}(\Gamma_k)$ since the former
crosses $A(z_1,k)$ and the latter is a radial segment. Hence the bi-Lipschitz estimate 
from Lemma~\ref{bilipominaisuus} gives that $\elle(\Gamma_k)\le C\elle(\gamma_k)$ with an absolute constant and 
\eqref{hyvak} follows from \eqref{trivlisays}.
The only remaining values of $k$ to
consider are those potential $k$ with 
$$2^{k-1}\le |\wz \varphi^{-1}(z_1)- \wz \varphi^{-1}(z_3)| \le 2^k$$
or 
$$2^{k-1}\le |\wz \varphi^{-1}(z_1)- \wz \varphi^{-1}(z_2)| \le 2^k.$$
For such $k,$ \eqref{ineq2} still holds and Lemma~\ref{linearmap} shows
that $\dist(Z_{k},\,\partial \Omega)\sim \dist(Z_{k-1},\,\partial \Omega)$ 
with absolute constants. 
By our assumption, $[z_2,z_3]$  is not contained in the union of two of our 
sets  $\wz \varphi(A(z_1,\,k))$,
and hence these additional integrals over $\Gamma_k$ are controlled by the 
earlier terms for which \eqref{ineq3} holds. 

We conclude from the previous paragraph and \eqref{hyvak} that summing over
$k$ together with the first paragraph of the proof yields 
\eqref{curve conditionGamma} when $z_1\neq z_2.$

Finally if $z_1=z_2$ we deduce \eqref{curve conditionGamma} by picking $w_j\in[z_1,\,z_3]\cap \wz \Omega$ with $w_j\to z_1$ and by applying the conclusion 
from the case $z_1\neq z_2$ (to $[w_j,\,z_3]$) and the monotone 
convergence theorem. 
\end{proof}

\subsection{Shadows of  Whitney-type sets}\label{assign}

Let $\Omega$ be a Jordan domain whose complementary domain $\wz \Omega$ 
satisfies \eqref{eq:condition}. According to \cite{la1985} (also see \cite[Lemma 2.1]{sh2010}), $\wz \Omega$ is then quasiconvex with a constant that
only depends on $\hat p$ and the constant $C$ in \eqref{eq:condition}.
Consequently, by the second part of Lemma \ref{arsela}, also the complement of $\Omega$ is quasiconvex with the same constant.  We conclude
from Lemma \ref{btchar} that 
$\Omega$ is $J$-John, where the 
John constant $J$ depends only on $\hat p$ and the constant $C$ in 
\eqref{eq:condition}. We fix a John center $x_0$ for $\Omega$ and a conformal
map $\varphi \colon \mathbb D \to \Omega$ with $\varphi(0)=x_0.$ 
By the Carath\'eodory-Osgood theorem $\varphi$  extends homeomorphically up 
to the boundary and we refer also to the
extension by $\varphi.$ Our map $\varphi$ will be fixed through 
Subsection~\ref{suffjordan}.
Recall from Subsection~\ref{sec:hypray} that 
$\wz \varphi$ refers to a fixed exterior conformal map. 

We will assign a collection of ``reflected'' squares in the Whitney decomposition 
$W$ of $\Omega$ to
squares $\widetilde Q_i$ in the Whitney decomposition
$\widetilde{W}=\{\widetilde{Q}_i\}$ of the complementary domain $\widetilde{\Omega}.$ 
This will actually only be needed for those  $\wz Q_i$ for which
$\ell(\wz Q_i)\le 3\diam(\Omega).$ The construction of our extension operator
will then rely on these squares. 
We continue under the assumption 
that $\wz \Omega$ satisfies  \eqref{eq:condition} and with the above $\varphi,
\wz \varphi.$
In what follows, we usually use the notation 
$\widetilde{A}$ to indicate that the set in question is contained in 
$\widetilde {\Omega}$.

Given a set $\wz A\subset \wz \Omega$, we consider all the hyperbolic 
rays in 
$\widetilde{\Omega}$ 
passing through $\widetilde A$, and define the {\it shadow} $S_{\wz \Omega}(\wz A)$ as the set of 
all the points where these rays hit the boundary $\partial\Omega$. 
Equivalently, by the invariance of hyperbolic rays
$$S_{\wz \Omega}(\wz A)=\wz \varphi(\pi_r(\wz \varphi^{-1}(\wz A))),$$
where $\pi_r$ is the 
radial projection  to the unit circle. 



Similarly, we define $S_{\Omega}(A)$
for $A \subset \Omega$, with the difference that the hyperbolic rays now
begin from $\varphi(0).$ Then 
$$S_{\Omega}(A)=\varphi(\pi_r(\varphi^{-1}(A))).$$


When there is no risk of confusion, we will drop the subindices and simply 
write $S(\cdot)$ for the respective shadow. 

We have the following properties.

\begin{lemma}\label{shadow estimate}
Let 
$A\subset \Omega$ be a closed 
$\lambda$-Whitney-type set. Then $S(A)$ is connected and 
$$\diam_{\Omega}(S(A ))\sim_{J} \diam(S(A )) \sim_{J,\,\lambda} \diam(A ), $$
where the constant $J$  is the John constant.
Moreover, for any fixed $M\ge 1$ and each closed
$\lambda$-Whitney-type set $\widetilde A $ in the exterior 
domain $\wz \Omega$ of $\Omega$ with
$$\diam(\widetilde A )\leq M \diam(\Omega),$$
$S(\wz A)$ is connected and
$$\diam(S(\wz A))\ge c(\lambda,M) \diam(\wz A).$$
\end{lemma}

\begin{proof} Let us begin with the case $A\subset \Omega.$
By the definition of Whitney-type sets,
$A$ is connected and thus also $\varphi^{-1}(A )$ is 
connected. 
Therefore, 
$\varphi^{-1}(S(A )) =\pi_r(\varphi^{-1}(A )) $ is connected, 
and hence so is $S(A )$. 

Next, by Lemma~\ref{whitney preserving}, $\varphi^{-1}(A)$ is a 
$\lambda'$-Whitney-type set with $\lambda'=\lambda'(\lambda)$.
Moreover,
\[\dist(\varphi^{-1}(A),\varphi^{-1}(S(A)))=\dist(\varphi^{-1}(A),\pi_r(\varphi^{-1}(A))=\dist(\varphi^{-1}(A),\partial \mathbb D)
 \]
and hence the conformal capacity between $\varphi^{-1}(S(A))$ and 
$\varphi^{-1}(A )$ in $\mathbb D$ is bounded from below by a positive constant depending only on $\lambda$; see  
\eqref{condition of lower bound}. By conformal invariance of capacity, also 
$${\rm Cap}(A,\,S(A),\,\boz)\ge \delta(\lambda).$$
Let us prove that
$C(\lambda)\diam(S(A )) \ge \diam(A ).$ 
By the monotonicity of capacity we have
\begin{equation}\label{reduktioo}
\delta(\lambda)\le {\rm Cap}(A,\,S(A),\,\boz) \le {\rm Cap}(A,\,S(A),\, \mathbb R^2),
\end{equation}
which by Lemma~\ref{inner capacity} yields that
\begin{equation}\label{equat33}
{\dist(A,\, S(A))}\le C(\lambda) \diam(S(A)).
\end{equation}
Hence, by the definition of Whitney-type sets,
\begin{equation}\label{uunohdus} 
\diam(A) \ls \dist(A,\,\partial \Omega) \le  {\dist(A,\, S(A))}\lesssim \diam(S(A)),\end{equation}
with  constants depending only on $\lambda$. 

Since $\Omega$ is John, hyperbolic 
rays are John curves by Lemma \ref{hsjohn}. 
Then  for each hyperbolic ray $\Gamma\subset \Omega$ ending at $w\in \partial\Omega$ with $\Gamma\cap A \neq \emptyset$, the property \eqref{equat90} of $\lambda$-Whitney-type sets and the definition of John curves give
$$\dist(w,\,A )\le C(J,\,\lambda) \dist(A,\,\partial \Omega)\le C(J,\,\lambda) \diam(A).$$
Thus
$$\diam(S(A))\le C(J,\,\lambda)\diam(A),$$ 
and hence, by \eqref{uunohdus}, we 
can find another constant $C(J,\lambda)$ 
such that 
\begin{equation}\label{blaaah}
\frac{1}{{C(J,\lambda)}}  \diam(A) \le \diam(S({A})) 
\le {C(J,\lambda)}  \diam(A).
\end{equation}

Finally it follows from Lemma~\ref{inner diameter diameter} that
$$
\diam_{\Omega}(S(A))\sim \diam(S(A))
$$
with constants depending only on $J,$ and the asserted estimate follows by combining this with \eqref{blaaah}. 

The connectivity of $S(\widetilde A)$ follows analogously to the above argument.
Regarding the desired
estimate for  $\diam(S(\wz A)),$ notice first that $\wz A$ contains a disk
$B=B(z_0,r)$ with $r=\frac 1 {\lambda}\diam(\wz A),$ 
since it is of 
$\lambda$-Whitney type. By the monotonicity of capacity we know that
\begin{equation}\label{trivia}
{\rm Cap}(\wz A,\,\partial \Omega\,,\wz \Omega)\ge {\rm Cap}(\partial B,\,\partial \Omega\,,\wz \Omega\setminus \overline B).
\end{equation}
Next, the M\"obius transformation $\phi:\ z\mapsto \frac {r^2} {z-z_0},$ given in complex notation,
 is bi-Lipschitz with a constant only depending on $\lambda$ in 
$B(z_0,Cr)\setminus B(z_0,r)$ for $C=2\lambda(\lambda+1)$ and $B(z_0,Cr)\setminus B(z_0,r)$ contains an arc of $\partial \Omega$ of diameter at least 
$\lambda r/M.$
We conclude that
$$\dist(\phi(\partial B),\phi(\partial \Omega))\le C(\lambda,M) \diam(\partial 
(\phi(\Omega))).$$ 
Hence  \eqref{capacity of balls} (with $U_0=\phi(B)$) gives
\begin{equation}\label{trivia2}{\rm Cap}(\phi(\partial B),\, \phi(\partial \Omega)\,,\mathbb R^2)= 
{\rm Cap}(\partial (\phi(B)),\, \partial (\phi(\Omega))\,,\phi(\mathbb R^2\setminus(\overline B\cup \overline \Omega)))\ge \delta(\lambda,M).
\end{equation}
Monotonicity, conformal invariance of capacity and \eqref{trivia},\eqref{trivia2} 
allow us to conclude that
\begin{equation}\label{trivia3}
{\rm Cap}(\wz A,\,\partial \Omega\,, \mathbb R^2)\ge {\rm Cap}(\wz A,\,\partial \Omega\,,\wz 
\Omega)\ge \delta(\lambda,M).
\end{equation}
Now,
Lemma~\ref{inner capacity} together with conformal invariance of capacity
and \eqref{trivia3}
gives 
$$\dist(\wz \varphi^{-1}(\wz A),\,\partial \mathbb D)\le C(M,\,\lambda).$$ 
Since $\wz \varphi^{-1}(\wz A)$ is of $C(\lambda)$-Whitney-type by 
Lemma~\ref{whitney preserving}, we conclude that
$$\diam(\wz \varphi^{-1}(\wz A))\sim_{\lambda} \dist(\wz \varphi^{-1}(\wz A ),\,\partial \mathbb D)\le C(M,\lambda). $$
This together with the version of \eqref{condition of lower bound} for 
$\mathbb R^2\setminus\overline{\mathbb D}$ and conformal invariance imply that
$$\delta(\lambda,\,M) \le {\rm Cap}(\wz \varphi^{-1}(\wz A ),\,\wz \varphi^{-1}(S(\wz A )),\,\mathbb R^2\setminus\overline{\mathbb D})={\rm Cap}(\wz A,\,S(\wz A ),\,\wz \Omega).$$ By monotonicity of capacity we further conclude that
$$\delta(\lambda,\,M)\le {\rm Cap}(\wz A,\,S(\wz A ),\,\mathbb R^2).$$
This estimate is the analog of \eqref{reduktioo} and hence we may complete
the argument exactly as in the case of $\Omega$ above. 
\end{proof}



The following lemma associates a Whitney square of $\Omega$ to a given 
closed boundary arc. 

 \begin{figure}
   \centering
\includegraphics[width=0.8\textwidth]{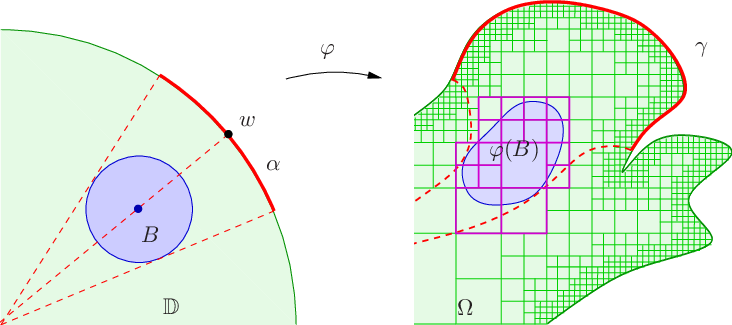}
     \caption{The set $B \subset \mathbb D$ is chosen to be a Whitney-type 
set whose shadow is exactly $\alpha$. Since $\varphi(B)$ is also of 
Whitney-type, 
there are at most a fixed number of Whitney squares intersecting it. Therefore 
one of these squares must have a large shadow.
     }
   \label{fig:shadowball}
 \end{figure}

\begin{lemma}\label{existence}
For each closed nondegenerate
subarc $\gamma\subset \partial \Omega,$ there exists a 
Whitney square $Q\in W$ 
satisfying
\begin{align}\label{sameshadow}
\diam(S({Q})) \le {C(J)} \diam(\gamma) , 
\end{align}

\begin{align}\label{sameinter}
\diam(\gamma) \le  {C(J)}\diam(S(Q)\cap \gamma), 
\end{align}
and
\begin{align}\label{kuutionkokoet}
\dist(Q,\gamma)\le C(J)\diam(\gamma).
\end{align} 
Here $C(J)$ depends only on $J.$
\end{lemma}

\begin{proof}
Given a closed nondegerate subarc $\gamma,$ let
$\alpha= \varphi^{-1}(\gamma).$ 
Suppose first that $\elle(\az)> \frac 1 2. $
By Lemma~\ref{John subdomain}, $\varphi$ is quasisymmetric with respect to the inner distance of $\Omega$ with $\eta$ only depending on $J.$ 
Pick $z_1,\,z_2\in \az$ such that
$$\dist_{\Omega}(\varphi(z_1),\,\varphi(0))=\dist_{\Omega}(\varphi(0),\,\gamma)$$
and
$$|z_1-z_2|=\frac 1 4.$$ Recall that $\varphi(z_i)$ is rectifiably joinable, say, to
$\varphi(0)$ by Remark \ref{muistutus} for $i=1,2.$
Since $\varphi$ is homeomorphic up to the boundary, we may pick points $w_1^j,w_2^j$ along
these rectifiable curves so that $\dist_{\Omega}(w_1^j,\varphi(0))$ tends to $\dist_{\Omega}(z_1,\varphi(0)),$  $\dist_{\Omega}(w_1^j,w_2^j)$ tends to $\dist_{\Omega}(\varphi(z_1),\varphi(z_2)),$  $\varphi^{-1}(w_1^j)$ tends to $z_1$ and  $\varphi^{-1}(w_2^j)$ tends to $z_2.$
Now 
$$|0-\varphi^{-1}(w_1^j)|\le 8|\varphi^{-1}(w_1^j)-\varphi^{-1}(w_2^j)|$$
for all sufficiently large $j$ and then the quasisymmetry of $\varphi$ gives the estimate
$$\dist_{\Omega}(\varphi(0),\,w_1^j)\le \eta(8)\dist_{\Omega}(w_1^j,\,w_2^j).$$
By letting $j$ tend to infinity we conclude that
\begin{equation}\label{inequ 202}
\dist_{\Omega}(\varphi(0),\,\gamma)=\dist_{\Omega}(\varphi(z_1),\,\varphi(0))  \le \eta(8)\dist_{\Omega}(\varphi(z_1),\,\varphi(z_2))\le \eta(8)\diam_{\Omega}(\gamma). 
\end{equation}
By the John property, see Lemma~\ref{hsjohn},
for each hyperbolic ray $\Gamma\subset \Omega$ we have
$$ \dist_{\Omega}(\varphi(0),\,\partial \Omega)\ge J  \elle(\Gamma).$$
Then the triangle inequality gives
\begin{equation}\label{equat3}
\dist_{\Omega}(\varphi(0),\,\gamma)\ge \dist_{\Omega}(\varphi(0),\,\partial \Omega)\ge  \frac J 2 \diam(\Omega). 
\end{equation}
Moreover, Lemma~\ref{inner diameter diameter} implies that
\begin{equation}\label{abba}
\diam(\gamma)\sim_J \diam_{\Omega}(\gamma). 
\end{equation}
By combining \eqref{abba}
with \eqref{inequ 202} and \eqref{equat3} we conclude that
$$\diam(\gamma) \ge \frac 1 {C(J)}\diam(\partial \Omega).$$
 Therefore if one chooses a Whitney square $Q$ containing 
$\varphi(0)$, then its shadow is $\partial \Omega$, and \eqref{sameshadow}  
follows; in this case \eqref{sameinter} holds trivially and 
\eqref{kuutionkokoet} follows from 
\eqref{inequ 202} together with \eqref{abba} since $\varphi(0)\in Q.$

When $\elle(\az)\le \frac 1 2,$ denote the midpoint of $\alpha$ by $w,$ 
let 
$$ r=\frac {\sin\left(\frac {\elle(\az)} {2}\right)}{1+2\sin\left(\frac {\elle(\az)} 2\right)} ,\  z=(1-2r) w$$ 
and set $B=\overline{B(z, r)}.$ See Figure~\ref{fig:shadowball}.
Observe that by the assumption $\elle(\az)\le \frac 1 2,$ the set $B$ satisfies
$$2\dist(B,\,\partial \mathbb D)=2r=\diam(B),$$
and is of $2$-Whitney-type, and the radial projection of $B$ is 
precisely $\az$. Moreover, quasisymmetry of 
$\varphi$ easily gives
\begin{equation}\label{bkoko}
\dist(\varphi(B),\gamma)\le C(J)\diam(\varphi(B)).
\end{equation}

Consider the collection $W_B$ of all Whitney squares in $W$ that intersect $\varphi(B).$ 
Since $\varphi(B)$ is a $\lambda$-Whitney-type set by Lemma~\ref{whitney preserving} for 
some absolute constant $\lambda$,
this collection has no more than $N$ squares where $N(\lambda);$ see the discussion 
after Definition~\ref{whitney-type set}. Since $\varphi$ is homeomorphic up to the 
boundary,  the shadow of  $\varphi(B)$ is 
precisely $\varphi(\az)=\gamma$. We claim that the shadow of one of the Whitney  
squares in $W_B,$ call it $Q$, 
satisfies  $$\diam(S(Q)\cap \gamma)\ge \diam(\gamma)/N.$$

Towards this, since $\varphi(B)\subset \bigcup_{Q'\in W_B}Q' $, we have
$\gamma=S(\varphi(B)) \subset  \bigcup_{Q'\in W_B} S(Q')$. 
Suppose that  for every $Q'\in W_B$ we have
$$\diam(S(Q')\cap \gamma)< \diam(\gamma)/N.$$
Recall that $\gamma$ is an arc and that each $S(Q')$ is  connected and hence also an arc. 
Since
$\gamma \subset  \bigcup_{Q'\in W_B} S(Q')$,  we deduce by the triangle inequality that 
$$\diam\left(\gamma \right)\le \sum_{Q'\in W_B}\diam(S(Q')\cap \gamma)<\diam(\gamma).$$
This gives a contradiction, and hence \eqref{sameinter} follows.

Towards \eqref{sameshadow}, first notice that $\varphi(B)$ is of 
$\lambda$-Whitney type for an absolute $\lambda$ by Lemma~\ref{whitney preserving}. 
Also $Q$ as a Whitney square is of $4\sqrt 2$-Whitney type. Since $Q$ intersects $\varphi(B),$ the property \eqref{equat91} of intersecting
Whitney-type sets ensures that 
\begin{equation}\label{equat32}
\diam(Q) \sim_\lambda\diam(\varphi(B)).
\end{equation}
By Lemma~\ref{shadow estimate},
we further have
\begin{equation}\label{samakoko}
\diam(S(Q))\sim_J \diam(Q)
\end{equation}
and
\begin{equation}\label{tokasamakoko}
\diam(\varphi(B))\sim_J \diam(\gamma)
\end{equation}
since $$S(\varphi(B))=\gamma.$$ 
By combining \eqref{samakoko} and \eqref{tokasamakoko} with 
\eqref{equat32} we conclude that 
$$\diam(S(Q))\le C(J) \diam(\gamma),$$
as desired.

Finally, \eqref{kuutionkokoet} follows by combining \eqref{bkoko} with \eqref{equat32}.
\end{proof}

\bigskip

The definition of our extension operator in Subsection~\ref{jordandef}
will rely on the following existence result.


\begin{lemma}\label{referenssi} 
Let $\Omega$ be a Jordan John domain with constant $J.$ There is a constant $C(J)$ that only depends on $J$ so that the following holds.
Given $\wz Q\in \wz W,$ there exists  $Q\in W$ so that
\begin{equation}\label{heijmaaritelma}
\diam (S(Q))/C(J)\le \diam (S(\wz Q))\le C(J) \diam(S(Q)\cap S(\wz Q))
\end{equation}
and 
\begin{equation}\label{heij2maaritelma}
\dist(Q,S(\wz Q))\le C(J)\diam(S(\wz Q)).
\end{equation} 
Moreover, if 
$\ell(\wz Q)\le 3\diam(\Omega),$  then
\begin{equation}\label{referenssi3}
\diam(\wz Q)\le C(J) \diam(Q).
\end{equation}
\end{lemma}
\begin{proof}
Since $\wz Q$ is of $4\sqrt {2}$ -Whitney type,
Lemma \ref{shadow estimate}
shows that $S(\wz Q)$ is a nondegenerate subarc of $\partial \Omega.$ Thus,
by Lemma~\ref{existence}, there exists a Whitney square $Q\in W$ that 
satisfies both
\eqref{heijmaaritelma} and \eqref{heij2maaritelma}
with constants only depending on $J.$  Finally, \eqref{referenssi3} follows from
these properties of $Q$ together with Lemma \ref{shadow estimate}.
\end{proof}

\begin{figure}
 \centering
 \includegraphics[width=0.70\textwidth]{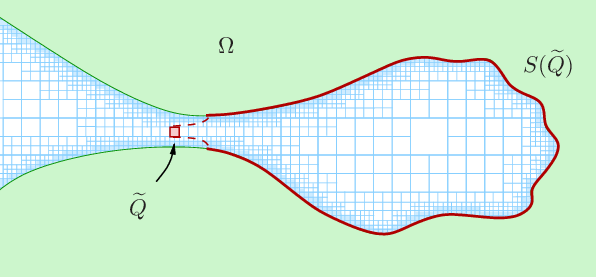}
 \caption{The shadow $S(\widetilde Q)$ of a Whitney square $\widetilde Q$ 
          of the complementary domain $\widetilde \Omega$
          may have much larger diameter than the square in question.}
 \label{fig:bigshadow}
\end{figure}


Notice that a single $Q\in W$ may well satisfy the requirement in Lemma \ref{referenssi} for many distinct 
$\widetilde {Q}$, of different sizes: $S(\wz Q)$ can be much larger in size 
than $\wz Q$; see Figure~\ref{fig:bigshadow}.
We close this subsection with technical lemmas that  will eventually allow us to deal with the distribution of such squares 
$\wz Q.$

\begin{lemma}\label{auxiliary 1}
Let $C\ge 1.$  Suppose that $Q\in W$ and suppose that $\gamma_2,\dots,\gamma_n\subset S(Q)$ are pairwise disjoint arcs so that 
$$\diam(S(Q)) \le {C}\diam(\gamma_j)$$ for each
$1\le j\le n.$
Then $n\le N$, where $N$ depends only on $C$ and the John constant $J$ of $\Omega$. 
\end{lemma}

\begin{proof}
Let $\gamma_1,\dots,\gamma_n$ be pairwise disjoint arcs contained in $S(Q)$ so that 
$\diam(S(Q))\le C\diam(\gamma_j)$ for each $1\le j\le n.$
In order to bound
$n$ it suffices to associate to each $\gamma_j$ a disk $B_j$ of radius 
$r\ge \diam(S(Q))/C'$
so that these disks are pairwise disjoint and all have distance to 
$S(Q)$ no more than
$C'\diam(S(Q)),$ for a constant $C'$ only depending on $C,J.$


Let $w_j$ be the midpoint of   $\varphi^{-1}(\gamma_j),$  
$$ r_j=\frac {\sin\left(\frac {\elle(\varphi^{-1}(\gamma_j))} {2}\right)}{1+2\sin\left(\frac {\elle(\varphi^{-1}(\gamma_j))} 2\right)},\ z_j=(1-2 r_j) w_j$$
 and set $B_j=\overline {B}(z_j,\, r_j).$ Then  the radial projection of $B_j$ is precisely $\varphi^{-1}(\gamma_j) $
 and each $B_j$ is of 4-Whitney type.
Since the arcs $\gamma_j$ are pairwise disjoint, so are also $\varphi^{-1}(\gamma_j)$ and consequently also the sets $B_j$. 
Then the sets $\varphi(B_j)$ are also pairwise disjoint. 
From Lemma~\ref{shadow estimate}   
it follows that 
$$\diam(\varphi(B_j))\ge C(J)\diam(\gamma_j)$$
and  (by \eqref{equat33}) 
$$\dist(\varphi(B_j),S(Q))\le \dist(\varphi(B_j),\gamma_j)\le C(J)\diam(\gamma_j)\le C(J)\diam(S(Q)).$$
The claim follows by recalling that a $\lambda$-Whitney-type set $A$ contains a disk of radius
$\frac {1}{\lambda}\diam(A)$ and that  $C\diam(\gamma_j)\ge \diam(S(Q));$
the sets $\varphi(B_j)$ are of $\lambda$-Whitney type for an absolute $\lambda$  by 
Lemma~\ref{whitney preserving}. 
\end{proof}


For a Whitney-type set $\wz A\subset \wz \Omega$ and a hyperbolic 
ray $\Gamma$ with
$\Gamma\cap \wz A\neq \emptyset,$ corresponding
to a point $z\in \partial \Omega,$ we define the 
{\it tail} of $\Gamma$ with respect to $\wz A $ to be the arc of 
$\Gamma$ between $z$ and $\wz A$, that is $\Gamma_{z,w}\subset \Gamma$ with
$w$ the first point in $\wz A$ when travelled towards infinity from $z.$ 
Denote this set by $T_{\wz \Omega}(\Gamma,\wz A).$


\begin{lemma}\label{sum estimate}
Let $\wz A\subset \wz \Omega$ be a closed $\lambda$-Whitney-type set so that
$\wz \Omega\setminus \wz A$ is connected and let
$\Gamma$ be a hyperbolic ray with $\Gamma\cap \wz A\neq \emptyset.$
Set $\wz W(\wz A,\Gamma)=\{\wz Q_j\in \wz W:\ \wz Q_j\cap T_{\wz \Omega}(\Gamma,\wz A)\neq
\emptyset\}.$
Then
\[
\sum_{\wz Q_j\in \wz W(\wz A,\Gamma)} \ell(\widetilde Q_{j})^{2-\hat p}\leq C\, \diam(S(\wz A))^{2-\hat p}, 
\]
where $C$ depends only on $\hat p,\lambda$ and the constant in \eqref{eq:condition}.
\end{lemma}

In order to prove this, we need an auxiliary lemma and a definition.


We define the tail of $\wz A$ by setting
$$T_{\wz \Omega}(\wz A)=\{y\in \wz \Omega\mid y\in T_{\wz \Omega}(\Gamma,\wz A) \mbox{ for some hyperbolic ray } \Gamma\}.$$
Equivalently, 
$$T_{\wz \Omega}(\wz A)=\wz \varphi(T_{\mathbb R^2 \setminus \overline {\mathbb D}}(\wz \varphi^{-1}(\wz A))).$$
When there is no danger of confusion, we will simply write $T$ 
instead of $T_{(\cdot)}$. 

We need an estimate for the sizes of those Whitney squares that intersect a 
given tail. Such estimates follow rather easily in the complement of the disk,
see Figure \ref{fig:diamcomp}, but our exterior domain case requires work.

\begin{lemma}\label{auxiliary 2}
Let $\wz A\subset \wz \Omega$ be a closed $\lambda$-Whitney-type set with
$\diam(\wz A)\leq 3\diam(\Omega).$ 
Assume additionally that $\wz \Omega\setminus \wz A$ is connected. 
Let
$\wz Q\in \wz W$ satisfy $\wz Q\cap T(\wz A)\neq\emptyset.$ Then 
$$\ell(\wz Q)\le C(\lambda) \diam(S(\wz A)). $$
\end{lemma}

\begin{figure}
   \centering
\includegraphics[width=0.7\textwidth]{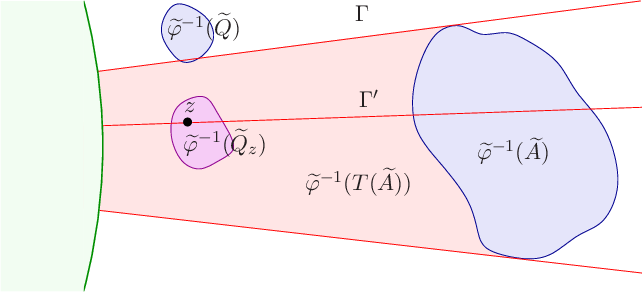}
    \caption{In the case $\diam(\widetilde \varphi^{-1}(\widetilde Q)) < c_1 \diam(\tilde \varphi^{-1}(S(\widetilde A))$ we argue using an extra
    Whitney-type set $\wz \varphi^{-1}(\widetilde Q_z) \subset \widetilde \varphi^{-1}(T(\widetilde A))$ of roughly the same size as $\widetilde \varphi^{-1}(\widetilde A)$ that is also near $\widetilde \varphi^{-1}(\widetilde A)$.
    }
   \label{fig:diamcomp}
 \end{figure}

\begin{proof} 
Fix $\wz Q\in \wz W$ with $\wz Q\cap T(\wz A)\neq\emptyset.$
We may assume that $\lambda \ge 4\sqrt 2$ so that also $\wz Q$ is of
$\lambda$-Whitney type. Let us first prove that 
\begin{equation}\label{equat4}
\diam(\wz Q)\ls\diam(\Omega),
\end{equation}
with a constant depending only on $\lambda.$ 

Towards this claim, recall from the definition of $\lambda$-Whitney-type
that there exists a disk
$$B\left(z_0,\,\frac 1 {\lambda}\diam(\wz A)\right)\subset \wz A. $$
Next, by \eqref{same capacity} we have
\begin{equation}\label{lisays}
{\rm Cap}(\wz A,\, \partial {\Omega} ,\, \wz \Omega) ={\rm Cap}(\partial \wz A,\, \partial {\Omega} ,\, \wz \Omega\setminus \wz A). 
\end{equation}
We continue by arguing as in the proof of Lemma~\ref{shadow estimate}.

Since the M\"obius transformation 
$\phi\colon z\mapsto \frac {\diam(\wz A)^2} {(z-z_0)}$ is 
$C(\lambda)$-bi-Lipschitz  in the set 
$$B(z_0,(2+\lambda)\diam(\wz A))\setminus B(z_0, \diam(\wz A)/\lambda)$$ and 
this set contains both $\partial \wz A$ and an arc of $\partial \Omega$ of
diameter at least $\diam(\wz A)/3,$
we have that 
$$\dist(\phi(\wz A),\,\phi(\partial \Omega))\le 
C'(\lambda)\diam(\phi(\partial \Omega)).$$
Hence \eqref{capacity of balls} (with $U_0=\mathbb R^2\setminus \phi(\wz A)$)
gives the estimate 
$${\rm Cap}(\phi(\partial \wz A),\, \phi(\partial {\Omega}),\, \phi(\wz \Omega\setminus \wz A))\ge \delta(\lambda).$$
Thus 
\begin{equation}\label{inequat 18}
{\rm Cap}(\wz A,\,\partial \Omega,\,\wz \Omega\setminus \wz A)
\ge \delta(\lambda)
\end{equation}
by the conformal invariance of capacity; notice that $\phi$ is conformal in 
the ring domain $\wz \Omega\setminus \wz A$. 

Next, as also $\wz \varphi^{-1}$ preserves conformal capacity, monotonicity together with the 
inequalities
\eqref{lisays} and \eqref{inequat 18} gives
\begin{equation}\label{uusilisays}
{\rm Cap}(\wz \varphi^{-1}(\wz A),\,\partial\mathbb D,\,\mathbb R^2)\ge {\rm Cap}(\wz \varphi^{-1}(\wz A),\,\partial\mathbb D,\,\mathbb R^2\setminus \overline{\mathbb D})\ge \delta(\lambda).
\end{equation}
Hence Lemma~\ref{inner capacity} and the fact that $\varphi^{-1}(\wz A)$ is of $\lambda'$-Whitney-type
by Lemma~\ref{whitney preserving} yield
\begin{equation}\label{yinkorjaus}
\dist(\wz \varphi^{-1}(\wz A),\, \partial \mathbb D)\le C(\lambda).
\end{equation}
By \eqref{equat90} and
the fact that $\wz \varphi^{-1}(T_{\wz \Omega}(\wz A))= T_{\mathbb R^2\setminus 
\overline {\mathbb D}}\wz \varphi^{-1}(\wz A)$ we deduce that 
\begin{equation}\label{lisays2}
\dist(w,\, \mathbb D)\le C(\lambda)
\end{equation}
for every $w\in \wz \varphi^{-1}(T(\wz A)).$ Since 
$\wz Q\cap T(\wz A)\neq \emptyset$ and since $\wz \varphi^{-1}(\wz Q)$ is also 
of $\lambda'$-Whitney type by
Lemma~\ref{whitney preserving}, \eqref{lisays2} gives us the estimate
\begin{equation}\label{lisays3}
\diam(\varphi^{-1}(\wz Q))\le C(\lambda) \dist(\varphi^{-1}(\wz Q),\, \mathbb D)\le C(\lambda).
\end{equation}
Now monotonicity and conformal invariance of capacity together with \eqref{condition of lower bound} and \eqref{lisays3} yield
$${\rm Cap}( \wz Q ,\,\partial\Omega,\,\mathbb R^2)\ge{\rm Cap}( \wz Q ,\,\partial\Omega,\,\wz \Omega)={\rm Cap}(\varphi^{-1}(\wz Q),\,\partial\mathbb D,\,\mathbb R^2\setminus \overline{\mathbb D})\ge \delta(\lambda).$$
Since $\wz Q$ is a Whitney square,  \eqref{equat4} follows from
this by Lemma~\ref{inner capacity}.

Recall again that the preimages of both
$\wz A$ and $\wz Q$ are of $\lambda'$-Whitney-type with  
$\lambda'=\lambda'(\lambda)$. Hence, if $\wz Q\cap \wz A\neq \emptyset,$ then  $\ell(\wz Q)\sim \diam(\wz A)$ by \eqref{equat91}, and
our asserted estimate follows from Lemma \ref{shadow estimate}. Hence we may assume that $\wz Q\cap \wz A= \emptyset.$

We prove the claim of the lemma first under 
the additional assumption that
\begin{equation}\label{uusilisays1}
\diam(\wz \varphi^{-1}(\wz Q))\ge {c_1} \diam( \wz\varphi^{-1}(S(\wz A)))
\end{equation}
where
\begin{equation}\label{equat51}
 c_1=\min\left\{\frac 1 9,\,\frac 1 {6 \lambda'},\,\frac  1 {8 \lambda'^2}\right\}.
\end{equation}
To begin, since $\wz \varphi^{-1}(\wz A)$ is of $\lambda'$-Whitney type,
\eqref{yinkorjaus} together with Lemma~\ref{shadow estimate} gives
\begin{equation}\label{uusilisays2}
\diam(\wz \varphi^{-1}(\wz A))\le C(\lambda,\lambda')\diam (S(\wz\varphi^{-1}
(\wz A))).
\end{equation}

Since $\wz Q\cap T_{\wz \Omega}(\wz A)\neq \emptyset,$ we can pick a point $z\in \wz \varphi^{-1}(\wz Q)\cap \wz \varphi^{-1}(T_{\wz \Omega}(\wz A))=\varphi^{-1}(\wz Q)\cap T_{\mathbb D\setminus 
\overline{\mathbb D}}(\wz \varphi ^{-1}(\wz A)).$ Then the hyperbolic ray (radial line) through $z$ intersects $\wz \varphi^{-1}(\wz A).$ Since $\wz \varphi^{-1}(\wz A)$ is of $\lambda'$-Whitney type,
the length of the segment of this radial line between $\partial \mathbb D$ and $\wz \varphi^{-1}(\wz A)$ is no more than $C(\lambda')\dist(\wz \varphi^{-1}(\wz A),\,S(\wz \varphi^{-1}(\wz A))).$ Let $I$ be the
subsegment between $z$ and $\wz \varphi^{-1}(\wz A).$
Then 
$$\elle(I)\ls  C(\lambda')\dist(\wz \varphi^{-1}(\wz A),\,S(\wz \varphi^{-1}(\wz A)))=C(\lambda')\dist(\wz \varphi^{-1}(\wz A),\,\partial \mathbb D).$$
Recalling that $\wz \varphi^{-1}(\wz A)$ is of $\lambda'$-Whitney type with $\lambda'=\lambda'(\lambda),$ this in combination with \eqref{uusilisays1} and \eqref{uusilisays2}  
gives
\begin{equation}\label{qhpreestimate}
\elle(I)\ls \diam(\varphi^{-1}(\wz A))\ls \diam(\wz \varphi^{-1}(\wz Q))
\end{equation}
with constants only depending on $\lambda.$ Since $\wz \varphi^{-1}(\wz Q)$ is of $\lambda'$-Whitney type, we deduce that
$\elle(I)\ls |z|-1$. Because $I$ is also a radial segment with $z$ the closest point to $\partial \mathbb D,$ it follows that the number of the Whitney squares of $\mathbb R^2\setminus \overline{\mathbb D}$ 
that intersect $I$ is at most $N=N(\lambda').$ Recalling that $\lambda'=\lambda'(\lambda)$ we conclude that we can join 
$\wz \varphi^{-1}(\wz A)$ and $\wz \varphi^{-1}(\wz Q)$ by a chain of no more than $N(\lambda)$ Whitney squares.
Then by Lemma~\ref{whitney preserving} and the fact that a $\hat \lambda$-Whitney type set intersects at most $N(\hat \lambda)$ Whitney squares, there also exists a chain of no more than $N'=N'(\lambda)$  Whitney squares of $\wz \Omega$ joining $\wz A$ and $\wz Q$.
Since  both $\wz A$ and $\wz Q$ are of $\lambda$-Whitney type, their diameters are comparable by \eqref{equat91} to diameters of those Whitney squares that intersect them and the diameters of any
two consecutive Whitney squares in our chain are comparable. It follows that
$\diam(\wz Q)\le C(\lambda) \diam(\wz A).$ 
By Lemma~\ref{shadow estimate} and the assumption that
$$\diam(\wz A)\le 3\diam(\Omega),$$
we conclude that
$\diam(\wz Q)\le C(\lambda) \diam(\wz A) \le C(\lambda) \diam(S(\wz A)).$

We are left to consider the case where 
\begin{equation}\label{equat57}
\diam(\wz \varphi^{-1}(\wz Q))<  c_1
\diam( \wz\varphi^{-1}(S(\wz A))).
\end{equation} 
If $\wz Q\subset T(\wz A)$ then by 
Lemma~\ref{shadow estimate} with \eqref{equat4}   we have
$$\diam(\wz Q)\lesssim \diam(S(\wz Q))\lesssim\diam(S(\wz A)).$$
If $\wz Q$ is not contained in $T(\wz A)$, let $d=\diam(\wz \varphi^{-1}(\wz Q))$. Let $w_1,w_2$ be the end points of $\wz \varphi^{-1}(S(\wz A)).$ By \eqref{equat57} and \eqref{equat51}, we have that
\begin{equation}\label{equat60}
d<6 \lambda'  d\le\diam ( \wz\varphi^{-1}(S(\wz A)))
\end{equation} 
and it follows that $\wz\varphi^{-1}(\wz Q)$ intersects only one of the hyperbolic rays from $w_1,w_2$ to infinity.
Let $\Gamma$ be this hyperbolic ray. Also let $\Gamma'$ be the hyperbolic ray in 
$\mathbb R^2\setminus \mathbb D$ which intersects 
$\wz \varphi^{-1}(T(\wz A))$ and satisfies 
\begin{equation}\label{equat52}
\dist(\Gamma,\,\Gamma')=2\lambda' d;
\end{equation} the existence of $\Gamma'$ follows from \eqref{equat60}.   Let $z$ be the point on $\Gamma'$ 
with $|z|=1+d.$ See Figure \ref{fig:diamcomp}.
Let $\wz Q_z$ be a Whitney square so that $z\in \wz \varphi^{-1}(\wz Q_z).$
Then $\wz \varphi^{-1}(\wz Q_z)$ is also of $\lambda'$-Whitney type as 
$\wz Q_z$ is of $4\sqrt 2$-Whitney type  and we assumed that 
$\lambda\ge 4\sqrt 2.$
Hence by Definition~\ref{whitney-type set} of $\lambda'$-Whitney type sets, \eqref{equat51} and \eqref{equat57} we conclude that 
\begin{equation} \label{raduga}
\diam( \wz \varphi^{-1}(\wz Q_z ))+\dist( \wz \varphi^{-1}(\wz Q_z ) ,\,\partial \mathbb D)\le \lambda' d+d < \frac 1 {4\lambda'}\diam ( \wz\varphi^{-1}(S(\wz A))),
\end{equation}
where we used the fact that $c_1\le \frac  1 {8 \lambda'^2}\le  \frac  1 {4 \lambda'(\lambda'+1)}.$
 
Next, $\wz \varphi^{-1}(S(\wz A))=S(\varphi^{-1}(\wz A))=\pi_r(\wz \varphi^{-1}(\wz A)),$ where $\pi_r$ is the radial projection. Since $\pi_r$  is a contraction, $\diam(\wz \varphi^{-1}(\wz A))>0$
and $\wz \varphi^{-1}(\wz A)$ is of $\lambda'$-Whitney type,
we have
$$\frac 1 {4\lambda'} \diam ( \wz\varphi^{-1}(S(\wz A)))< \frac 1 {\lambda'} \diam(\wz 
\varphi^{-1}(\wz A)) \le \dist(\wz \varphi^{-1}(\wz A),\,\partial \mathbb D).$$
By combining this estimate with \eqref{raduga} we conclude that, for any point $x\in \wz \varphi^{-1}(\wz Q_z ),$
\begin{equation}\label{equat61}
\dist(x,\,\partial \mathbb D)\le \diam( \wz  \varphi^{-1}(\wz Q_z ) )+\dist( \wz  \varphi^{-1}(\wz Q_z ) ,\,\partial \mathbb D)<\dist(\wz \varphi^{-1}(\wz A),\,\partial \mathbb D);
\end{equation}
especially
$$\wz  \varphi^{-1}(\wz Q_z )\cap \wz \varphi^{-1}(\wz A)=\emptyset.$$
Furthermore, since $$\diam(\wz  \varphi^{-1}(\wz Q_z ))\le \lambda' d,$$
by \eqref{equat60} and \eqref{equat52} we know that $\wz  \varphi^{-1}(\wz Q_z )$ does not 
intersect either of our two hyperbolic rays in $\mathbb R^2\setminus \mathbb D$ from the end points $w_1,w_2$ of 
$\wz \varphi^{-1}(S(\wz A))$. 
This implies that
$$ S(\wz  \varphi^{-1}(\wz Q_z ))  \subset \wz \varphi^{-1}(S(\wz A)).$$
This, together with \eqref{equat61}, yields
$ \wz  \varphi^{-1}(\wz Q_z )  \subset \wz \varphi^{-1}(T(\wz A))$, or equivalently
$\wz Q_z\subset T(\wz A).$
Since $\wz  Q_z\subset T(\wz A)$ and $\diam(\wz Q_z)\lesssim \diam(\Omega)$ by
\eqref{equat4}, Lemma~\ref{shadow estimate} gives
\begin{equation}\label{tropics}
\diam(\wz Q_z)\lesssim \diam(S(\wz Q_z))\lesssim \diam(S(\wz A)).
\end{equation}

Pick $\hat z\in \wz \varphi^{-1}(\wz Q)\cap \Gamma.$ Since $\wz \varphi^{-1}(\wz Q)$ is of $\lambda'$-Whitney type, we have that $|\hat z|-1\sim d$ with a constant only depending on $\lambda'.$
Let $z_1$ be the point on $\Gamma$ with $|z_1|=1+2d$ and let  $z_2$ be a point on
$\Gamma'$ with $|z_2|=1+2d.$
Consider the curve $\gamma$ obtained by concatenation from the part of $\Gamma$ between $\hat z,z_1,$ part of $\Gamma'$ between $z,z_2$ and a shorter one of the circular arcs on $S(0,1+2d)$
joining $z_1,z_2.$ Then the number of Whitney squares of $\mathbb R^2\setminus \overline {\mathbb D}$ intersecting $\gamma$ is at most $N(\lambda').$ We again rely on Lemma \ref{whitney preserving}
and the fact that a $\hat \lambda$-Whitney type set intersects at most $N(\hat \lambda)$ Whitney squares to conclude there also exists a chain of no more than $N'=N'(\lambda)$  Whitney squares of $\wz \Omega$ joining $\wz Q_z$ to $\wz Q$. It follows that $\diam(\wz Q_z)\sim \diam(\wz Q)$ and hence the desired estimate follows from \eqref{tropics}.
\end{proof} 


\begin{proof}[Proof of Lemma~\ref{sum estimate}]
Let $\Gamma$ be a hyperbolic ray that intersects $\wz A.$
Denote by $\Gamma_0$ the tail of $\Gamma$ with respect to $\wz A.$

We claim that $\elle(\Gamma_0)\leq C \diam(S(\wz A))$ with a constant that only
depends on our data: 
$\hat p$ and the constant $C$ in \eqref{eq:condition}. To begin, suppose that 
$\wz Q\in \wz W$ 
intersects $\Gamma_0.$ Then $\wz Q\cap T(\wz A)\neq \emptyset,$ and hence
Lemma~\ref{auxiliary 2} gives
\begin{equation}\label{equat3434}
\ell(\wz Q) \leq C \diam(S(\wz A))
\end{equation}
with a constant that only depends on $\lambda.$ 
%
Next, \eqref{equat3434} yields that
\begin{equation}\label{equat3444} 
\dist(z,\,\partial \Omega)\leq 4\sqrt 2\, C\, \diam(S(\wz A))
\end{equation}
whenever $z\in \Gamma_0.$

By \eqref{equat3444} and  Lemma \ref{lma:2} we have
\begin{align}\label{yksicurve}
\diam(S(\wz A))^{1-\hat p}\elle(\Gamma_0)\leq C \int_{\Gamma_0} \dist(z,\,\partial \Omega)^{1-\hat p} \, \d s(z)\leq C_1  
\ell(\Gamma_0)^{2-\hat p}, 
\end{align}
where $C$ only depends on $\lambda$ and $C_1$ depends only on $\hat p,\lambda$ and on the constant in 
\eqref{eq:condition}.
This together with the assumption that $\hat p>1$ results in
\begin{align}\label{pituus}
\elle(\Gamma_0) \leq C_1^{1/(\hat p-1)} \diam(S(\wz A)). 
\end{align}

By combining \eqref{yksicurve} with  \eqref{pituus} we conclude that 
\begin{align}\label{onecurve}
\int_{\Gamma_0} \dist(z,\,\partial \Omega)^{1-\hat p} \, \d s(z)\leq C_1^{(2-\hat p)/(\hat p-1)}  
\diam(\wz S(\wz A))^{2-\hat p}. 
\end{align}
We now employ \eqref{onecurve} to prove our claim.

Recall that  $\wz W(A,\Gamma)$ consists of those $\wz Q_j\in \wz W$ that
intersect $\Gamma_0.$ Since each Whitney square has at most 20 neighboring
squares, we can distribute the squares in $\wz W(A,\Gamma)$ 
into no more than 21 subcollections $\{\wz W_k\}_{k=1}^{21}$ such that in
each of the subcollections the squares are pairwise disjoint.
Next,  for any two distinct $\wz Q_i,\,\wz Q_j\in \wz W_k$,  
by Lemma~\ref{lma:whitney} we have
$$\frac {11}{10} \wz Q_i\cap \frac {11}{10} \wz Q_j =\emptyset. $$
Clearly, for each $ \wz Q_{j}\in  \wz  W(A,\Gamma)$, we have
$$\mathcal H^{1}\left(\frac {11}{10} \wz Q_{j}\cap \Gamma_0\right)\ge \frac {1}{10}\ell( \wz Q_{j}),$$
where $\mathcal H^{1}$ denotes the $1$-dimensional Hausdorff measure. 
Recall that
$$\ell( \wz Q_{j}) \leq \dist( \wz Q_{j},\,\partial \Omega)\leq 4\sqrt 2\, 
\ell( \wz Q_{j}) .$$ Hence \eqref{onecurve} gives
\begin{align*}
\sum_{\wz Q_{j}\in \wz W(\wz A,\Gamma)} \ell(\wz Q_{j})^{2-\hat p} \, & \ls \sum_{k=1}^{13}\sum_{\wz Q_{j} \in \wz W_k}\int_{\Gamma_0\cap  \frac {11}{10}\wz Q_{j}} \dist(z,\,\partial \Omega)^{1-\hat p} \, ds \\
& \ls \int_{\Gamma_0} \dist(z,\,\partial \Omega)^{1-\hat p} \, ds\ls 
\diam(S(\wz A))^{2-\hat p}.
\end{align*}
\end{proof}

\subsection{Definition of the extension operator in the Jordan case}\label{jordandef}

Recall from Subsection~\ref{assign} that our conformal map $\varphi\colon \mathbb D\to \Omega$ satisfies
$\varphi(0)=x_0,$ 
where $x_0$ is a fixed John center of $\Omega$. 
Let 
$$B_{\Omega}=B(x_0,\,\diam(\Omega)).$$
Then $\overline \Omega\subset B_{\Omega}.$
Recall from Lemma~\ref{lma:whitney} that 
$$\ell(\wz Q)\le \dist(\wz Q,\,\partial \Omega)$$
for each $\wz Q\in \wz W,$ the Whitney decomposition of $\wz \Omega.$ 
Then, if $\wz Q \cap B_{\Omega}\neq \emptyset$, we obtain by definition that
$$\ell(\wz Q)\le \dist(\wz Q,\,\partial \Omega)\le \diam(\Omega).$$ Also,
if $\wz Q'\in \wz W$ is a neighbor of $\wz Q$ with  $\wz Q \cap B_{\Omega}\neq \emptyset,$
then
$$\ell(\wz Q') \le \dist(\wz Q',\,\partial \Omega)\le (1+\sqrt 2)\dist(\wz Q,\,\partial \Omega)\le 3 \diam(\Omega).$$
Hence the side lengths of all the Whitney squares $\wz Q$ that intersect $B_{\Omega}$ and
of all their neighbors are at most $3\diam(\Omega).$ 

Let $C(J)$ be the constant from Lemma \ref{referenssi}.
For each $\wz Q_i\in \wz W$ with $\ell(\wz Q_i)\le 3\diam(\Omega)$ we 
consider the collection $W_i$ of all squares $Q\in W$ that satisfy the conclusions of  Lemma~\ref{referenssi} for this value of $C(J).$ 
Then this collection is non-empty. We have to choose one $Q$ from this collection. Since any choice will work, we may proceed as follows. Recall that  $W$ can be written as $\{Q_1,Q_2,\dots\}.$ We pick the $Q_j\in W_i$ of smallest index $j$ and define
$$\rf(\wz Q_i)=Q_j.$$ 

It may happen that $\rf(\wz Q_i)=\rf(\wz Q_k)$ even when $k\neq i$ and there may well be squares $Q\in W$ for which there is no $\wz Q_i$ with
$\rf(\wz Q_i)=Q.$ In fact, the number of distinct $\wz Q_i$ with $\rf(\wz Q_i)=Q$ is always finite (Lemma \ref{ketjujenpaall} in Subsection~\ref{sectionintermediate})
but we do not have a uniform bound on the number of them. 
Nevertheless, Lemma~\ref{sum estimate} with work would allow us to to control the sum of $\ell(\wz Q_i)^{2-\hat p}$ for the $\wz Q_i$ that
satisfy $\rf(\wz Q_i)=Q.$ However, this would not suffice  for our final estimate, as certain intermediate Whitney squares also come into the estimate. To overcome this, we will eventually prove Lemma~\ref{strong sum estimate}  that takes into consideration also these intermediate squares.

 
Pick a collection of functions $\phi_{i}\in C^{\infty}(\widetilde \Omega)$ so that
each $\phi_i$ is compactly
supported in $\frac {11}{10}\widetilde Q_{i}$,
$|\nabla \phi_{i}|\lesssim \ell(\widetilde Q_{i})^{-1}$, and
\[
\sum_{i}\phi_{i}(x)=1 
\]
for all $x\in \widetilde \Omega.$ 
Then the support of $\phi_i$ and that of $\phi_j$ have no intersection 
unless $\wz Q_i\cap \wz Q_j\neq\emptyset$. 
See \cite{jo1981} for the existence of such a partition of unity $\{\phi_i\}$.

Given $u\in W^{1,\,p}({\Omega})$ and $\wz Q_i\in \wz W$ with 
$\ell(\wz Q_i)\le 3\diam(\Omega),$ we set
\[
a_i=\bint_{\rf (\wz Q_i)}u(z)\,dz =\frac 1 {|\rf(\wz Q_{i})|}\int_{\rf (\wz Q_i)} u(z)\,dz, 
\]
and we define 
$Eu(x)=u(x)$ in $\Omega$
and
\begin{equation}\label{eq:jordanextdefinition1}
E u(x)=\sum_{i}a_i\phi_{i}(x) 
\end{equation}
for $x\in B_{\Omega}\setminus \overline{\Omega}$. 
Here the sum runs over those $i$ for which $\ell(\wz Q_i)\le 3\diam(\Omega).$
We will prove that $\|E u\|_{W^{1,\,p}(B_{\Omega}\setminus {\overline{\Omega}})}
\lesssim\|u\|_{W^{1,\,p}( {\Omega})}$. We have not yet defined $Eu$ on 
$\partial \Omega.$  Since $\partial \Omega$ is of area zero by Lemma \ref{bdyzero}, this is not an issue, and we simply let $Eu(x)=0$ for points in $\partial \Omega.$

\begin{remark} A reader familiar with the extension operator employed in \cite{jo1981} perhaps wonders why we have chosen $\rf(\wz Q_i)$ via the shadow of $\wz Q_i$ instead
of picking a Whitney square $Q$ of diameter comparable to that of $\wz Q_i$ and at distance comparable to the diameter of $\wz Q_i.$ Actually, such a square can be found as $\Omega$ is 
John, but we have not been able to establish useful estimates for the difference of averages over reflections of neighboring squares under this kind of a choice. One should view our
construction of $\rf$ as a kind of reflection via harmonic measure. In fact, the Jordan case in the setting of \cite{jo1981} is that of a quasidisk and for them our choice of $\rf(\wz Q_i)$  
can be checked to conform with the one used in \cite{jo1981}. We will control the above difference of averages via suitable John subdomains of $\Omega.$ In our setting, these subdomains
may well have bad overlaps contrary to what happens in \cite{jo1981} and in our adaptation of this technique in Section~\ref{sec:nec}, see \eqref{peter 3.2}. The key point in what follows will be to obtain control on the overlaps in terms of the squares $\wz Q_i.$ 
\end{remark}

\subsection{Basic estimate}\label{sectionestimate}

In order to estimate $|\nabla Eu|$ for the operator defined in \eqref{eq:jordanextdefinition1} we need control on the differences of the averages of $u$ over pairs of Whitney squares.
Towards this, denote by $\widehat{|\nabla u|}$ the zero extension of $|\nabla u|$, and by $M$ the Hardy-Littlewood
maximal operator. 

\begin{lemma}\label{special case}
Given distinct Whitney squares $Q,\,Q'\subset \Omega$ such that
\begin{equation} \label{oletus} 
\dist_{\Omega}(S(Q),\,S(Q'))\lesssim  \ell(Q)\sim \ell(Q'),
\end{equation}
we have 
 \[
\left|\bint_{{Q}}u(z)\,dz-\bint_{{Q'}}u(z)\,dz\right| \le C_0 \ell(Q)^{-1} \int_{Q} M(\widehat{|\nabla u|})(z) \, dz. 
 \]
Here $C_0$ only depends on $J$ and the constants
in \eqref{oletus}. 
\end{lemma}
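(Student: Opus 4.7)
The plan is to construct a short Whitney chain in $\Omega$ joining $Q_1$ to $Q_2$, consisting of boundedly many squares of side length comparable to $\ell(Q_1)$, and then to apply the usual chain--Poincar\'e machinery to obtain a pointwise estimate by the Hardy--Littlewood maximal function; integrating this pointwise bound over $Q_1$ then delivers the claim.

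First I would translate the assumption on shadows into an inner-distance estimate between the squares themselves. For any $z_i\in S(Q_i)$ the hyperbolic ray from $z_i$ meets $Q_i$ and, by Lemma~\ref{lma:dual of John} and the remark following it, is a John sub-curve; hence the John inequality $\dist(\gamma(t),\partial\Omega)\ge Jt$ forces the tail of this ray from $z_i$ up to $Q_i$ to have length at most $\dist(Q_i,\partial\Omega)/J\lesssim \ell(Q_i)$. Concatenating these two tails with a curve realizing $\dist_\Omega(S(Q_1),S(Q_2))$ produces a curve in $\overline{\Omega}$ from $Q_1$ to $Q_2$ of length $\lesssim \ell(Q_1)$, and therefore $\dist_\Omega(p_1,p_2)\lesssim \ell(Q_1)$ for the centers $p_i\in Q_i$.

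Next I would invoke inner uniformity of $\Omega$ with hyperbolic geodesics as the admissible curves (available for John domains by the remark after Definition~\ref{inneruniform}) to produce a curve $\gamma$ from $p_1$ to $p_2$ with $\ell(\gamma)\lesssim \ell(Q_1)$ and with $\dist(z,\partial\Omega)\ge \epsilon_0 \min(\ell(\gamma_{p_1 z}),\ell(\gamma_{z p_2}))$ at every $z\in\gamma$. A short triangle inequality argument (splitting on whether $\min(\ell(\gamma_{p_1 z}),\ell(\gamma_{z p_2}))$ is smaller or larger than $\tfrac14 \ell(Q_1)$, using $\dist(p_i,\partial\Omega)\sim \ell(Q_1)$ in the first case and the inner-uniform bound in the second) then forces $\dist(z,\partial\Omega)\gtrsim \ell(Q_1)$ throughout $\gamma$. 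Every Whitney square of $\Omega$ meeting $\gamma$ therefore has side length comparable to $\ell(Q_1)$, and since $\ell(\gamma)\lesssim \ell(Q_1)$ these squares assemble into a chain $Q_1=S_0,S_1,\dots,S_n=Q_2$ with $n$ bounded by an absolute constant, with closures of consecutive members intersecting, and with the whole chain contained in $B(x,C\ell(Q_1))$ for every $x\in Q_1$.

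Finally I would run the standard chain argument. For each pair $S_i,S_{i+1}$ I select a ball $B_i\subset\Omega$ of radius $\sim \ell(Q_1)$ containing both; the $L^1$--Poincar\'e inequality on $B_i$ then gives $|a_{S_i}-a_{S_{i+1}}|\lesssim \ell(Q_1)\,\bint_{B_i}|\nabla u|$. For any $x\in Q_1$ the inclusion $B_i\subset B(x,C\ell(Q_1))$ combined with $|B_i|\sim \ell(Q_1)^2$ yields $\bint_{B_i}|\nabla u|\lesssim M(\widehat{|\nabla u|})(x)$, and summing the bounded number of telescoping differences produces
\[
|a_{Q_1}-a_{Q_2}|\lesssim \ell(Q_1)\,M(\widehat{|\nabla u|})(x)\qquad \text{for every } x\in Q_1.
\]
Integrating over $Q_1$ and dividing by $|Q_1|\sim \ell(Q_1)^2$ then gives the claimed bound. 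The principal obstacle is the curve-construction step: keeping $\gamma$ uniformly away from $\partial\Omega$ so that no Whitney square along it degenerates in size. Once this is achieved, the remainder is the Jones-type chaining already used in the proof of Theorem~\ref{inner extension}.
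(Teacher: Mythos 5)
Your argument follows essentially the same road as the paper: convert the shadow-distance hypothesis into $\dist_\Omega(Q_1,Q_2)\lesssim\ell(Q_1)$, use inner uniformity and the hyperbolic geodesic to stay at distance $\gtrsim\ell(Q_1)$ from $\partial\Omega$, then run a chain--Poincar\'e argument. The only real variation is in the first step: you derive $\dist_\Omega(Q_k,S(Q_k))\lesssim\ell(Q_k)$ directly from the John property of hyperbolic rays ($\dist(\gamma(t),\partial\Omega)\ge Jt$ bounds the arc length from $z_k\in S(Q_k)$ to the first hit of $Q_k$), whereas the paper deduces the same inequality from disk geometry and quasisymmetry of $\varphi$. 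Both are correct and both ultimately rest on Lemma~\ref{lma:dual of John}, so this is a cosmetic variant rather than a genuinely different route. For the Poincar\'e step, the paper collapses the whole geodesic tube into a single John subdomain and applies Poincar\'e once, while you chain through the boundedly many Whitney squares along $\gamma$; again these are interchangeable.

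There is one genuine, though easily repaired, slip in the chaining step. You ask for a ball $B_i\subset\Omega$ of radius $\sim\ell(Q_1)$ containing the two adjacent Whitney squares $S_i$ and $S_{i+1}$. Such a ball need not exist: the Whitney condition from Lemma~\ref{lma:whitney} gives $\dist(S_i,\partial\Omega)\ge\ell(S_i)$, but the smallest ball containing $S_i\cup S_{i+1}$, centered say at a point of $\overline{S_i}\cap\overline{S_{i+1}}$, must have radius at least $\sqrt{2}\max(\ell(S_i),\ell(S_{i+1}))$, which can exceed the distance from that center to $\partial\Omega$. So the ball may stick out of $\Omega$. The standard fix is to apply the Poincar\'e inequality on the slightly inflated squares $\frac{9}{8}S_i$ and $\frac{9}{8}S_{i+1}$, which do lie in $\Omega$, comparing $a_{S_i}$ and $a_{S_{i+1}}$ to the average over the overlap $\frac{9}{8}S_i\cap\frac{9}{8}S_{i+1}$ (whose measure is $\gtrsim\ell(Q_1)^2$); alternatively, use $S_i\cup S_{i+1}$ or the paper's John subdomain $\Omega_{Q_1,Q_2}$. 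With that substitution the rest of your maximal-function estimate goes through unchanged.
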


\begin{proof}
Since $\Omega$ is John and $\varphi(0)$ is a John center of $\Omega,$ $\varphi$ is 
$\eta$-quasisymmetric with respect 
to the inner distance by Lemma \ref{John subdomain}, where $\eta$ depends only on the
John constant $J.$ 
Next, $\dist(A,\,\partial \mathbb D)=\dist(A,\, S(A))$ for each $A\subset \mathbb D.$ Since  
$\varphi^{-1}(Q),\,\varphi^{-1}(Q')$ are of $\lambda$-Whitney-type for some absolute constant $\lambda$  by Lemma~\ref{whitney preserving},
we conclude that 
\begin{equation}\label{110}
\dist(\varphi^{-1}(Q),\,\varphi^{-1}(S(Q)))\le C(\lambda) \diam(\varphi^{-1}(Q)) 
\end{equation}
and 
$$ \dist(\varphi^{-1}(Q'),\,\varphi^{-1}(S(Q')))\le C(\lambda) \diam(\varphi^{-1}(Q')).$$

Let us show that quasisymmetry of $\varphi$ 
allows us to translate \eqref{110} and its analog for 
$Q'$ to $\Omega.$
Pick $z_1\in  \varphi^{-1}(Q)$ and $z_2 \in \varphi^{-1}(S(Q))$ such that
\begin{equation}\label{tor1}
\dist( \varphi^{-1}(Q),\, \varphi^{-1}(S(Q)))=|z_1-z_2|,
\end{equation}
and let $z_3\in \varphi^{-1}(Q)$ be a point such that
\begin{equation}\label{tor2} 
\diam(\varphi^{-1}(Q)) \le 2 |z_1-z_3|.
\end{equation}
Recall that $\varphi(z_2)$ is rectifiably joinable, say, to $\varphi(0)$ by 
Remark \ref{muistutus}. Pick points $w_j$ along this rectifiable curve so that 
$w_j$ tend to $\varphi(z_2)$ and $\dist_{\Omega}(\varphi(z_1),\,w_j)$ tends to $\dist_{\Omega}(\varphi(z_1),\varphi(z_2)).$ Since $\varphi$ is homeomorphic up to
boundary, it follows that $\varphi^{-1}(w_j)$ tend to $z_2.$ Hence, 
by \eqref{110},\eqref{tor1},\eqref{tor2} we have
$$|z_1-\varphi^{-1}(w_j)|\le C(\lambda) |z_1-z_3|$$
when $j$ is sufficiently large.
Then the quasisymmetry of $\varphi$  gives
$$\dist_{\Omega}( \varphi(z_1),\, w_j)\le C(J,\,\lambda) \dist_{\Omega}( \varphi(z_1),\, \varphi(z_3))$$
for all sufficiently large $j.$ Since $\dist_{\Omega}(\varphi(z_1),\,w_j)$ tends to 
$\dist_{\Omega}(\varphi(z_1),\varphi(z_2))$ we deduce that
$$\dist_{\Omega}( \varphi(z_1),\, \varphi(z_2))\le C(J,\,\lambda) \dist_{\Omega}( \varphi(z_1),\, \varphi(z_3)).$$
Hence 
\begin{equation}\label{111}
\dist_{\Omega}( Q,\, S(Q))\ls \diam_{\Omega}( Q)\sim \ell( Q)
\end{equation}
with constants depending only on $\lambda$ and $J.$
Similarly 
\begin{equation}\label{111toka}
\dist_{\Omega}( Q',\, S(Q'))\ls \diam_{\Omega}( Q')\sim \ell( Q').
\end{equation}

By the triangle inequality (see Lemma~\ref{inner triangle}), \eqref{111}, \eqref{111toka}, 
Lemma~\ref{shadow estimate} and  \eqref{oletus} we conclude that 
\begin{align*}
\dist_{\Omega}(Q,\,Q') \lesssim & \dist_{\Omega}(Q,\,S(Q))+\diam_{\Omega}(S(Q))+\dist_{\Omega}(S(Q),\,S(Q'))\\
&+\diam_{\Omega}(S(Q'))+\dist_{\Omega}(Q,\,S(Q'))\\ \ls& \ \ell(Q)
\end{align*}
with constants depending only on $\lambda$ and $J$. 
By Lemma \ref{lma:map and geodesic} we deduce from this that the length of the hyperbolic segment 
$\Gamma$
between the centers 
of $Q$ and $Q'$ is no more than a constant (only depending on the constants in 
\eqref{oletus} and the John constant $J$)
multiple of $\ell(Q).$

Next, we construct  a John subdomain $\Omega_{Q,\,Q'}\subset \Omega\cap CQ$ of 
diameter no more than 
$C \ell(Q),$
containing both $Q$ and $Q'$, where $C$ only depends on the John constant 
$J$. Towards this, set
 $$\Omega_{Q,\,Q'}= Q \cup Q' \cup\bigcup_{z\in \Gamma} B\left(z,\,3^{-1}\dist(z,\,\partial \Omega)\right),$$
where $\Gamma$ is the above hyperbolic segment between the centers 
of $Q$ and $Q'.$ To see that $\Omega_{Q,\,Q'}$ is John, let $z_0$ be the middle point (in the sense of length)
of $\Gamma$ and
consider, for a given
$z\in \Omega_{Q,\,Q'}$, the following curve $\gamma$: the first part of the 
curve is a line segment from  $z$ to $z_1\in \Gamma$, where 
$z\in B\left(z_1,\,3^{-1}\dist(z_1,\,\partial \Omega)\right)$ or $z_1$ is 
the center of $Q$ (or $Q'$) if $z\in Q$ (or $z\in Q'$), and the second part 
coincides with $\Gamma[z_1,\,z_0].$ Since a simply connected John domain 
$\Omega$ is 
(quantitatively) inner uniform and we can use
hyperbolic segments as the curves required in  \eqref{e1.1}
(see Definition \ref{inneruniform} and Lemma \ref{johninner}), it 
follows that the above curve is a John curve of $\Omega_{Q,\,Q'}$ between $z$ 
and $z_0,$ with a 
constant only depending on $J.$

By letting 
$$a= \bint_{\Omega_{Q,\,Q'}} u\,dz,\, a_Q= 
\bint_{{Q}}u(z)\,dz,\, a_{Q'}= 
\bint_{{Q'}}u(z)\, dz,$$
the Poincar\'e inequality on $\Omega_{Q,\,Q'}$ from \cite{B1997} (with a constant depending only on $J$) and   \eqref{oletus}  imply
\begin{align*}
|a_{Q}-a_{Q'}|&\le \left|a_{Q}-a\right|+\left|a_{Q'}-a\right|\lesssim \bint_{Q} \left|u-a\right|\, dz+ \bint_{Q'} \left|u-a\right|\, dz\\
&\lesssim \ell(Q)^{-1}\int_{\Omega_{Q,\,Q'}}|\nabla u(z)|  \, dz  \lesssim \ell(Q)  \bint_{CQ}\widehat{|\nabla u|}(z)\, dz  \\
&\lesssim \ell(Q) \bint_{Q} M(\widehat{|\nabla u|})(z) \, dz  \lesssim \ell(Q)^{-1} \int_{Q} M(\widehat{|\nabla u|})(z) \, dz.
\end{align*}
\end{proof}

\begin{figure}
 \centering
 \includegraphics[width=0.95\textwidth]{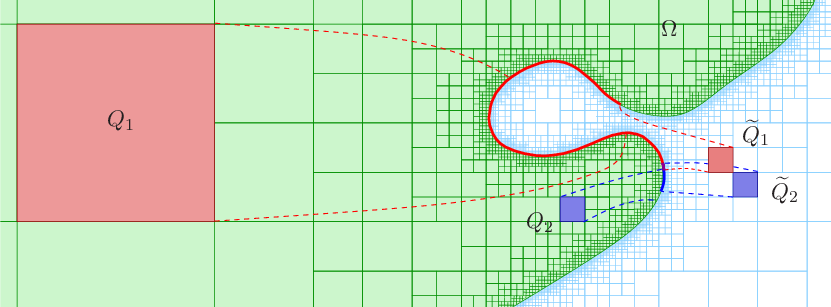}
 \caption{The shadows of neighboring squares $\widetilde Q_1$ and $\widetilde Q_2$ 
          can differ significantly in size from each other. Consequently the reflected squares 
          $Q_1$ and $Q_2$ may be of very different size.}
 \label{fig:shadows}
\end{figure}


\subsection{Intermediate Whitney squares}\label{sectionintermediate}

We would like to employ Lemma~\ref{special case} to estimate $|a_{\rf(\wz Q_i)}-a_{\rf(\wz Q_j)}|$ for pairs of neighboring squares $\wz Q_i$ and $\wz Q_j.$ Unfortunately, the reflected squares need not have comparable size (see Figure~\ref{fig:shadows}), and hence we cannot always directly rely 
on Lemma \ref{special case}. To fix this problem, we construct chains of suitable intermediate Whitney squares in order to be able to use our estimate.  Each of these chains consists of a finite number of elements, but there is no uniform bound for these numbers.

\begin{lemma}\label{feikit}
Let $\wz Q_i,\,\wz Q_j$ be distinct squares so that $\wz Q_i\cap \wz Q_j\neq\emptyset$ and 
$$\ell(\wz Q_i),\,\ell(\wz Q_j)\le 3\diam(\Omega).$$ Suppose that $\diam(S(\wz Q_i))\le \diam(S(\wz Q_j)).$  
Then there exist $l=l(i,j)\in \mathbb N$ and $G(\wz Q_i,\wz Q_j):=\{Q^0,\dots,Q^l\}$ consisting of squares of $W$ so that the following holds:
 \begin{equation}\label{paat} Q^0=\rf(\wz Q_i) \mbox{ and } Q^l= \rf(\wz Q_j),
 \end{equation}
for $0\le n \le l-1$ we have the estimate
\begin{equation}\label{kuutioiden etaisyys}
\dist_{\Omega}(S(Q^n),\,S(Q^{n+1}))\lesssim \ell(Q^n)\sim \ell(Q^{n+1}),
\end{equation}
and,  for $0\le m \le l,$ the estimate
\begin{equation}\label{kuutioiden koot}
\ell(Q^m)\sim 2^{-m}\diam (S(\wz Q_j)),
\end{equation}
with constants only depending on $J.$
\end{lemma}
\begin{proof}
Let distinct squares
$\wz Q_i,\,\wz Q_j$ with $\wz Q_i\cap \wz Q_j\neq \emptyset$ satisfy both $$\diam(S(\wz Q_i))\le \diam(S(\wz Q_j))$$ and 
$$\ell(\wz Q_i),\,\ell(\wz Q_j)\le 3\diam(\Omega).$$
If 
\begin{equation}\label{equat202}
\frac 1 8 \diam(S(\wz Q_j)) \le \diam(S(\wz Q_i)),
\end{equation}
we set $l(i,j)=1$ and define $Q^0=\rf(\wz Q_i),\,Q^1=\rf(\wz Q_j).$
Then by Lemma~\ref{shadow estimate}, Lemma~\ref{existence} and the fact that $\wz Q_i\cap \wz Q_j\neq \emptyset$, we have that \eqref{kuutioiden etaisyys} holds  with  constants depending only on $J$. 
Moreover, \eqref{kuutioiden koot} holds with an absolute constant.

Suppose that \eqref{equat202} fails. Set $Q^0=\rf(\wz Q_i).$
Pick a connected closed set $\wz F^1$ (referred to as a fake square) such that $\wz \Omega\setminus \wz F^1$ is connected, 
$$\wz Q_i\subset \wz F^1 \subset \wz Q_i\cup \wz Q_j,\, S(\wz Q_i)\subset S(\wz F^1)$$
and
\begin{equation}\label{equat56}
2\diam(S(\wz F^1))=\diam(S(\wz Q_i\cup \wz Q_j)).
\end{equation}
The existence of $\wz F^1$ is clear since
$\wz \varphi\colon\mathbb R^2\setminus \mathbb D\to \overline{\wz \Omega}$ is a homeomorphism and conformal outside $\overline {\mathbb D}$. 
For example, we can construct  $\wz F^1$ in the following way. Since $\wz \varphi$ is a homeomorphism, we know that both $\wz \varphi^{-1}(\partial \wz Q_i)$ and $\wz \varphi^{-1}(\partial \wz Q_j)$ are   Jordan curves, and they intersect each other. Pick $z\in \partial \wz Q_i\cap \partial \wz Q_j$. Then  parameterizing $\wz \varphi^{-1}(\partial \wz Q_j)$ via $\gamma\colon[0,\,1]\to \wz \varphi^{-1}(\partial \wz Q_j)$ with $\gamma(0)=\gamma(1)=z$, by continuity there is $0<t<1$ such that, by letting
$\wz F^1=\wz \varphi(\gamma[0,\,t]\cup \wz Q_i),$
we have that \eqref{equat56} holds; notice that the preimages under $\wz \varphi$ of hyperbolic rays are radial rays, and then $\wz \varphi^{-1}(S(\partial \wz Q_j))=\wz \varphi^{-1}(S(\wz Q_j))$. 
Then by our construction it is clear that $\wz Q_i\subset \wz F^1 \subset \wz Q_i\cup \wz Q_j$ and
that $\wz \Omega \setminus \wz F^1$ is connected. Hence $\wz F^1$  is  a desired set. 

Notice that $\wz F^1$ is a Whitney-type set since $\ell(\wz Q_i)\sim \ell(\wz Q_j)\sim \diam(\wz F^1)$ and $\wz Q_i \subset \wz F^1$. By Lemma~\ref{existence}, there is a Whitney square $Q^1\in W$ such that
$$ \diam(S(Q^1)) \le {C(J)} \diam(S(\wz F^1)) , $$
and
$$ \diam(S(\wz F^1 )) \le  {C(J)}\diam(S(Q^1)\cap S(\wz F^1)), $$
where $C(J)$ depends only on $J$.  
We did not need the assumption that $\wz \Omega \setminus \wz F^1$
be connected above; we will later use it in oder to apply Lemma~\ref{sum estimate}.

Next we pick a connected closed set $\wz F^2$ such that $\wz \Omega \setminus \wz F^2$ is connected,
$\wz Q_i\subset \wz F^2 \subset \wz F^1\subset \wz Q_i\cup \wz Q_j$, $S(\wz Q_i)\subset S(\wz F^2)$ and
$$4\diam(S(\wz F^2))=\diam(S(\wz Q_i\cup \wz Q_j)), $$
and select a Whitney square $Q^2\subset \Omega$ such that
$$ \diam(S(Q^2)) \le {C(J)} \diam(S(\wz F^2)) , $$
and
$$ \diam(S(\wz F^2 )) \le  {C(J)}\diam(S(Q^2)\cap S(\wz F^2)), $$
where $C(J)$ depends only on $J.$ We continue this process to find squares $Q^l\in W$ until we have 
$$\frac 1 2\diam(S(\wz F^l))\le \diam(S( \wz Q_i) )\le \diam(S(\wz F^l)) $$
for some $l\in \mathbb N$. 

By our construction,
\begin{equation}\label{kopio}
2^m \diam(S(\wz F^m))=\diam(S(\wz Q_i\cup \wz Q_j))
\end{equation}
for $m=1,\dots,l.$ 
Next, $Q^m$ was obtained via  Lemma~\ref{existence} where the corresponding
square satisfies by \eqref{samakoko} the additional 
requirement that
\begin{equation}\label{saamaat}
\diam(S(Q^m))\sim_J \ell(Q^m).
\end{equation}
Taking into account the estimate
\begin{equation}\label{vaarajarj}
\diam(S(\wz F^m))\lesssim \diam(S(Q^m))\lesssim \diam(S(\wz F^m))
\end{equation} 
with constants only depending on $J$ that
follows from our choice of $Q^m,$ we conclude with  \eqref{kuutioiden koot}. 

Regarding  \eqref{kuutioiden etaisyys},
recall from the construction that $S(Q^m)\cap S(\wz F^m)\neq \emptyset$
and 
$S(\wz F^n)\cap S(\wz F^{n+1})\neq \emptyset$ for all relevant $n,m.$
Since $\dist_{\Omega}$ satisfies
a triangle inequality by Lemma~\ref{inner triangle}, 
we conclude that
$$\dist_{\Omega}(S(Q^n),\,S(Q^{n+1}))\ls $$
\begin{equation}\label{kyantaa}
 \ls \diam_{\Omega}(S(Q^n))+\diam_{\Omega}(S(\wz F^n))+ \diam_{\Omega}(S(\wz F^{n+1}))+\diam_{\Omega}(S(Q^{n+1})).
\end{equation}
Hence \eqref{kopio}, \eqref{saamaat}, \eqref{vaarajarj} and \eqref{kyantaa}
together with Lemma~\ref{inner diameter diameter} 
give \eqref{kuutioiden etaisyys}.
\end{proof}

From now on, in this subsection, we will always assume that $\ell(\wz Q_i)\le 3\diam(\Omega)$ and that $\ell(\wz Q_j)\le 3\diam(\Omega).$  We call such Whitney squares allowable.
The preceding lemma gives the chain $G(\wz Q_i,\,\wz Q_j)$ when 
$$\diam(S(\wz Q_i))\le \diam(S(\wz Q_j)).$$  
Especially,  both  $G(\wz Q_i,\,\wz Q_j)$ and $G(\wz Q_j,\,\wz Q_i)$ have been constructed when $$\diam(S(\wz Q_i))= \diam(S(\wz Q_j)).$$
Even though the claim of the lemma does not imply that these chains coincide as sets, the construction in the proof of the lemma gives this.
In order not to make our notation overly complicated, we abuse notation and extend our definition also to the case where $$\diam(S(\wz Q_i))> \diam(S(\wz Q_j))$$  by setting 
$G(\wz Q_i,\,\wz Q_j):=G(\wz Q_j,\,\wz Q_i).$ Under this convention, $Q^0$ is one of the squares   $\rf(\wz Q_i), \rf(\wz Q_j)$ and $Q^l,$ $l=l(i,j)=l(j,i),$ is the other one, \eqref{kuutioiden etaisyys}
holds as stated, but for \eqref{kuutioiden koot} we need to replace $\diam (S(\wz Q_j))$ with the maximum of 
$\diam (S(\wz Q_j))$ and $\diam (S(\wz Q_j)).$ 

Given an allowable $\wz Q_i\in \wz W$ with we define
$$G(\wz Q_i)=\cup_j G(\wz Q_i,\,\wz Q_j),$$
where the union runs over all the squares $\wz Q_j \in \wz W$ that intersect $\wz Q_i.$ 

Our next lemma gives estimates for the overlaps of our chains.

\begin{lemma}  \label{ketjujenpaall}
There is a positive integer $N=N(J)$ so that
\begin{equation} 
\sum_{Q\in G(\wz Q_i,\,\wz Q_j)} \chi_Q(x)\le N
\end{equation}
for all $i,j$ and every $x\in \Omega.$ 
Moreover,
\begin{equation}
\sum_{j} \sum_{Q\in G(\wz Q_i,\,\wz Q_j)} \chi_Q(x) \le 20\sum_{Q\in G(\wz Q_i)}\chi_Q(x)
\end{equation}
 for each $i$ and
\begin{equation}
\sum_i \sum_{Q\in G(\wz Q_i)}\chi_Q(x)<\infty,
\end{equation}
for all $x\in \Omega.$
\end{lemma}
\begin{proof}
The first claim follows from \eqref{kuutioiden koot}. The second claim is an immediate consequence of the fact that the Whitney square $\wz Q_i$ has at most 20 neighbors.
Towards the final claim, recall that 
$\widetilde \varphi \colon \mathbb R^2 \setminus \mathbb D \to \overline {\widetilde \Omega}$ is a homeomorphism  (and conformal in  
$\mathbb R^2 \setminus \overline{\mathbb D}$). This implies that the diameter 
of the shadow of $\tilde A$ tends to zero uniformly when 
$\diam(\tilde A)\to 0.$ Consequently, given $\delta>0,$ there can be only a finite number of 
$\wz Q_j\in \wz W$ with 
$\ell(\wz Q_i)\le 3\diam(\Omega)$ for which $\diam(S(\wz Q_i\cup \wz Q_j))\ge \delta$ for some neighbor $\wz Q_i$ of $\wz Q_j.$ The final claim follows from this together
with our first claim, \eqref{saamaat} and \eqref{vaarajarj}.
\end{proof}

Notice that we are not claiming a uniform bound for the number of distinct $\wz Q_i$ for which a given $Q$ belongs to $G(\wz Q_i).$ In fact, such a bound
does not necessarily exist.
The following lemma provides us with a crucial substitute for such an estimate.

\begin{figure}
\centering
\includegraphics[width=1\textwidth]{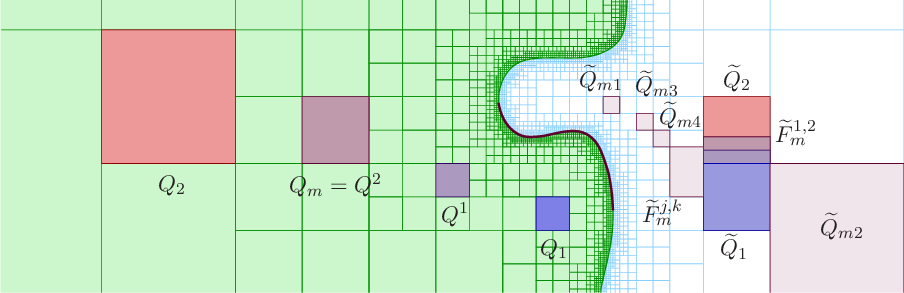}
     \caption{A square $Q \in W$  might be associated to several squares 
$\widetilde Q_{l}$ as well as to fake squares $\widetilde F_Q^{j,k}$.
     In the illustration the squares $\widetilde Q_1$ and $\widetilde Q_2$ 
give rise to two fake squares, one of which is associated with $Q$.
     Another fake square as well as four (real) squares that are associated 
with $Q$ are exhibited. Also the shadow of $Q$ is shown.  }
   \label{fig:fakes}
  \end{figure}

\begin{lemma}\label{strong sum estimate}
For each $Q\in W$, we have
\[
\sum_{Q\in G(\wz Q_i) } \ell(\wz Q_{i})^{2-\hat p}\lesssim \ell({Q})^{2-\hat p}, 
\]
where the constant depends only on $\hat p$ and the constant $C$ in  
\eqref{eq:condition}.
\end{lemma}

\begin{proof}
Recall that $\Omega$ is $J$-John with a constant that only depends on $\hat p$ and the constant $C$ in \eqref{eq:condition} by 
Lemma \ref{komplementtionjohn}.

Fix $Q\in W$ so that $Q\in G(\wz Q_i)$ for at least one $i.$ By Lemma \ref{ketjujenpaall} the number of such indices $i$ is finite and, for each of the
at most 20 neighbors $\wz Q_j,$ $Q$ corresponds
to at most $N(J)$ different fake squares $\wz F_{i,j}^m$ used in the construction of $G(\wz Q_i,\,\wz Q_j).$ Consider this finite collection of the sets
$\wz F_{i,j}^m.$ 
We relabel them as $\wz F_{n}$ with
respect to $n$, say $1\le n\le k,$ so 
that the diameters of $\widetilde\varphi^{-1}(S(\wz F_n))$ decrease when 
$n$ increases.

We set $\wz F_Q^1:=\wz F_k$ and stop the construction if $k=1$ or if the shadow of each $F_n$ with $1\le n\le k-1$ intersects the shadow of $\wz F_Q^1.$
If this is not the case and  
$S(\widetilde  F_{k-1})\cap S(\wz F_Q^1)= \emptyset$ we set
$\wz F_Q^2=\wz F_{k-1}.$ Otherwise, we consider $\wz F_{k-2}$ as
a candidate for $\wz F_Q^2$ and continue inductively via the following procedure.
We choose $\wz  F_Q^2$ to be $\wz F_{n}$ for the largest integer $n$ 
smaller than $k$ for which
$$S(\widetilde  F_{n})\cap S(\wz F_Q^1)= \emptyset. $$ We stop the process if $n=1$ or if the shadow of each $F_m$ with $1\le m\le n-1$ intersects $S(\wz F_Q^1)$ or $S(\wz F_Q^2).$
Otherwise, we choose $\wz F_Q^3$ to be $\wz F_{m}$  with the largest $m\le n-1$ such that its 
shadow does not intersect $S(\wz F_Q^1)$ nor $S(\wz F_Q^2)$, 
and continue this process. 
This gives us   $\wz F_Q^1,\dots,\wz F_Q^{n_0}$
with pairwise disjoint shadows. By the construction of these sets, 
Lemma~\ref{auxiliary 1} gives us a universal bound on $n_0$ in terms of 
$C(J);$ see \eqref{sameshadow} and \eqref{sameinter}.

Let $\wz  F_{n}$ be a set from above which was not chosen as one of the sets 
$\wz F_Q^{i}$. 
By the construction in the previous paragraph, there
is an index $l$ so that  $S(\widetilde  F_{n})\cap S(\wz F_Q^l)\neq \emptyset.$ 
Since $\widetilde\varphi^{-1}(S(\wz F_n))=S(\widetilde\varphi^{-1}(\wz F_n))$ and
$\widetilde\varphi^{-1}(S(\wz F_Q^l))=S(\widetilde\varphi^{-1}(\wz F_Q^l))$ are closed arcs of the unit circle,
at least one of the end points of 
$S(\wz  F_Q^{l})$ is contained in $S(\wz  F_{n})$; 
otherwise $S(\wz  F_{n})$ is strictly contained in $S(\wz F_Q^{l}),$ which means that 
$$ \diam (\widetilde\varphi^{-1} (S(\wz F_Q^{l})))> 
\diam (\widetilde\varphi^{-1} ( S(\wz  F_{n}))),$$ 
contradicting our selection of the sets
$\wz F_Q^{l}.$ 
Therefore, by assigning two hyperbolic rays to each $\wz F_Q^{l},$ 
we obtain a collection
of $2n_0$ hyperbolic rays that intersect all of our sets 
$\widetilde  F_{i,\,j}^m$ with 
$i\in I(Q).$

Let $\Gamma$ be one of our $2n_0$ hyperbolic rays.
Denote by $\Gamma_0$ the tail of 
$\Gamma$ with respect to a set in 
$$\{ \wz F_{i,\,j}^m \mid Q\in G(\wz Q_i),\, \Gamma 
\cap\wz F_{i,\,j}^m \neq \emptyset \}$$
whose preimage under $\widetilde \varphi$ is furthest away from the origin, 
that is, a last set that $\Gamma$ hits towards infinity. 
Let $\wz F_0$ be such a set. Then  $\ell(Q)\sim 
\diam(S(\wz F_0))$ by \eqref{kuutioiden koot} as $\wz F_0$ is one of the sets $\wz F_{i,\,j}^m.$ 
Moreover, $\wz F_0$ is of $8\sqrt 2$-Whitney-type and $\wz \Omega\setminus \wz F_0$ is connected since $\wz F_0$ is one of the sets $\wz F_{i,\,j}^m.$ See Figure~\ref{fig:fakes} for an illustration.
Hence Lemma~\ref{sum estimate} gives the estimate
\begin{equation}\label{oikeavaite}
\sum_{\wz Q_l\in \wz W,\, \wz Q_l\cap \Gamma_0\neq \emptyset} \ell(\wz Q_{l})^{2-\hat p}\ls
\ell({Q}_m)^{2-\hat p}
\end{equation}
with a constant that only depends on $\hat p$ and the constant $C$ in \eqref{eq:condition}.

Since each $\wz  F_{i,\,j}^m$ is 
a subset of $\wz Q_i\cup \wz Q_j$ where $\wz Q_i\cap \wz Q_j\neq \emptyset,$
each Whitney square has at most 20 neighbors, and the number $n_0$ of our hyperbolic rays is
bounded in terms of $J,$
our claim follows from \eqref{oikeavaite}.
\end{proof}

\subsection{Sufficiency in the Jordan case}\label{suffjordan}

Recall the definition of $Eu$ via \eqref{eq:jordanextdefinition1} from Subsection~\ref{jordandef} and of the chains
$G(\wz Q_i,\,\wz Q_j)$ and the sets $G(\wz Q_i)$ from Subsection~\ref{sectionintermediate}. We begin by
estimating the 
norm of the gradient of our extension over each 
square $\widetilde Q \in \widetilde W$ with  $\wz Q\cap B_{\Omega}\neq \emptyset.$ 

\begin{lemma}\label{lma:cubeestimate}
 For all $\widetilde Q_i \in \widetilde W$ with 
$\wz Q_i\cap B_{\Omega}\neq \emptyset,$ we have
 \[
\|\nabla E u\|^p_{L^p(\widetilde Q_i)}  \le C\, \sum_i \sum_{Q\in G(\wz Q_i,\,\wz Q_k)}\ell(\widetilde Q_i)^{2-\hat p} \ell(Q)^{\hat p-2}   \int_{Q} M(\widehat{|\nabla u|})(z))^p \, dz, 
 \]
 where the sum is over all the indices $k$ for which $\widetilde Q_{k}\cap \widetilde Q_j\neq\emptyset$.
Here $C$ depends only on $p,\hat p$ and the  
constant $C$ in \eqref{eq:condition}.
\end{lemma}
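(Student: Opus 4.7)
The plan is to reduce the estimate to differences $|a_{Q_j}-a_{Q_k}|$ between coefficients on neighboring Whitney squares and then to telescope these differences along the chain $G(\wz Q,\wz Q_k)$ by repeated application of Lemma~\ref{special case}. First, since $\sum_j\phi_j\equiv1$ near $\wz Q$, for any $x\in\wz Q$ and any reference constant (we take $a_Q$) one has
\[
\nabla Eu(x)=\sum_{j}(a_{Q_j}-a_Q)\nabla\phi_j(x),
\]
and only those $j$ with $\wz Q_j\cap\wz Q\ne\emptyset$ contribute. Because neighboring Whitney squares are of comparable size, $|\nabla\phi_j|\lesssim\ell(\wz Q_j)^{-1}\sim\ell(\wz Q)^{-1}$ and there are boundedly many neighbors, so
\[
\|\nabla Eu\|_{L^p(\wz Q)}^p\lesssim\ell(\wz Q)^{2-p}\sum_{k:\wz Q_k\cap\wz Q\neq\emptyset}|a_{Q_k}-a_Q|^p.
\]

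The main step is to control $|a_{Q_k}-a_Q|^p$ via the chain $G(\wz Q,\wz Q_k)$. Writing this chain as $Q^{(0)}=Q,Q^{(1)},\dots,Q^{(N)}=Q_k$ (ordered so that $\ell(Q^{(l)})$ grows geometrically, by factor roughly $2$, from $\ell(Q)$ up to $\ell(Q_k)$), I plan to verify that each consecutive pair satisfies the hypothesis \eqref{oletus} of Lemma~\ref{special case}. For the interior edges this follows from the construction of the fake squares: $\wz F^{l+1}\subset\wz F^l\subset\wz Q\cup\wz Q_k$, the shadows of consecutive $\wz F^l$ differ by a factor $2$, so $\ell(Q^{(l)})\sim\ell(Q^{(l+1)})$ by Lemma~\ref{shadow estimate} and Lemma~\ref{existence}, and $S(Q^{(l)})$, $S(Q^{(l+1)})$ both meet $S(\wz F^l)$ in subarcs of comparable size to $\ell(Q^{(l)})$, giving $\dist_\Omega(S(Q^{(l)}),S(Q^{(l+1)}))\lesssim\ell(Q^{(l)})$ via the John-domain comparability of boundary inner distances (see the remark after Lemma~\ref{inner capacity}). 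The endpoint edges $(Q,Q^{(1)})$ and $(Q^{(N-1)},Q_k)$ are similar. Telescoping and Hölder then give
\[
|a_{Q_k}-a_Q|\le C_0\sum_{l=0}^{N-1}\ell(Q^{(l)})^{-1}\int_{Q^{(l)}}M(\widehat{|\nabla u|})\,dz\le C_0\sum_{l}\ell(Q^{(l)})^{(p-2)/p}\Bigl(\int_{Q^{(l)}}M(\widehat{|\nabla u|})^p\Bigr)^{1/p}.
\]

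To recover the exponent $\ell(Q_m)^{s-2}$, I apply Hölder's inequality with the weight $w_l=\ell(Q^{(l)})^{s-p}$ (noting that $s-p>0$), which yields
\[
|a_{Q_k}-a_Q|^p\le \Bigl(\sum_l\ell(Q^{(l)})^{s-2}\int_{Q^{(l)}}M(\widehat{|\nabla u|})^p\Bigr)\Bigl(\sum_l\ell(Q^{(l)})^{(p-s)/(p-1)}\Bigr)^{p-1}.
\]
Because $p-s<0$ and $p-1>0$, the exponent $(p-s)/(p-1)$ is negative, so $\ell(Q^{(l)})^{(p-s)/(p-1)}$ is largest on the smallest chain element; since the sizes form a geometric progression with ratio $2$, the sum is controlled by its largest term, namely $\sim\ell(Q)^{(p-s)/(p-1)}$. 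Using $\ell(Q)\sim\diam(S(Q))\sim_J\diam(S(\wz Q))\gtrsim\ell(\wz Q)$ from Lemma~\ref{shadow estimate} and Lemma~\ref{existence}, together with the negativity of $p-s$, we obtain $(\sum_l\ell(Q^{(l)})^{(p-s)/(p-1)})^{p-1}\lesssim\ell(\wz Q)^{p-s}$. Combining the above displays then yields
\[
\|\nabla Eu\|_{L^p(\wz Q)}^p\lesssim\sum_{k}\sum_{Q_m\in G(\wz Q,\wz Q_k)}\ell(\wz Q)^{2-s}\ell(Q_m)^{s-2}\int_{Q_m}M(\widehat{|\nabla u|})^p\,dz,
\]
as claimed.

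The principal technical obstacle is the verification that every consecutive pair in the chain $G(\wz Q,\wz Q_k)$ satisfies \eqref{oletus} with constants depending only on $J$, so that Lemma~\ref{special case} can be applied uniformly along the chain. This requires a careful reading of the construction of the fake squares $\wz F^l$ and the associated Whitney squares $Q^l$, and in particular the use of Lemma~\ref{existence} to obtain that $S(Q^{(l)})$ covers a definite portion of $S(\wz F^l)$. Once this is granted, the rest of the argument is the Hölder/telescoping computation outlined above.
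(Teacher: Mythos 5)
Your proof is correct and follows essentially the same route as the paper: reduce to $\ell(\wz Q)^{2-p}\sum_k|a_{Q_k}-a_Q|^p$ via the partition of unity, telescope along the chain $G(\wz Q,\wz Q_k)$ using Lemma~\ref{special case} (after checking \eqref{oletus} for consecutive chain elements from the fake-square construction), apply H\"older with a weight that shifts the exponent from $p-2$ to $s-2$, and control the dual sum $\sum_l\ell(Q^{(l)})^{(p-s)/(p-1)}$ by its largest (i.e.\ smallest-$\ell$) term using the geometric-type decay and $\min\{\ell(Q),\ell(Q_k)\}\gtrsim\ell(\wz Q)$. The paper packages the H\"older step via $\ez=(s-p)/p$ rather than your explicit weight $\ell(Q^{(l)})^{s-p}$, but the computation is identical; the only minor inaccuracy is that the comparability $\diam_\Omega(\gamma)\sim\diam(\gamma)$ for boundary arcs you invoke is stated in the remark following Lemma~\ref{shadow estimate}, not Lemma~\ref{inner capacity}.
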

\begin{proof}
Recall that 
$\wz \varphi:\R^2\setminus \overline {\mathbb D}\to \wz\Omega$ 
extends homeomorphically up to the
boundary. 

Fix $\widetilde Q_{j}$ with $\widetilde Q_{j}\cap \widetilde Q_i\neq \emptyset.$ 
Let $Q^n,Q^{n+1}\in G(\wz Q_i,\,\wz Q_j)$ be consecutive squares. Then
\begin{equation} \label{equat7111}
\dist_{\Omega}(S(Q^n),S(Q^{n+1}))\ls_J \ell(Q^n) \sim_J \ell(Q^{n+1}) 
\end{equation} 
by \eqref{kuutioiden etaisyys}.
Let $q>0.$ Then, by \eqref{kuutioiden koot} together with Lemma~\ref{shadow estimate}, we have the
estimate
\begin{align}
\sum_{Q^n\in G(\wz Q_i,\,\wz Q_j)} \ell(Q^n)^{-q}&\le C(q,J)\min\{
\diam(S(\wz Q_i)),\diam(S(\wz Q_j)\}^{-q}\\ &\le C(q,J) \ell(\wz Q_i)^{-q}.\label{kuusumma}
\end{align}

Recall that $\{\phi_{k}\}$ is a partition of unity with $\phi_k=0$ in $\wz Q_j$ if $\wz Q_j\cap \wz Q_k=\emptyset.$ Hence, for each $x\in \wz Q_i,$ we have 
$$\nabla E u(x)=\nabla \left( \sum_{\widetilde Q_{k}\cap \widetilde Q_i\neq\emptyset} a_{k} \phi_k(x) \right) =  \nabla \left(\sum_{\widetilde Q_{k}\cap \widetilde Q_i\neq\emptyset} (a_{k}-a_{i}) \phi_k(x) \right),$$
where $a_l$ refers to the average of $u$ over $\rf(\wz Q_l).$
Since $|\nabla \phi_k|\ls \ell(\wz Q_i)^{-1}$ whenever $\wz Q_{k}\cap \wz Q_i\neq\emptyset$, we further have
\begin{align} 
\|\nabla E u\|^p_{L^p (\widetilde Q_i)}&\ls \int_{\wz Q_i} \sum_{\wz Q_{k}\cap \wz Q_i\neq\emptyset} |a_{k}-a_{i}|^p  |\nabla   \phi_k(x)|^p \, dx \nonumber \\
&\lesssim \sum_{\wz Q_{k}\cap \wz Q_i\neq\emptyset}|a_{k}-a_{i}|^p\ell(\widetilde Q_j)^{-p}|\widetilde Q_i| \\
&\lesssim  \sum_{\wz Q_{k}\cap \wz Q_i\neq\emptyset}|a_{k}-a_{i}|^p\ell(\widetilde Q_i)^{2-p} \label{esti}
\end{align}
with an absolute constant.

Let $\ez=\frac {\hat p-p}{p}>0$.  We apply Lemma~\ref{special case} via 
\eqref{equat7111},  H\"older's 
inequality and \eqref{kuusumma} with $q=\frac {\ez p}{p-1}$ to get
\begin{align*}
 |a_{k}-a_{i}|^p & \lesssim\left(\sum_{Q^n\in G(\wz Q_k,\,\wz Q_i)}|a_{Q^n}-a_{Q^{n+1}}| \right)^{p} \\
 &\lesssim\left(\sum_{Q^n\in G(\wz Q_k,\,\wz Q_i)} \ell(Q^n) \bint_{Q^n} M(\widehat{|\nabla u|})(z) \, dz \right)^{p}\\
&\lesssim\left[\sum_{Q^n\in G(\wz Q_k,\,\wz Q_i)} \ell(Q^n)^{1+\ez-\ez} \left( \bint_{Q^n} (M(\widehat{|\nabla u|})(z))^p \, dz\right)^{\frac 1 p} \right]^{p}\\
&\lesssim \left(\sum_{Q_m\in G(\wz Q_k,\,\wz Q_i)} \ell(Q^n)^{p+p\ez}   \bint_{Q^n} (M(\widehat{|\nabla u|})(z))^p \, dz \right)  \left(\sum_{Q^n\in G(\wz Q_k,\,\wz Q_i)} \ell(Q^n)^{-\frac {\ez p}{p-1}} \right)^{p-1}\\
&\lesssim \ell(\wz Q_i)^{-\ez p}  \sum_{Q^n\in G(\wz Q_k,\,\wz Q_i)} \ell(Q^n)^{p+p\ez-2}   \int_{Q^n} (M(\widehat{|\nabla u|})(z))^p \, dz. 
\end{align*}
Above, the constants only depend on  $p,\hat p$ and the
constant $C$ in \eqref{eq:condition}.

By recalling that $\ez p=\hat p-p$ and inserting the above estimate into \eqref{esti}, we obtain
\begin{align*}
\|\nabla E u\|^p_{L^{ p}(\widetilde Q_i)}& \lesssim  \sum_{\widetilde Q_{k}\cap \widetilde Q_i\neq\emptyset}|a_{k}-a_{i}|^p\ell(\widetilde Q_i)^{2-p}\\
& \lesssim \sum_{\widetilde Q_{k}\cap \widetilde Q_i\neq\emptyset}   \sum_{Q^n\in G(\wz Q_i,\,\wz Q_k)}\ell(\widetilde Q_i)^{2-\hat p} \ell(Q^n)^{\hat p-2}   \int_{Q^n} (M(\widehat{|\nabla u|})(z))^p \, dz, 
\end{align*}
with the desired control on the constants.
\end{proof}

\begin{proof}[Proof of Theorem~\ref{prop:Jordancase}]
Recall that $B_{\Omega}=B(x_0,\diam(\Omega)),$ $Eu$ is defined on  $B_{\Omega}$ as in \eqref{eq:jordanextdefinition1} and that
$\ell(\wz Q)\le 3
\diam(\Omega)$ whenever $\wz Q\in \wz W$ intersects $B_{\Omega}$ or is a neighbor of such 
a square.
By Lemma~\ref{lma:cubeestimate}, we have
$$\|\nabla E u\|^p_{L^p(B_{\Omega}\setminus\overline{\Omega})} \ls \sum_{\wz Q_i\cap B_{\Omega}\neq \emptyset} \sum_{\widetilde Q_{k}\cap \widetilde Q_i\neq\emptyset}   \sum_{Q\in G(\wz Q_i,\,\wz Q_k)}\ell(\widetilde Q_i)^{2-\hat p} \ell(Q)^{\hat p-2}   \int_{Q} (M(\widehat{|\nabla u|})(z))^p \, dz$$
with a constant only depending on our data: $p,\hat p$ and the constant $C$ in \eqref{eq:condition}. 

Towards interchanging the order of summation, notice that a fixed 
$Q\in W$ appears in our triple sum only when $Q\in G(\wz Q_i)$ in which case it is counted for each of the at most 20 neighbors $\wz Q_j$ at most $N(J)$ times by Lemma \ref{ketjujenpaall}.
Hence by interchanging the order of summation (Tonelli's theorem),  we obtain by Lemma~\ref{strong sum estimate} the estimate
\begin{align}
\|\nabla E u\|^p_{L^p(B_{\Omega}\setminus\overline{\Omega})} &\ls \sum_{\wz Q_i\cap B_{\Omega}\neq \emptyset} \sum_{\widetilde Q_{k}\cap \widetilde Q_i\neq\emptyset}   \sum_{Q\in G(\wz Q_i,\,\wz Q_k)}\ell(\widetilde Q_i)^{2-\hat p} \ell(Q)^{\hat p-2}   \int_{Q} M(\widehat{|\nabla u|})(z))^p \, dz \nonumber \\ 
& \lesssim  \sum_{Q\in W}\sum_{Q\in G(\wz Q_i)}  \ell(\widetilde Q_i)^{2-\hat p}  \ell(Q)^{\hat p-2}   \int_{Q} (M(\widehat{|\nabla u|})(z))^p \, dz\nonumber \\
& \lesssim \sum_{Q\in W} \int_{Q} (M(\widehat{|\nabla u|})(z))^{p} \, dz\nonumber \\
&\lesssim \int_{\mathbb R^2} \widehat{|\nabla u|}^p(z) \, dz \le  \int_{\Omega} |\nabla u|^p  \, dz. \label{grad norm inequ}
\end{align}
Here the constants depend only on our data.


Next, recall that $Eu(x)=\sum_j a_{j}\phi_j(x)$ when $x\in B_{\Omega}\setminus \overline \Omega,$ where $a_{j}$ is the average over $\rf(\wz Q_j)\in W$ 
with $\ell(\wz Q_j)\le 3\,\diam(\Omega).$ Write 
$\rf^{-1}( Q)$ for the collection of all
$\wz Q_{j}\in \wz W$ with $Q=\rf(\wz Q_j).$ 
Now
$$\sum_{\wz Q_{j}\in \rf^{-1}(Q)}\ell(\wz Q_{j})^2\leq C(J) \ell(Q)^{ 2} $$
since for every $\widetilde Q_{j}\in \rf^{-1}(Q)$ we have $\wz Q_{j}\subset C(J)Q$ by 
Lemma~\ref{shadow estimate}, 
 \eqref{kuutionkokoet} and the triangle inequality.
Then, 
by the definition of $Eu,$ Tonelli's theorem for series and H\"older's inequality we obtain
\begin{align}
\|Eu\|^{p}_{L^p(B_{\Omega}\setminus\overline {\Omega})}&\lesssim \sum_{Q\in W}\sum_{\widetilde Q_{j}\in \rf  (Q)}\ell(\wz Q_{j})^2 \left(\bint_{Q} |u| \,dx\right)^p \nonumber\\
&\lesssim \sum_{Q\in W}\sum_{\widetilde Q_{j}\in \rf^{-1}( Q)}   \ell(\wz Q_{j})^2 \ell(Q)^{-2} \int_{Q} |u|^p\,dx \nonumber \\
&\lesssim \sum_{Q\in W}   \int_{Q} |u|^p\,dx\lesssim \int_{\Omega} |u|^p\,dx \label{equat8}
\end{align}
with constants only depending on our data.
By combining \eqref{grad norm inequ} and \eqref{equat8} we conclude that
$$\int_{B_{\Omega}\setminus \partial \Omega} |\nabla E u|^p + |Eu|^p\, dx\leq C 
\|u\|^p_{W^{1,\,p}(\Omega)},$$
where $C$ depends only on $p,\hat p$ and the constant $C$ in \eqref{eq:condition}.

Suppose now that $u\in W^{1,\,p}(\Omega)\cap C^{\infty}(\overline{\Omega})$. We extend $Eu$ to all of $B_{\Omega}$ by letting
$$\hat E u(x)=Eu(x) \ \text{ when $x\in B_{\Omega}\setminus \partial \Omega$,  } \  \ \hat Eu(x)=u(x) \ \text{ when $x\in \partial \Omega$.  }$$
We claim that $\hat Eu(x)$ is continuous in  $B_{\Omega}$.

Notice that $E u$ is clearly continuous (even smooth) in 
$B_{\Omega}\setminus \overline \Omega$ and smooth in $\Omega.$ Hence we are reduced to 
show continuity at every $x\in \partial \Omega.$ 
Recall that $\Omega$ is Jordan. This implies that $\diam(S(\wz Q))$ tends to zero uniformly
when $\ell(\wz Q)$ tends to zero. Given $x\in \partial \Omega$ and points $x_k$ converging to
$x$ from within $\wz \Omega,$ pick Whitney squares $\wz Q_k$ containing $x_k.$ Then by the fact that $\{\phi_j\}$ forms a partition of unity, we have
\begin{align*}
|\hat Eu(x_k)-u(x)|
&=\left|\sum_{\wz Q_j\cap \wz Q_k\neq \emptyset}a_j \phi_j(x_k)  - \sum_{\wz Q_j\cap \wz Q_k\neq \emptyset} \phi_j(x_k) u(x)\right|\\
&\le \sum_{\wz Q_j\cap \wz Q_k\neq \emptyset} \phi_j(x_k) |a_j-u(x)|. 
\end{align*}

Since $\wz Q_k$ tend to $x$, also the neighboring squares of  $\wz Q_k$ tend to $x.$ We claim that
also their shadows converge to $x.$ Towards this, it suffices to check that the preimages of their
shadows tend to $\wz \varphi^{-1}(x)$ under our homeomorphism $\wz \varphi\colon \mathbb R^2\setminus
\mathbb D\to \overline {\wz \Omega}$ that is conformal in $\mathbb R^2\setminus \overline {\mathbb D}.$
Now the preimages of the shadows of these squares are the radial projections of
the preimages $\wz \varphi^{-1}(\wz A_k)$ of these squares and the desired conclusion follows since
 $\wz \varphi^{-1}(\wz A_k)$ tend to $\wz \varphi^{-1}(x).$
Hence, it follows from Lemma \ref{shadow estimate} and Lemma~\ref{existence}
that the Whitney squares of $\Omega$ associated to the neighboring squares of $\wz Q_k$ also tend to
$x.$
Thus we have 
$$\hat Eu(x_k)\to u(x)$$
by the assumption that $u$ is the restriction of a smooth (especially continuous) function to $\Omega$
and $Eu(x_k)$ is defined via averages over the squares associated to the neighboring squares of 
$\wz Q_k.$

Recall that $\Omega$ is John and that the Lebesgue measure of $\partial \Omega$ is zero Lemma~\ref{bdyzero}.
With the continuity of $\hat Eu$, \cite[Theorem 4]{JS2000} then guarantees that the above definition gives a Sobolev function with the desired norm control. 
 Also by Lemma~\ref{bdyzero} we know that $\hat Eu=Eu$ as Sobolev functions. 
Thus
$E\colon W^{1,\,p}(\Omega)\cap C^{\infty}(\overline{\Omega}) \to W^{1,\,p}(B_{\Omega}) $
is a bounded operator, and it is  also linear by its definition.

Recall that $C^{\infty}(\overline{\Omega})$ is dense in $W^{1,\,p}(\Omega)$ for $1<p<\infty$ if
$\Omega$ is a planar Jordan domain, see \cite{lewis1987}. 
By our norm estimates above, we can (uniquely) extend $E$ to entire $W^{1,\,p}(\Omega)$ as 
a bounded operator. This extension is given by the original definition of $E.$ 
Since $B_{\Omega}$ is an extension domain, we conclude that the claim of the theorem follows. 
\end{proof}

\begin{remark}\label{lopulta}
The norm of our extension operator from $W^{1,p}(\Omega)$ into $W^{1,p}(B_{\Omega})$ only
depends on $p,\hat p$ and the constant $C$ in \eqref{eq:condition}, both for the homogeneous and
the full Sobolev norms; see \eqref{grad norm inequ} and \eqref{equat8}. 
Here $B_{\Omega}=B(x_0,\diam(\Omega))$ and $x_0$ is a chosen John center of
$\Omega.$ If we wish to extend
to entire $\mathbb R^2,$ then the norm of the extension operator will also necessarily
depend on the diameter of $\Omega$ if we use the full Sobolev norm.
\end{remark}

\subsection{Proof in the general case}\label{sufffinal}

We establish the existence of an extension operator in the general case of a bounded simply connected domain $\Omega$ 
via an approximation process, relying on our earlier results.

Recall that we are claiming the existence of a bounded extension operator under the 
condition \eqref{eq:extcharcompl} for a given bounded simply connected domain $\Omega.$ 
We have already verified a version of this if $\Omega$ is Jordan.

In order to be able to prove the general case by using the result for the Jordan 
case, we need a sequence of approximating Jordan
domains to have extension operators with uniform norm bounds. 
For this purpose we have stated the dependence
of the norm of the extension operator in Theorem \ref{prop:Jordancase} explicitly in Remark~\ref{lopulta}.

From now on, $\Omega$ is a bounded simply connected domain that satisfies 
\eqref{eq:extcharcompl}. Towards the existence of a suitable approximating sequence, 
recall that \eqref{eq:extcharcompl} guarantees that 
$\Omega$ is John, see Corollary \ref{komplementtionjohn}. 
Fix a conformal map $\varphi\colon \mathbb D \to \Omega$ 
so that $\varphi(0)$ is a John center of $\Omega.$ By Remark
\ref{uptobdy} we may extend $\varphi$ continuously up to the boundary.
We still denote the extended map by $\varphi$.

Let $B_n=B(0,\, 1-\frac 1 n)$ for $n\ge 2$. 
Then $\Omega_n=\varphi(B_n)$ are Jordan John domains
(with constant independent of $n$)  
contained in $\Omega$ by Lemma~\ref{John subdomain}, and converge to $\Omega$ 
uniformly in Hausdorff distane
because of the uniform continuity
of $\varphi$ up to the boundary. Actually, $\varphi$ is even uniformly H\"older continuous \cite{GM1985},
\cite{Ch1992}.

\begin{figure}
\includegraphics[width=0.9\textwidth]{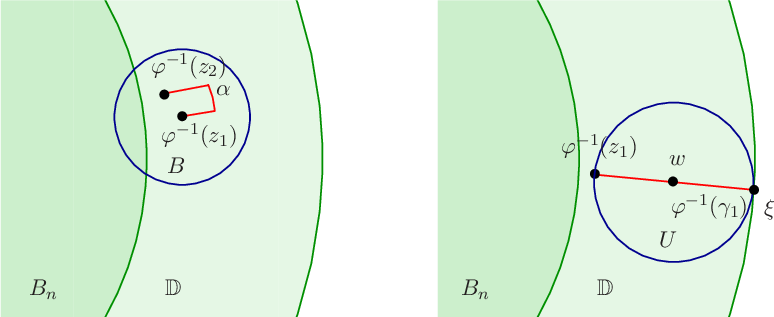}
\caption{The proof of the existence of the curve satisfying \eqref{eq:condition}
for the domain $\widetilde\Omega_n$ is split into two cases. On the left
we have the case where the preimages of the points $z_1$ and $z_2$ are
close enough so that one can use a curve $\alpha$ from Lemma~\ref{pisteetlahella} connecting them in the annular
domain $\Omega \setminus \Omega_n$. On the right is the case where the
preimages are far from each other and the constructed curve exits the
annular domain.}
\label{fig:curvext}
\end{figure}

Before giving the proof of Theorem~\ref{thm:main} we establish a technical 
result according to which the complementary 
domain of $\Omega_n$ satisfies condition \eqref{eq:condition}
with $\hat p>p$ and $C$ that are independent of $n$. 
This eventually allows us to apply Theorem 
\ref{prop:Jordancase} to $\Omega_n$ so as to complete the proof by a 
compactness argument.

\begin{lemma}\label{reduktio}
Each of the complementary domains $\wz \Omega_n$ of $\Omega_n$ satisfies 
condition  
\eqref{eq:condition}
with curves $\gamma\subset \wz \Omega_n$ for a fixed $\hat p>p$
and a constant independent of $n.$
\end{lemma}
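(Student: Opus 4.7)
The strategy is to fix an exponent $s>p$ once and for all via Lemma~\ref{selfimprove} applied to $\Omega,$ and then, given $z_1,z_2\in \wz\Omega_n,$ produce a curve in $\wz\Omega_n$ realizing \eqref{eq:condition} by combining the curve that \eqref{eq:condition} supplies for $\Omega$ on the ``outside'' complement $\mathbb{R}^2\setminus \Omega$ with short auxiliary ``exit'' curves used to leave the narrow annular region $A_n := \Omega \setminus \overline{\Omega_n}.$ Concretely, Lemma~\ref{selfimprove} furnishes $s=s(p,C(\Omega,p))>p$ and a constant $C_0,$ both depending only on $p$ and the constant in \eqref{eq:extcharcompl}, such that \eqref{eq:condition} holds for $\Omega;$ this $s$ is the one to be used in Lemma~\ref{reduktio}. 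Three overlapping cases will be considered. If $z_1,z_2\in \mathbb{R}^2\setminus \Omega,$ the curve $\gamma\subset \wz\Omega$ supplied by \eqref{eq:condition} for $\Omega$ already lies in $\wz\Omega_n,$ and the pointwise inequality $\dist(z,\partial\Omega_n)\ge \dist(z,\partial\Omega)$ on $\wz\Omega$ (coming from $\overline{\Omega_n}\subset \Omega$) together with $1-s<0$ immediately transfers the bound to $\partial\Omega_n.$ If $|z_1-z_2|$ is less than a small fixed fraction of $\min\{\dist(z_1,\partial\Omega_n),\dist(z_2,\partial\Omega_n)\},$ the Euclidean segment $[z_1,z_2]$ lies in $\wz\Omega_n$ and the bound is trivial. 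The remaining case has some $z_i\in A_n$ with $|z_1-z_2|\gtrsim \dist(z_i,\partial\Omega_n).$

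In this remaining case, for each $z_i\in A_n$ set $w_i=\varphi^{-1}(z_i)\in\mathbb{D}\setminus\overline{B_n}$ and construct an exit curve $\beta_i\subset \overline{A_n}$ from $z_i$ to a point of $\mathbb{R}^2\setminus \Omega,$ typically the radial image $\beta_i=\varphi([w_i,w_i/|w_i|])$ terminating at $\hat z_i:=\varphi(w_i/|w_i|)\in\partial\Omega;$ when $z_i$ sits so deep in the gap that $\dist(z_i,\partial\Omega)\gg |z_1-z_2|,$ a sideways exit is used instead, obtained as the image under $\varphi$ of a ``go-out, arc, go-back'' curve running through an intermediate circle in $\mathbb{D}\setminus\overline{B_n}.$ The final curve from $z_1$ to $z_2$ is then the concatenation of $\beta_1,$ the curve supplied by the first case joining the two exit points, and $\beta_2$ (with the obvious modification when $z_j\in \mathbb{R}^2\setminus \Omega$). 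The two technical ingredients are the triangle-type bound $|\hat z_1-\hat z_2|\lesssim |z_1-z_2|$ and the integral estimate
\[
 \int_{\beta_i}\dist(z,\partial\Omega_n)^{1-s}\,ds \lesssim |z_1-z_2|^{2-s},
\]
both produced by decomposing $\beta_i$ via the dyadic Whitney structure of $\mathbb{D}\setminus\overline{B_n},$ transferring to Whitney-type pieces of $A_n$ by Lemma~\ref{whitney preserving}, and applying Koebe's theorem (Lemma~\ref{linearmap}). On each such piece one has $\dist(\varphi(u),\partial\Omega_n)\sim |\varphi'(u)|\,\dist(u,\partial B_n)$ and $|\varphi'|$ essentially constant; the long-distance assumption forces the initial Whitney scale at $w_i$ to match that corresponding to $|z_1-z_2|,$ and the John property of $\Omega$ (Lemma~\ref{lma:dual of John}), with constant depending only on $C(\Omega,p),$ keeps all Koebe factors uniformly controlled. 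Consequently the constants depend only on $p,s,$ and $C(\Omega,p),$ hence are independent of $n.$

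The main obstacle is precisely this integral estimate on $\beta_i$ when $w_i$ lies very close to $\partial B_n$ relative to $\partial\mathbb{D}:$ there $\dist(z_i,\partial\Omega_n)$ is much smaller than $\dist(z_i,\partial\Omega),$ the integrand $\dist(\cdot,\partial\Omega_n)^{1-s}$ is large, and one cannot appeal to the simple comparison with $\dist(\cdot,\partial\Omega)$ that handled the first case. Balancing the small-distance factor against $|\varphi'|^{2-s}$ along the dyadic pieces, and selecting the exit-curve shape (radial versus sideways) according to whether $\dist(z_i,\partial\Omega)$ or $\dist(z_i,\partial\Omega_n)$ dominates, is what ultimately produces the target $|z_1-z_2|^{2-s}$ with a constant independent of $n.$
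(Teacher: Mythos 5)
Your proposal is correct and follows essentially the same three-case architecture as the paper's proof: both points outside $\Omega$ handled by $\dist(z,\partial\Omega_n)\ge\dist(z,\partial\Omega)$ together with Lemma~\ref{selfimprove}; hyperbolically close points handled inside the annulus $\mathbb D\setminus\overline{B_n}$ via Koebe distortion; and the remaining case handled by exit curves to $\partial\Omega$ concatenated with the curve that \eqref{eq:extcharcompl} provides for $\Omega$. The only difference is one of implementation in the last case -- you estimate the radial exit $\varphi([w_i,w_i/|w_i|])$ directly by a dyadic Whitney/Koebe computation, whereas the paper instead places a John disk $U\subset\mathbb D\setminus\overline{B_n}$ touching $\partial\mathbb D$ with $z_i\in\varphi(\overline U)$ and uses the cigar property of John curves in $\varphi(U)$ together with $\dist(z,\partial\Omega_n)\ge\dist(z,\partial\varphi(U))$ -- and both routes yield the same $n$-independent constant.
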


\begin{proof}
Fix $n\ge 2$ and let $z_1,z_2\in \wz \Omega_n.$ Write $R=1-\frac 1 n.$
We begin by noticing that, if $z_1$ and $z_2$ are both outside 
$\Omega$, then condition \eqref{eq:condition} with a fixed $\hat p>p$ and $C$ follows immediately from 
\eqref{eq:extcharcompl} and the self-improvement property for $\Omega$ from 
Lemma~\ref{selfimprove},
since 
$\dist(z,\,\partial \Omega)\le\dist (z,\,\partial \Omega_n)$ for $z\in\R^2 \setminus\Omega$. 
Hence, switching $z_1$ and $z_2$ if necessary, we may assume that $z_1\in \Omega\setminus \Omega_n.$

We fix $\hat p>p$ as in the first paragraph of the proof.
Suppose first that also $z_2\in \Omega\setminus \Omega_n.$ 
Let us consider the case where 
$$\varphi^{-1}(z_2)\in B(\varphi^{-1}(z_1),\delta (1-|\varphi^{-1}(z_1)|)),$$  where $\delta$ is as in Lemma \ref{pisteetlahella}. Then
Lemma \ref{pisteetlahella} gives us a curve $\alpha$ joining $\varphi^{-1}(z_1)$ to $\varphi^{-1}(z_2)$ in $B(\varphi^{-1}(z_1),(1-|z_1|)/2)\setminus \overline{B}(0,R)$ so that
\begin{equation}\label{tamatahan}
\int_{\alpha}\dist (z,\partial B(0,R)\cup \partial B(\varphi^{-1}(z_1),(1-|\varphi^{-1}(z_1)|)/2))^{1-\hat p}\, ds(z)\le C(\hat p)|\varphi^{-1}(z_1)-\varphi^{-1}(z_2)|^{2-\hat p}.
\end{equation}
See Figure \ref{fig:curvext}.
Because $B:=B(\varphi^{-1}(z_1),(1-|\varphi^{-1}(z_1)|)/2)$ is of $2$-Whitney type, Lemma \ref{bilipominaisuus} gives us the estimate
$$C^{-1}|\varphi'(\varphi^{-1}(z_1))||w_2-w_1|\le |\varphi(w_2)-\varphi(w_1)|\le C|\varphi'(\varphi^{-1}(z_1))||w_2-w_1|$$
for all $w_1,w_2\in B$ for an absolute constant $C.$ Since $\alpha\subset B,$ we may apply this bi-Lipschitz estimate to the above integral over $\alpha$ so as
to conclude for $\gamma=\varphi\circ \alpha$ that
$$\int_{\gamma}\dist(z,\varphi(S(0,R)\cap B) \cup \varphi(\partial B)))^{1-\hat p}\, ds(z)\le C'(\hat p)|z_2-z_1|^{2-\hat p},$$
where $C'(\hat p)$ depends only on $\hat p.$ The desired inequality follows since $\varphi$ is a homeomorphism and  hence $\dist (z,\partial \wz \Omega_n)\ge \dist(z,\varphi((S(0,R)\cap B)\cup \partial B))$
when $z\in \varphi(\alpha)=\gamma.$
The desired conclusion also follows if the roles of $z_1,z_2$ above are reversed. 

Next, \eqref{quasi} (applied to $\varphi^{-1}$) gives us an 
absolute constant $C$ such that if 
$$C|z_1-z_2|\le \max\{\dist(z_1,\partial \Omega),\,\dist(z_2,\partial \Omega)\},$$
then we are in a situation covered by the previous paragraph. Thus we may  assume that
\begin{equation}\label{oikeasti}
C|z_1-z_2|\ge \max\{\dist(z_1,\partial \Omega),\,\dist(z_2,\partial \Omega)\}.
\end{equation}

Recall from  Lemma~\ref{quasisymmetry} that $\varphi$ is $\eta$-quasisymmetric 
with respect to the inner distance with $\eta$ that only depends on the John constant of 
$\Omega.$
Define
$$U= B\left(\frac {1  + |\varphi^{-1}(z_1)|} 2 \frac{\varphi^{-1}(z_1)}{|\varphi^{-1}(z_1)|},\, \frac {1 - |\varphi^{-1}(z_1)|} 2 \right).$$
Then the disk $U$ is contained
in $\mathbb D\setminus \overline {B_n}$, 
$z_1\in \varphi(\overline U),$ $\varphi(\overline U)\cap \partial
\Omega\neq \emptyset,$ and Lemma~\ref{John subdomain} gives that
$\varphi(U)$ is $J'$-John with center $\varphi(w),$ where $w$ is the center of 
$U,$ and $J'$ only depends on the John constant $J$ of $\Omega.$ 

We claim that 
\begin{equation}\label{halkaisija}
\diam(\varphi(U))\le C(J)\dist(z_1,\partial \Omega).
\end{equation}
Towards this, let $\xi=\varphi^{-1}(z_1)/|\varphi^{-1}(z_1)|,$ 
the tangent point of $U$ with the unit circle, and pick a point $z_3\in \partial \Omega$ 
satisfying 
$$\dist(z_1,\,\partial \Omega)=|z_1-z_3|.$$ 
Pick a sequence of points $x_j$ along the Euclidean segment between $z_1,z_3$ so
that $x_j\to z_3.$ 
Then
\begin{equation}\label{tovika}
\dist_{\Omega}(z_1,x_j)=|z_1-x_j|\le \dist_{\Omega}(z_1,x_j).
\end{equation} 
Since $\varphi$ is a homeomorphism of the unit disk onto $\Omega,$
there is a subsequence of the sequence $(x_j)$ so that the preimages converge to some
$w_3\in \partial \mathbb D.$ For simplicity, we refer to the elements of this subsequence
still by $x_j.$ 
Write $r$ for the radius of $U.$ By the definition of $U$ and $\xi$ we have 
$$2r=|\varphi^{-1}(z_1)-\xi|=\dist(\varphi^{-1}(z_1),\partial \mathbb D)$$
and since $w_3\in \partial \mathbb D,$ we conclude that
$$2r\le |\varphi^{-1}(z_1)-w_3|.$$
In particular, for any $w_2\in U$ we have
$$|\varphi^{-1}(z_1)-w_2|\le 2r\le |\varphi^{-1}(z_1)-w_3|$$
and consequently
\begin{equation}\label{totovika}
|\varphi^{-1}(z_1)-w_2|\le 2|\varphi^{-1}(z_1)-\varphi^{-1}(x_j)|
\end{equation}
for all sufficiently large $j.$
 Quasisymmetry of $\varphi$ together with \eqref{totovika} and \eqref{tovika} now gives for all sufficiently large $j$ 
 the estimate
 $$|z_1-\varphi(w_2)|\le \dist_{\Omega}(z_1,\varphi(w_2))\le \eta(2) \dist_{\Omega}(z_1,x_j)\le \eta(2)\dist(z_1,\partial \Omega).$$ 
Hence \eqref{halkaisija} follows.

By connecting $z_1$ to the John center $\varphi(w)$ of $\varphi(U)$ and then 
the John center to $\varphi(\xi)\in \partial \Omega$ via hyperbolic segments in $\varphi(U)$, we obtain by Remark~\ref{muistutus},  \eqref{halkaisija} and \eqref{oikeasti}
a curve $\gamma_1\subset
\varphi(U)$ consisting of two John curves and joining $z_1$ to 
$\partial \Omega$ so that
\begin{align*}
&\int_{\gamma_1}\dist(z,\,\partial \Omega_n)^{1-\hat p} \, \d s(z) \le \int_{\gamma_1}\dist(z,\,\partial (\varphi(U)))^{1-\hat p} \, \d s(z)\\ &\lesssim \dist(\varphi(w),\partial (\varphi(U))^{2-s}\lesssim \diam(\varphi(U))^{2-\hat p}\lesssim \dist(z_1,\partial \Omega)^{2-\hat p} \lesssim |z_2-z_1|^{2-\hat p}. 
\end{align*}
See Figure \ref{fig:curvext}. Here the constants depend only on $J.$
 Analogously, we find a corresponding curve $\gamma_2$ for $z_2.$
It remains to join the two endpoints $\wz z_1,\wz z_2$
of $\gamma_1$ and $\gamma_2$ in $\partial \Omega$ by a curve $\gamma_3$ outside $\Omega$ as in the first paragraph of the proof; notice here that \eqref{oikeasti} guarantees that
$$|\wz z_1-\wz z_2|\le C|z_1-z_2|.$$
 By the triangle inequality, the curve obtained by concatenating  $\gamma_1,\gamma_3$ and $\gamma_2$
satisfies our requirements. 

We are left to consider the case where $z_1\in \Omega\setminus \overline \Omega_n$ and  $z_2\notin \Omega.$ In this case, we simply use the curve $\gamma_1$ constructed above for $z_1$ together with a curve
$\gamma_3$ outside $\Omega$ joining $z_2$ and the endpoint of  $\gamma_1$ in $\partial \Omega$ as above. 
\end{proof}

\begin{proof}[Proof of Theorem~\ref{thm:main}]
By Section~\ref{sec:nec}, we only need to prove the sufficiency of \eqref{eq:extcharcompl}.
Recall the conformal map $\varphi$ and the domains 
$$\Omega_n=\varphi(B_n)$$ from the beginning of this subsection. 
By Lemma~\ref{John subdomain}, the domains
$\Omega_n$ are John domains with John center $x_0=\varphi(0)$ with a John constant
only depending on $J.$

By Lemma \ref{reduktio} and Theorem \ref{prop:Jordancase}, \eqref{eq:extcharcompl} yields 
that 
there exist extension operators 
$$E_n\colon W^{1,\,p}(\Omega_n)\to W^{1,\,p}(B(x_0,\diam(\Omega_n)),$$ where 
the norms of the extension operators $E_n$ are independent of $n,$ see Remark~\ref{lopulta}. 
Since
$\Omega_n=\varphi(B_n)$ and $\varphi$ is continuous up to boundary, $\diam(\Omega_n)\to
\diam(\Omega)$ when $n$ tends to infinity. Hence $B(x_0,r)\subset B(x_0,\diam(\Omega_n))$
for all sufficiently large $n$ when $r= \diam(\Omega)-\dist(x_0,\partial \Omega).$
Define $B=B(x_0,r).$ 
We conclude that
$$E_n\colon W^{1,\,p}(\Omega_n)\to W^{1,\,p}(B)$$
is a bounded extension operator with a norm bound independent of $n,$ 
for all sufficiently large
$n.$



Fix $u\in W^{1,\,p}(\Omega)$, and let $u_n=u|_{\Omega_n}$ for $n\ge 2$. 
Now
$\|\nabla E_n u_n\|_{L^{p}(B)} + \|E_n u_n\|_{L^p(B)}$ is bounded independently of $n$ for large
$n.$
Hence, by the assumption
$p>1,$ there exists a subsequence that converges weakly in $L^p(B)$ to some 
$v\in W^{1,\,p}
(B)$ with 
$$ \|\nabla v\|_{L^{p}(B)} + \|v\|_{L^p(B)}\le \liminf_{n\to \infty}\left(\|\nabla E_n u_n\|_{L^{p}(B)} + \|E_n u_n\|_{L^p(B)}\right).$$
Define $Eu:=v$ and notice that $\Omega\subset B$ and that the sequence 
$\{E_n u_n\}$ converges to $u$ pointwise a.e. on 
$\Omega$. Hence we know 
that $Eu$ is an extension of $u,$ and the desired norm bound over $B$ follows 
from the uniform bound
on the extension operators $E_n.$ 
Since $B$ is a $W^{1,\,p}$-extension domain, this completes the proof of 
Theorem \ref{thm:main}.
\end{proof}


\section{Proof of Corollary~\ref{cor:dual}}\label{sec:cor}

Before giving the proof of Corollary~\ref{cor:dual}, we present  a lemma stating that we can always 
swap 
an unbounded domain with compact boundary to a bounded domain (and vice versa) with the same 
extendability and curve properties.
This is the main observation needed to conclude Corollary \ref{cor:dual} from Theorem 
\ref{thm:main}
and Theorem \ref{thm_sh}. 

\begin{lemma}\label{lma:unboundedbounded}
 Let $\Omega \subset \R^2$ be a bounded domain. Fix $x \in \Omega$ and define
 an unbounded domain $\hat \Omega = i_x(\Omega \setminus\{x\})$ using the inversion 
 \[
  i_x \colon \R^2\setminus\{x\} \to \R^2\setminus\{x\} \colon y \mapsto x + \frac{y-x}{|y-x|^2}.
 \]
 Then
 \begin{enumerate}
  \item For any $1 \le p \le \infty$ the domain $\Omega$ is a $W^{1,p}$-extension domain 
        if and only if $\hat \Omega$ is a $W^{1,p}$-extension domain.
  \item For any $q>2$ there exists a constant $C>0$ such that 
  for all $z_1, z_2 \in \Omega$ there exists 
a rectifiable curve $\gamma \subset \Omega$ joining $z_1$ and $z_2$ such that
\begin{equation}
  \int_{\gamma}\dist(z,\partial \Omega)^\frac{1}{1-q}\,\d s(z) 
\le C |z_1-z_2|^\frac{q-2}{q-1}\label{eq:extcharagain}
\end{equation} if and only if exists a constant $\hat C>0$ such that for every $\hat z_1, \hat z_2 \in \hat \Omega$
\eqref{eq:extchar} there exists 
a rectifiable curve $\hat\gamma \subset \hat \Omega$ joining $\hat z_1$ and $\hat z_2$ such that
\begin{equation}\label{eq:extcharagainandagain}
  \int_{\gamma}\dist(z,\partial \hat \Omega)^\frac{1}{1-q}\,\d s(z) 
\le \hat C |\hat z_1- \hat z_2|^\frac{q-2}{q-1}.
\end{equation}
\end{enumerate}
\end{lemma}

\begin{proof}
Let  $R=2\diam(\Omega)$ and $2r= \dist(x,\,\partial \Omega) $. Then $\partial \Omega \subset A(x,\,r,\,R) := B(x,\,R) \setminus \overline{B(x,\,r)}$. 
Notice that $i_x$ is a bi-Lipschitz map 
when restricted to $A(x,\,r,\,R)$, with the bi-Lipschitz constant only depending on $r$ and $R$, and that $i_x(A(x,\,r,\,R)) = A(x,1/R,1/r)$. 

(1) Notice that for $0 < r_1 < r_2 < \infty$, also the annulus $A(x,r_1,r_2)$ is an 
$W^{1,p}$-extension domain with an operator $E_{r_1,r_2}$.
Now, assume that $\Omega$ is a $W^{1,p}$-extension domain with extension operator $E$. Let us show that $\hat \Omega$ is also a $W^{1,p}$-extension domain. Towards this, take $u\in W^{1,\,p}(\hat \Omega)$. By the fact that $i_x$ is bi-Lipschitz on $A(x,r,R)$, we have $ u\circ i_x|_{A(x,\,r,\,R)\cap \Omega}\in W^{1,\,p}(\Omega \setminus B(x,r) )$.
Since $A(x,\,r,\,2r)\subset A(x,\,r,\,R)\cap \Omega$,
we have $E_{r,2r}(u\circ i_x|_{A(x,\,r,\,2r)})\in W^{1,\,p}(\mathbb R^2)$. Now, define $v \in W^{1,p}(\Omega)$ by
\[
 v(y) = \begin{cases}
         E_{r,2r}(u\circ i_x)(y), & \text{if }
         y \in B(x,2r)\\
         u\circ i_x (y) & \text{if }y \in \Omega \setminus B(x,2r).
        \end{cases}
\]
This can then be extended to $Ev\in W^{1,\,p}(\mathbb R^2)$.
 Again, by the bi-Lipschitz property of $i_x$ on $A(x,r,R)$, we have that
 \[
  (Ev) \circ i_x^{-1}\in W^{1,\,p}(A(x,1/R,1/r)),
 \]
 which finally gives the required extension $\hat Eu \in W^{1,p}(\mathbb R^2)$ as
 \[
  \hat Eu(y) = \begin{cases}
                 E_{1/R,1/r}((Ev)\circ i_x^{-1})(y)
                , &\text{if }y \in B(x,1/r)\\
                u(y), &\text{if }y \notin B(x,1/r).
               \end{cases}
 \]
 This shows that $\hat \Omega$ is a $W^{1,p}$-extension domain.
 
 Let us then show the converse and assume that $\hat \Omega$ is a $W^{1,p}$-extension domain with an extension operator $\hat E$. The construction of the extension is done analogously to the previous case.
 Let $u \in W^{1,p}(\Omega)$.
 Then $v \in W^{1,p}(\hat\Omega)$, when we define
\[
 v(y) = \begin{cases}
         E_{1/(2r),1/r}(u\circ i_x^{-1})(y), & \text{if }
         y \notin B(x,1/r)\\
         u\circ i_x^{-1} (y) & \text{if }y \in \Omega \cap B(x,1/r),
        \end{cases}
\] 
and the required extension $Eu \in W^{1,p}(\mathbb R^2)$ is then given by
\[
  Eu(y) = \begin{cases}
                 E_{r,R}((\hat Ev) \circ i_x)(y)
                , &\text{if }y \notin B(x,r)\\
                u(y), &\text{if }y \in B(x,r).
               \end{cases}
 \]
 Hence, $\Omega$ is a $W^{1,p}$-extension domain.
 \medskip
 
%
%
%
%

(2) Suppose the existence of curves $\gamma \subset \Omega$ satisfying \eqref{eq:extcharagain}. Let us show the condition \eqref{eq:extcharagainandagain} for $\hat \Omega$. Towards this, take $\hat z_1,\hat z_2 \in \hat \Omega$. Suppose
first that $\hat z_1,\hat z_2 \in \overline{B}(x,1/r)$. Define $z_1 = i_x^{-1}(\hat z_1)$ and $z_2 = i_x^{-1}(\hat z_2)$. Let $\gamma \subset \Omega$ be a curve joining $z_1$ and $z_2$ so that \eqref{eq:extcharagain} holds.
By \cite[Lemma 2.1]{sh2010} we may assume that $\elle(\gamma)\le C|z_1-z_2|$.
If $\gamma \subset A(x,r,R)$, then by the bi-Lipschitz property of $i_x$, the curve $\hat \gamma = i_x(\gamma)$ satisfies \eqref{eq:extcharagainandagain}. If $\gamma \not\subset A(x,r,R)$, let $\tilde z_1,\tilde z_2 \subset \gamma \cap \partial B(x,r)$ be such that $\gamma[z_1,\tilde z_1]$ connects $z_1$ to $\tilde z_1$ in $A(x,r,R)$, and 
$\gamma[z_2,\tilde z_2]$ connects $z_2$ to $\tilde z_2$ in $A(x,r,R)$. Then,
\begin{equation}\label{eq:quasiconvexforcircle}
|\tilde z_1 - \tilde z_2| \le \elle(\gamma) \le C|z_1-z_2|.
\end{equation}
Let $\alpha$ be a shorter arc of $\partial B(x,r)$ joining $\tilde z_1$ and $\tilde z_2$. Since $\dist(\partial B(x,r), \partial\Omega)\ge r$, by \eqref{eq:quasiconvexforcircle} we have
\[
  \int_{\alpha}\dist(z,\partial \Omega)^\frac{1}{1-q}\,\d s(z) 
\le C |z_1-z_2|^\frac{q-2}{q-1}.
\]
Hence, again by the bi-Lipschitz property of $i_x$ on $A(x,r,R)$ the curve $\hat \gamma = i_x(\gamma[z_1,\tilde z_1] \ast \alpha \ast \gamma[\tilde z_2,z_2])$ satisfies \eqref{eq:extcharagainandagain}.

Suppose then that $\hat z_1,\hat z_2 \in \overline{B}(x,1/r)$ fails. Then, if $[\hat z_1, \hat z_2] \cap \partial B(x,1/r)$ contains two distinct points, we can use the previous case to connect these by a curve $\gamma \subset \hat \Omega$. For the remaining part, we can simply use the remaining parts of $[\hat z_1, \hat z_2] \setminus B(x,1/r)$. Finally, if $[\hat z_1, \hat z_2] \cap \partial B(x,1/r)$ is a singleton, we simply use $[\hat z_1,\hat z_2]$.

The proof of the converse implication is analogous. Towards it, let us assume that there exist curves $\hat \gamma \subset \hat \Omega$ satisfying \eqref{eq:extcharagainandagain}. Let
$z_1,z_2 \in \Omega$ and define $\hat z_1 = i_x(z_1)$ and $\hat z_2 = i_x(z_2)$. Let $\hat \gamma \subset \hat \Omega$ be a curve connecting $\hat z_1$ and $\hat z_2$ that satisfies \eqref{eq:extcharagainandagain}. By Lemma~\ref{kvasikonveksi} we may assume that $\elle(\hat\gamma)\le C|\hat z_1 -\hat z_2|$.
If $\hat \gamma \subset \overline{B}(x,1/r)$, again the bi-Lipschitz property of $i_x$ inside $A(x,r,R)$ gives that $\gamma = i_x^{-1}(\hat\gamma)$ satisfies \eqref{eq:extcharagain}. Let us then suppose that $\hat \gamma \not\subset \overline{B}(x,1/r)$.
If $\hat z_1 \in \overline{B}(x,1/r)$, we take $\tilde z_1\in \hat\gamma \cap \partial B(x,1/r)$ so that $\hat \gamma[\hat z_1,\tilde z_1] \subset \overline{B}(x,1/r)$.
If $\hat z_1 \notin \overline{B}(x,1/r)$, we define $\tilde z_1 = \hat z_1$.
Similarly, if $\hat z_2 \in \overline{B}(x,1/r)$, we take $\tilde z_2\in \hat\gamma \cap \partial B(x,1/r)$ so that $\hat \gamma[\hat z_2,\tilde z_2] \subset \overline{B}(x,1/r)$,
and if $\hat z_2 \notin \overline{B}(x,1/r)$, we set $\tilde z_2 = \hat z_2$. Then, the curve
\[
 \gamma = i_x^{-1}(\hat \gamma[\hat z_1,\tilde z_1]) \ast [i_x^{-1}(\tilde z_1),i_x^{-1}(\tilde z_2)] \ast i_x^{-1}(\hat \gamma[\tilde z_2,\hat z_2])
\]
connects $z_1$ to $z_2$ in $\Omega$ and satisfies  \eqref{eq:extcharagain} because of 
\[
 |\tilde z_1 -\tilde z_2| \le \elle(\hat\gamma)\le C|\hat z_1 -\hat z_2|
\]
and $\dist(x,\gamma)\ge r$.
\end{proof}

\begin{proof}[Proof of Corollary~\ref{cor:dual}]
By Lemma~\ref{lma:unboundedbounded} it suffices to show that the complementary domain
$\wz \Omega$ of a given Jordan $W^{1,p}$-extension domain $\Omega,$ where $1<p<\infty,$
is a $W^{1,q}$-extension domain for $q=p/(p-1).$

Suppose first that our Jordan domain $\Omega$ is a $W^{1,p}$-extension domain for a given 
$1<p<2.$
Then Theorem \ref{neceJordan} and Remark~\ref{remark 1} give the existence of curves as in
\eqref{eq:extcharcompl} in the complementary domain $\wz \Omega.$ Notice that 
\eqref{eq:extcharcompl} is precisely \eqref{eq:extchar} with $q=p/(p-1)>2.$ Thus, by 
applying Lemma~\ref{lma:unboundedbounded} (twice) and Theorem~\ref{thm_sh} we conclude
that $\wz \Omega$ is a $W^{1,q}$-extension domain. 

If $\Omega$ is a $W^{1,p}$-extension domain for some 
$p>2,$ then  \eqref{eq:extchar} holds by Theorem~\ref{thm_sh} (for points in $\Omega$).
Let $x \in \Omega$, and take $i_x$ and $\hat \Omega$ as in Lemma~\ref{lma:unboundedbounded}.
Now, by applying again Lemma~\ref{lma:unboundedbounded}, \eqref{eq:extcharcompl} holds for points in $\hat \Omega$. For any pair $z_1, z_2 \in \overline{\hat \Omega}$ there exist sequences $(x_j)_j,(y_j)_j$ in $\hat \Omega$ that converge to $z_1$ and $z_2$, respectively. For every pair $(x_j,y_j)$ we take a curve $\gamma_j \subset \hat \Omega$ satisfying \eqref{eq:extcharcompl} (with the obvious notational changes). By Lemma~\ref{arsela}, there exists a limiting curve $\gamma \subset \overline{\hat \Omega}$ connecting $z_1$ and $z_2$ also satisfying \eqref{eq:extcharcompl}. Hence, by Theorem \ref{thm:main}, $\mathbb R^2 \setminus \overline{\hat \Omega}$ is a $W^{1,q}$-extension domain, and so, via Lemma \ref{lma:unboundedbounded}, also $\tilde \Omega$.


We are left with the case $p=2.$ In this case, $\Omega$ is necessarily a uniform domain and
hence so is $\wz \Omega$. Thus $\wz \Omega$ is also a $W^{1,2}$-extension domain; see 
\cite{golavo1979,gore1990,govo1981,jo1981}.
\end{proof}

\end{document}